\definecolor{cite}{HTML}{11871E}
\definecolor{url}{HTML}{698996}
\definecolor{link}{HTML}{912F1B}
\tikzstyle{arrow} = [-{Straight Barb[scale=0.8]}, line width=0.2mm]
\Crefname{prop}{Proposition}{Propositions}
\Crefname{lem}{Lemma}{Lemmas}
\Crefname{cor}{Corollary}{Corollaries}
\Crefname{thm}{Theorem}{Theorems}
\Crefname{alphThm}{Theorem}{Theorems}
\Crefname{alphCor}{Corollary}{Corollaries}
\Crefname{defn}{Definition}{Definitions}
\Crefname{notation}{Notation}{Notations}
\Crefname{cons}{Construction}{Constructions}
\Crefname{rmk}{Remark}{Remarks}
\Crefname{obs}{Observation}{Observations}
\Crefname{trick}{Trick}{Tricks}
\Crefname{warning}{Warning}{Warnings}
\Crefname{conj}{Conjecture}{Conjectures}
\Crefname{assump}{Assumption}{Assumptions}
\Crefname{recollect}{Recollection}{Recollections}
\Crefname{terminology}{Terminology}{Terminologies}
\Crefname{question}{Question}{Questions}
\Crefname{example}{Example}{Examples}
\Crefname{figure}{Figure}{Figures}
\newtheorem{thm}[subsubsection]{Theorem}
\newtheorem{prop}[subsubsection]{Proposition}
\newtheorem{lem}[subsubsection]{Lemma}
\newtheorem*{lem*}{Lemma}
\newtheorem{cor}[subsubsection]{Corollary}
\newtheorem{alphThm}{Theorem}
\newcommand{\neutralize}[1]{\expandafter\let\csname c@#1\endcsname\count@}
\newtheorem{alphCor}{Corollary}
\theoremstyle{definition}
\newtheorem{defn}[subsubsection]{Definition}
\newtheorem{cons}[subsubsection]{Construction}
\newtheorem{nota}[subsubsection]{Notation}
\newtheorem{recollect}[subsubsection]{Recollections}
\newtheorem{terminology}[subsubsection]{Terminology}
\theoremstyle{remark}
\newtheorem{rmk}[subsubsection]{Remark}
\newtheorem{obs}[subsubsection]{Observation}
\newtheorem{example}[subsubsection]{Example}
\newtheorem{warning}[subsubsection]{Warning}
\DeclareMathOperator{\fibfunc}{\mathrm{fun}}
\newcommand{\baseCat}{{\mathcal{T}}}
\newcommand{\tkappa}{^{\underline{\kappa }}}
\newcommand{\tomega}{^{\underline{\omega}}}
\newcommand{\norm}{\mathrm{N}}
\newcommand{\calg}{\mathrm{CAlg}}
\newcommand{\spc}{\mathcal{S}}
\newcommand{\vop}{^{{\myuline{\mathrm{op}}}}}
\newcommand{\ind}{\mathrm{Ind}}
\newcommand{\filteredKappa}{^{\kappa\operatorname{-filt}}}
\newcommand{\filtered}{^{\mathrm{filt}}}
\newcommand{\lexact}{^{\mathrm{lex}}}
\newcommand{\Tsemiadd}{^{\underline{\mathrm{sadd}}}}
\newcommand{\Tinert}{^{\underline{\mathrm{inert}}}}
\newcommand{\cocartesianCategory}{\mathrm{coCart}}
\newcommand{\linear}{\mathrm{Lin}}
\newcommand{\accessible}{{\mathrm{Acc}}}
\DeclareMathOperator{\cmonoid}{\mathrm{CMon}}
\newcommand{\canonical}{\mathrm{can}}
\DeclareMathOperator{\forget}{\mathrm{fgt}}
\DeclareMathOperator{\rfunc}{\mathrm{RFun}}
\DeclareMathOperator{\lfunc}{\mathrm{LFun}}
\newcommand{\eval}{\mathrm{ev}}
\newcommand{\idem}{\mathrm{Idem}}
\newcommand{\constant}{\operatorname{const}}
\DeclareMathOperator{\tconstant}{\underline{\mathrm{const}}}
\newcommand{\cat}{\mathrm{Cat}}
\newcommand{\presentable}{\mathrm{Pr}}
\newcommand{\Texact}{^{{\mathrm{ex}}}}
\newcommand{\Tlexact}{^{{\mathrm{lex}}}}
\newcommand{\Trexact}{^{{\mathrm{rex}}}}
\newcommand{\A}{\mathcal{A}}
\newcommand{\sC}{{\mathcal C}}
\newcommand{\D}{{\mathcal D}}
\newcommand{\B}{\mathcal{B}}
\newcommand{\op}{^{\mathrm{op}}}
\newcommand{\unstraighten}{\mathrm{UnStr}}
\newcommand{\cocartesian}{^{\mathrm{cocart}}}
\newcommand{\sG}{{\mathcal G}}
\newcommand{\K}{\mathcal{K}}
\newcommand{\R}{\mathcal{R}}
\newcommand{\E}{\mathcal{E}}
\DeclareMathOperator{\mackey}{\mathrm{Mack}}
\DeclareMathOperator{\presheaf}{\mathrm{PSh}}
\DeclareMathOperator{\nattrans}{\mathrm{Nat}}
\newcommand{\spectra}{\mathrm{Sp}}
\newcommand{\finite}{\mathrm{Fin}}
\DeclareMathOperator{\effBurn}{\mathrm{Span}}
\DeclareMathOperator{\im}{\mathrm{Im}}
\newcommand{\id}{\mathrm{id}}
\DeclareMathOperator{\map}{\mathrm{Map}}
\DeclareMathOperator{\fib}{\operatorname{fib}}
\DeclareMathOperator{\func}{\mathrm{Fun}}
\newcommand{\catinf}{\mathrm{Cat}}
\DeclareMathOperator{\twistedArrow}{\operatorname{TwAr}}
\newcommand{\orbit}{\mathcal{O}}
\DeclareMathOperator{\cofree}{\operatorname{\underline{Cofree}}}
\DeclareMathOperator{\totalCategory}{\operatorname{Total}}
\newcommand{\tcone}{^{\underline{\triangleleft}}}
\newcommand{\tcocone}{^{\underline{\triangleright}}}
\newcommand{\terminalTCat}{\underline{\ast}}
\newcommand{\pointwise}{^{\mathrm{pw}}}
\newcommand{\totimes}{^{\underline{\otimes}}}
\def\colim{\qopname\relax m{colim}}
\newcommand{\arrdisp}{0.33ex}
\newcommand{\arrdisplacementsp}{0.72ex}
\newcommand{\ardis}{\ar@<\arrdisp>}
\newcommand{\ardissp}{\ar@<\arrdisplacementsp>}
\newcommand{\myuline}[1]{%
  \uline{\phantom{#1}}%
  \llap{\contour{white}{#1}}%
}
\newcommand*{\saved@myuline}{}
\let\saved@myuline\myuline
\newcommand*{\mathuline}{%
  \mathpalette{\math@myuline\saved@myuline}%
}
\newcommand*{\math@myuline}[3]{%
  \mbox{#1{$#2#3\m@th$}}%
}
\renewcommand*{\myuline}{%
  \relax  
  \ifmmode
    \expandafter\mathuline
  \else
    \expandafter\saved@myuline
  \fi
}
\title{\LARGE Parametrised Presentability over Orbital Categories}
\date{ \today}
\author{\textsc{Kaif Hilman}\thanks{Max Planck Institute for Mathematics, Bonn\\kaif@mpim-bonn.mpg.de}}
\begin{document}
\maketitle

\begin{abstract}
    In this paper, we develop the notion of presentability in the parametrised homotopy theory framework of \cite{parametrisedIntroduction} over orbital categories. We formulate and prove a  characterisation of parametrised presentable categories in terms of its associated straightening. From this we deduce a parametrised adjoint functor theorem from the unparametrised version, prove various localisation results, and we record the interactions of the notion of presentability here with multiplicative matters. Such a theory is of interest for example in equivariant homotopy theory, and we will apply it in \cite{kaifNoncommMotives} to construct the category of parametrised noncommutative motives for equivariant algebraic K-theory.
\end{abstract}

\tableofcontents

\section{Introduction}

Parametrised homotopy theory is the study of higher categories fibred over a base $\infty$-category. This is a generalisation of the usual theory of higher categories, which can be viewed as the parametrised homotopy theory over a point. The advantage of this approach is that many structures can be cleanly encoded by the morphisms in the base $\infty$-category. For example, in the algebro-geometric world, various forms of pushforwards exist for various classes of scheme morphisms (see \cite{bachmannHoyois} for more details). Another example, which is the main motivation of this work for our applications in \cites{kaifNoncommMotives}, is that of genuine equivariant homotopy theory for a finite group $G$ - here the base $\infty$-category would be $\orbit_G\op$, the opposite of the $G$-orbit category. In this case, for subgroups $H\leq K\leq G$, important and fundamental constructions such as \textit{indexed coproducts}, \textit{indexed products}, and  \textit{indexed tensors} $$\coprod_{K/H} \quad\quad\quad\quad \prod_{K/H}\quad\quad\quad\quad\bigotimes_{K/H}$$ can be encoded by the morphisms in $\orbit_G\op$.  One framework in which to study this is the series of papers following \cite{parametrisedIntroduction} and the results in this paper should be viewed as a continuation of the vision from the aforementioned series - we refer the reader to them for more motivations and examples. \\

For an $\infty$-category to admit all small colimits and limits is a very desirable property as it means that many constructions can be done in it. However, this property entails that it has to be large enough and we might lose control of it due to size issues. Fortunately, there is a fix to this problem in the form of the very well-behaved class of  \textit{presentable} $\infty$-categories: these are cocomplete $\infty$-categories that are ``essentially generated'' by a small subcategory. One of the most important features of presentable $\infty$-categories is the adjoint functor theorem which says that one can test whether or not a functor between presentables is right or left adjoint by checking that it preserves limits or colimits respectively. The $\infty$-categorical theory of presentability was developed by Lurie in \cite[{Chapter 5}]{lurieHTT}, generalising the classical 1-categorical notion of \textit{locally presentable categories}.\\

The goal of this paper is to translate the above-mentioned theory of presentable $\infty$-categories to the parametrised setting and to understand the relationship between the notion of parametrised presentability and its unparametrised analogue in \cite{lurieHTT}. As a signpost to the expert reader, we will always assume throughout this paper that the base category is orbital in the sense of \cite{nardinThesis}. Besides that, we will adopt the convention in said paper of defining a $\baseCat$-category, for a fixed based $\infty$-category $\baseCat$, to be a cocartesian fibration over the \textit{opposite}, $\baseCat\op$. This convention is geared towards equivariant homotopy theory as introduced in the motivation above where $T = \orbit_G$. Note that by the straightening-unstraightening equivalence of \cite{lurieHTT}, a $\baseCat$-category can equivalently be thought of as a functor $\baseCat\op \rightarrow \widehat{\cat}_{\infty}$.  The first main result we obtain is then the following characterisation of parametrised presentable $\infty$-categories.

\begin{alphThm}[Straightening characterisation of parametrised presentables, full version in  {\cref{simpsonTheorem}}]\label{thmA}
Let $\sC$ be a $\baseCat$-category. Then it is $\baseCat$-presentable if and only if the associated straightening $C : \baseCat\op \rightarrow \widehat{\cat}_{\infty}$ factors through the non-full subcategory $ \presentable^{\mathrm{L}}\subset \widehat{\cat}_{\infty}$ of presentable categories and left adjoint functors, and morevoer these functors themselves have left adjoints satisfying certain Beck-Chevalley conditions (\ref{beckChevalleyMeaning}). 
\end{alphThm}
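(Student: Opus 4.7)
The plan is to unpack $\baseCat$-presentability as (i) $\baseCat$-cocompleteness (admitting all small $\baseCat$-colimits) and (ii) a $\baseCat$-accessibility condition, then match each half with the corresponding feature of the straightening. The key structural input is orbitality of $\baseCat$: it implies that $\baseCat$-colimits decompose into \emph{fibrewise} colimits inside each fibre $C(t)$ together with \emph{indexed colimits} along morphisms $f$ in $\baseCat\op$, where the latter are precisely left adjoints $f_!$ to the restriction functors arising from the cocartesian fibration. The Beck--Chevalley conditions are the natural base-change relations these $f_!$ must satisfy relative to the pullback squares in $\baseCat$ guaranteed by orbitality.

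For the forward direction, suppose $\sC$ is $\baseCat$-presentable. Fibrewise colimits are a special case of $\baseCat$-colimits, so each $C(t)$ admits all small colimits, and fibrewise smallness of generators follows by restricting a $\baseCat$-accessible presentation of $\sC$ to each fibre. Hence each $C(t)$ is presentable. The existence of indexed colimits in $\sC$ directly supplies the left adjoints $f_!$; the restriction functors are themselves already left adjoints (to the indexed products $f_\ast$), so they preserve colimits and therefore factor through $\presentable^{\mathrm{L}}$. The Beck--Chevalley conditions are forced by the fact that cocartesian transport in $\sC$ is coherent under composition and by the universal property of indexed colimits.

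For the reverse direction, one needs to manufacture $\baseCat$-colimits and a small $\baseCat$-generator from the given data. $\baseCat$-cocompleteness follows by combining the fibrewise presentability of each $C(t)$ with the $f_!$ via the decomposition theorem for $\baseCat$-colimits over an orbital base, with Beck--Chevalley providing exactly the coherence needed to patch fibrewise and indexed colimits into bona fide $\baseCat$-colimits. For $\baseCat$-accessibility, one picks compatible small generating subcategories $\sG_t \subset C(t)$ at each object of $\baseCat$ and closes them under the $f_!$ to obtain a small $\baseCat$-subcategory that generates $\sC$ under $\baseCat$-colimits.

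The main obstacle will be the accessibility half of the equivalence. One must choose a uniform cardinal $\kappa$ so that each $C(t)$ is $\kappa$-presentable \emph{and} indexed coproducts of $\kappa$-compact objects stay $\kappa$-compact, then verify that the resulting $\baseCat$-category of $\kappa$-compact objects of $\sC$ is essentially small. Orbitality (controlling the sizes of mapping spaces and of pullbacks in $\baseCat$) together with Beck--Chevalley (controlling compactness under $f_!$) make this feasible, but the bookkeeping of cardinals and of compatible compactness across the fibres is the delicate technical point; the fibrewise cocompleteness and the functoriality in $\presentable^{\mathrm{L}}$ by contrast drop out essentially formally from the orbital decomposition.
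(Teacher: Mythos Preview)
Your outline is broadly correct, and your forward direction is reasonable—arguably more direct than the paper's, which reaches condition~(7) only at the end of a chain $(1)\Rightarrow\cdots\Rightarrow(5)\Rightarrow(7)$, deducing everything from the Bousfield-localisation description (5). Your route via the omnibus $\baseCat$-colimit theorem (giving the $f_!$ and Beck--Chevalley) together with the fibrewise nature of $\baseCat$-accessibility works just as well; the only extra check you should make explicit is that the restriction functors $f^*$ preserve $\kappa$-compacts for a uniform $\kappa$, which follows because $\underline{\ind}_\kappa$ is fibrewise and $\ind_\kappa$ of any functor between small categories preserves $\kappa$-compacts.

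Where you diverge more substantially, and make life harder than necessary, is in the reverse direction. You propose to build a small $\baseCat$-generator by choosing fibrewise generators $\sG_t$ and then \emph{closing under the $f_!$}, and you correctly identify the resulting cardinal bookkeeping as the delicate point. But this closure is a red herring: for $\baseCat$-accessibility you need a small $\baseCat$-subcategory $\underline{\sC}^0$ with $\underline{\sC}\simeq\underline{\ind}_\kappa(\underline{\sC}^0)$, and since the $\baseCat$-category structure is carried by the $f^*$ (not the $f_!$), you only need $\underline{\sC}^0$ to be closed under the \emph{restriction} functors. The hypothesis that the straightening lands in $\presentable_{L,\kappa}$ says exactly that each $f^*$ preserves $\kappa$-compacts, so the full subcategory $\underline{\sC}^{\underline{\kappa}}$ of fibrewise $\kappa$-compact objects is already a small $\baseCat$-subcategory. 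The paper then finishes in one line: since $\underline{\ind}_\kappa$ is \emph{by definition} applied fibrewise, the unparametrised equivalence $\sC_V\simeq\ind_\kappa((\sC_V)^\kappa)$ from \cite[Prop.~5.3.5.12]{lurieHTT} assembles to $\underline{\sC}\simeq\underline{\ind}_\kappa(\underline{\sC}^{\underline{\kappa}})$. No closure under $f_!$, no compatibility of compactness with indexed coproducts, and no cardinal gymnastics are required.
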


In the full version, we also give a complete parametrised analogue of the characterisations of presentable $\infty$-categories due to Lurie and Simpson (cf. \cite[{Thm. 5.5.1.1}]{lurieHTT}), which in particular shows that the notion defined in this paper is equivalent to the one defined in   \cite[{$\S1.4$}]{nardinThesis}. While it is generally expected that the theory of $\infty$-cosmoi in \cite{riehlVerityCosmos} should absorb the statement and proof of the Lurie-Simpson-style characterisations of presentability, the value of the theorem above is in clarifying the relationship between the notion of presentability in the parametrised sense and in the unparametrised sense. Indeed, the description in \cref{thmA} is a genuinely parametrised statement that is not seen in the unparametrised realm where $T = \ast$. One consequence of this is that we can easily deduce the parametrised adjoint functor theorem \textit{from} the unparametrised version instead of repeating the same arguments in the parametrised setting.

\begin{alphThm}[Parametrised adjoint functor theorem,  {\cref{parametrisedAdjointFunctorTheorem}}]\label{thmB}
Let $F : \sC \rightarrow \D$ be a $\baseCat$-functor between $\baseCat$-presentable categories. Then:
\begin{enumerate}
    \item If $F$ strongly preserves $\baseCat$-colimits, then $F$ admits a $\baseCat$-right adjoint.
    \item If $F$ strongly preserves $\baseCat$-limits and is $\baseCat$-accessible, then $F$ admits a $\baseCat$-left adjoint.
\end{enumerate}
\end{alphThm}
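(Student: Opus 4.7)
The plan is to invoke \cref{thmA} to reduce the parametrised statement to its unparametrised counterpart applied pointwise, and then to reassemble via the uniqueness of adjoints. By \cref{thmA}, the straightenings of $\sC$ and $\D$ are diagrams $C, D : \baseCat\op \to \presentable^{\mathrm{L}}$ in which each structure functor $f^* := C(f)$ additionally admits a left adjoint $f_!$ satisfying the stated Beck-Chevalley conditions. The $\baseCat$-functor $F$ then corresponds to a natural transformation between these diagrams, i.e.\ a collection of functors $F_t : C(t) \to D(t)$ together with coherent equivalences $F_s \circ f^* \simeq f^* \circ F_t$ for every $f : s \to t$ in $\baseCat\op$.

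For part (1), I would first unwind ``strong preservation of $\baseCat$-colimits'' into two concrete conditions: (a) each $F_t$ preserves ordinary colimits, and (b) for every $f : s \to t$, the Beck-Chevalley mate $f_! \circ F_s \to F_t \circ f_!$ formed from the natural equivalence above via the adjunctions $f_! \dashv f^*$ is invertible. Since each $C(t)$ and $D(t)$ is presentable by \cref{thmA}, the unparametrised adjoint functor theorem \cite[Cor.~5.5.2.9]{lurieHTT} produces pointwise right adjoints $G_t : D(t) \to C(t)$. Taking right adjoints in the Beck-Chevalley equivalence $f_! F_s \simeq F_t f_!$ and using the essential uniqueness of right adjoints then yields $G_s \circ f^* \simeq f^* \circ G_t$, which is precisely the naturality data needed to assemble $\{G_t\}_t$ into a natural transformation $G : D \to C$ of $\widehat{\cat}_\infty$-valued diagrams. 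Unstraightening produces the required $\baseCat$-right adjoint of $F$.

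Part (2) is dual. Strong preservation of $\baseCat$-limits unpacks to each $F_t$ preserving ordinary limits together with the Beck-Chevalley equivalence $F_t \circ f_* \simeq f_* \circ F_s$, where $f_*$ denotes the right adjoint of $f^*$ (which exists because $f^*$ is itself a left adjoint by \cref{thmA}). The $\baseCat$-accessibility of $F$ unpacks to pointwise accessibility of each $F_t$, so the unparametrised adjoint functor theorem again supplies left adjoints $H_t : D(t) \to C(t)$. Taking left adjoints in the equivalence $F_t f_* \simeq f_* F_s$ and again invoking uniqueness of adjoints produces $H_s \circ f^* \simeq f^* \circ H_t$, which assembles the $H_t$ into a natural transformation $H : D \to C$ and hence unstraightens to a $\baseCat$-left adjoint.

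The main obstacle I foresee is the careful verification that strong preservation of $\baseCat$-(co)limits and $\baseCat$-accessibility decompose into the pointwise plus Beck-Chevalley conditions invoked above; I expect this unpacking to follow directly from how $\baseCat$-(co)limits in a $\baseCat$-presentable category are built out of ordinary pointwise (co)limits and the indexed functors $f_!$ (respectively $f_*$), together with the straightening description of \cref{thmA}. Once this reduction step is in place the remainder of the argument is entirely formal, which is precisely the structural advantage advertised after \cref{thmA}.
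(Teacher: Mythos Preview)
Your proposal is correct and follows essentially the same route as the paper: apply the unparametrised adjoint functor theorem fibrewise (using the presentability of each fibre from \cref{thmA}), then deduce the naturality square for the pointwise adjoints by passing to adjoints in the Beck--Chevalley equivalence coming from strong preservation. The unpacking you flag as the main obstacle is exactly the content of \cref{characterisationStrongPreservations}, and the assembly of the fibrewise adjoints into a $\baseCat$-functor is handled by \cref{criteriaForTAdjunctions}, so both steps you anticipate are already available in the paper.
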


Another application of \cref{thmA} is the construction of \textit{presentable Dwyer-Kan localisations},  \cref{parametrisedPresentableDwyerKanLocalisation}. This is deduced essentially by performing fibrewise localisations, which are in turn furnished by \cite{lurieHTT}. It is an extremely important construction, much like in the unparametrised world, and we will for example use it in \cite{kaifNoncommMotives} to understand the parametrised enhancement of the noncommutative motives of \cite{BGT13}. Other highlights include the \textit{localisation-cocompletions} construction in \cref{arbitraryCocompletions}, the idempotent-complete-presentables correspondence \cref{TPresentableIdempotentCorrespondence}, as well as studying the various interactions between presentability with multiplicative matters and functor categories in \cref{subsec4.6:functorCategoriesAndPresentables} where, among other things, we prove a formula for the tensor product of parametrised presentable categories that was claimed in \cite{nardinThesis} without proof. \\

We now comment on the methods and philosophy of this article. The approach taken here is an axiomatic one and is slightly different in flavour from the series of papers in \cite{parametrisedIntroduction} in that we freely pass between the viewpoint of parametrised $\infty$-categories as cocartesian fibrations and as $\infty$-category-valued functors via the straightening-unstraightening equivalence of Lurie. This allows us to work model-independently, ie. without thinking of our $\infty$-categories as simplicial sets. The point is that, as far as presentability and adjunctions are concerned, the foundations laid in \cites{expose1Elements, barwickGlasmanNardin, shahThesis, shahPaperII, nardinThesis} are sufficient for us to make model-independent formulations and proofs via universal properties. Indeed, a recurring trick in this paper is to say that relevant universal properties guarantee the existence of certain functors, and then we can just check that certain diagrams of $\infty$-categories commute by virtue of the essential uniqueness of left/right adjoints.\\

\textbf{Outline of paper.} In \cref{section:PrelimsGeneralBaseCategories,section:PrelimsOrbitalBaseCategories,section:PrelimsAtomicOrbitalBaseCategories} we collect all the background materials, together with references, that will be needed for the rest of the paper. We hope that this establishes notational consistency and makes the paper as self-contained as possible. We have denoted by ``recollections'' those subsections which contain mostly only statements that have appeared in the literature. We recommend the reader to skim this section on first reading and refer to it as necessary. In \cref{sec4:Compactness} we introduce the notions of $\baseCat$-compactness and $\baseCat$-idempotent-completeness. We then come to the heart of the paper in \cref{sec5:presentability} where we state and prove various basic results about parametrised presentable $\infty$-categories as enumerated above. \\

\textbf{Conventions and assumptions.} This paper is written in the language of $\infty$-categories and so from now on we will drop the adjective $\infty$- and mean $\infty$-categories when we say categories. Moreover, throughout the paper the base category $\baseCat$ will be assumed to be \textit{orbital} (cf. Definition \ref{DefinitionAtomicOrbital}) unless stated otherwise. We will also use the notation $\catinf$ for the category of small categories and $\widehat{\cat}_{\infty}$ for the category of large categories. \\

\textbf{Related work.} Since the appearance of this article, Louis Martini and Sebastian Wolf have also independently produced many similar results in \cite{martiniWolf} using a different formalism of working internal to an $\infty$--topos. An important point of departure of this work from \cite{martiniWolf} is the the following: because this article is geared towards the applications we have in mind in \cite{kaifNoncommMotives}, our setup crucially interacts well with the notion of parametrised symmetric monoidal structures (a.k.a. multiplicative norms), which to the best of our knowledge, is not yet formulated in their formalism at the present moment. Moreover, this work has also been incorporated as Chapters 1 and 2 of the author's PhD thesis \cite{kaifThesis}.\\

\textbf{Acknowledgements.} I am grateful to Jesper Grodal, Markus Land, and Maxime Ramzi for useful comments, sanity checks, and many hours of enlightening conversations. I would also like to acknowledge my debt to Fabian Hebestreit and Ferdinand Wagner whose joint notes \cite{fabianWagner} on a lecture series given by the former have taught me much about how to use higher category theory model-independently. Finally, I would like to thank Malte Leip for proofreading the various drafts of this article. This article is based on work done during the author's PhD which was supported by the Danish
National Research Foundation through the Copenhagen Centre for Geometry and Topology
(DNRF151). \\

\section{Preliminaries: general base categories}\label{section:PrelimsGeneralBaseCategories}

\subsection{Recollections: basic objects and constructions}

\begin{recollect}\label{defn:baseCategoryT}\label{nota:Cat_T} \label{nota:totalCategory}
For a category $\baseCat$, there is Lurie's straightening-unstraightening equivalence $\cocartesianCategory(\baseCat\op) \simeq \func(\baseCat\op, \cat)$ (cf. for example \cite[{Thm. I.23}]{fabianWagner}). The category of $\baseCat$\textit{--categories} is then defined simply as $\func(\baseCat\op,\cat)$ and we also write this as $\cat_{\baseCat}$. We will always denote a $\baseCat$--category with an underline $\underline{\sC}$. Under the equivalence above, the datum of a $\baseCat$--category is equivalent to the datum of a cocartesian fibration $p \colon \totalCategory(\underline{\sC}) \rightarrow \baseCat\op$, and a $\baseCat$--functor is defined just to be a morphism of $\baseCat$--categories $\underline{\sC} \rightarrow \underline{\D}$, is then equivalently a map of cocartesian fibrations $\totalCategory(\underline{\sC})\rightarrow\totalCategory(\underline{\D})$ over $\baseCat\op$. For an object $V\in \baseCat$, we will write $\sC_V$ or $\underline{\sC}_V$ for the fibre of $\totalCategory(\underline{\sC})\rightarrow \baseCat\op$ over $V$. \label{nota:fibreC_V}
\end{recollect}

\begin{rmk}
The product $\underline{\sC}\times \underline{\D}$ in $\cat_{\baseCat}$  of two $\baseCat$--categories $\underline{\sC}, \underline{\D}$ is given as the pullback $\totalCategory(\underline{\sC})\times_{\baseCat\op} \totalCategory(\underline{\D})$ in the cocartesian fibrations perspective. We will always denote with $\times$ when we are viewing things as $\baseCat$--categories and we reserve $\times_{\baseCat\op}$ for when we are viewing things as total categories. In this way, there will be no confusion as to whether or not $\times_{\baseCat\op}$ denotes a pullback in $\cat_T$: this will never be the case.
\end{rmk}

\begin{nota}\label{nota:Fun_T}
Since $\cat_{\baseCat} = \func(\baseCat\op,\cat)$ is naturally even a 2--category, for $\underline{\sC},\underline{\D}\in \cat_{\baseCat}$, we have the \textit{category} of $\baseCat$--functors from $\underline{\sC}$ to $\underline{\D}$: this we write as $\func_{\baseCat}(\underline{\sC}, \underline{\D})$. Unstraightening, we obtain $\func_{\baseCat}(\underline{\sC}, \underline{\D}) \simeq \func\cocartesian(\totalCategory(\underline{\sC}), \totalCategory(\underline{\D})) \times_{\func(\totalCategory(\underline{\sC}), \baseCat\op)}\{p\}$ where $\func\cocartesian$ is the full subcategory of functors preserving $\baseCat\op$--cocartesian morphisms. 
\end{nota}

\begin{example}
We now give some basic examples of $\baseCat$--categories to set notation.
\begin{itemize}
    \item (Fibrewise $\baseCat$--categories) \label{nota:tconstantCat} Let $K \in \cat$. Write $\tconstant_\baseCat(K) \in \cat_T$ for the constant $K$--valued diagram. In other words, $\totalCategory(\tconstant_{\baseCat}(K))\simeq K\times \baseCat\op$. 
    \item We write $\terminalTCat \coloneqq \tconstant_\baseCat(\ast)$. This is clearly a final object in $\cat_{\baseCat} = \func(\baseCat\op,\cat)$. 
    
    \item (Corepresentable $\baseCat$--categories)\label{categoryOfPoints}
Let $V\in \baseCat$. Then we can consider the left (and so cocartesian) fibration associated to the functor $\map_{\baseCat}\colon \baseCat\op \rightarrow \spc$ and denote this $\baseCat$--category by $\underline{V}$. Note that $\totalCategory(\underline{V}) \simeq (\baseCat_{/V})\op$. By corepresentability of $\underline{V}$, we have $\func_{\baseCat}(\underline{V},\underline{\sC})  \simeq \sC_V$. To wit, for $K\in\cat$, by \cref{TFunctorCategory}, we have
\begin{equation*}
    \begin{split}
        \map_{\cat}\big(K, \func_{\baseCat}(\underline{V}, \underline{\sC})\big) 
        &\simeq \map_{\cat_{\baseCat}}\big(\underline{V}, \underline{\func}_{\baseCat}(\tconstant(K), \underline{\sC})\big)\\
        &\simeq \map_{\cat}(K, \sC_V)
    \end{split}
\end{equation*}
\end{itemize}
\end{example}

\begin{defn}
The category of $\baseCat$--\textit{objects of $\underline{\sC}$} is defined to be $\func_{\baseCat}(\terminalTCat, \underline{\sC})$. 
\end{defn}
\begin{rmk}
If $\baseCat\op$ has an initial object $T\in \baseCat\op$, then this means that the category of $\baseCat$--objects in $\underline{\sC}$ is just $\sC_T$.
\end{rmk}

\begin{cons}[Parametrised opposites]\label{defn:vop}
For a $\baseCat$--category $\underline{\sC}$, its $\baseCat$\textit{--opposite} $\underline{\sC}\vop$ is defined to be the image under the functor obtained by applying $\func(\baseCat\op, -)$ to $(-)\op \colon \cat \rightarrow \cat$. In the unstraightened view, this is given by taking fibrewise opposites in the total category. In \cite{expose1Elements} this was called vertical opposites $(-)^{\mathrm{vop}}$ to invoke just such an impression.
\end{cons}

\begin{obs}\label{oppositeCorepresentability}
Let $\underline{V}$ be a corepresentable $\baseCat$--category. Then $\underline{V}\vop \simeq \underline{V}$ since the functor $(-)\op \colon \cat \rightarrow \cat$ restricts to the identity on $\spc$.
\end{obs}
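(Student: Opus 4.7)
The plan is to unpack the definitions and observe that the two steps of the argument \textemdash{} ``$(-)\op$ restricts to the identity on $\spc$'' and ``$\underline{V}$ factors through $\spc$'' \textemdash{} combine cleanly. Concretely, by \cref{categoryOfPoints}, the $\baseCat$--category $\underline{V}$ corresponds under straightening to the corepresentable functor $\map_{\baseCat}(V,-)\colon \baseCat\op \to \spc \hookrightarrow \cat$, which by construction factors through the full subcategory $\spc \subset \cat$ of $\infty$--groupoids.

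Next, by \cref{defn:vop}, the $\baseCat$--opposite is defined as the image of $\underline{V}$ under $\func(\baseCat\op,(-)\op)$, i.e.\ the straightening of $\underline{V}\vop$ is the composite
\[
\baseCat\op \xrightarrow{\map_{\baseCat}(V,-)} \spc \hookrightarrow \cat \xrightarrow{(-)\op} \cat.
\]
Since a space is equivalent to its opposite (equivalently, the inclusion $\spc \hookrightarrow \cat$ is stable under $(-)\op$, and the restriction $(-)\op|_{\spc}$ is canonically equivalent to the identity functor on $\spc$), the above composite is equivalent to the original $\map_{\baseCat}(V,-)$, which straightens to $\underline{V}$.

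There is no real obstacle here; the only point requiring a moment of care is the assertion that $(-)\op$ is naturally equivalent to the identity when restricted to $\spc$. This is standard and follows from the observation that an $\infty$--groupoid coincides with its opposite in a functorial way (equivalently, the involution $(-)\op$ on $\cat$ fixes the subcategory $\spc$ pointwise up to coherent natural equivalence). Given this, the claim $\underline{V}\vop \simeq \underline{V}$ is immediate by applying $\func(\baseCat\op,-)$ and unstraightening. One could alternatively argue directly in the unstraightened picture by noting that $\totalCategory(\underline{V}) \simeq (\baseCat_{/V})\op$ has groupoidal fibres over $\baseCat\op$, so taking fibrewise opposites yields the same cocartesian fibration up to equivalence.
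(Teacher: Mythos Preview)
Your proposal is correct and follows exactly the same reasoning as the paper: the paper's justification is the single clause ``since the functor $(-)\op \colon \cat \rightarrow \cat$ restricts to the identity on $\spc$,'' and you have simply unpacked this by recalling from \cref{categoryOfPoints} that $\underline{V}$ straightens to a $\spc$-valued functor and from \cref{defn:vop} that $(-)\vop$ is postcomposition with $(-)\op$.
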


\begin{cons}\label{nota:Tcones}
The cone and cocone are functors $(-)^{\triangleleft}, (-)^{\triangleright} \colon \cat \rightarrow \cat$ which add a (co)cone point to a category. Applying $\func(\baseCat\op,-)$ to this functor yields the $\baseCat$\textit{--cone} and \textit{--cocone} functors $(-)\tcone$ and $(-)\tcocone$ respectively. We refer to \cite{shahThesis} for more on this.
\end{cons}

\begin{defn}
A $\baseCat$--functor is $\baseCat$\textit{-fully faithful} (resp. $\baseCat$\textit{-equivalence}) if it is so fibrewise. There is the expected characterisation of $\baseCat$--fully faithfulness in terms of $\baseCat$--mapping spaces, see \cref{fullyFaithfulMappingSpaces}.
\end{defn}

\begin{defn}\label{DefinitionAtomicOrbital}
We say that the category $\baseCat$ is \textit{orbital} if the finite coproduct cocompletion $\finite_{\baseCat}$ admits finite pullbacks. Here, by finite coproduct cocompletion, we mean the full subcategory of the presheaf category $\func(\baseCat\op,\spc)$ spanned by finite coproduct of representables. We say that it is \textit{atomic} if every retraction is an equivalence.
\end{defn}

\begin{nota}[Basechange]\label{nota:baseChangeCat}\label{nota:Fun_V}
As in \cite{nardinThesis}, we will write
$\underline{\sC}_{\underline{V}} \coloneqq \underline{\sC}\times \underline{V} = \totalCategory(\underline{\sC})\times_{\baseCat\op}\totalCategory(\underline{V})$ for the basechanged parametrised category, which is now viewed as a $\baseCat_{/V}$--category. The $(-)_{\underline{V}}$ is a useful reminder that we have basechanged to $V$, and so for example we will often use the notation $\func_{\underline{V}}$ to mean $\func_{\baseCat_{/V}}$ and \textit{not} $\func_{\totalCategory(\underline{V})} \simeq \func_{(\baseCat_{/V})\op}$.
\end{nota}

\begin{cons}[Internal $\baseCat$--functor category, {\cite[ $\S9$]{expose1Elements}}]\label{TFunctorCategory}
For $\underline{\sC}, \underline{\D}\in \cat_{\baseCat}$, there is a $\baseCat$--category $\underline{\func}_{\baseCat}(\underline{\sC},\underline{\D})$ such that 
\[\func_{\baseCat}(\underline{\E},\underline{\func}_{\baseCat}(\underline{\sC}, \underline{\D})) \simeq \func_{\baseCat}(\underline{\E} \times \underline{\sC}, \underline{\D})\] This is because $\func(\baseCat\op, \cat)$ is presentable and the endofunctor $- \times  \underline{\sC}$ has a right adjoint since it preserves colimits. In particular, by a Yoneda argument we get $\underline{\func}_{\baseCat}(\terminalTCat,\underline{\D}) \simeq \underline{\D}$.  Moreover, plugging in $\underline{\E} = \terminalTCat$ we see that $\baseCat$--objects of the internal $\baseCat$--functor object are just $\baseCat$--functors. Furthermore, the $\baseCat$--functor categories basechange well in that 
\[\underline{\func}_{\baseCat}(\underline{\sC}, \underline{\D})_{\underline{V}} \simeq \underline{\func}_{\underline{V}}(\underline{\sC}_{\underline{V}}, \underline{\D}_{\underline{V}})\]
so  the fibre over $V\in \baseCat\op$ is given by $\func_{\underline{V}}(\underline{\sC}_{\underline{V}}, \underline{\D}_{\underline{V}}).$ To wit, for any $\baseCat_{/V}$--category $\underline{\E}$,
\begin{equation*}
    \begin{split}
    \map_{(\cat_{\baseCat})_{/\underline{V}}}(\underline{\E}, \underline{\func}_{\baseCat}(\underline{\sC}, \underline{\D})_{\underline{V}}) &\simeq \map_{\cat_{\baseCat}}(\underline{\E}, \underline{\func}_{\baseCat}(\underline{\sC}, \underline{\D}))\\
    &\simeq \map_{(\cat_{\baseCat})_{/\underline{V}}}(\underline{\E}\times\underline{\sC}, \underline{\D}_{\underline{V}})\\
    &\simeq \map_{(\cat_{\baseCat})_{/\underline{V}}}(\underline{\E}\times_{\underline{V}}\sC_{\underline{V}}, \D_{\underline{V}})\\
    &\simeq \map_{(\cat_{\baseCat})_{/\underline{V}}}(\underline{\E}, \underline{\func}_{\underline{V}}(\sC_{\underline{V}},\D_{\underline{V}}))
    \end{split}
\end{equation*}
\end{cons}

\begin{nota}[Parametrised cotensors]\label{parametrisedCotensors}
Let $I$ be a small unparametrised category. Then the adjunction $-\times I : \cat \rightleftarrows \cat : \func(I,-)$ induces the adjunction
\[(-\times I)_* : \func(\baseCat\op,\cat) \rightleftarrows \func(\baseCat\op, \cat) : \func(I,-)_*\]
Under the identification $\func(\baseCat\op,\cat) \simeq \cat_{\baseCat}$ where $\cat_{\baseCat}$ is the category of ${\baseCat}$-categories, it is clear that $(-\times I)_*$ corresponds to the ${\baseCat}$-functor $\tconstant_{\baseCat}(I)\times-$, whose right adjoint we know is $\underline{\func}_{\baseCat}(\tconstant_{\baseCat}(I), -)$. Therefore $\underline{\func}_{\baseCat}(\tconstant_{\baseCat}(I), -)$ implements the \textit{fibrewise functor construction}. We will introduce the notation \label{nota:parametrisedCotensors} ${\fibfunc}(I,-)$ for $\underline{\func}_{\baseCat}(\tconstant_{\baseCat}(I), -)$. This satisfies the following properties whose proofs are immediate.
\begin{enumerate}
    \item $\underline{\cat}_{\baseCat}$ is cotensored over $\cat$ in the sense that for any ${\baseCat}$-categories $\underline{\sC}, \underline{\D}$ we have 
\[\underline{\func}_{\baseCat}(\underline{\sC},  {\fibfunc}(I, \underline{\D})) \simeq {\fibfunc}(I, \underline{\func}_{\baseCat}(\underline{\sC},  \underline{\D}))\]
\item ${\fibfunc}(I, -)$ preserves ${\baseCat}$-adjunctions.
\end{enumerate}
\end{nota}

\begin{obs}\label{opOfFunctorCats}
There is a natural equivalence of $\baseCat$--categories
\[\underline{\func}_{\baseCat}(\underline{\sC}, \underline{\D})\vop \simeq \underline{\func}_{\baseCat}(\underline{\sC}\vop, \underline{\D}\vop)\] This is because $(-)\vop : \cat_{\baseCat} \rightarrow \cat_{\baseCat} $ is an involution, and so  for any $\underline{\E} \in \cat_{\baseCat}$,
\begin{equation*}
    \begin{split}
        \map_{\cat_{\baseCat}}(\underline{\E}, \underline{\func}_{\baseCat}(\underline{\sC}, \underline{\D})\vop) &\simeq \map_{\cat_{\baseCat}}(\underline{\E}\vop, \underline{\func}_{\baseCat}(\underline{\sC}, \underline{\D})) \\
        &\simeq \map_{\cat_{\baseCat}}(\underline{\E}\vop \times \underline{\sC}, \underline{\D})\\
        &\simeq \map_{\cat_{\baseCat}}(\underline{\E} \times \underline{\sC}\vop, \underline{\D}\vop) \\
        &\simeq \map_{\cat_{\baseCat}}(\underline{\E}, \underline{\func}_{\baseCat}(\underline{\sC}\vop, \underline{\D}\vop))
    \end{split}
\end{equation*}
\end{obs}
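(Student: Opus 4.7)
The plan is to apply the Yoneda lemma in $\cat_{\baseCat}$ and construct the claimed equivalence by showing that both sides corepresent naturally equivalent functors on $\cat_{\baseCat}$. The key structural input is that $(-)\vop$, obtained by applying $\func(\baseCat\op,-)$ to the involution $(-)\op \colon \cat \rightarrow \cat$, is itself an involution on $\cat_{\baseCat}$, and moreover it preserves finite products (since $(-)\op$ does on $\cat$, and the whole construction is levelwise).

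First I would fix an arbitrary test object $\underline{\E}\in \cat_{\baseCat}$ and compute $\map_{\cat_{\baseCat}}(\underline{\E}, \underline{\func}_{\baseCat}(\underline{\sC}, \underline{\D})\vop)$. Using the involution property, this becomes $\map_{\cat_{\baseCat}}(\underline{\E}\vop, \underline{\func}_{\baseCat}(\underline{\sC}, \underline{\D}))$, and then the defining universal property of the internal functor category from \cref{TFunctorCategory} rewrites this as $\map_{\cat_{\baseCat}}(\underline{\E}\vop \times \underline{\sC}, \underline{\D})$. Now applying the involution $(-)\vop$ once more (and using that it preserves products and is an equivalence of categories), I identify this with $\map_{\cat_{\baseCat}}(\underline{\E} \times \underline{\sC}\vop, \underline{\D}\vop)$. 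A final appeal to the universal property of the internal functor category yields $\map_{\cat_{\baseCat}}(\underline{\E}, \underline{\func}_{\baseCat}(\underline{\sC}\vop, \underline{\D}\vop))$, and the resulting chain is natural in $\underline{\E}$ by construction.

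At this point Yoneda supplies the claimed equivalence. Naturality in $\underline{\sC}$ and $\underline{\D}$ is automatic from the same Yoneda argument, since each step in the chain of equivalences above is visibly natural in all three of $\underline{\E}, \underline{\sC}, \underline{\D}$. There is no serious obstacle here; the one point that needs to be acknowledged rather than glossed over is that $(-)\vop$ distributes over the product $\times$ in $\cat_{\baseCat}$, but this is immediate from the fact that $(-)\op$ distributes over products in $\cat$ combined with the fact that products in $\cat_{\baseCat} = \func(\baseCat\op, \cat)$ are computed pointwise. Thus the entire argument is a formal Yoneda manipulation, and no model-dependent or fibrational input is required.
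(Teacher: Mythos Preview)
Your argument is correct and is essentially identical to the paper's own proof: both run the same four-step Yoneda chain using the involution property of $(-)\vop$ and the universal property of the internal functor category. You are slightly more explicit than the paper in spelling out why $(-)\vop$ distributes over $\times$ and in invoking Yoneda by name, but the substance is the same.
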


\begin{cons}[Cofree parametrisation, {\cite[{ Def. 1.10}]{nardinThesis}}]\label{cofreeParametrisations}
Let $\D$ be a category. There is a $\baseCat$--category $\cofree(\D)\colon \baseCat\op \rightarrow \cat$ classified by $V \mapsto \func((\baseCat_{/V})\op, \D)$. This has the following universal property: if $\underline{\sC}\in\cat_{\baseCat}$, then there is a natural equivalence
\[\func_{\baseCat}(\underline{\sC},\cofree(\D)) \simeq \func(\totalCategory(\underline{\sC}),\D)\] of ordinary $\infty$-categories. This construction is of foundational importance and it allows us to define the following two fundamental $\baseCat$--categories.
\end{cons}

\begin{nota}\label{nota:TCatOfTCats} \label{nota:TCatOfTSpaces}
We will write $\underline{\cat}_{\baseCat} \coloneqq \cofree_T(\cat)$ for the $\baseCat$--\textit{category of $\baseCat$--categories}; we write $\underline{\spc}_{\baseCat} \coloneqq \cofree_T(\spc)$ for the \textit{$\baseCat$--category of $\baseCat$--spaces}.
\end{nota}

\begin{thm}[Parametrised straightening-unstraightening, {\cite[{Prop. 8.3}]{expose1Elements}}]\label{parametrisedStraightenUnstraighten}
Let $\underline{\sC}\in \cat_{\baseCat}$. Then there are equivalences 
\[\func_{\baseCat}(\underline{\sC},\underline{\cat}_{\baseCat}) \simeq \cocartesianCategory(\totalCategory(\underline{\sC}))\quad\quad \func_{\baseCat}(\underline{\sC},\underline{\spc}_{\baseCat}) \simeq \mathrm{Left}(\totalCategory(\underline{\sC}))\]
\end{thm}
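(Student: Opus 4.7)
The plan is to chase the two stated equivalences directly from the defining universal property of the cofree parametrisation together with Lurie's unparametrised straightening--unstraightening. The entire argument should be a short two-step identification for each of the two equivalences, with essentially no genuine content beyond unpacking definitions.

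More precisely, recall from \cref{cofreeParametrisations} that $\underline{\cat}_{\baseCat} \coloneqq \cofree_T(\cat)$ and $\underline{\spc}_{\baseCat} \coloneqq \cofree_T(\spc)$, and that the cofree parametrisation satisfies the universal property
\[
\func_{\baseCat}(\underline{\sC}, \cofree_T(\D)) \simeq \func(\totalCategory(\underline{\sC}), \D)
\]
as ordinary $\infty$-categories, natural in $\underline{\sC}\in \cat_{\baseCat}$ and $\D\in\cat$. Specialising to $\D = \cat$ and $\D = \spc$ therefore gives
\[
\func_{\baseCat}(\underline{\sC}, \underline{\cat}_{\baseCat}) \simeq \func(\totalCategory(\underline{\sC}), \cat),
\qquad
\func_{\baseCat}(\underline{\sC}, \underline{\spc}_{\baseCat}) \simeq \func(\totalCategory(\underline{\sC}), \spc).
\]
Now I would invoke Lurie's (unparametrised) straightening--unstraightening equivalence, in the form recalled in \cref{defn:baseCategoryT}, applied to the base $\infty$-category $\totalCategory(\underline{\sC})$: this provides a natural equivalence $\func(\totalCategory(\underline{\sC}), \cat) \simeq \cocartesianCategory(\totalCategory(\underline{\sC}))$, and restricting to the full subcategory of functors landing in $\spc \subset \cat$ gives the corresponding equivalence with $\mathrm{Left}(\totalCategory(\underline{\sC}))$ (using that $\spc$-valued straightenings correspond exactly to left fibrations). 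Composing the two equivalences yields the desired statements.

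No step in this argument is a genuine obstacle: the only thing one might want to double-check is naturality/coherence, namely that the universal property of $\cofree_T$ really produces an equivalence of $\infty$-categories (and not just of mapping spaces), so that it can be composed with Lurie's equivalence. This is precisely what is asserted in \cref{cofreeParametrisations}, so the proof reduces to a one-line chain of citations. If one wished to be pedantic, one could verify naturality in $\underline{\sC}$ by applying the universal property parametrically in an auxiliary category $K \in \cat$, using the cotensoring of $\cat_{\baseCat}$ over $\cat$ from \cref{parametrisedCotensors} to promote the underlying equivalence of spaces to an equivalence of categories, but this is already built into the formulation of \cref{cofreeParametrisations}.
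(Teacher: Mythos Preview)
Your proposal is correct and follows exactly the same approach as the paper: apply the universal property of the cofree parametrisation from \cref{cofreeParametrisations} to identify $\func_{\baseCat}(\underline{\sC},\underline{\cat}_{\baseCat}) \simeq \func(\totalCategory(\underline{\sC}),\cat)$, and then invoke Lurie's unparametrised straightening--unstraightening. The paper's proof is essentially a one-line version of what you wrote.
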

\begin{proof}
This is an immediate consequence of the usual straightening-unstraightening and the universal property of $\baseCat$--categories of $\baseCat$--objects above. For example,
\[\func_{\baseCat}(\underline{\sC},\underline{\cat}_{\baseCat}) \simeq \func(\totalCategory(\underline{\sC}),\cat) \simeq \cocartesianCategory(\totalCategory(\underline{\sC}))\] and  similarly for spaces.
\end{proof}


\subsection{Parametrised adjunctions}
$\baseCat$--adjunctions as introduced in \cite{shahThesis} is based on the \textit{relative adjunctions} of \cite{lurieHA}.

\begin{defn}[{\cite[{Def. 7.3.2.2}]{lurieHA}}]
Suppose we have diagrams of categories
\begin{center}
    \begin{tikzcd}
    \sC \ar[dr,"q"'] && \D\ar[ll, "G"']\ar[dl,"p"] &&& \sC \ar[dr,"q"']\ar[rr, "F"] && \D\ar[dl,"p"]\\
    & \E &&&&  & \E
    \end{tikzcd}
\end{center}
Then we say that:
\begin{itemize}
    \item For the first diagram, $G$ admits a \textit{left adjoint} $F$ \textit{relative to} $\E$ if $G$ admits a left adjoint $F$ such that for every $C\in \sC$, $q$ sends the unit $\eta : C \rightarrow GFC$ to an equivalence in $\E$ (equivalently, if $q\eta : q \Rightarrow p\circ F$ exhibits a commutation $p\circ F \simeq q$ by \cite[{Prop. 7.3.2.1}]{lurieHA}).
    \item For the second diagram, $F$ admits a \textit{right adjoint} $G$ \textit{relative to} $\E$ if $F$ admits a right adjoint $G$ such that for every $D\in \D$, $p$ maps the counit $\varepsilon : FGD \rightarrow D$ to an equivalence in $\E$ (equivalently if $p\varepsilon : q\circ G \Rightarrow p$ exhibits $q\circ G \simeq p$ by \cite[{Prop. 7.3.2.1}]{lurieHA}).
\end{itemize}
Observe that when $\E \simeq \ast$, this specialises to the usual notion of adjunctions.
\end{defn}

\begin{rmk}
These two definitions are compatible. To see this, assume the first condition for example, ie. that $G$ has a left adjoint $F$ relative to $\E$. We need to see that $F$ then admits a right adjoint $G$ relative to $\E$ in the sense of the second condition, ie. that $p$ sends the counit $\varepsilon : FGD \rightarrow D$ to an equivalence in $\E$. For this just consider the commutative diagram
\begin{center}
    \begin{tikzcd}
    qG \rar["q\eta_{G}", "\simeq"'] \ar[dr,equal]& qGFG \dar[d, "qG\varepsilon"] \rar["\simeq"] & pFG \dar["p\varepsilon"] \\
    & qG \rar["\simeq"] & p
    \end{tikzcd}
\end{center}
where the triangle is by the adjunction, and the square is by the natural equivalence $qG \simeq p$.
\end{rmk}

\begin{defn}\label{bousfieldLocalisationDefinition}
Let $\underline{\sC}, \underline{\D}\in\cat_{\baseCat}$. Then a $\baseCat$\textit{-adjunction} $F : \underline{\sC}\rightleftarrows \underline{\D} : G$ is defined to be a relative adjunction such that $F, G$ are $\baseCat$--functors. A $\baseCat$\textit{-Bousfield localisation} is a $\baseCat$--adjunction where the $\baseCat$--right adjoint is $\baseCat$--fully faithful.
\end{defn}

\begin{prop}[Stability of relative adjunctions under pullbacks, {\cite[{Prop. 7.3.2.5}]{lurieHA}}]\label{relativeAdjunctionPullbacks}
Suppose we have a relative adjunction 
\begin{center}
    \begin{tikzcd}
    \sC \ar[dr,"q"'] \ar[rr, shift left  = 3, "F"]&& \D\ar[ll, "G"]\ar[dl,"p"]\\
    & \E 
    \end{tikzcd}
\end{center} 
Then for any functor $\E'\rightarrow \E$ the diagram of pullbacks 
\begin{center}
    \begin{tikzcd}
    \sC\times_{\E}\E' \ar[dr,"q"'] \ar[rr, shift left  = 3, "F"]&& \D\times_{\E}\E'\ar[ll, "G"]\ar[dl,"p"]\\
    & \E'
    \end{tikzcd}
    
\end{center} 
is again a relative adjunction.
\end{prop}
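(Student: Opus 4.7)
The plan is to use the characterisation built into the definition above: a relative left adjoint over $\E$ is an ordinary adjunction $F \dashv G$ for which the unit is sent by $q$ to an equivalence, equivalently for which there is a canonical equivalence $p \circ F \simeq q$ (and dually $q \circ G \simeq p$, as in the remark above). Granted these commutativities, $F$ and $G$ lift canonically to functors $F' \colon \sC \times_\E \E' \to \D \times_\E \E'$ and $G' \colon \D \times_\E \E' \to \sC \times_\E \E'$ over $\E'$, by the universal property of the pullback.

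To verify that $F' \dashv G'$, I would argue on mapping spaces. Writing $X = (C, E'_1) \in \sC \times_\E \E'$ and $Y = (D, E'_2) \in \D \times_\E \E'$, one has
\[\map_{\D \times_\E \E'}(F'X, Y) \simeq \map_\D(FC, D) \times_{\map_\E(pFC, pD)} \map_{\E'}(E'_1, E'_2).\]
Applying the adjunction equivalence $\map_\D(FC, D) \simeq \map_\sC(C, GD)$ and rewriting the base using $pF \simeq q$ and $qG \simeq p$ identifies the right-hand side with
\[\map_\sC(C, GD) \times_{\map_\E(qC, qGD)} \map_{\E'}(E'_1, E'_2) \simeq \map_{\sC \times_\E \E'}(X, G'Y),\]
naturally in $X$ and $Y$. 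Relativity over $\E'$ is then immediate: the induced unit $X \to G'F'X$ projects to the identity on $E'_1$ by construction, hence trivially to an equivalence in $\E'$.

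The main technical obstacle is to arrange that the chain of mapping space identifications above is coherently natural, rather than merely a collection of pointwise equivalences, so that they assemble into honest unit and counit transformations satisfying the triangle identities. Making this precise requires manipulating coherent pullback squares of categories, which is exactly what is carried out in the proof of \cite[{Prop. 7.3.2.5}]{lurieHA}, and so I would ultimately cite that result rather than reprove it.
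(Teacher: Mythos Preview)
The paper does not give a proof of this proposition at all; it simply records the statement with a citation to \cite[{Prop. 7.3.2.5}]{lurieHA} and moves on. Your proposal ultimately does the same thing---you end by deferring to Lurie's proof---so the two are in agreement. The mapping-space sketch you supply along the way is correct and is exactly the heuristic behind Lurie's argument, and you are right to flag the coherence of the unit/counit as the only genuine point requiring care.
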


We now have the following criteria to obtain relative adjunctions - these are just modified from Lurie's more general assumptions.

\begin{prop}[Criteria for relative adjunctions, {\cite[Prop. 7.3.2.6]{lurieHA}}]\label{criteriaRelativeAdjunctions}
Suppose $p : \sC \rightarrow \E$, $q : \D \rightarrow \E$ are cocartesian fibrations. If we have a map of cocartesian fibrations $F$
\begin{center}
    \begin{tikzcd}
    \sC \ar[dr,"q"'] \ar[rr, "F"]&& \D\ar[dl,"p"]\\
    & \E 
    \end{tikzcd}
    
\end{center} 
Then:
\begin{itemize}
    \item[(1)] $F$ admits a right adjoint $G$ relative to $\E$ if and only if for each $E \in \E$ the map of fibres $F_E : \sC_E\rightarrow \D_E$ admits a right adjoint $G_E$. The right adjoint need no longer be a map of cocartesian fibrations.
    
    \item[(2)] $F$ admits a left adjoint $L$ relative to $\E$ if and only if for each $E \in \E$ the map of fibres $F_E : \sC_E\rightarrow \D_E$ admits a left adjoint $L_E$ and the canonical comparison maps (constructed in \cite[{Prop. 7.3.2.11}]{lurieHA})
    \[Lf^* \rightarrow LFf^*L \xrightarrow{\varepsilon} f^*L\] constructed from the fibrewise adjunction are equivalences - here $f^*$ is the pushforward given by the cocartesian lift along some $f : E' \rightarrow E$ in $\E$. The relative left adjoint, if it exists, must necessarily be a map of cocartesian fibrations.
\end{itemize}
\end{prop}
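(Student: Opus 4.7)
The plan is to essentially reproduce the proof of \cite[Prop. 7.3.2.6]{lurieHA}, whose statement is recorded here; the argument rests on the observation that the relative adjointness condition is equivalent to units (or counits) lying fibrewise over $\E$.

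For the forward direction of (1), suppose $F$ has a relative right adjoint $G$ with counit $\varepsilon_D : FGD \to D$ satisfying $p(\varepsilon_D) \simeq \mathrm{id}$ for each $D$. Then $\varepsilon_D$ lives in the fibre $\D_E$ over $E = p(D)$, and restricting exhibits $F_E \dashv G|_{\D_E}$ on the nose. The forward direction of (2) is symmetric, with units replacing counits; the additional Beck--Chevalley condition follows because any relative left adjoint must be a map of cocartesian fibrations --- a short 2-categorical diagram chase combining the unit $\eta$, the counit $\varepsilon$, and the fact that $F$ preserves cocartesian arrows shows that the comparison $Lf^* \to f^*L$ coincides, up to equivalence, with $L$ applied to a cocartesian arrow, and so must be an equivalence.

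For the backward direction of (1), the strategy is to verify the universal-property criterion: $F$ admits a relative right adjoint if and only if for each $D \in \D$ the category $(\sC \times_{\D} \D_{/D}) \times_{\E} \{p(D)\}$ admits a terminal object. Given the fibrewise right adjoint $G_{p(D)}$, I would claim that $(G_{p(D)} D, \varepsilon_D)$ is such a terminal object: for a competitor $(C', \phi : FC' \to D)$ lying over $f : E' \to p(D)$, one pushes $C'$ forward to $f_!C' \in \sC_{p(D)}$ via a cocartesian lift; since $F$ preserves cocartesian arrows, $\phi$ factors through $Ff_!C' \simeq f_!FC' \to D$, now a morphism in $\D_{p(D)}$, where the fibrewise adjunction produces the required unique extension. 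Assembling these choices over all $D$ yields the global right adjoint $G$. This construction provides no reason for $G$ to preserve cocartesian arrows, as anticipated in the statement.

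For the backward direction of (2), the approach is dual but carries the extra responsibility of showing that the assembled $L$ preserves cocartesian arrows, i.e.\ defines a morphism in $\cocartesianCategory(\E)$. Compatibility of the fibrewise $L_E$ under pushforward along $f : E' \to E$ is precisely what the Beck--Chevalley comparison records; with this condition in hand, the $L_E$ assemble into a cocartesian-arrow-preserving functor $L : \D \to \sC$, and relative left adjointness then follows from the fibrewise adjunctions via the usual mate calculus. I expect this asymmetry with part (1) --- encoded by the Beck--Chevalley clause and reflecting the fact that relative left adjoints are always morphisms of cocartesian fibrations while relative right adjoints need not be --- to be the main technical subtlety of the proof.
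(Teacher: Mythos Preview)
Your treatment of part (1) is correct and matches the paper's approach closely: both reduce to the objectwise adjoint criterion and verify it by pushing a competitor $c \in \sC_{e'}$ forward along the cocartesian lift of $f : e' \to e$, using that $F$ preserves cocartesian arrows so that $f^*Fc \simeq Ff^*c$. Your slice-category/terminal-object phrasing is equivalent to the paper's mapping-space decomposition $\map_{\sC}(c,Gd) \simeq \coprod_f \map^f_{\sC}(c,Gd) \simeq \coprod_f \map_{\sC_e}(f^*c,Gd)$.

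For part (2) backward, however, your order of operations is off in a way that creates a genuine gap. You propose to first ``assemble'' the fibrewise $L_E$ into a global cocartesian-arrow-preserving functor $L$, and only then check relative adjointness. But assembling fibrewise functors into a global functor is not automatic: the Beck--Chevalley squares give you compatibility on objects and $1$-morphisms, not the higher coherences needed to produce an honest functor $\D \to \sC$. The paper avoids this by running the same objectwise argument as in (1): define $Ld \coloneqq L_e(d)$ with fibrewise unit, and verify directly that
\[
\map^f_{\sC}(Ld,c) \simeq \map_{\sC_E}(f^*Ld,c) \to \map_{\sC_E}(Lf^*d,c) \simeq \map_{\D_E}(f^*d,Fc) \simeq \map^f_{\D}(d,Fc)
\]
is an equivalence. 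The Beck--Chevalley condition is precisely what makes the middle arrow an equivalence, so the objectwise criterion goes through and produces $L$ as a functor automatically. The paper then observes separately, via a short Yoneda argument, that any relative left adjoint so produced must preserve cocartesian arrows. So the correct logic is: Beck--Chevalley $\Rightarrow$ objectwise criterion holds $\Rightarrow$ relative left adjoint $L$ exists as a functor $\Rightarrow$ $L$ preserves cocartesian arrows; not the reverse.
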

\begin{proof}
We prove each in turn. To see (1), suppose $F$ has an $\E$-right adjoint $G$. Then for each $e\in \E$ the inclusion $\{e\} \hookrightarrow \E$ induces a pullback relative adjunction over the point $\{e\}$ by \cref{relativeAdjunctionPullbacks}, and so we get the statement on fibres. Conversely, suppose we have fibrewise right adjoints. To construct an $\E$-right adjoint $G$, since adjunctions can be constructed objectwise by the unparametrised version of \cref{pointwiseConstructionOfAdjunction} below, we need to show that for each $e\in \E$ and $d\in \D_e$,    there is a  $Gd\in \sC_e$ and a map $\varepsilon : FGd \rightarrow d$ such that:
    \begin{itemize}
        \item[(a)] For every $c \in \sC$ the following composition  is an equivalence
        \[\map_{\sC}(c, Gd) \xrightarrow{F} \map_{\D}(Fc,FGd) \xrightarrow{\varepsilon} \map_{\D}(Fc, d)\]
        \item[(b)] The morphism $p\varepsilon : pFGd \rightarrow pd$ is an equivalence in $\E$.
    \end{itemize}
    We can just define $Gd \coloneqq G_e(d) \in \sC_e$ given by the fibrewise right adjoint and let $\varepsilon : FGd \rightarrow d$ be the fibrewise counit. Since these are fibrewise, point (b) is automatic. To see point (a), let $c \in \sC_{e'}$ for some $e' \in \E$. Since the mapping space in the total category of cocartesian fibrations are just disjoint unions over the components lying under $\map_{\sC}(c,Gd)$, we can work over some $f \in \map_{\E}(e', e)$. Consider
    \begin{center}
        \begin{tikzcd}
        \map^f_{\sC}(c, Gd) \dar["\simeq"] \rar & \map_{\D_e}(f^*Fc, d) \simeq \map_{\D}^f(Fc,d)\\
        \map_{\sC_{e}}(f^*c, Gd) \rar["F"] \ar[ur, "\simeq"] & \map_{\D_e}(f^*Fc, FGd)\uar["\varepsilon"]
        \end{tikzcd}
    \end{center}
    where we have used also that $F$ was a map of cocartesian fibrations so that $f^*F\simeq Ff^*$ and that the diagonal map is an equivalence since we had a fibrewise adjunction $F_e \dashv G_e$ by hypothesis. This completes the proof of part (1). 
    
    For case (2), to see the cocartesianness of a relative left adjoint $L$,  note
    \begin{equation*}
        \begin{split}
            \map_{\sC}(Lf^*d, c) \simeq \map_{\D}(f^*d, Fc) &\simeq \map^f_{\D}(d, Fc)\\
            &\simeq \map_{\sC}^f(Ld, c)\simeq \map_{\sC}(f^*Ld,c)
        \end{split}
    \end{equation*}
    The proof for right adjoints in (1) go through in this case but now we use 
    \begin{center}
        \begin{tikzcd}
            \map_{\sC}^f(Ld, c) \dar["\simeq"] \rar & \map_{\D_E}(f^*d, Fc) \simeq \map_{\D}^f(d, Fc)\\
            \map_{\sC_E}(f^*Ld, c) \rar & \map_{\sC_E}(Lf^*d,c) \uar["\simeq"]
        \end{tikzcd}
    \end{center}
    and so the technical condition in the statement says that there is a canonical map inducing the bottom map in the square which must necessarily be an equivalence.
\end{proof}

\begin{rmk}
One might object to the notation we have adopted for the pushforward being $f^*$ instead of $f_!$. This convention is standard in the framework of \cite{parametrisedIntroduction} because the latter notation is reserved for the \textit{left adjoint} of $f^*$ (the so-called $\baseCat$--coproducts) that will be recalled later.
\end{rmk}

\begin{cor}[Fibrewise criteria for $\baseCat$--adjunctions]\label{criteriaForTAdjunctions}
Let $F : \underline{\sC} \rightarrow \underline{\D}$ be a $\baseCat$--functor. Then it admits a $\baseCat$--right adjoint if and only if it has fibrewise right adjoints $G_V$ for all $V\in \baseCat$ and 
\begin{center}
    \begin{tikzcd}
    \sC_W & \D_W \lar["G_W"']\\
    \sC_V \uar["f^*"]& \D_V\lar["G_V"]\uar["f^*"']
    \end{tikzcd}
\end{center}
commutes for all $f : W \rightarrow V$ in $\baseCat$. Similarly for left $\baseCat$--adjoints.
\end{cor}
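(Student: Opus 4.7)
The plan is to derive this corollary directly from \cref{criteriaRelativeAdjunctions}, applied to the cocartesian fibrations $\totalCategory(\underline{\sC}), \totalCategory(\underline{\D}) \to \baseCat\op$. The key observation is that a $\baseCat$-adjunction is by definition a relative adjunction over $\baseCat\op$ in which \emph{both} adjoints are maps of cocartesian fibrations. Thus the task splits naturally into two halves: first produce a relative adjoint using the cited criterion, then check that it is itself cocartesian, and this second check is where the Beck-Chevalley square enters.

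The left adjoint case is essentially immediate from part (2) of \cref{criteriaRelativeAdjunctions}: it asserts that $F$ admits a relative left adjoint if and only if fibrewise left adjoints $L_V$ exist and the canonical comparison $L_W f^* \to f^* L_V$ is an equivalence for every $f\colon W \to V$ in $\baseCat$. This comparison is exactly the Beck-Chevalley mate built from unit and counit, and its being an equivalence is precisely the commutativity of the analogous square. Crucially, part (2) already tells us that any such relative left adjoint automatically preserves cocartesian morphisms, so it is automatically a $\baseCat$-functor with no further verification required.

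For the right adjoint direction, part (1) of \cref{criteriaRelativeAdjunctions} produces from fibrewise right adjoints $G_V$ a relative right adjoint $G$; what remains is to characterise when this $G$ preserves cocartesian morphisms. A morphism in $\totalCategory(\underline{\D})$ from $d$ to $d'$ lying over $f\op\colon V \to W$ in $\baseCat\op$ is cocartesian iff $d' \simeq f^* d$, so $G$ is a $\baseCat$-functor iff for every $f\colon W \to V$ in $\baseCat$ the Beck-Chevalley mate
\[ f^* G_V \xrightarrow{\eta} G_W F_W f^* G_V \simeq G_W f^* F_V G_V \xrightarrow{\varepsilon} G_W f^* \]
is a natural equivalence, where the middle identification uses that $F$, being a $\baseCat$-functor, satisfies $F_W f^* \simeq f^* F_V$. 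This is exactly the commutativity of the stated square.

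The converse directions in both cases are formal: a $\baseCat$-right (resp.\ left) adjoint of $F$ restricts to fibrewise adjoints by \cref{relativeAdjunctionPullbacks}, and the hypothesis that the parametrised adjoint is itself a $\baseCat$-functor---i.e.\ preserves cocartesian morphisms---delivers exactly the Beck-Chevalley commutativity. I do not anticipate any substantial obstacle, since the heavy lifting has been absorbed by \cref{criteriaRelativeAdjunctions}; the only content beyond quoting it is the translation of cocartesian-preservation into the given commutative square, which is essentially the definition of cocartesian morphism unwound fibrewise.
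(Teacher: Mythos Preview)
Your proposal is correct and follows essentially the same approach as the paper: apply \cref{criteriaRelativeAdjunctions} to obtain a relative adjoint, then observe that the commutativity of the displayed square is precisely the condition that this relative adjoint preserves cocartesian morphisms and is hence a $\baseCat$-functor. The paper's proof is a one-line gesture toward this same argument, whereas you spell out the Beck--Chevalley mate explicitly; in substance the two are identical.
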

\begin{proof}
The commuting square ensures that the relative right adjoint is a $\baseCat$--functor.
\end{proof}

\begin{prop}[Criteria for $\baseCat$--Bousfield localisations, ``{\cite[{Prop. 5.2.7.4}]{lurieHTT}}'']\label{HTT.5.2.7.4}
Let $\underline{\sC}\in\cat_{\baseCat}$ and $L : \underline{\sC} \rightarrow \underline{\sC}$ a $\baseCat$--functor equipped with a fibrewise natural transformation $\eta : \id \Rightarrow L$. Let $j : L\underline{\sC}\subseteq \underline{\sC}$ be the inclusion of the $\baseCat$--full subcategory spanned by the image of $L$. Suppose the transformations $L\eta, \eta_L : L \Longrightarrow L\circ L$ are equivalences. Then the pair $(L, j)$ constitutes a $\baseCat$--Bousfield localisation with unit $\eta$.
\end{prop}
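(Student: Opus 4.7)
The plan is to reduce to the unparametrised statement fibrewise and then verify the Beck--Chevalley commutation needed to assemble the fibrewise adjunctions into a $\baseCat$-adjunction via \cref{criteriaForTAdjunctions}.

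First, I would check that $L\underline{\sC}\subseteq \underline{\sC}$ is actually a well-defined $\baseCat$-full subcategory, i.e.~that the fibrewise essential images of $L_V$ are preserved by the pushforwards. This is immediate from $L$ being a $\baseCat$-functor: for any $f\colon W\to V$ in $\baseCat$, one has a natural equivalence $f^* L_V \simeq L_W f^*$, so if $x\simeq L_V y$ lies in $\sC_V$, then $f^*x \simeq L_W(f^*y)$ lies in the essential image of $L_W$. In particular the fibre $(L\underline{\sC})_V$ coincides with the essential image of $L_V$, and the inclusion $j$ is in particular itself a $\baseCat$-functor.

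Next, I would apply the unparametrised \cite[Prop.~5.2.7.4]{lurieHTT} fibrewise. The hypothesis that $L\eta$ and $\eta_L$ are equivalences is a fibrewise hypothesis (since $\eta$ is a fibrewise natural transformation of $\baseCat$-functors), so for each $V\in\baseCat$ we get an ordinary Bousfield localisation adjunction $L_V \dashv j_V$ with unit $\eta_V$, and in particular $j_V$ is the fibrewise right adjoint to $L_V$ viewed as a functor $\sC_V \to (L\underline{\sC})_V$.

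Finally, I would invoke \cref{criteriaForTAdjunctions} to upgrade these fibrewise right adjoints to a $\baseCat$-right adjoint. What must be verified is the Beck--Chevalley square
\begin{center}
\begin{tikzcd}
\sC_W & (L\underline{\sC})_W \lar["j_W"']\\
\sC_V \uar["f^*"]& (L\underline{\sC})_V\lar["j_V"]\uar["f^*"']
\end{tikzcd}
\end{center}
but this commutes on the nose since $j$ is already known to be a $\baseCat$-functor from the first step; equivalently, $j$ is a monomorphism of cocartesian fibrations, so pushforwards are compatible with it. Thus $L \dashv j$ is a $\baseCat$-adjunction, and since each $j_V$ is fully faithful (being the fibrewise Bousfield inclusion), $j$ is $\baseCat$-fully faithful, giving a $\baseCat$-Bousfield localisation with unit $\eta$ as claimed.

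The only mildly subtle point is the very first step, namely checking that the notation $L\underline{\sC}$ really defines a $\baseCat$-full subcategory; once this is in hand, the fibrewise input from \cite[Prop.~5.2.7.4]{lurieHTT} and the fibrewise criterion \cref{criteriaForTAdjunctions} do all the remaining work without further calculation.
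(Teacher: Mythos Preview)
Your proposal is correct and follows essentially the same route as the paper: apply the unparametrised \cite[Prop.~5.2.7.4]{lurieHTT} fibrewise to get the fibrewise adjunctions, then invoke \cref{criteriaForTAdjunctions} to assemble them into a $\baseCat$-adjunction. The paper's proof is terser, omitting your first step (that $L\underline{\sC}$ is a genuine $\baseCat$-full subcategory) and leaving the Beck--Chevalley square implicit, but the logical structure is identical.
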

\begin{proof}
We want to apply \cref{criteriaForTAdjunctions}. Since we are already provided with the fact that $L$ was a $\baseCat$--functor, all that is left to show is that it is fibrewise left adjoint to the inclusion $L\underline{\sC} \subseteq \underline{\sC}$. But this is guaranteed by \cite[{Prop. 5.2.7.4}]{lurieHTT}, and so we are done.
\end{proof}

Finally, we show that parametrised adjunctions have the expected internal characterisation in terms of the parametrised mapping spaces recalled in \cref{mappingAnimaYoneda}.

\begin{lem}[Mapping space characterisation of $\baseCat$--adjunctions]\label{paramAdjunctionMappingAnima}
Let $F : \underline{\sC} \leftrightarrows \underline{\D} : G$ be a pair of $\baseCat$--functors. Then there is a $\baseCat$--adjunction $F\dashv G$ if and only if we have a natural equivalence 
\[\myuline{\map}_{\underline{\D}}(F-, -) \simeq \myuline{\map}_{\underline{\sC}}(-, G-) : \underline{\sC}\vop  \times  \underline{\D} \longrightarrow \underline{\spc}_{\baseCat}\] 
\end{lem}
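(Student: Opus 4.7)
The approach is to reduce the bi-implication to the fibrewise characterisation of $\baseCat$-adjunctions from \cref{criteriaForTAdjunctions}, combined with the standard unparametrised Yoneda-style criterion for adjunctions in terms of mapping spaces. The key preliminary observation is that $\underline{\spc}_{\baseCat} = \cofree(\spc)$ is cofreely parametrised, so a natural equivalence between two $\baseCat$-functors valued in $\underline{\spc}_{\baseCat}$ is detected fibrewise; moreover, the fibre of $\myuline{\map}_{\underline{\sC}}(-,-)$ over $V \in \baseCat$ is the $\baseCat_{/V}$-mapping space of $\underline{\sC}_{\underline{V}}$, which upon further evaluation at the terminal object of $(\baseCat_{/V})\op$ recovers the ordinary $\map_{\sC_V}(-,-)$.

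For the forward direction, given a $\baseCat$-adjunction $F \dashv G$ with unit $\eta$, I would construct the comparison transformation
\[\myuline{\map}_{\underline{\D}}(F-,-) \xrightarrow{G_*} \myuline{\map}_{\underline{\sC}}(GF-, G-) \xrightarrow{\eta^*} \myuline{\map}_{\underline{\sC}}(-, G-)\]
and verify it is an equivalence by checking fibrewise: over each $V$, \cref{criteriaForTAdjunctions} supplies the fibrewise adjunction $F_V \dashv G_V$, and the unparametrised Yoneda-style characterisation then produces the desired fibrewise equivalence. A symmetric construction using the counit $\varepsilon$ handles the other side.

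For the reverse direction, restricting the given $\baseCat$-natural equivalence to fibres over $V$ and then to the terminal object of $(\baseCat_{/V})\op$ yields natural equivalences $\map_{\D_V}(F_V-,-) \simeq \map_{\sC_V}(-,G_V-)$. The unparametrised argument (evaluating at identities to extract units $\eta_V$) exhibits fibrewise adjunctions $F_V \dashv G_V$. To invoke \cref{criteriaForTAdjunctions} it remains to verify the Beck-Chevalley squares $G_W \circ f^* \simeq f^* \circ G_V$ for each $f : W \to V$ in $\baseCat$; I would extract this from the $\baseCat$-naturality of the equivalence by tracking how morphisms of the total category $\totalCategory(\underline{\sC}\vop \times \underline{\D})$ lying over $f$ act on both sides, using that $F$ is a $\baseCat$-functor and therefore automatically commutes with $f^*$.

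The main obstacle I anticipate is precisely this last step, namely rigorously translating the full $\baseCat$-naturality (as opposed to mere fibrewise naturality) of the mapping space equivalence into the Beck-Chevalley commutation for $G$. It may be cleanest to argue entirely via the cofree adjunction $\cofree \dashv \totalCategory$, since a $\baseCat$-functor $\underline{\sC}\vop \times \underline{\D} \to \underline{\spc}_{\baseCat}$ is equivalently an ordinary functor $\totalCategory(\underline{\sC}\vop \times \underline{\D}) \to \spc$; the required commutation for $G$ then follows by comparing values on cocartesian morphisms in this total category, reducing to an identity of maps already guaranteed by the fibrewise adjunctions extracted above.
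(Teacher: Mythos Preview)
Your approach is essentially the same as the paper's, and the forward direction matches exactly: construct the comparison via the unit and verify it fibrewise using the explicit description of $\myuline{\map}_{\underline{\sC}}$ from \cref{mappingAnimaYoneda}.

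However, you are creating an obstacle for yourself in the reverse direction that does not exist. You write that the ``main obstacle'' is verifying the Beck--Chevalley squares $G_W \circ f^* \simeq f^* \circ G_V$ for $G$, and you propose to extract this from the $\baseCat$-naturality of the mapping space equivalence. But look again at the hypothesis of the lemma: \emph{$G$ is already given as a $\baseCat$-functor}. The commutation $G_W \circ f^* \simeq f^* \circ G_V$ is precisely what it means for $G$ to preserve cocartesian morphisms, so it holds automatically. The paper's proof therefore dispatches the reverse direction in one sentence: since both $F$ and $G$ are already $\baseCat$-functors, by \cref{criteriaForTAdjunctions} the only remaining content is the existence of fibrewise adjunctions $F_V \dashv G_V$, and these are supplied immediately by restricting the given natural equivalence to fibres. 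The machinery you sketch (tracking cocartesian morphisms in the total category, using the cofree adjunction) is not wrong, but it is unnecessary for this statement.
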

\begin{proof}
The if direction is clear: since $F$ and $G$ were already $\baseCat$--functors, by \cref{criteriaForTAdjunctions} the only thing left to do is to show fibrewise adjunction, and this is easily implied by the equivalence which supplies the unit and counits. For the only if direction, by definition of a relative adjunction, we have a fibrewise natural transformation $\eta : \id_{\sC}\Rightarrow GF$ (ie. a morphism in $\func_{\baseCat}(\underline{\sC},\underline{\sC}) $) and so we obtain a natural comparison
\[\myuline{\map}_{\underline{\D}}(F-,-) \xrightarrow{G}\myuline{\map}_{\underline{\sC}}(GF-, G-) \xrightarrow{\eta^*} \myuline{\map}_{\underline{\sC}}(-,G-)\] Since equivalences between $\baseCat$--functors are checked fibrewise, let $c\in \sC_V, d\in \D_V$. Then 
\begin{center}
\begin{adjustbox}{scale =0.84}
    \begin{tabular}{l l l}
         $\myuline{\map}_{\underline{\D}}(F-, -) \: :$ & $(c,d) \mapsto$ &  $\Big((W\xrightarrow{f} V)\mapsto (\map_{\D_V}(Fc,d)\rightarrow \map_{\D_W}(f^*Fc, f^*d)\Big) \in \underline{\spc}_{\underline{V}}$\\
         $\myuline{\map}_{\underline{\sC}}(-, G-) \: :$ & $(c,d) \mapsto$ &$\Big((W\xrightarrow{f} V)\mapsto (\map_{\sC_V}(c,Gd)\rightarrow \map_{\sC_W}(f^*c, f^*Gd)\Big)\in \underline{\spc}_{\underline{V}}$
    \end{tabular}
\end{adjustbox}    
\end{center}
Since $F, G$ were $\baseCat$--functors, we have $Ff^* \simeq f^*F$ and $Gf^*\simeq f^*G$, and so the natural comparison coming from the relative adjunction unit given above exhibits a pointwise equivalence between $\myuline{\map}_{\underline{\D}}(F-,-)$ and $\myuline{\map}_{\underline{\sC}}(-,G-)$ by \cref{criteriaForTAdjunctions}.
\end{proof}

\section{Preliminaries: orbital base categories}\label{section:PrelimsOrbitalBaseCategories}
Some of the notions here still make sense for general $\baseCat$, but we want orbitality in order to make formulations involving Beck-Chevalley conditions. Hence, from now on, we assume that $\baseCat$ is orbital. 

\subsection{Recollections: colimits and Kan extensions}\label{subsection:parametrisedColimits}

\begin{defn}\label{TColimits}
Let $\underline{K} \in \cat_{\baseCat}$ and $q \colon \underline{K}\rightarrow\terminalTCat$ be the unique map. Then precomposition induces the $\baseCat$--functor
$q^* : \underline{\D} \simeq \underline{\func}_{\baseCat}(\terminalTCat,\underline{\D}) \longrightarrow \underline{\func}_{\baseCat}(\underline{K},\underline{\D})$. The $\baseCat$--left adjoint $q_!$, if it exists, is called the $\underline{K}$--indexed $\baseCat$\textit{--colimit}, and similarly for \textit{T-limits} $q_*$.
\end{defn}

\begin{example}\label{importantClassOfColimits}
Here are some special and important classes of these:
\begin{itemize}
    \item A $\baseCat$--(co)limit indexed by $\tconstant_\baseCat(K)$ for some ordinary $\infty$--category $K$ is called a \textit{fibrewise} $\baseCat$\textit{-(co)limit.} 
    \item A $\baseCat$--(co)limit indexed by a corepresentable $\baseCat$--category $\underline{V}$ (cf. \cref{categoryOfPoints}) of some $V\in \baseCat$ is called the \textit{T-(co)product.}
\end{itemize}
\end{example}

\begin{defn}[{\cite[{Def. 11.2}]{shahThesis}}]\label{definitionStrongPreservation}
Let $F : \underline{\sC} \rightarrow \underline{\D}$ be a $\baseCat$--functor. 
\begin{itemize}
    \item We say that it \textit{preserves} $\baseCat$\textit{-colimits} if for all $\baseCat$--colimit diagrams $\overline{d} : K \tcocone \rightarrow \underline{\sC}$, the post-composed diagram $F\circ \overline{d} : K\tcocone \rightarrow \underline{\D}$ is a $\baseCat$--colimit. Similarly for $\baseCat$--limits.
    
    \item We say that $F$ \textit{strongly preserves} $\baseCat$\textit{-colimits} if for all $V \in \baseCat$, $F_{\underline{V}} : \underline{\sC}_{\underline{V}}\rightarrow \underline{\D}_{\underline{V}}$ preserves $\baseCat_{/V}$--colimits. Similarly for $\baseCat$--limits.
    \end{itemize}
\end{defn}

\begin{warning}[{\cite[{Rmk. 5.14}]{shahThesis}}]\label{shahStrongPreservationWarning}
Note that being $\baseCat$--cocomplete is much stronger than just admitting all $\baseCat$--colimits. This is because admitting all $\baseCat$--colimits just means that any $\baseCat_{/V}$--diagram $\underline{K}_{\underline{V}}\rightarrow \underline{\sC}_{\underline{V}}$ pulled back from a $\baseCat$--diagram $\underline{K}\rightarrow \underline{\sC}$ admits a $\baseCat_{/V}$--colimit. However not every $\baseCat_{/V}$--diagram is pulled back as such. We will elaborate on the distinction of these definitions in the next subsection. In this document, we will never consider preservations, but only strong preservations.
\end{warning}


\begin{defn}[{\cite[{Def. 5.13}]{shahThesis}}]
Let $\underline{\sC}\in\cat_{\baseCat}$. Then we say $\underline{\sC}$ is $\baseCat$\textit{--(co)complete} if for all $V\in \baseCat$ and $\baseCat_{/V}$--diagram $p : \underline{K} \rightarrow \underline{\sC}_{\underline{V}}$ with $\underline{K}$ small, $p$ admits a $\baseCat_{/V}$--(co)limit. 
\end{defn}

\begin{terminology}\label{term:strongAdmission}
When we want to specify particular kinds of parametrised (co)limits that a $\baseCat$--category admits, it is convenient to use the following terminology: for $\K = \{\K_V\}_{V\in \baseCat}$ some collection of diagrams varying over $V\in \baseCat$, we say that $\underline{\sC}$ \textit{strongly admits} $\K$--(co)limits if for all $V\in \baseCat$, $\underline{\sC}_{\underline{V}}$ admits $\underline{K}$--colimits for all $\underline{K}\in \K_V$. Examples include:
\begin{itemize}
    \item $\underline{\sC}$ strongly admits all $\baseCat$--(co)limits means that it is $\baseCat$--(co)cocomplete,
    \item Let $\kappa$ be a regular cardinal. We say that $\underline{\sC}$ strongly admits $\kappa$--small $\baseCat$--(co)products to mean that it has $\baseCat$--(co)limits for any diagram indexed over $\coprod_{a\in A}\underline{V}_a$ where $A$ is a $\kappa$--small set. Hence, strongly admitting finite $\baseCat$--(co)products means admitting finite fibrewise (co)products and (co)limits for all corepresentable diagrams $\underline{V}$.
\end{itemize}
\end{terminology}

\begin{lem}[Decomposition of indexed coproducts]\label{decompositionOfIndexedCoproducts}
Let $R_a, V \in \baseCat$ and $\coprod_af_a : \coprod_a R_a \rightarrow V$ be a map where the coproduct is not necessarily finite. Suppose $\underline{\sC}$ strongly admits finite  $\baseCat$--coproducts and arbitrary fibrewise coproducts. Then $\underline{\sC}$ admits $\coprod_af_a$-coproducts and this is computed by composing fibrewise $\baseCat$--coproduct $\coprod_a$ with the individual indexed $\baseCat$--coproducts.
\end{lem}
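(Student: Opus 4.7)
The plan is to factor the restriction functor $q^* \coloneqq (\coprod_a f_a)^*$ as a diagonal followed by a product of individual restrictions, and then exhibit the desired $\baseCat_{/V}$--left adjoint by composing the left adjoints of the two constituent factors. Writing $\underline{K} \coloneqq \coprod_a \underline{R}_a$ as a $\baseCat_{/V}$--category via $q = \coprod_a f_a$, the universal property of coproducts in $\cat_{\underline{V}}$ gives a natural equivalence
\[
\underline{\func}_{\underline{V}}\bigl(\coprod_a \underline{R}_a,\, \underline{\sC}_{\underline{V}}\bigr) \;\simeq\; \prod_a \underline{\func}_{\underline{V}}(\underline{R}_a, \underline{\sC}_{\underline{V}}),
\]
under which $q^*$ identifies with the composite
\[
\underline{\sC}_{\underline{V}} \xrightarrow{\;\Delta\;} \prod_a \underline{\sC}_{\underline{V}} \xrightarrow{\;\prod_a f_a^*\;} \prod_a \underline{\func}_{\underline{V}}(\underline{R}_a, \underline{\sC}_{\underline{V}}),
\]
since precomposition with $q$ sends the constant diagram at $d$ to the tuple $(f_a^* d)_a$.

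Each factor admits an explicit $\baseCat_{/V}$--left adjoint by the hypotheses on $\underline{\sC}$: the diagonal $\Delta$ has left adjoint given by the $A$--indexed fibrewise coproduct, which is available since $\underline{\sC}$ strongly admits arbitrary fibrewise coproducts; and each restriction $f_a^*$ has left adjoint $(f_a)_!$ by the assumption of strong admission of finite $\baseCat$--coproducts, which subsumes the existence of all corepresentable--indexed coproducts along maps in $\baseCat$. Composing these two adjunctions yields the $\baseCat_{/V}$--left adjoint
\[
q_! \colon (c_a)_a \longmapsto \coprod_a (f_a)_! c_a,
\]
exactly matching the formula claimed in the statement.

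The one point requiring care is the need to ensure that each constituent is a genuine $\baseCat_{/V}$--adjunction rather than merely a fibrewise one, so that their composition is again a $\baseCat_{/V}$--adjunction. This is precisely what the ``strongly admits'' hypothesis packages via \cref{criteriaForTAdjunctions}: for every $g \colon W \to V$ in $\baseCat_{/V}$, pullback $g^*$ commutes with both the arbitrary fibrewise coproduct $\coprod_a$ and with each indexed coproduct $(f_a)_!$ (the latter being the standard Beck--Chevalley condition for $\baseCat$--coproducts). Hence the composite $q_!$ is a $\baseCat_{/V}$--functor left adjoint to $q^*$, and the construction goes through as required.
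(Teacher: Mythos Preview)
Your proof is correct and follows essentially the same approach as the paper: factor $(\coprod_a f_a)^*$ through the equivalence $\underline{\func}_{\underline{V}}(\coprod_a \underline{R}_a, \underline{\sC}_{\underline{V}}) \simeq \prod_a \underline{\func}_{\underline{V}}(\underline{R}_a, \underline{\sC}_{\underline{V}})$ as $(\prod_a f_a^*)\circ \Delta$, then compose the left adjoints $\coprod_a$ and $\prod_a (f_a)_!$ supplied by the hypotheses. The paper is slightly terser about why the composite of right adjoints agrees with $(\coprod_a f_a)^*$, appealing to a commuting triangle of $\baseCat_{/V}$--categories, whereas you spell out a bit more explicitly why the constituent adjunctions are genuine $\baseCat_{/V}$--adjunctions.
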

\begin{proof}
We will in fact show that we have  $\baseCat_{/V}$--adjunctions
\begin{center}
\adjustbox{scale=0.9}{
    \begin{tikzcd}
    \underline{\func}_{\underline{V}}(\coprod_a\underline{R_a}, \underline{\sC}_{\underline{V}}) = \prod_a\underline{\func}_{\underline{V}}(\underline{R_a}, \underline{\sC}_{\underline{V}}) \rar[shift left = 2, "\prod_a(f_a)_!"] & \prod_a\underline{\func}_{\underline{V}}(\underline{V}, \sC_{\underline{V}}) \rar[shift left= 2, "\coprod_a"] \lar[shift left = 2, "\prod_a(f_a)^*"]& \underline{\func}_{\underline{V}}(\underline{V}, \underline{\sC}_{\underline{V}})\lar[shift left = 2, "\Delta"]
    \end{tikzcd}
    }
\end{center}
That these $\baseCat_{/V}$--adjunctions exist is by our hypotheses, and all that is left to do is check that $\prod_a(f_a)^*\circ \Delta \simeq (\coprod_af_a)^*$. But this is also clear since we have the commuting diagram
\begin{center}
    \begin{tikzcd}
    \coprod_a\underline{R_a} \rar["\coprod_af_a"]\dar["f_a"'] & \underline{V}\\
    \coprod_a \underline{V} \ar[ur, "\coprod_a\id_V"']
    \end{tikzcd}
\end{center}
Applying $(-)^*$ to this triangle completes the proof.
\end{proof}

\begin{terminology}[Beck-Chevalley conditions]\label{beckChevalleyMeaning}
Let $\underline{\sC}\in\cat_{\baseCat}$ that admits finite fibrewise coproducts (resp. products) and such that for each $f : W \rightarrow V$ in $\baseCat$, $f^* : \sC_V \rightarrow \sC_W$ admits a left adjoint $f_!$ (resp. right adjoint $f_*$). We say that $\underline{\sC}$ satisfies the \textit{left Beck-Chevalley condition} (resp. \textit{right Beck-Chevalley condition}) if for every pair of morphisms $f: W\rightarrow V$ and $g : Y\rightarrow V$ in $\baseCat$: in the pullback (whose orbital decomposition exists by orbitality of $\baseCat$) 
        \begin{center}
            \begin{tikzcd}
            \coprod_a R_a = Y\times_VW\dar[dr, phantom, very near start, "\scalebox{1.5}{$\lrcorner$}"] \rar["\coprod_af_a"]\dar["\coprod_ag_a"'] & Y \dar["g"]\\
            W \rar["f"'] & V
            \end{tikzcd}
        \end{center}
        the canonical basechange transformation 
        \[\coprod_ag_{a_!}f^*_a \Longrightarrow f^*g_! \quad \Big(\text{resp.   } f^*g_* \Longrightarrow \prod_ag_{a_*}f^*_a \Big)\] is an equivalence.
\end{terminology}

Here is an omnibus of results due to Jay Shah.

\begin{thm}[{\cite[5.5-5.12 and $\S12$]{shahThesis}
}]\label{omnibusTColimits}
Let $\underline{\sC}\in\cat_{\baseCat}$. Then:
\begin{enumerate}
        \item[(1)] (Fibrewise criterion) $\underline{\sC}$ strongly admits $\baseCat$--colimits indexed by $\tconstant_\baseCat(K)$ if and only if for every $V\in \baseCat$ the fibre $\sC_V$ has all colimits indexed by $K$ and for every morphism $f : W\rightarrow V$ in $\baseCat$ the cocartesian lift $f^* : \sC_{V}\rightarrow \sC_W$ preserves colimits indexed by $K$. A cocone diagram $\overline{p} : \tconstant_{\baseCat}(K)\tcocone \rightarrow \underline{\sC}$ is a $\baseCat$--colimit if and only if it is so fibrewise.
    \item[(1)] (T-coproducts criteria) $\underline{\sC}$ strongly admits finite $\baseCat$--coproducts if and only if we have:
    \begin{itemize}
        \item[(a)] For every $W\in \baseCat$ the fibre $\sC_W$ has all finite coproducts and for every $f : W\rightarrow V$ in $\baseCat$ the map $f^* : \sC_{V}\rightarrow \sC_{W}$ preserves finite coproducts,
        \item[(b)] $\sC$ satisfies the left Beck-Chevalley condition (cf. \cref{beckChevalleyMeaning}).
    \end{itemize}
    \item[(3)] (Decomposition principle) $\underline{\sC}$ is $\baseCat$--cocomplete if and only if it has all fibrewise colimits and strongly admits finite $\baseCat$--coproducts.
\end{enumerate}
Similar statements hold for $\baseCat$--limits, and the right adjoint to $f^*$ will be denoted $f_*$.
\end{thm}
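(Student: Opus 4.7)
The plan is to apply the fibrewise criterion for parametrised adjunctions (\cref{criteriaForTAdjunctions} together with its parent \cref{criteriaRelativeAdjunctions}) to the constant-diagram functor
\[q^* \colon \underline{\sC} \simeq \underline{\func}_\baseCat(\terminalTCat, \underline{\sC}) \longrightarrow \underline{\func}_\baseCat(\underline{K}, \underline{\sC})\]
in parts (1) and (2), and then to translate the abstract cocartesianness comparison into the concrete conditions stated in the theorem. In both cases the existence of a $\baseCat$-left adjoint $q_!$ will reduce to (a) existence of fibrewise left adjoints and (b) a comparison natural transformation being an equivalence, which will unwind either to the preservation condition of part (1) or to the left Beck-Chevalley condition of part (2). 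Part (3) will then be deduced from (1), (2), and \cref{decompositionOfIndexedCoproducts}.

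First I would handle part (1) by specialising to $\underline{K} = \tconstant_\baseCat(K)$. The internal-hom basechange of \cref{TFunctorCategory} identifies $\underline{\func}_\baseCat(\tconstant_\baseCat(K), \underline{\sC})_V \simeq \func(K, \sC_V)$, with the cocartesian lift along $f \colon W \to V$ given by postcomposition with $f^* \colon \sC_V \to \sC_W$. Then $q^*_V$ is just the constant-diagram functor $\sC_V \to \func(K, \sC_V)$, whose fibrewise left adjoint, when it exists, is $\colim_K$. The cocartesianness comparison of \cref{criteriaRelativeAdjunctions}(2) then reads $\colim_K f^* \to f^* \colim_K$, which is an equivalence precisely when $f^*$ preserves $K$-indexed colimits. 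The fibrewise testing of $\baseCat$-colimit cocones will follow from the fibrewise characterisation of $\baseCat$-adjunctions in \cref{paramAdjunctionMappingAnima}.

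Next I would tackle part (2), which I expect to be the main obstacle. By first decomposing an arbitrary finite $\baseCat$-coproduct indexed by $\coprod_a \underline{W_a}$ into finite fibrewise coproducts combined with the individual $\underline{W_a}$-indexed coproducts, I would reduce to the case $\underline{K} = \underline{V_0}$ of a single corepresentable. The essential step is then to describe the cocartesian structure on $\underline{\func}_\baseCat(\underline{V_0}, \underline{\sC})$: by $\baseCat$-corepresentability its fibres simplify, and a morphism $g \colon W' \to W$ in $\baseCat$ acts by restriction along the basechange of $\underline{V_0}$ against $g$, which by orbitality of $\baseCat$ decomposes into a finite coproduct of corepresentables $\coprod_a \underline{R_a}$. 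The fibrewise left adjoint along an arrow $f \colon W \to V_0$ is the indexed coproduct $f_!$, whose existence is exactly what the Beck-Chevalley hypothesis (b) presupposes. The cocartesianness comparison of \cref{criteriaRelativeAdjunctions}(2) then unfolds, after chasing through this orbital decomposition of pullbacks in $\baseCat$, to precisely the left Beck-Chevalley transformation of \cref{beckChevalleyMeaning}; the delicate bookkeeping identifying these two comparison maps is the hard core of the argument.

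Finally for part (3), the forward direction is immediate from the definitions. For the converse, given an arbitrary $\baseCat_{/V}$-diagram $p \colon \underline{K} \to \underline{\sC}_{\underline{V}}$, I would exhibit its colimit by decomposing $\underline{K}$ into its corepresentable fibres (handled by finite $\baseCat$-coproducts) and its fibrewise morphism data (handled by fibrewise colimits). The compatibility required to amalgamate these two ingredients into a single $\baseCat$-colimit is furnished by \cref{decompositionOfIndexedCoproducts}, which says precisely that fibrewise coproducts interact correctly with indexed $\baseCat$-coproducts. The dual statements for $\baseCat$-limits follow by running each step mutatis mutandis, with $f^*$ now admitting right adjoints $f_*$ satisfying the right Beck-Chevalley condition.
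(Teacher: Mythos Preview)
The paper does not supply its own proof of this theorem: it is stated as a recollection, with the citation \cite[5.5--5.12 and \S12]{shahThesis} carrying the entire burden. So there is no in-paper argument to compare against; one can only assess your sketch against the actual content of Shah's proofs.

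Your outline for parts (1) and (2) is sound and is essentially the strategy Shah follows: translate the existence of $q_!$ into the fibrewise-left-adjoint-plus-cocartesianness criterion of \cref{criteriaRelativeAdjunctions}(2), then unwind the comparison map. For (1) this is straightforward; for (2) your identification of the comparison with the Beck--Chevalley transformation is correct in spirit, though the bookkeeping you flag as ``delicate'' really is the substance of Shah's \S5.

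Part (3), however, has a genuine gap. Your appeal to \cref{decompositionOfIndexedCoproducts} does not do what you claim: that lemma only decomposes a $\baseCat$-coproduct indexed by a \emph{disjoint union of corepresentables} $\coprod_a \underline{R_a}$ into fibrewise coproducts of the individual indexed coproducts. It says nothing about an arbitrary small $\baseCat_{/V}$-diagram $\underline{K}$, which need not be of this form. The phrase ``decomposing $\underline{K}$ into its corepresentable fibres and its fibrewise morphism data'' hides the entire difficulty: one must actually produce, for a general $\underline{K}$, a factorisation of the colimit through a fibrewise colimit of indexed coproducts, and the compatibility data needed to glue these is not supplied by \cref{decompositionOfIndexedCoproducts}. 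This is precisely why Shah devotes a whole section (\S12) to the decomposition principle, using a Kan-extension argument along the cocartesian structure of $\underline{K} \to \baseCat^{\mathrm{op}}$ rather than anything as direct as your sketch suggests.
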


\begin{thm}[Omnibus $\baseCat$--adjunctions, {\cite[$\S8$]{shahThesis}}]\label{omnibusTAdjunctions}
Let $F : \underline{\sC} \rightleftarrows \underline{\D} : G$ be a $\baseCat$--adjunction and $\underline{I}$ be a $\baseCat$--category. Then:
\begin{enumerate}
    \item[(1)] We get adjunctions
$F_* \colon \underline{\func}_{\baseCat}(\underline{I},\underline{\sC})  \rightleftarrows \underline{\func}_{\baseCat}(\underline{I},\underline{\D}) : G_*$, $G^* \colon \underline{\func}_{\baseCat}(\underline{\sC},\underline{I}) \rightleftarrows \underline{\func}_{\baseCat}(\underline{\D},\underline{I}) : F^*$. By \cref{criteriaForTAdjunctions} this implies ordinary adjunctions when we replace $\underline{\func}_{\baseCat}$ by $\func_{\baseCat}$.

\item[(2)] $F$ strongly preserves $\baseCat$--colimits and $G$ strongly preserves $\baseCat$--limits.
\end{enumerate}
\end{thm}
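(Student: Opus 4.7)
The plan is to handle both parts by reducing to fibrewise criteria together with the decomposition principle for $\baseCat$--colimits, exploiting the essential uniqueness of adjoints throughout.

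For part (1), consider first the postcomposition adjunction $F_* \dashv G_*$ on internal functor categories. My approach is to apply \cref{criteriaForTAdjunctions}, which asks for fibrewise adjunction together with basechange compatibility. By \cref{TFunctorCategory}, the fibre of $\underline{\func}_{\baseCat}(\underline{I}, \underline{\sC})$ over $V \in \baseCat$ is $\func_{\underline{V}}(\underline{I}_{\underline{V}}, \underline{\sC}_{\underline{V}})$, and the fibre of $F_*$ there is postcomposition by $F_{\underline{V}}$. Since basechange preserves adjunctions (\cref{relativeAdjunctionPullbacks}), $F_{\underline{V}} \dashv G_{\underline{V}}$ is a $\baseCat_{/V}$--adjunction, and whiskering its unit and counit against $\alpha \colon \underline{I}_{\underline{V}} \to \underline{\sC}_{\underline{V}}$ (resp.\ $\beta \colon \underline{I}_{\underline{V}} \to \underline{\D}_{\underline{V}}$) provides the unit and counit of an ordinary adjunction $(F_{\underline{V}})_* \dashv (G_{\underline{V}})_*$, with the triangle identities inherited. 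Basechange compatibility along $f \colon W \to V$ in $\baseCat$ reduces to the commutativity of
\begin{center}
\begin{tikzcd}
\func_{\underline{V}}(\underline{I}_{\underline{V}}, \underline{\sC}_{\underline{V}}) \ar[d, "f^*"'] \ar[r, "(F_{\underline{V}})_*"] & \func_{\underline{V}}(\underline{I}_{\underline{V}}, \underline{\D}_{\underline{V}}) \ar[d, "f^*"] \\
\func_{\underline{W}}(\underline{I}_{\underline{W}}, \underline{\sC}_{\underline{W}}) \ar[r, "(F_{\underline{W}})_*"'] & \func_{\underline{W}}(\underline{I}_{\underline{W}}, \underline{\D}_{\underline{W}})
\end{tikzcd}
\end{center}
which is automatic since $F$ is a $\baseCat$--functor and hence compatible with basechange. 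This assembles into the $\baseCat$--adjunction $F_* \dashv G_*$. The precomposition adjunction $G^* \dashv F^*$ is entirely analogous: the same basechange criterion applies, but now precomposition is contravariant and therefore reverses the direction of the adjunction.

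For part (2), I would first use \cref{relativeAdjunctionPullbacks} to observe that it suffices to prove that a $\baseCat$--left adjoint preserves $\baseCat$--colimits, since strong preservation then follows by rerunning the statement for each basechange $F_{\underline{V}}$. Then I would invoke the decomposition principle \cref{omnibusTColimits}(3): every $\baseCat$--colimit is built from fibrewise colimits and indexed $\baseCat$--coproducts, so it suffices to handle these two classes separately. For fibrewise colimits, \cref{omnibusTColimits}(1) detects them fibrewise, and each $F_V$ is a left adjoint (to $G_V$) by basechanging along $\{V\} \hookrightarrow \baseCat\op$, so preserves ordinary colimits; preservation of the $\baseCat$--colimit then follows from the same fibrewise detection criterion.

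For indexed $\baseCat$--coproducts, I need to show that for each $f \colon W \to V$ in $\baseCat$, the identification $F_V \circ f_! \simeq f_! \circ F_W$ holds where $f_! \dashv f^*$. Since $G$ is a $\baseCat$--functor, it commutes with the cocartesian pushforward: $G_W \circ f^* \simeq f^* \circ G_V$ as functors $\D_V \to \sC_W$. Composing adjoints then supplies a chain of natural equivalences
\[\map_{\D_V}(F_V f_! x, d) \simeq \map_{\sC_V}(f_! x, G_V d) \simeq \map_{\sC_W}(x, f^* G_V d) \simeq \map_{\sC_W}(x, G_W f^* d) \simeq \map_{\D_W}(F_W x, f^* d),\]
exhibiting $F_V \circ f_!$ as the left adjoint of $f^* \colon \D_V \to \D_W$ precomposed with $F_W$, i.e.\ $F_V \circ f_! \simeq f_! \circ F_W$ (and in particular showing that the relevant indexed coproduct exists in $\D$). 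The dual argument, using $F$'s compatibility with $f^*$ to deduce $G$'s compatibility with $f_*$, shows that $G$ strongly preserves $\baseCat$--limits. The step I expect to require the most care is the packaging in part (1) --- turning fibrewise 1-categorical adjunction data into a genuine $\baseCat$--adjunction on internal functor categories --- but \cref{criteriaForTAdjunctions} has been engineered precisely to absorb this, leaving only the verification of the fibrewise adjunction and the basechange square.
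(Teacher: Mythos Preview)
Your Part (1) argument and your reduction of strong to ordinary preservation in Part (2) via \cref{relativeAdjunctionPullbacks} are both correct; the latter is in fact the paper's only substantive step, as it otherwise defers both parts to \cite{shahThesis}.

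For ordinary preservation in Part (2), your route through the decomposition principle has a small wrinkle: \cref{omnibusTColimits}(3) is a statement about $\baseCat$--cocompleteness, not a decomposition of individual colimit diagrams, and the theorem here places no cocompleteness hypotheses on $\underline{\sC}$ or $\underline{\D}$. Invoking the decomposition to reduce to fibrewise colimits and indexed coproducts therefore tacitly assumes that the intermediate pieces exist, which is not given. That said, the fix is already in your hands: the mapping-space computation you perform for indexed coproducts works verbatim for an arbitrary $\underline{K}$--colimit cocone with vertex $c$, namely
\[
\myuline{\map}_{\underline{\D}}(Fc, d) \simeq \myuline{\map}_{\underline{\sC}}(c, Gd) \simeq \underline{\lim}_{\underline{K}\vop}\myuline{\map}_{\underline{\sC}}(p, Gd) \simeq \underline{\lim}_{\underline{K}\vop}\myuline{\map}_{\underline{\D}}(Fp, d),
\]
using \cref{paramAdjunctionMappingAnima} for the outer equivalences. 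This exhibits $Fc$ as a $\underline{K}$--colimit of $Fp$ directly, with no decomposition needed, and is presumably closer to what \cite{shahThesis} actually does. So the detour through \cref{omnibusTColimits}(3) is both slightly imprecise and unnecessary---but the essential idea (adjoints swap via uniqueness) is exactly right.
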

\begin{proof}
In \cite[Cor. 8.9]{shahThesis}, part (2) was stated only as ordinary preservation, not strong preservation. But then strong preservation was implicit since relative adjunctions are stable under pullbacks by \cref{relativeAdjunctionPullbacks}, and the statement in \cite{shahThesis} also holds after pulling back to $-\times\underline{V}$ for all $V\in \baseCat$.
\end{proof}

\begin{prop}[$\baseCat$--cocompleteness of Bousfield local subcategories]\label{colimitsInBousfieldLocals}
If $L : \underline{\sC} \rightleftarrows \underline{\D} : j$ is a $\baseCat$--Bousfield localisation where $\underline{\sC}$ is $\baseCat$--cocomplete, then $\underline{\D}$ is too and $\baseCat$--colimits in $\underline{\D}$ is computed as $L$ applied to the $\baseCat$--colimit computed in $\underline{\sC}$.
\end{prop}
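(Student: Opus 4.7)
The plan is to produce the $\baseCat$--colimit in $\underline{\D}$ by an explicit formula and verify its universal property through the mapping space criterion of \cref{paramAdjunctionMappingAnima}. Because relative adjunctions are stable under pullback (\cref{relativeAdjunctionPullbacks}), the basechanged pair $L_{\underline{V}} \colon \underline{\sC}_{\underline{V}} \rightleftarrows \underline{\D}_{\underline{V}} \colon j_{\underline{V}}$ is again a Bousfield localisation for every $V \in \baseCat$, and its source is $\baseCat_{/V}$--cocomplete. Hence it suffices to produce, for every small $\underline{K} \in \cat_{\baseCat}$, a $\baseCat$--left adjoint to the precomposition $q^*_{\underline{\D}} \colon \underline{\D} \to \underline{\func}_{\baseCat}(\underline{K}, \underline{\D})$, where $q \colon \underline{K} \to \terminalTCat$ is the structure map; applying the same reasoning to each basechange then gives $\baseCat$--cocompleteness.

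Since $L$ and $j$ are themselves $\baseCat$--functors, postcomposition yields a $\baseCat$--adjunction $L_* \dashv j_*$ between the functor categories by \cref{omnibusTAdjunctions}, and precomposition fits into the intertwining identity $q^*_{\underline{\sC}} \circ j \simeq j_* \circ q^*_{\underline{\D}}$ obtained from the fact that each side sends $d$ to the constant diagram at $jd$. Writing $q^{\underline{\sC}}_!$ for the $\underline{K}$--indexed $\baseCat$--colimit in $\underline{\sC}$, which exists by hypothesis, I would define
\[F \;\coloneqq\; L \circ q^{\underline{\sC}}_! \circ j_* \;\colon\; \underline{\func}_{\baseCat}(\underline{K}, \underline{\D}) \longrightarrow \underline{\D}.\]
This is a $\baseCat$--functor as a composition of such. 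For $p \in \underline{\func}_{\baseCat}(\underline{K}, \underline{\D})$ and $d \in \underline{\D}$, one then chains the natural equivalences
\begin{align*}
\myuline{\map}_{\underline{\D}}(Fp,\, d)
 &\simeq \myuline{\map}_{\underline{\sC}}\bigl(q^{\underline{\sC}}_! j_* p,\; jd\bigr) \\
 &\simeq \myuline{\map}_{\underline{\func}_{\baseCat}(\underline{K}, \underline{\sC})}\bigl(j_* p,\; q^*_{\underline{\sC}} jd\bigr) \\
 &\simeq \myuline{\map}_{\underline{\func}_{\baseCat}(\underline{K}, \underline{\sC})}\bigl(j_* p,\; j_* q^*_{\underline{\D}} d\bigr) \\
 &\simeq \myuline{\map}_{\underline{\func}_{\baseCat}(\underline{K}, \underline{\D})}\bigl(p,\; q^*_{\underline{\D}} d\bigr),
\end{align*}
invoking in turn $L \dashv j$, $q^{\underline{\sC}}_! \dashv q^*_{\underline{\sC}}$, the intertwining identity, and the $\baseCat$--full faithfulness of $j_*$. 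By \cref{paramAdjunctionMappingAnima} this exhibits $F \dashv q^*_{\underline{\D}}$, so $F$ computes the $\baseCat$--colimit in $\underline{\D}$; the explicit formula for $F$ is precisely the statement that this colimit is $L$ applied to the $\baseCat$--colimit in $\underline{\sC}$.

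The only ingredient warranting a quick check is that $j_*$ inherits $\baseCat$--full faithfulness from $j$. This follows by basechanging through the formula $\underline{\func}_{\baseCat}(\underline{K}, -)_{\underline{V}} \simeq \underline{\func}_{\underline{V}}(\underline{K}_{\underline{V}}, -_{\underline{V}})$ of \cref{TFunctorCategory}, reducing to the unparametrised fact that postcomposition with a fully faithful functor is fully faithful on functor categories. Beyond that the argument is pure bookkeeping in universal properties, and I do not anticipate any genuine obstacle.
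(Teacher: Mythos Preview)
Your proof is correct and is essentially a fully unpacked version of the paper's one-line argument, which just says ``This is an immediate consequence of \cref{paramAdjunctionMappingAnima}.'' Both approaches use the mapping space characterisation of $\baseCat$--adjunctions to verify that $L \circ q^{\underline{\sC}}_! \circ j_*$ is left adjoint to $q^*_{\underline{\D}}$; you simply make the chain of adjunction equivalences explicit, whereas the paper leaves this to the reader. Your side remark that $j_*$ is $\baseCat$--fully faithful is in fact proved later in the paper as \cref{functorsPreserveTFullyFaithfulness}, so there is no circularity, and your direct reduction via \cref{TFunctorCategory} to the unparametrised statement is also fine.
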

\begin{proof}
This is an immediate consequence of \cref{paramAdjunctionMappingAnima}.
\end{proof}

\begin{prop}[$\baseCat$--(co)limits of functor categories is pointwise]\label{TColimitsFunctorCategories}
Let $\underline{K}, \underline{I}, \underline{\sC}$ be $\baseCat$--categories. Suppose $\underline{\sC}$ strongly admits $\underline{K}$--indexed diagrams. Then so does $\underline{\func}_{\baseCat}(\underline{I}, \underline{\sC}) $  and the parametrised (co)limits are inherited from that of $\underline{\sC}$.
\end{prop}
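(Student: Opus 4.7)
The plan is to exploit the tensor-hom adjunction for the internal $\baseCat$-functor category recorded in \cref{TFunctorCategory} to transport the $\baseCat$-colimit adjunction on $\underline{\sC}$ to one on $\underline{\func}_{\baseCat}(\underline{I},\underline{\sC})$. First, using the basechange compatibility $\underline{\func}_{\baseCat}(\underline{I},\underline{\sC})_{\underline{V}} \simeq \underline{\func}_{\underline{V}}(\underline{I}_{\underline{V}},\underline{\sC}_{\underline{V}})$ also given in \cref{TFunctorCategory}, the strong version of the claim reduces, after replacing $\baseCat$ by $\baseCat_{/V}$ everywhere, to the non-strong statement: if $\underline{\sC}$ admits $\underline{K}$-indexed $\baseCat$-(co)limits, then so does $\underline{\func}_{\baseCat}(\underline{I},\underline{\sC})$, and these are computed by postcomposition.

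To establish the reduced statement, let $q \colon \underline{K}\rightarrow \terminalTCat$ be the unique map, so that the hypothesis provides by \cref{TColimits} a $\baseCat$-adjunction $q_!\colon \underline{\sC}\rightleftarrows \underline{\func}_{\baseCat}(\underline{K},\underline{\sC}) : q^*$. Applying the first part of \cref{omnibusTAdjunctions} to this adjunction yields a $\baseCat$-adjunction
\[(q_!)_* \colon \underline{\func}_{\baseCat}(\underline{I},\underline{\sC}) \rightleftarrows \underline{\func}_{\baseCat}(\underline{I},\underline{\func}_{\baseCat}(\underline{K},\underline{\sC})) : (q^*)_*.\]
By iterating the defining adjunction of the internal functor category, we have
\[\underline{\func}_{\baseCat}(\underline{I},\underline{\func}_{\baseCat}(\underline{K},\underline{\sC})) \simeq \underline{\func}_{\baseCat}(\underline{I}\times\underline{K},\underline{\sC}) \simeq \underline{\func}_{\baseCat}(\underline{K},\underline{\func}_{\baseCat}(\underline{I},\underline{\sC})),\]
and under this equivalence the functor $(q^*)_*$ corresponds to the canonical comparison $q^*\colon \underline{\func}_{\baseCat}(\underline{I},\underline{\sC})\rightarrow \underline{\func}_{\baseCat}(\underline{K},\underline{\func}_{\baseCat}(\underline{I},\underline{\sC}))$ that encodes $\underline{K}$-diagrams in $\underline{\func}_{\baseCat}(\underline{I},\underline{\sC})$. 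Consequently, the left adjoint $(q_!)_*$, which is by construction given by postcomposing with the $\baseCat$-colimit functor $q_!$ of $\underline{\sC}$, is the desired $\baseCat$-colimit on $\underline{\func}_{\baseCat}(\underline{I},\underline{\sC})$; this simultaneously establishes existence and the pointwise formula. The $\baseCat$-limit case is entirely dual, using $q_*$ instead.

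The only delicate point is verifying that the symmetry equivalence $\underline{\func}_{\baseCat}(\underline{I},\underline{\func}_{\baseCat}(\underline{K},-)) \simeq \underline{\func}_{\baseCat}(\underline{K},\underline{\func}_{\baseCat}(\underline{I},-))$ really identifies $(q^*)_*$ with the structural $q^*$. This follows from the universal property in \cref{TFunctorCategory}: for any test $\baseCat$-category $\underline{\E}$, both maps represent the morphism $\map_{\cat_\baseCat}(\underline{\E}\times\underline{K}\times\underline{I},\underline{\sC})\leftarrow \map_{\cat_\baseCat}(\underline{\E}\times\underline{I},\underline{\sC})$ induced by precomposition with $\id_{\underline{\E}}\times q \times \id_{\underline{I}}$, which is manifestly symmetric in the roles of $\underline{K}$ and $\underline{I}$ after reassociation.
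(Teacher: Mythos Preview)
Your argument is correct and is exactly the paper's approach: apply $\underline{\func}_{\baseCat}(\underline{I},-)$ to the colimit adjunction $q_!\dashv q^*$ on $\underline{\sC}$ via \cref{omnibusTAdjunctions}, then use the symmetry $\underline{\func}_{\baseCat}(\underline{I},\underline{\func}_{\baseCat}(\underline{K},\underline{\sC}))\simeq \underline{\func}_{\baseCat}(\underline{K},\underline{\func}_{\baseCat}(\underline{I},\underline{\sC}))$ to identify $(q^*)_*$ with the constant-diagram functor. One small slip: the direction of your displayed adjunctions is reversed, since $q_!$ goes $\underline{\func}_{\baseCat}(\underline{K},\underline{\sC})\to\underline{\sC}$ (and hence $(q_!)_*$ goes $\underline{\func}_{\baseCat}(\underline{I},\underline{\func}_{\baseCat}(\underline{K},\underline{\sC}))\to\underline{\func}_{\baseCat}(\underline{I},\underline{\sC})$), not the other way around.
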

\begin{proof}
This is a direct consequence of the adjunction
$(\underline{\colim}_{\underline{K}})_* : \underline{\func}_{\baseCat}(\underline{K}, \underline{\func}_{\baseCat}(\underline{I}, \underline{\sC}) ) \simeq \underline{\func}_{\baseCat}(\underline{I}, \underline{\func}_{\baseCat}(\underline{K}, \underline{\sC}) ) \rightleftarrows \underline{\func}_{\baseCat}(\underline{I},\underline{\sC})  : \constant$ for $\baseCat$--colimits. The other case is similar.
\end{proof}

\begin{defn}[$\baseCat$--Kan extensions]\label{TKanExtensions}
Let $j : \underline{I} \rightarrow \underline{K}$ be a $\baseCat$--functor. If
$j^* : \underline{\func}_{\baseCat}(\underline{K},\underline{\D}) \longrightarrow \underline{\func}_{\baseCat}(\underline{I},\underline{\D})$ has a $\baseCat$--left adjoint, then we denote it by $j_!$ and call it the \textit{T-left Kan extension}. Similarly for $\baseCat$--right Kan extensions.
\end{defn}

\begin{prop}[Fully faithful $\baseCat$--Kan extensions, {\cite[Prop. 10.6]{shahThesis}}]\label{fullyFaithfulTKanExtension}
Let $i : \underline{\sC} \hookrightarrow \underline{\D}$ be a $\baseCat$--fully faithful functor and $F : \underline{\sC} \rightarrow \underline{\E}$ be another $\baseCat$--functor. If the $\baseCat$--left Kan extension $i_!F$ exists, then the adjunction unit $F \Rightarrow i^*i_!F \colon \underline{\sC} \rightarrow \underline{\E}$ is an equivalence.
\end{prop}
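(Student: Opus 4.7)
The plan is to reduce to the unparametrised case by way of the pointwise formula for $\baseCat$--left Kan extensions established in \cite{shahThesis}. First, an equivalence between $\baseCat$--functors $\underline{\sC}\to\underline{\E}$ can be checked on objects of the total category, i.e. fibrewise and then pointwise. So it suffices to show that for each $V\in\baseCat$ and each $c\in \sC_V$ the component $\eta_c\colon F(c)\to (i_!F)(i(c))$ of the unit is an equivalence in $\E_V$.

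Next, I would invoke Shah's pointwise formula, which expresses $(i_!F)(i(c))$ as the $\baseCat_{/V}$--colimit over the parametrised slice $\underline{\sC}_{/i(c)}$ of the composite $\underline{\sC}_{/i(c)}\to \underline{\sC}\xrightarrow{F}\underline{\E}$. Since $i$ is $\baseCat$--fully faithful, the canonical $\baseCat_{/V}$--functor $\underline{\sC}_{/c}\to \underline{\sC}_{/i(c)}$ induced by $i$ is an equivalence: at the level of $\baseCat$--mapping spaces this is just the equivalence $\myuline{\map}_{\underline{\sC}}(c',c)\simeq \myuline{\map}_{\underline{\D}}(i(c'),i(c))$ supplied by \cref{paramAdjunctionMappingAnima} and the hypothesis on $i$ (cf. \cref{fullyFaithfulMappingSpaces} as cited in the paper). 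Thus we may replace the indexing by $\underline{\sC}_{/c}$.

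To finish, note that $\id_c$ witnesses a $\baseCat_{/V}$--terminal object of $\underline{\sC}_{/c}$, so that the inclusion $\terminalTCat_{\baseCat_{/V}}\hookrightarrow \underline{\sC}_{/c}$ picking it out is $\baseCat_{/V}$--cofinal by the parametrised cofinality theorem of \cite{shahThesis}. Hence the $\baseCat_{/V}$--colimit in question collapses to the value of the diagram at $(c,\id_c)$, which is exactly $F(c)$; unwinding, the resulting identification is precisely $\eta_c$, which is therefore an equivalence. The main obstacle is not really conceptual—the argument is a structural parametrised rewrite of the classical fact that left Kan extension along a fully faithful functor is itself fully faithful—but rather the bookkeeping of setting up the parametrised slice $\underline{\sC}_{/c}$, the pointwise formula, and parametrised cofinality in Shah's framework, all of which we take on faith from \cite{shahThesis}.
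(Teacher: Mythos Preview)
The paper does not prove this proposition itself; it is stated as a recollection with citation to \cite[Prop.~10.6]{shahThesis}, so there is no in-paper argument to compare against. Your proof is correct and is the standard one: invoke the parametrised pointwise formula for $i_!F$, use $\baseCat$--full faithfulness of $i$ (via \cref{fullyFaithfulMappingSpaces}) to identify the comma $\baseCat_{/V}$--category over $i(c)$ with the slice $(\underline{\sC}_{\underline{V}})_{/c}$, and collapse the colimit using that $\id_c$ is a $\baseCat_{/V}$--final object. Two minor remarks: your opening phrase ``reduce to the unparametrised case'' is misleading, since the argument remains entirely within the parametrised framework; and your notation $\underline{\sC}_{/i(c)}$ is ambiguous---it would be clearer to write the comma construction $\underline{\sC}_{\underline{V}}\times_{\underline{\D}_{\underline{V}}}(\underline{\D}_{\underline{V}})_{/i(c)}$ explicitly, since $i(c)$ is not an object of $\underline{\sC}$.
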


\begin{thm}[Omnibus $\baseCat$--Kan extensions, {\cite[Thm. 10.5]{shahThesis}}]\label{omnibusTKanExtensions}
Let $\underline{\sC}\in\cat_{\baseCat}$ be $\baseCat$--cocomplete. Then for every $\baseCat$--functor of small $\baseCat$--categories $f : \underline{I} \rightarrow \underline{K}$, the $\baseCat$--left Kan extension $f_! : \underline{\func}_{\baseCat}(\underline{I},\underline{\sC})  \longrightarrow \underline{\func}_{\baseCat}(\underline{K},\underline{\sC})$ exists. 
\end{thm}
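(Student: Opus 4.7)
The plan is to apply the fibrewise criterion for $\baseCat$--adjunctions, \cref{criteriaForTAdjunctions}(2), to the restriction $\baseCat$--functor $f^* : \underline{\func}_{\baseCat}(\underline{K}, \underline{\sC}) \to \underline{\func}_{\baseCat}(\underline{I}, \underline{\sC})$. By the basechange compatibility recorded in \cref{TFunctorCategory}, the fibre of $f^*$ over $V \in \baseCat$ is $(f_{\underline{V}})^* : \func_{\underline{V}}(\underline{K}_{\underline{V}}, \underline{\sC}_{\underline{V}}) \to \func_{\underline{V}}(\underline{I}_{\underline{V}}, \underline{\sC}_{\underline{V}})$, and the $\baseCat$--cocompleteness of $\underline{\sC}$ passes to $\baseCat_{/V}$--cocompleteness of $\underline{\sC}_{\underline{V}}$. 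Hence it suffices to construct fibrewise left adjoints and verify the Beck-Chevalley compatibility along morphisms $g : W \to V$ in $\baseCat$.

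For the fibrewise construction, I would mimic the classical pointwise Kan extension formula: given $F : \underline{I} \to \underline{\sC}$ and an object of $\underline{K}$ lying over $V$, the value of $f_!F$ at that object should be the $\baseCat_{/V}$--colimit of $F$ restricted to the parametrised comma slice $\underline{I} \times_{\underline{K}} \underline{K}_{/\underline{V}}$, which exists by the $\baseCat$--cocompleteness hypothesis. The adjunction universal property
\[\myuline{\map}_{\underline{\func}_{\baseCat}(\underline{K},\underline{\sC})}(f_!F, G) \simeq \myuline{\map}_{\underline{\func}_{\baseCat}(\underline{I},\underline{\sC})}(F, f^*G)\]
would then follow from a parametrised cofinality argument, using \cref{paramAdjunctionMappingAnima} to reduce the claim to a pointwise comparison of $\baseCat$--mapping spaces.

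The Beck-Chevalley compatibility required by the criterion is then essentially automatic: the parametrised comma slices basechange correctly under $\baseCat_{/W} \to \baseCat_{/V}$, and the pushforward functors $g^* : \sC_V \to \sC_W$ strongly preserve $\baseCat$--colimits in $\underline{\sC}$ by cocompleteness. The main obstacle is the careful formulation of the parametrised comma slice and the attendant parametrised cofinality theorem underpinning the pointwise formula; these are precisely the ingredients developed in earlier sections of \cite{shahThesis}, which is why the cited proof just invokes that machinery rather than reconstructing the theory from scratch here.
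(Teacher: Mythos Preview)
The paper does not supply its own proof of this theorem; it is stated as a recollection and attributed entirely to \cite[Thm.~10.5]{shahThesis}. Your sketch is a fair outline of the strategy Shah employs---constructing the left adjoint to $f^*$ via a parametrised pointwise formula built on parametrised slice categories and a cofinality theorem---and you correctly identify that the substantive work (the precise formulation of parametrised slices and the relevant cofinality statement) lives in the cited reference rather than here. So your proposal and the paper are aligned: both defer to Shah, and your final paragraph accurately diagnoses why.

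One minor remark: the reference to ``\cref{criteriaForTAdjunctions}(2)'' is slightly off, since that corollary is not subdivided into parts; you presumably mean the left-adjoint clause of \cref{criteriaRelativeAdjunctions}(2), or equivalently the ``similarly for left $\baseCat$--adjoints'' sentence in \cref{criteriaForTAdjunctions}. Also, your notation $\underline{I}\times_{\underline{K}}\underline{K}_{/\underline{V}}$ for the parametrised comma is only schematic---the actual object in Shah's framework is somewhat more delicate to set up---but since you flag this as the main obstacle anyway, the point stands.
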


\subsection{Strong preservation of \texorpdfstring{$\baseCat$}--colimits}
We now explain in more detail the notion of strong preservation. In particular, the reader may find \cref{characterisationStrongPreservations} to be a convenient alternative description, and we will have many uses of it in the coming sections.

\begin{obs}[Strong preservations vs preservations]\label{strongPreservationObservation}
Here are some comments for the distinction. \cref{characterisationStrongPreservations} will then characterise strong preservations more concretely.
\begin{enumerate}
    \item[(1)] Recall \cref{shahStrongPreservationWarning} that admitting $\baseCat$--colimits is weaker than being $\baseCat$--cocomplete. In the proof of the Lurie-Simpson characterisation \cref{simpsonTheorem}, we will see that we really need $\baseCat$--cocompleteness via \cref{characterisationStrongPreservations}.
    
    \item[(2)] However, $\underline{\sC}$ admitting $\baseCat$--colimits indexed by $p : \underline{K}\rightarrow \baseCat\op$ does imply $\underline{\sC}_{\underline{V}}$ admits $\baseCat_{/V}$--colimits indexed by $\underline{K}_{\underline{V}}$. This is because the adjunction $p_! \colon \underline{\func}_{\baseCat}(\underline{K},\underline{\sC})  \rightleftarrows \underline{\func}_{\baseCat}(\terminalTCat, \underline{\sC})  : p^*$ pulls back to the $p_! \colon \underline{\func}_{\underline{V}}(\underline{K}_{\underline{V}},\underline{\sC}_{\underline{V}}) \rightleftarrows \underline{\func}_{\underline{V}}({\underline{V}}, \underline{\sC}_{\underline{V}}) : p^*$ adjunction by \cref{relativeAdjunctionPullbacks}. We have also used that functor $\baseCat$--categories basechange well by \cref{TFunctorCategory}.
    
\item[(3)] Strongly preserving fibrewise $\baseCat$--(co)limits is equivalent to preserving these (co)limits on each fibre since by \cref{omnibusTColimits} fibrewise (co)limits are constructed fibrewise.
\end{enumerate}
\end{obs}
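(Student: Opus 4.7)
The observation packages three unrelated remarks, so I would treat them separately. Part~(1) is a pointer back to \cref{shahStrongPreservationWarning} and requires no argument; I would just record that the stronger notion (cocompleteness) is needed in the proof of \cref{simpsonTheorem} and move on.

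For part~(2), the plan is to apply the pullback-stability of relative adjunctions (\cref{relativeAdjunctionPullbacks}). By hypothesis, the precomposition $\baseCat$-functor $p^*\colon \underline{\sC}\simeq\underline{\func}_{\baseCat}(\terminalTCat,\underline{\sC})\rightarrow\underline{\func}_{\baseCat}(\underline{K},\underline{\sC})$ admits a $\baseCat$-left adjoint $p_!$ in the sense of \cref{TColimits}. Passing to the slice $\baseCat_{/V}$ is exactly the pullback along $\underline{V}\to\terminalTCat$ in $\cat_{\baseCat}$, so \cref{relativeAdjunctionPullbacks} yields a relative adjunction between the basechanged functor categories. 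The only other input is \cref{TFunctorCategory}, which identifies the basechange of the functor $\baseCat$-category with the expected $\baseCat_{/V}$-functor category: $\underline{\func}_{\baseCat}(\underline{K},\underline{\sC})_{\underline{V}}\simeq\underline{\func}_{\underline{V}}(\underline{K}_{\underline{V}},\underline{\sC}_{\underline{V}})$ and likewise $\underline{\sC}_{\underline{V}}\simeq\underline{\func}_{\underline{V}}(\underline{V},\underline{\sC}_{\underline{V}})$. Combining these two facts produces the desired $\baseCat_{/V}$-adjunction, which by \cref{TColimits} says exactly that $\underline{\sC}_{\underline{V}}$ admits $\underline{K}_{\underline{V}}$-indexed $\baseCat_{/V}$-colimits.

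For part~(3), the plan is to reduce strong preservation of $\tconstant_{\baseCat}(K)$-indexed colimits to a condition on each fibre via the fibrewise criterion \cref{omnibusTColimits}(1). Suppose $F\colon\underline{\sC}\rightarrow\underline{\D}$ strongly preserves fibrewise $K$-indexed $\baseCat$-colimits, and fix $V\in\baseCat$. Applying \cref{omnibusTColimits}(1) to the constant diagram over $\baseCat_{/V}$, a $\baseCat_{/V}$-colimit cocone indexed by $\tconstant_{\baseCat_{/V}}(K)$ is detected on each fibre $\sC_W$ for $W\in\baseCat_{/V}$; since the fibres of $\underline{\sC}_{\underline{V}}$ over $\baseCat_{/V}$ agree with the fibres of $\underline{\sC}$ over the corresponding $W\in\baseCat$, the condition on $F_{\underline{V}}$ is exactly that each $F_W$ preserves $K$-indexed colimits. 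Specialising to $V=W$ gives the only-if direction; conversely, if $F_W$ preserves $K$-colimits for every $W$, the same fibrewise description shows that $F_{\underline{V}}$ preserves $\tconstant_{\baseCat_{/V}}(K)$-indexed $\baseCat_{/V}$-colimits for all $V$.

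There is no real obstacle: everything is bookkeeping of pullbacks of adjunctions and the fibrewise detection of fibrewise colimits. The one point I would be careful about is confirming that the identifications in \cref{TFunctorCategory} are compatible with the unit/counit of the $p_!\dashv p^*$ adjunction, so that the pulled-back adjunction is literally the one defining $\underline{K}_{\underline{V}}$-indexed colimits in $\underline{\sC}_{\underline{V}}$; this is essentially formal from the naturality of the internal-hom construction in the base.
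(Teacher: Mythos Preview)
Your proposal is correct and follows exactly the paper's own justifications, which are given inline in the observation itself: part~(2) via \cref{relativeAdjunctionPullbacks} and \cref{TFunctorCategory}, and part~(3) via the fibrewise criterion in \cref{omnibusTColimits}. Your added care about compatibility of the internal-hom identifications with the adjunction data is a reasonable remark but not something the paper elaborates on.
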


The following result was also recorded in the recent \cite[Thm. 8.6]{shahPaperII}.

\begin{prop}[Characterisation of strong preservations]\label{characterisationStrongPreservations}
Let $\underline{\sC},\underline{\D}$ be $\baseCat$--cocomplete categories and $F : \underline{\sC} \rightarrow \underline{\D}$ a $\baseCat$--functor. Then $F$ strongly preserves $\baseCat$--colimits if and only if it preserves colimits in each fibre and for all $f : W \rightarrow V$ in $\baseCat$, the following square commutes (and similarly for $\baseCat$--limits)
\begin{center}
    \begin{tikzcd}
    \sC_W \rar["f_!"]\dar["F_W"'] & \sC_V \dar["F_V"]\\
    \D_W \rar["f_!"] & \D_V
    \end{tikzcd}
\end{center}
\end{prop}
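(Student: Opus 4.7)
The plan is to prove both directions by reducing strong preservation of arbitrary $\baseCat$-colimits to the two canonical classes singled out by the decomposition principle (\cref{omnibusTColimits}(3)): the fibrewise colimits and the $\baseCat$-coproducts indexed by corepresentables, whose left adjoints are the $f_!$'s.

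For the ``only if'' direction, assume $F$ strongly preserves $\baseCat$-colimits, so by definition each basechanged functor $F_{\underline{V}} \colon \underline{\sC}_{\underline{V}} \to \underline{\D}_{\underline{V}}$ preserves all $\baseCat_{/V}$-colimits. Specialising to the fibrewise case $\underline{K} = \tconstant_{\underline{V}}(K)$, \cref{strongPreservationObservation}(3) tells us that strong preservation of fibrewise $\baseCat_{/V}$-colimits is equivalent to preservation in each fibre $\sC_W \to \D_W$ for $W \to V$, which yields fibrewise preservation of $F$. For the commuting square, observe that for a morphism $f \colon W \to V$ in $\baseCat$ the indexed coproduct $f_! \colon \sC_W \to \sC_V$ is precisely the $\baseCat_{/V}$-coproduct indexed by the corepresentable $\underline{W}$, viewed as a $\baseCat_{/V}$-category via $f$; strong preservation forces $F_V \circ f_! \simeq f_! \circ F_W$, which is the square.

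For the ``if'' direction, we must show that $F_{\underline{V}}$ preserves $\baseCat_{/V}$-colimits for every $V$. By the decomposition principle, any $\baseCat_{/V}$-cocomplete category, and more generally any $\baseCat_{/V}$-colimit, is built from fibrewise colimits and $\baseCat_{/V}$-coproducts, so it suffices to verify preservation on each of these two classes. Preservation of fibrewise $\baseCat_{/V}$-colimits again follows from \cref{strongPreservationObservation}(3) together with the hypothesis that $F$ preserves colimits in each fibre. Preservation of $\baseCat_{/V}$-coproducts amounts to asking that $F_{\underline{V}}$ commutes with each $g_! \colon \sC_W \to \sC_V$ for $g \colon W \to V$ in $\baseCat_{/V}$, which is exactly the commutativity of the square applied to $g$ regarded as a morphism in $\baseCat$.

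The main obstacle is the ``colimit-by-colimit'' version of the decomposition principle used in the second direction: \cref{omnibusTColimits}(3) is stated as a criterion for $\baseCat$-cocompleteness, whereas we need to know that an arbitrary $\baseCat_{/V}$-colimit is actually computed by composing a fibrewise colimit with an indexed coproduct. This pointwise decomposition is implicit in Shah's construction of $\baseCat$-colimits in \cite{shahThesis} and is made explicit in \cite[Thm.~8.6]{shahPaperII}, which is the reference the paper already invokes immediately before the statement; alternatively, one can sidestep the general decomposition by arguing adjoint-functor-theoretically, using the criterion of \cref{criteriaRelativeAdjunctions}(2) applied to the postcomposition map $F_* \colon \underline{\func}_{\underline{V}}(\underline{K}, \underline{\sC}_{\underline{V}}) \to \underline{\func}_{\underline{V}}(\underline{K}, \underline{\D}_{\underline{V}})$ together with \cref{omnibusTAdjunctions}(2) and the essential uniqueness of left adjoints to conclude that $F_{\underline{V}}(\colim_{\underline{K}} -) \simeq \colim_{\underline{K}} F_{\underline{V}}(-)$ from commutativity with the fibrewise colimit functor and with the $g_!$'s.
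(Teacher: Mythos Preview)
Your ``only if'' direction is correct and essentially matches the paper: you identify $f_!$ as the global sections of the $\baseCat_{/V}$--coproduct functor and read off the square from strong preservation.

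The ``if'' direction, however, has a real gap. You write that preservation of $\baseCat_{/V}$--coproducts ``amounts to asking that $F_{\underline{V}}$ commutes with each $g_! \colon \sC_W \to \sC_V$'', and then invoke the single square from the hypothesis. But $g_!$ is a $\baseCat_{/V}$--functor $\underline{\func}_{\underline{V}}(\underline{W},\underline{\sC}_{\underline{V}}) \to \underline{\sC}_{\underline{V}}$, and the equivalence $F_{\underline{V}} \circ g_! \simeq g_! \circ (F_{\underline{V}})_*$ must be checked at \emph{every} fibre $Y \to V$, not just at $Y = V$. At a general fibre $Y$, the source becomes $\prod_a \sC_{R_a}$ for the orbit decomposition $\coprod_a R_a \simeq W \times_V Y$, and $(g_!)_Y$ becomes $\coprod_a (f_a)_!$. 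The paper carries out exactly this Beck--Chevalley unwinding and then invokes the hypothesised squares for each $f_a \colon R_a \to Y$ together with fibrewise preservation of finite coproducts. Your argument stops at the terminal fibre and so does not establish the parametrised equivalence.

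Your diagnosis of the obstacle is also slightly off. The issue is not a missing ``colimit-by-colimit'' decomposition principle; it is that commutativity with a single $f_!$ on fibres does not yet give commutativity with the $\baseCat_{/V}$--left adjoint $q_!$ as a $\baseCat_{/V}$--functor. And note that \cite[Thm.~8.6]{shahPaperII} is, per the paper's own remark preceding the proposition, precisely the statement you are trying to prove, so invoking it here is circular. The fibre-by-fibre check via orbitality is not avoidable.
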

\begin{proof}
To see the only if direction, that $F$ preserves colimits in each fibre is clear since $F$ in particular preserves fibrewise $\baseCat$--colimits. Now for $f : W\rightarrow V$, we basechange to $\underline{V}$. Since $F$ strongly preserves $\baseCat$--colimits, we get commutative squares
\begin{center}
    \begin{tikzcd}
    \underline{\func}_{\underline{V}}(\underline{W}, \underline{\sC} \times \underline{V}) \rar[shift left = 0, "f_!"] \dar["(F_{\underline{V}})_*"']& \underline{\func}_{\underline{V}}(\underline{V}, \underline{\sC} \times \underline{V}) \dar["(F_{\underline{V}})_*"]\\
    \underline{\func}_{\underline{V}}(\underline{W}, \underline{\D} \times \underline{V}) \rar[shift left = 0, "f_!"] & \underline{\func}_{\underline{V}}(\underline{V}, \underline{\D} \times \underline{V}) 
    \end{tikzcd}
\end{center}
Taking global sections by using that $\func_{\underline{V}}(\underline{W}, \underline{\sC} \times \underline{V}) \simeq \func_{\baseCat}(\underline{W}, \underline{\sC})  \simeq \sC_W$ from \cref{categoryOfPoints}, we get the desired square.

For the if direction, we know by \cref{omnibusTColimits} that all $\baseCat$--colimits can be decomposed as fibrewise $\baseCat$--colimits and indexed $\baseCat$--coproducts, and so if we show strong preservation of these we would be done. By \cref{strongPreservationObservation} (3) strong preservation of fibrewise $\baseCat$--colimits is the same as preserving colimits in each fibre, so this case is covered. Since arbitrary indexed $\baseCat$--coproducts are just compositions of orbital $\baseCat$--coproducts and arbitrary fibrewise coproducts by \cref{decompositionOfIndexedCoproducts}, we need only show for orbital $\baseCat$--coproducts, so let $f : W \rightarrow V$ be a morphism in $\baseCat$. We need to show that the canonical comparison in 
\begin{center}
    \begin{tikzcd}
    \underline{\func}_{\underline{V}}(\underline{W}, \underline{\sC}\times\underline{V}) \rar[shift left = 0, "f_!"] \dar["(F_{\underline{V}})_*"']& \underline{\func}_{\underline{V}}(\underline{V}, \underline{\sC}\times\underline{V}) \dar["(F_{\underline{V}})_*"]\\
    \underline{\func}_{\underline{V}}(\underline{W}, \underline{\D}\times\underline{V}) \rar[shift left = 0, "f_!"] & \underline{\func}_{\underline{V}}(\underline{V}, \underline{\D}\times\underline{V}) 
    \end{tikzcd}
\end{center}
 is a natural equivalence. Since equivalences is by definition a fibrewise notion, we can check this on each fibre. So let $\varphi : Y \rightarrow V$ be in $\baseCat$, and consider the pullback
 \begin{center}
     \begin{tikzcd}
     \coprod_a R_a \rar["\coprod f_a"]\ar[dr, phantom, very near start, "\scalebox{1.5}{$\lrcorner$}"] \dar["\coprod \varphi_a"']& Y\dar["\varphi"]\\
     W \rar["f"] & V
     \end{tikzcd}
 \end{center}
 by orbitality of $\baseCat$. We need to show that 
 \begin{center}
    \begin{tikzcd}
    \underline{\func}_{\underline{V}}(\underline{W}, \underline{\sC}\times\underline{V})_Y \rar[shift left = 0, "f_!"] \dar["(F_{\underline{V}})_*"']& \underline{\func}_{\underline{V}}(\underline{V}, \underline{\sC}\times\underline{V})_Y \dar["(F_{\underline{V}})_*"]\\
    \underline{\func}_{\underline{V}}(\underline{W}, \underline{\D}\times\underline{V})_Y \rar[shift left = 0, "f_!"] & \underline{\func}_{\underline{V}}(\underline{V}, \underline{\D}\times\underline{V})_Y 
    \end{tikzcd}
\end{center}
commutes. But then by the universal property of the internal functor $\baseCat$--categories from \cref{TFunctorCategory}, this is the same as 
\begin{center}
    \begin{tikzcd}
    {\func}_{\underline{Y}}(\coprod_a\underline{R_a}, \underline{\sC}\times\underline{Y})\simeq {\func}_{\underline{Y}}(\underline{W}\times_{\underline{V}}\underline{Y}, \underline{\sC}\times\underline{Y}) \rar[shift left = 0, "f_!"] \dar["(F_{\underline{Y}})_*"']& {\func}_{\underline{Y}}(\underline{Y}, \underline{\sC}\times\underline{Y}) \dar["(F_{\underline{Y}})_*"]\\
    {\func}_{\underline{Y}}(\coprod_a\underline{R_a}, \underline{\D}\times\underline{Y})\simeq{\func}_{\underline{Y}}(\underline{W}\times_{\underline{V}}\underline{Y}, \underline{\D}\times\underline{Y}) \rar[shift left = 0, "f_!"] & 
    {\func}_{\underline{Y}}(\underline{Y}, \underline{\D}\times\underline{Y}) 
    \end{tikzcd}
\end{center}
and this is in turn
\begin{center}
    \begin{tikzcd}
    \prod_a\sC_{R_a} \rar["\coprod (f_a)_!"]\dar["\prod F_a"'] & \sC_Y \dar["F_V"]\\
    \prod_a\D_{R_a} \rar["\coprod (f_a)_!"'] & \D_Y
    \end{tikzcd}
\end{center}
which commutes by hypothesis together with that $F$ commutes with fibrewise $\baseCat$--colimits (and so in particular finite fibrewise coproducts). This finishes the proof of the result.
\end{proof}

\subsection{Recollections: mapping spaces and Yoneda}

\begin{cons}[Parametrised mapping spaces and Yoneda, {\cite[Def. 10.2]{expose1Elements}}]\label{mappingAnimaYoneda}
Let $\underline{\sC}$ be a $\baseCat$--category. Then the $\baseCat$--twisted arrow construction gives us a left $\baseCat$--fibration \label{nota:parametrisedTwAr}
\[(s,t) : \underline{\twistedArrow}_T(\underline{\sC}) \longrightarrow \underline{\sC}\vop \times  \underline{\sC}\] $\baseCat$--straightening this via \cref{parametrisedStraightenUnstraighten} we get a $\baseCat$--functor \label{nota:map_T}
\[\myuline{\map}_{\underline{\sC}} : \underline{\sC}\vop \times  \underline{\sC} \longrightarrow \underline{\spc}_{\baseCat}\] By \cite[$\S5$]{barwickGlasmanNardin} we know that $\myuline{\map}_{\underline{\sC}}(-,-) : \underline{\sC}\vop \times  \underline{\sC}\rightarrow\underline{\spc}_{\baseCat}$ is given on fibre over $V$ by the map $\sC_V\op\times \sC_V \rightarrow \func((\baseCat_{/V})\op, \spc)$ \[(c,c') \mapsto \Big((W\xrightarrow{f} V)\mapsto (\map_{\sC_V}(c,c')\rightarrow \map_{\sC_W}(f^*c, f^*c')\Big)\]
Moreover, by currying we obtain the $\baseCat$--Yoneda embedding \label{nota:parametrisedPresheaves}
\[j : \underline{\sC} \longrightarrow \underline{\presheaf}_{\baseCat}(\underline{\sC})  = \underline{\func}_{\baseCat}(\underline{\sC}\vop, \underline{\spc}_{\baseCat})\] which on level $V\in \baseCat$ is given by
\begin{equation*}
    \begin{split}
        j_V : \sC_{V}\hookrightarrow \totalCategory(\underline{\sC}\vop)\times_{\baseCat\op}\totalCategory(\underline{V}) & \hookrightarrow \func_{\underline{V}}(\underline{\sC}\vop\times\underline{V}, \underline{\spc}_{\underline{V}})\\
        &\simeq \func(\totalCategory(\underline{\sC}\vop)\times_{\baseCat\op}\totalCategory(\underline{V}), \spc)
    \end{split}
\end{equation*}
\end{cons}

\begin{rmk}\label{fullyFaithfulMappingSpaces}
By the explicit fibrewise description of the parametrised mapping spaces above, we see immediately that a $\baseCat$--functor $F : \underline{\sC} \rightarrow \underline{\D}$ is $\baseCat$--fully faithful if and only if it induces equivalences on $\myuline{\map}(-,-).$
\end{rmk}

\begin{lem}[$\baseCat$--Yoneda Lemma, {\cite[Prop. 10.3]{expose1Elements}}]\label{TYonedaLemma}
Let $\underline{\sC}$ be a $\baseCat$--category and let $X \in \sC_V$ for some $V\in \baseCat$. Then for any $\baseCat_{/V}$--functor $F : \underline{\sC}\vop\times\underline{V} \longrightarrow \underline{\spc}_{\underline{V}}$, we have an equivalence of $\baseCat_{/V}$--spaces
\[F(X) \simeq \myuline{\map}_{\underline{\presheaf}_{\underline{V}}(\underline{\sC}_{\underline{V}})}\big(j_V(X), \: F\big)\] In particular, the $\baseCat$--Yoneda embedding
$j : \underline{\sC} \longrightarrow\underline{\presheaf}_{\baseCat}(\underline{\sC})$
is $\baseCat$--fully faithful.
\end{lem}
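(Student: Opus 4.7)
The plan is to reduce the parametrised Yoneda formula to the classical $\infty$-categorical Yoneda lemma applied on total categories, with the cofree parametrisation of \cref{cofreeParametrisations} serving as the bridge between the parametrised and unparametrised worlds; the full-faithfulness statement will then follow formally.

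First, I would reduce to the case where $V$ is terminal in $\baseCat$. Since both the parametrised Yoneda embedding and the parametrised mapping space construction are compatible with basechange along $(-)_{\underline{V}}$ by \cref{TFunctorCategory} together with the explicit fibrewise formula of \cref{mappingAnimaYoneda}, the statement over $\baseCat$ for a basepoint $X \in \sC_V$ is equivalent to the corresponding statement over $\baseCat_{/V}$, where $V$ now plays the role of the terminal object. After this relabelling, $j_V$ becomes simply the Yoneda embedding $j$ of $\underline{\sC}$.

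Second, since equivalences of $\baseCat$-spaces are detected fibrewise, I would fix $W \in \baseCat$ with the unique $f \colon W \to V$ and compare both sides on the fibre over $W$. Unwinding via the explicit formula of \cref{mappingAnimaYoneda}, the fibre over $W$ of the right-hand side is the mapping space
\[\map_{\func_{\underline{W}}((\underline{\sC}_{\underline{W}})\vop, \underline{\spc}_{\underline{W}})}(f^*j(X), f^*F).\]
By the defining universal property of the cofree parametrisation $\underline{\spc}_{\baseCat} = \cofree_{\baseCat}(\spc)$ recalled in \cref{cofreeParametrisations}, this identifies canonically with a mapping space in the ordinary functor category $\func(\totalCategory((\underline{\sC}_{\underline{W}})\vop), \spc)$, where $f^*j(X)$ unstraightens to the classical representable at $f^*X$ viewed as an object of the total category, while $f^*F$ corresponds to the ordinary straightening of $F_{\underline{W}}$. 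The desired equivalence thereby reduces to an instance of the classical Yoneda lemma.

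Finally, for full-faithfulness: by \cref{fullyFaithfulMappingSpaces} it suffices to check that $j$ induces equivalences on parametrised mapping spaces, and applying the Yoneda formula just established to $F = j(Y)$ (with $X, Y$ in the same fibre, which can always be arranged by basechange) yields
\[\underline{\map}_{\underline{\presheaf}_{\baseCat}(\underline{\sC})}(j(X), j(Y)) \simeq j(Y)(X) = \underline{\map}_{\underline{\sC}}(X, Y).\]
The main technical obstacle lies in the second step: one must verify that under the cofree identification the parametrised Yoneda embedding, originally constructed via the $\baseCat$-twisted arrow left fibration $(s,t)$ of \cref{mappingAnimaYoneda}, really does correspond to the classical Yoneda embedding of the total category. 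This amounts to a naturality chase combining the twisted arrow formalism with the straightening-unstraightening equivalence, and is where all the technical compatibilities have to fit together coherently.
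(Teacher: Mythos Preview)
Your approach is correct and matches the paper's in essence: both reduce to the classical Yoneda lemma via the cofree-parametrisation identification $\func_{\underline{V}}(\underline{\sC}\vop_{\underline{V}}, \underline{\spc}_{\underline{V}}) \simeq \func(\totalCategory(\underline{\sC}\vop_{\underline{V}}), \spc)$ from \cref{cofreeParametrisations}. The one noteworthy difference concerns what you flag as the ``main technical obstacle''. The paper does not perform a naturality chase to verify that the parametrised Yoneda embedding corresponds to the classical one under the cofree identification; rather, this factorisation
\[j_V \colon \sC_V \hookrightarrow (\totalCategory(\underline{\sC}\vop)\times_{\baseCat\op}\totalCategory(\underline{V}))\op \xhookrightarrow{\widetilde{j}_V} \func(\totalCategory(\underline{\sC}\vop)\times_{\baseCat\op}\totalCategory(\underline{V}), \spc)\]
is already part of the explicit fibrewise description of $j$ recorded in \cref{mappingAnimaYoneda}, and is simply cited as input. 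With this in hand, the paper reverses your logical order: full-faithfulness of $j_V$ is immediate from the factorisation through $\widetilde{j}_V$, and the Yoneda formula then follows from one application of ordinary Yoneda at the top fibre. Your more careful fibrewise-over-$W$ verification is arguably more complete in establishing the equivalence as one of $\baseCat_{/V}$-spaces rather than just at the level of global sections, a point on which the paper is terse.
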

\begin{proof}
First of all note that the $V$-fibre Yoneda map above factors as 
\begin{center}
    \begin{tikzcd}
    \sC_V \rar["j_V"] \dar[hook] & \func(\totalCategory(\underline{\sC}\vop)\times_{\baseCat\op}\totalCategory(\underline{V}),\spc) \\
    (\totalCategory(\underline{\sC}\vop)\times_{\baseCat\op}\totalCategory(\underline{V}))\op \ar[ur, hook, "\widetilde{j}_V"']
    \end{tikzcd}
\end{center}
This already gives that $j_V$ is fully faithful, and so by definition of parametrised fully faithfulness, the $\baseCat$--yoneda functor $j : \underline{\sC} \rightarrow \underline{\presheaf}_{\baseCat}(\underline{\sC}) $ is $\baseCat$--fully faithful. On the other hand, by the universal property of the $\baseCat$--category of $\baseCat$--objects from \cref{cofreeParametrisations}, we can regard $F$ as an ordinary functor $F : \totalCategory(\underline{\sC}\vop)\times_{\baseCat\op}\totalCategory(\underline{V})\rightarrow \spc$. And so by ordinary Yoneda we get 
\begin{equation*}
    \begin{split}
        \map_{{\func}_{\underline{V}}(\underline{\sC}\vop\times\underline{V}, \underline{\spc}_{\underline{V}})}\big(j_V(X), \: F\big) &\simeq \map_{{\func}(\totalCategory(\underline{\sC}\vop)\times_{\baseCat\op}\totalCategory(\underline{V}), \spc)}\big(j_V(X), \: F\big) \\
        &\simeq F(X)\in \spc
    \end{split}
\end{equation*}
as required.
\end{proof}

\begin{thm}[Continuity of $\baseCat$--Yoneda, {\cite[Cor. 11.10]{shahThesis}}]\label{continuityTYoneda}
Let $\underline{\sC}\in\cat_{\baseCat}$. The $\baseCat$--yoneda embedding $j : \underline{\sC}\rightarrow \underline{\presheaf}_{\baseCat}(\underline{\sC}) $ strongly preserves and detects $\baseCat$--limits.
\end{thm}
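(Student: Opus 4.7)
The plan is to deduce both strong preservation and detection of $\baseCat$--limits from the pointwise computation of $\baseCat$--limits in parametrised functor categories (\cref{TColimitsFunctorCategories}) combined with $\baseCat$--fully faithfulness of $j$ (\cref{TYonedaLemma}).

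For strong preservation, I would first note that $\underline{\presheaf}_{\baseCat}(\underline{\sC}) = \underline{\func}_{\baseCat}(\underline{\sC}\vop, \underline{\spc}_{\baseCat})$ inherits its $\baseCat$--limits pointwise from $\underline{\spc}_{\baseCat}$ by \cref{TColimitsFunctorCategories}. Consequently, to check that $j$ sends a $\baseCat$--limit cone $\bar p : \underline{K}\tcone \to \underline{\sC}$ to a $\baseCat$--limit cone, it suffices to check this after evaluation at every $x \in \underline{\sC}\vop$. But by construction of the $\baseCat$--Yoneda (\cref{mappingAnimaYoneda}), $\eval_x \circ j \simeq \myuline{\map}_{\underline{\sC}}(x,-)$, so the task reduces to showing that each $\baseCat$--corepresented functor $\myuline{\map}_{\underline{\sC}}(x,-) : \underline{\sC} \to \underline{\spc}_{\baseCat}$ strongly preserves $\baseCat$--limits. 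By \cref{characterisationStrongPreservations} this in turn splits into two pieces: on each fibre one must show $\map_{\sC_W}(f^*x,-)$ preserves limits, which is classical; and one must verify the basechange square for each $f : W \to V$ commutes, which is automatic from the naturality of $\myuline{\map}$ in its first variable as recorded in \cref{mappingAnimaYoneda}.

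For detection, let $\bar p : \underline{K}\tcone \to \underline{\sC}$ be a cone whose image $j\bar p$ is a $\baseCat$--limit cone in $\underline{\presheaf}_{\baseCat}(\underline{\sC})$. Combining the same pointwise-limit formula with $\baseCat$--fully faithfulness of $j$, one finds that for each $x$ the cone $\myuline{\map}_{\underline{\sC}}(x,\bar p)$ is already a $\baseCat$--limit cone in $\underline{\spc}_{\baseCat}$. The parametrised Yoneda-style test for $\baseCat$--limits --- obtained by reading the $\baseCat$--adjunction $\constant \dashv \underline{\lim}$ through the mapping-space characterisation of \cref{paramAdjunctionMappingAnima} --- then forces $\bar p$ itself to be a $\baseCat$--limit cone.

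The main obstacle I anticipate is making rigorous the parametrised ``Yoneda test'' for limits used in the detection step. A safer backup route is to reduce entirely to fibres: \cref{characterisationStrongPreservations} turns both preservation and detection into a fibrewise question plus compatibility with basechanges. Fibrewise, the explicit formula in \cref{mappingAnimaYoneda} exhibits $j_V$ as the classical Yoneda of the total category $\totalCategory(\underline{\sC}\vop) \times_{\baseCat\op} \totalCategory(\underline{V})$, which preserves and detects all limits in the usual sense, and the basechange compatibility is once again formal from the naturality of $\myuline{\map}$.
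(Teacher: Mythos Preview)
The paper does not supply its own proof of this statement: it is quoted verbatim from \cite[Cor.~11.10]{shahThesis} and no argument is given. So there is nothing in the paper to compare your proposal against.

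Your overall strategy---reduce to pointwise $\baseCat$--limits in $\underline{\presheaf}_{\baseCat}(\underline{\sC})$ via \cref{TColimitsFunctorCategories}, identify $\eval_x\circ j$ with $\myuline{\map}_{\underline{\sC}}(x,-)$, and then argue that parametrised mapping functors preserve $\baseCat$--limits---is the natural one and is essentially how Shah proceeds. Two technical points deserve care. First, your appeal to \cref{characterisationStrongPreservations} is not quite licit: that proposition is stated under the hypothesis that both source and target are $\baseCat$--(co)complete, whereas here $\underline{\sC}$ carries no completeness assumption. The ``if'' direction of that proposition genuinely uses the decomposition of arbitrary $\baseCat$--(co)limits into fibrewise and indexed pieces, which requires the ambient category to admit them. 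You should instead argue directly that a given $\baseCat$--limit cone in $\underline{\sC}$ maps to one in $\underline{\spc}_{\baseCat}$ under each $\myuline{\map}_{\underline{\sC}}(x,-)$, using the explicit fibrewise formula for $\myuline{\map}$ from \cref{mappingAnimaYoneda} together with \cref{omnibusTColimits}. Second, your backup route has a gap: the factorisation in the proof of \cref{TYonedaLemma} writes $j_V$ as the composite $\sC_V \hookrightarrow (\totalCategory(\underline{\sC}\vop)\times_{\baseCat\op}\totalCategory(\underline{V}))\op \xhookrightarrow{\widetilde{j}_V} \func(-,\spc)$, and while the classical Yoneda $\widetilde{j}_V$ certainly preserves and reflects limits, the first inclusion (fibre into opposite total category) need not preserve limits in general, so you cannot conclude preservation for $j_V$ this way. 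Reflection is fine since both maps are fully faithful, so your backup route does handle detection cleanly.
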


\begin{cor}\label{mappingAdjunctionProjectionFormula}
Let $f : V \rightarrow W$ be a map in $\baseCat$. Let $B\in \sC_V$, $X\in \sC_W$, and $f_! \dashv f^* \dashv f_*$. Then 
\[\myuline{\map}_{\underline{\sC}_{\underline{W}}}(f_!B, X) \simeq f_*\myuline{\map}_{\underline{\sC}_{\underline{V}}}(B, f^*X) \in \underline{\spc}_{\underline{W}}\]
\end{cor}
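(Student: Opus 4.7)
My plan is to realise $f_! B$ as a parametrised colimit and then apply the continuity of the parametrised Yoneda embedding. First, the morphism $f \colon V \to W$ gives a $\baseCat_{/W}$-functor $q \colon \underline{V}_{\underline{W}} \to \terminalTCat_{\underline{W}}$, and corepresentability identifies precomposition $q^*$ with the restriction $f^* \colon \sC_W \to \sC_V$. Hence $q_!$ realises $f_! B$ as the $\baseCat_{/W}$-colimit indexed by $\underline{V}_{\underline{W}}$ of the diagram $B \colon \underline{V}_{\underline{W}} \to \underline{\sC}_{\underline{W}}$ corresponding to $B \in \sC_V$, and similarly $q_*$ realises $f_*$ on $\underline{\spc}_{\underline{W}}$ as the corresponding $\baseCat_{/W}$-limit.

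Next, I would apply \cref{continuityTYoneda} to $\underline{\sC}_{\underline{W}}\vop$ (or equivalently transpose the $\baseCat_{/W}$-Yoneda embedding) to conclude that the functor $\myuline{\map}_{\underline{\sC}_{\underline{W}}}(-, X) \colon \underline{\sC}_{\underline{W}}\vop \to \underline{\spc}_{\underline{W}}$ sends $\baseCat_{/W}$-colimits to $\baseCat_{/W}$-limits. Applied to the presentation $f_! B \simeq \colim_{\underline{V}_{\underline{W}}} B$, this yields
\[
\myuline{\map}_{\underline{\sC}_{\underline{W}}}(f_! B, X) \;\simeq\; f_* \myuline{\map}_{\underline{\sC}_{\underline{W}}}(B, X).
\]
It then remains to identify the diagram $\myuline{\map}_{\underline{\sC}_{\underline{W}}}(B, X) \colon \underline{V}_{\underline{W}}\vop \to \underline{\spc}_{\underline{W}}$ with the $\baseCat_{/V}$-object $\myuline{\map}_{\underline{\sC}_{\underline{V}}}(B, f^*X)$, which is a direct unwinding of the fibrewise formula from \cref{mappingAnimaYoneda}: at any $h \colon V' \to V$ both evaluate to $\map_{\sC_{V'}}(h^*B, h^*f^*X)$, since the restriction of $X$ along $V' \to V \to W$ is $h^* f^* X$.

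The main obstacle will be this last identification, which requires careful bookkeeping between $\underline{V}$ viewed as a $\baseCat_{/W}$-category via $f$ and as the terminal $\baseCat_{/V}$-category. If this becomes cumbersome, an alternative route is to verify the equivalence fibrewise over $\baseCat_{/W}$: for each $g \colon Y \to W$, the left Beck-Chevalley condition for $\underline{\sC}$ yields $g^* f_! B \simeq \coprod_a f_{a!} g_a^* B$ from the orbital pullback decomposition $Y \times_W V \simeq \coprod_a R_a$, whereas the right Beck-Chevalley condition for $\underline{\spc}$ gives $g^* f_* \simeq \prod_a f_{a*} g_a^*$; both sides then reduce via the fibrewise adjunctions $f_{a!} \dashv f_a^*$ to the same product $\prod_a \map_{\sC_{R_a}}(g_a^* B, g_a^* f^* X)$.
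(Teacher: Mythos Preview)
Your approach is correct and is essentially the same as the paper's: both apply \cref{continuityTYoneda} to $\underline{\sC}\vop$ (over $\underline{W}$) to convert the indexed coproduct $f_!$ into the indexed product $f_*$ on parametrised mapping spaces. The only difference is packaging: the paper works directly with the covariant Yoneda $A \mapsto \myuline{\map}_{\underline{\sC}}(A,-)$ as a strongly limit-preserving functor of $A\in\underline{\sC}\vop$, which immediately yields $\myuline{\map}_{\underline{\sC}_{\underline{W}}}(f_!B,-)\simeq f_*\myuline{\map}_{\underline{\sC}_{\underline{V}}}(B,-)$ and thereby absorbs the ``bookkeeping'' step you flag as an obstacle; your version instead fixes $X$ and unwinds the resulting $\underline{V}_{\underline{W}}$-diagram fibrewise, which is more explicit but logically identical.
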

\begin{proof}
Applying \cref{continuityTYoneda} on $\underline{\sC}\vop$, we see that
\[\underline{\sC}\vop \hookrightarrow \underline{\func}(\underline{\sC}, \underline{\spc}) \quad :: \quad A \mapsto \myuline{\map}_{\underline{\sC}\vop}(-, A) \simeq \myuline{\map}_{\underline{\sC}}(A,-)\] strongly preserves $\baseCat$--limits. Hence, since $f_*$ in $\underline{\sC}\vop$ is given by $f_!$ in $\underline{\sC}$, we see that 
\[\myuline{\map}_{\underline{\sC}_{\underline{W}}}(f_!B, - ) \simeq \myuline{\map}_{\underline{\sC}_{\underline{W}\vop}}(-, f_!B) \simeq f_*\myuline{\map}_{\underline{\sC}_{\underline{V}\vop}}(-, B) \simeq f_*\myuline{\map}_{\underline{\sC}_{\underline{V}}}(B,-)\] as required.
\end{proof}

\begin{thm}[$\baseCat$--Yoneda density, {\cite[Lem. 11.1]{shahThesis}}]\label{TYonedaDensity}
Let $j : \underline{\sC} \hookrightarrow \underline{\presheaf}_{\baseCat}(\underline{\sC}) $ be the $\baseCat$--yoneda embedding. Then $\id_{\underline{\presheaf}_{\baseCat}(\underline{\sC}) }\simeq j_!j$, that is, everything in the $\baseCat$--presheaf is a $\baseCat$--colimit of representables.
\end{thm}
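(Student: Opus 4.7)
The plan is to show the canonical counit map $j_! j \to \id_{\underline{\presheaf}_\baseCat(\underline{\sC})}$ — arising from the adjunction $j_! \dashv j^*$ on $\baseCat$-functor categories applied to $\id \in \underline{\func}_\baseCat(\underline{\presheaf}_\baseCat(\underline{\sC}), \underline{\presheaf}_\baseCat(\underline{\sC}))$, using $j^*(\id) = j$ — is an equivalence. Since equivalences of $\baseCat$-functors are detected fibrewise, I would verify this on each $F \in \underline{\presheaf}_\baseCat(\underline{\sC})_V$.

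First, I would observe that $j$ is $\baseCat$-fully faithful by the parametrised Yoneda lemma (\cref{TYonedaLemma}), so \cref{fullyFaithfulTKanExtension} yields an equivalence $j \simeq j^*(j_! j) = (j_! j) \circ j$. In other words, the candidate map $j_! j \to \id$ is already an equivalence on every representable $\baseCat$-presheaf $j(c) \in \underline{\presheaf}_\baseCat(\underline{\sC})_V$; this handles the ``base case''.

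To extend from representables to an arbitrary $F$, I would invoke a pointwise formula for the parametrised left Kan extension, identifying $(j_! j)(F)$ at $V$ with the $\baseCat_{/V}$-colimit of $j$ over the $\baseCat_{/V}$-category of elements of $F$, i.e.\ over $\underline{\sC}_{\underline{V}} \times_{\underline{\presheaf}_\baseCat(\underline{\sC})_{\underline{V}}} \underline{\presheaf}_\baseCat(\underline{\sC})_{\underline{V}, /F}$. By the $\baseCat$-Yoneda lemma (\cref{TYonedaLemma}) again, the objects of this slice correspond to points of $F$ over basechanges. A fibrewise reduction — using that $\baseCat$-colimits in $\underline{\spc}_\baseCat$ are computed pointwise (via \cref{cofreeParametrisations} and \cref{TColimitsFunctorCategories}) combined with the ordinary co-Yoneda lemma at each fibre — would then identify this colimit with $F$ itself, yielding the desired equivalence $(j_! j)(F) \simeq F$.

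The main obstacle will be the parametrised pointwise formula for the $\baseCat$-left Kan extension: in the unparametrised setting this is classical, but in the parametrised context it requires some care to set up (slice $\baseCat$-categories of elements and a suitable parametrised cofinality statement). This is essentially the content of \cite[Lem.~11.1]{shahThesis}; granted this formula, the remaining work is the routine reduction to ordinary co-Yoneda on each fibre.
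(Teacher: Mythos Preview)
The paper does not give its own proof of this statement: it is recorded as a recollection, with a bare citation to \cite[Lem.~11.1]{shahThesis}. So there is no paper proof to compare against; your proposal is in fact more detailed than what the paper itself provides.

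Your outline is sound and you have correctly located the essential content: the parametrised pointwise formula for $\baseCat$--left Kan extensions is exactly the nontrivial input, and once granted, the identification of $(j_!j)(F)$ with $F$ follows. Your step~1 (the base case on representables via \cref{fullyFaithfulTKanExtension}) is correct but somewhat redundant once you have the pointwise formula, since that formula already handles all objects uniformly.

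One imprecision worth flagging: your step~3 proposes to ``reduce fibrewise to the ordinary co-Yoneda lemma'', but this is not quite as direct as stated. The fibre $\underline{\presheaf}_{\baseCat}(\underline{\sC})_V$ is $\func(\totalCategory(\underline{\sC}\vop)\times_{\baseCat\op}\totalCategory(\underline{V}),\spc)$ (cf.\ the proof of \cref{TYonedaLemma}), not $\func(\sC_V\op,\spc)$; the parametrised Yoneda embedding at $V$ factors through a proper subcategory of the unparametrised representables on this total category. So ordinary co-Yoneda over $\sC_V$ is not literally what is being invoked; rather, one uses ordinary co-Yoneda over the total category $\totalCategory(\underline{\sC}\vop)\times_{\baseCat\op}\totalCategory(\underline{V})$ together with the identification of the relevant parametrised slice with an unparametrised one. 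This is exactly the sort of unwinding carried out in Shah's proof, so your deferral to \cite[Lem.~11.1]{shahThesis} for this step is appropriate --- just be aware that the ``fibrewise reduction'' is to co-Yoneda over total categories, not over the individual fibres $\sC_V$.
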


\begin{thm}[Universal property of $\baseCat$--presheaves, {\cite[Thm. 11.5]{shahThesis}}]\label{YonedaUnivProp}
Let $\underline{\sC}, \underline{\D}\in \cat_{\baseCat}$ and suppose $\underline{\D}$ is $\baseCat$--cocomplete. Then the precompositions 
$j^* : \func_{\baseCat}^L(\underline{\presheaf}_{\baseCat}(\underline{\sC}), \underline{\D})\longrightarrow \func_{\baseCat}(\underline{\sC},\underline{\D})$ and $j^* : \underline{\func}_{\baseCat}^L(\underline{\presheaf}_{\baseCat}(\underline{\sC}), \underline{\D})\longrightarrow \underline{\func}_{\baseCat}(\underline{\sC},\underline{\D})$
are  equivalences with the inverse given by left Kan extensions. Here $\func_{\baseCat}^L$ means those functors which strongly preserve $\baseCat$--colimits (cf. \cref{LFunRFunNotations}).
\end{thm}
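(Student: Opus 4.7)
The plan is to construct an inverse to $j^*$ as the $\baseCat$--left Kan extension along $j$. Since $\underline{\D}$ is $\baseCat$--cocomplete, \cref{omnibusTKanExtensions} (applied at the cost of a size enlargement, since $\underline{\presheaf}_{\baseCat}(\underline{\sC})$ is large) yields the $\baseCat$--adjunction
\[
j_! \colon \underline{\func}_{\baseCat}(\underline{\sC}, \underline{\D}) \rightleftarrows \underline{\func}_{\baseCat}(\underline{\presheaf}_{\baseCat}(\underline{\sC}), \underline{\D}) \colon j^*.
\]
Because $j$ is $\baseCat$--fully faithful by the $\baseCat$--Yoneda lemma (\cref{TYonedaLemma}), \cref{fullyFaithfulTKanExtension} shows the unit $\eta \colon \id \Rightarrow j^*j_!$ is an equivalence, so $j_!$ is $\baseCat$--fully faithful.

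It then remains to identify its essential image with $\underline{\func}_{\baseCat}^L(\underline{\presheaf}_{\baseCat}(\underline{\sC}), \underline{\D})$. First, I would verify that $j_!F$ strongly preserves $\baseCat$--colimits for every $F$, by combining the pointwise formula for $\baseCat$--left Kan extensions (implicit in \cite{shahThesis}) with the fact that $\baseCat$--colimits in $\underline{\presheaf}_{\baseCat}(\underline{\sC})$ are pointwise (via \cref{TColimitsFunctorCategories} applied to $\underline{\spc}_{\baseCat}$). Equivalently, one could use \cref{characterisationStrongPreservations} to reduce to checking fibrewise preservation and commutation with the indexed coproducts $f_!$ separately, which would then follow from the pointwise formula fibrewise plus a Beck--Chevalley comparison.

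Second, for the counit $\varepsilon_G \colon j_! j^* G \Rightarrow G$ to be an equivalence on any $G \in \underline{\func}_{\baseCat}^L$, I would apply $j^*$ and use the triangle identity with the unit equivalence above to see that $j^* \varepsilon_G$ is an equivalence, that is, $\varepsilon_G$ is an equivalence on the $\baseCat$--Yoneda image. Both $G$ and $j_! j^* G$ strongly preserve $\baseCat$--colimits (by hypothesis and the previous step respectively), so by $\baseCat$--Yoneda density (\cref{TYonedaDensity})---which expresses every $X \in \underline{\presheaf}_{\baseCat}(\underline{\sC})$ as a $\baseCat$--colimit of representables---the natural transformation $\varepsilon_G$ is an equivalence everywhere. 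The non-underlined statement then follows from the underlined one by passing to $\baseCat$--objects, using the universal property in \cref{TFunctorCategory}.

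The main obstacle is the middle step: the verification that $j_!F$ strongly preserves $\baseCat$--colimits. This is the parametrised analogue of the standard pointwise-cocontinuous extension argument for presheaf categories, and a careful proof requires either importing an explicit pointwise formula for $\baseCat$--Kan extensions from \cite{shahThesis} (not recalled in this excerpt) or painstakingly checking the conditions of \cref{characterisationStrongPreservations} piece by piece, each of which reduces to a Fubini-style commutation of $\baseCat$--colimits that is itself formal but notationally involved.
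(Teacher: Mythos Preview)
The paper does not actually give a proof of this statement: \cref{YonedaUnivProp} is placed in the ``Recollections'' subsection and is simply cited as \cite[Thm.~11.5]{shahThesis} without argument. So there is no in-paper proof to compare your proposal against.

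That said, your outline follows the expected shape of the argument and is consistent with how the surrounding results in the paper are organised. The reduction to (i) $j_!$ being fully faithful via \cref{fullyFaithfulTKanExtension}, (ii) $j_!F$ landing in $\underline{\func}_{\baseCat}^L$, and (iii) the counit being an equivalence on $\underline{\func}_{\baseCat}^L$ via \cref{TYonedaDensity} is correct, and your identification of step (ii) as the substantive point is accurate. Note that the paper itself freely invokes this theorem elsewhere (e.g.\ in the proofs of \cref{UnivPropInd}, \cref{characterisationTLeftAdjointsPresentables}, and \cref{arbitraryCocompletions}) without ever supplying the details of why $j_!F$ strongly preserves $\baseCat$--colimits, so the intended argument is indeed the one carried out in \cite{shahThesis} rather than anything redeveloped here. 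Your two suggested routes for step (ii)---either a pointwise formula for parametrised Kan extensions or a direct verification via \cref{characterisationStrongPreservations}---are both viable, and the first is closer to what Shah does.
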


We learnt of the following useful procedure from Fabian Hebestreit.
\begin{defn}[Adjoint objects]
Let $R : \underline{\D} \rightarrow \underline{\sC}$ be a $\baseCat$--functor. Let $x \in \underline{\sC}$ and $y \in \underline{\D}$ and $\eta : x \rightarrow R(y)$. We say that $\eta$ \textit{witnesses $y$ as a left adjoint object to $x$ under $R$} if 
\[\myuline{\map}_{\underline{\D}}(y,-) \xrightarrow{R} \myuline{\map}_{\underline{\sC}}(Ry,R-) \xrightarrow{\eta^*} \myuline{\map}_{\underline{\sC}}(x, R-)\]
is an equivalence of $\baseCat$--functors $\underline{\D}
\rightarrow \underline{\spc}_{\baseCat}$.
\end{defn}

The following observation, due to Lurie, is quite surprising for $\infty$-categories: adjunctions can be constructed objectwise, ie. to check that we have an adjunction, it is enough to construct a left adjoint object for each object.

\begin{prop}[Pointwise construction of adjunctions] \label{pointwiseConstructionOfAdjunction}
$R : \underline{\D} \rightarrow \underline{\sC}$ admits a left adjoint $L : \underline{\sC}\rightarrow \underline{\D}$ if and only if all objects in $\underline{\sC}$ admits a left adjoint object. More generally, writing $\underline{\sC}_R$ for the full subcategory of objects admitting left adjoint objects, we obtain a $\baseCat$--functor $L : \underline{\sC}_R \rightarrow \underline{\D}$ that is $\baseCat$--left adjoint to the restriction of $R : \underline{\D}\rightarrow \underline{\sC}$ to the subcategory of $\underline{\D}$ landing in $\underline{\sC}_R$.
\end{prop}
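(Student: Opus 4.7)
The plan is to reduce to the fibrewise criterion \cref{criteriaForTAdjunctions} combined with the unparametrised pointwise construction of adjunctions from \cite[$\S 5.2.4$]{lurieHTT}. The bridge between the parametrised and the fibrewise data is the mapping space characterisation \cref{paramAdjunctionMappingAnima} together with the $\baseCat$-Yoneda lemma \cref{TYonedaLemma}.

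The only if direction is formal: if $L \dashv R$ is a $\baseCat$-adjunction with unit $\eta$, then \cref{paramAdjunctionMappingAnima} immediately exhibits $(L(x), \eta_x)$ as a left adjoint object to each $x \in \sC_V$. For the if direction, fix $V \in \baseCat$ and for each $x \in \sC_V$ let $(y_x, \eta_x)$ be the given left adjoint object witnessing the equivalence $\myuline{\map}_{\underline{\D}}(y_x, -) \simeq \myuline{\map}_{\underline{\sC}}(x, R-)$. Restricting this to the fibre over $\id_V$ produces an ordinary equivalence $\map_{\D_V}(y_x, -) \simeq \map_{\sC_V}(x, R_V -)$, so by the unparametrised pointwise construction of adjunctions the assignment $x \mapsto y_x$ promotes to a functor $L_V \colon \sC_V \to \D_V$ left adjoint to $R_V$, with $L_V(x) \simeq y_x$ on objects.

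By \cref{criteriaForTAdjunctions} it only remains to check the Beck-Chevalley square $f^* L_V \simeq L_W f^*$ for each $f \colon W \to V$ in $\baseCat$. Evaluating the parametrised mapping space equivalence for $x$ at the object $f \in \baseCat_{/V}$, and using that $R f^* \simeq f^* R$ since $R$ is a $\baseCat$-functor, yields
\[\map_{\D_W}(f^* y_x, z) \simeq \map_{\sC_W}(f^* x, R z)\]
naturally in $z \in \D_W$. Applying the same recipe to the object $f^* x \in \sC_W$ gives $\map_{\D_W}(y_{f^* x}, z) \simeq \map_{\sC_W}(f^* x, R z)$, and so the ordinary Yoneda lemma on $\D_W$ forces $f^* y_x \simeq y_{f^* x}$, which is precisely the desired compatibility. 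This same computation also shows that $f^*$ preserves the property of admitting a left adjoint object, so that $\underline{\sC}_R \subseteq \underline{\sC}$ is a genuine $\baseCat$-full subcategory and the general statement follows by rerunning the argument restricted to $\underline{\sC}_R$. The main obstacle is really only the bookkeeping in this Beck-Chevalley step, i.e.\ making sure that the $\baseCat_{/V}$-level mapping space equivalence is unpacked correctly at the various fibres $W$; everything else is a direct invocation of the fibrewise criterion and of the classical Lurie result.
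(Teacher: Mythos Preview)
Your proof is correct and takes a genuinely different route from the paper. The paper bypasses the fibrewise criterion entirely: it curries $\myuline{\map}_{\underline{\sC}}(-,R-)$ to a $\baseCat$-functor $H \colon \underline{\sC}\vop \rightarrow \underline{\func}_{\baseCat}(\underline{\D},\underline{\spc}_{\baseCat})$, observes that on $\underline{\sC}_R\vop$ this lands in the essential image of the parametrised Yoneda embedding $\underline{\D}\vop \hookrightarrow \underline{\func}_{\baseCat}(\underline{\D},\underline{\spc}_{\baseCat})$, and simply defines $L\vop$ as the resulting lift. This one-step construction automatically produces $L$ as a $\baseCat$-functor, with the adjunction identity $\myuline{\map}_{\underline{\D}}(L-,-) \simeq \myuline{\map}_{\underline{\sC}}(-,R-)$ built in by design, so there is no Beck--Chevalley condition to verify. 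Your approach instead constructs the fibrewise $L_V$ via the unparametrised statement and then checks compatibility with $f^*$ by hand. This works, and your Beck--Chevalley verification is the right idea: restricting the parametrised mapping-space equivalence along $f$ shows that $(f^*y_x, f^*\eta_x)$ is again a left adjoint object to $f^*x$, and since the canonical comparison $L_W f^*(x) \rightarrow f^*L_V(x)$ of \cref{criteriaRelativeAdjunctions}(2) is precisely the map of corepresenting objects determined by these two units, it is forced to be an equivalence. The paper's Yoneda-lift argument is slicker and packages all the coherence for free; your argument is more explicit and makes transparent that the content is exactly the fibrewise statement together with the observation that left-adjoint-object data restricts along morphisms in $\baseCat$.
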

\begin{proof}
The trick is to use the $\baseCat$--Yoneda lemma to help us assemble the various left adjoint objects into a coherent $\baseCat$--functor. We consider $\myuline{\map}_{\underline{\sC}}(-,R-) : \underline{\sC}\vop \times  \underline{\D} \rightarrow 
\underline{\spc}_{\baseCat}$ as a $\baseCat$--functor $H : \underline{\sC}\vop \rightarrow \underline{\func}_{\baseCat}(\underline{\D},\underline{\spc}_{\baseCat})$. Hence by definition of $\underline{\sC}_R$, the bottom left composition lands in the essential image of the Yoneda embedding and so we obtain a lift $L\vop$ in the commuting square 
\begin{center}
    \begin{tikzcd}
    \underline{\sC}_R\vop \rar["L\vop"] \dar[hook] & \underline{\D}\vop\dar["y",hook]\\
    \underline{\sC}\vop\rar["H"] & \underline{\func}_{\baseCat}(\underline{\D},\underline{\spc}_{\baseCat})
    \end{tikzcd}
\end{center}
To see that when $\underline{\sC}_R = \underline{\sC}$, we get a $\baseCat$--left adjoint, note that by construction $y\circ L\vop \simeq H$ in $\underline{\func}_{\baseCat}(\underline{\sC}\vop,\func(\underline{\D},\underline{\spc}_{\baseCat}))$, and hence $\myuline{\map}_{\underline{\D}}(L-,-) \simeq \myuline{\map}_{\underline{\sC}}(-,R-)$ in $\underline{\func}_{\baseCat}(\underline{\sC}\vop \times \underline{\D},\underline{\spc}_{\baseCat})$. By the characterisation of $\baseCat$--adjunctions from \cref{paramAdjunctionMappingAnima}, we are done.
\end{proof}

\begin{nota}\label{LFunRFunNotations}
We write $\underline{\rfunc}_{\baseCat}$ (resp. $\underline{\lfunc}_{\baseCat}$) for the $\baseCat$--full subcategories in $\underline{\func}_{\baseCat}$ of $\baseCat$--right adjoint functors (resp. $\baseCat$--left adjoint functors). This is distinguished from the notations $\underline{\func}^R_{\baseCat}$ (resp.  $\underline{\func}^L_{\baseCat}$) by which we mean the $\baseCat$--full subcategories in $\underline{\func}_{\baseCat}$ of strongly $\baseCat$--limit-preserving functors (resp. strongly $\baseCat$--colimit-preserving functors).
\end{nota}

\begin{prop}[``{\cite[Prop. 5.2.6.2]{lurieHTT}}'']\label{HTT5.2.6.2}
Let $\underline{\sC},\underline{\D}\in \cat_{\baseCat}$. Then there is a canonical equivalence $\underline{\lfunc}_{\baseCat}(\underline{\D},\underline{\sC}) \simeq \underline{\rfunc}_{\baseCat}(\underline{\sC},\underline{\D})\vop$.
\end{prop}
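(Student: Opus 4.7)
The strategy is to reduce the statement to the unparametrised \cite[Prop.~5.2.6.2]{lurieHTT} via a fibrewise argument, using that both sides basechange compatibly and that the mate correspondence commutes with basechange.

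First, I would identify the fibres of both sides over any $V \in \baseCat$. By \cref{TFunctorCategory} the internal $\baseCat$-functor category basechanges well, and by \cref{relativeAdjunctionPullbacks} a $\baseCat$-adjunction pulls back to a $\baseCat_{/V}$-adjunction; hence the fibre of $\underline{\lfunc}_{\baseCat}(\underline{\D},\underline{\sC})$ over $V$ is precisely $\lfunc_{\underline{V}}(\underline{\D}_{\underline{V}}, \underline{\sC}_{\underline{V}})$. Since $\baseCat$-opposites are taken fibrewise by \cref{defn:vop}, the fibre of $\underline{\rfunc}_{\baseCat}(\underline{\sC},\underline{\D})\vop$ over $V$ is $\rfunc_{\underline{V}}(\underline{\sC}_{\underline{V}}, \underline{\D}_{\underline{V}})\op$. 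It therefore suffices to construct equivalences on each fibre, naturally in $V$.

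Next I would construct the fibrewise equivalences via the mate correspondence, following Lurie's argument in HTT~5.2.6.2 carried out inside the functor $\infty$-category $\func(\baseCat_{/V}\op, \cat) = \cat_{\baseCat_{/V}}$, in which the internal adjunctions are precisely the $\baseCat_{/V}$-adjunctions. Concretely, on objects send a $\baseCat_{/V}$-left adjoint $F$ to its essentially unique $\baseCat_{/V}$-right adjoint $G$, and on a natural transformation $\alpha : F_1 \Rightarrow F_2$ send $\alpha$ to its mate $\hat\alpha : G_2 \Rightarrow G_1$ built from the units and counits in the standard manner. Essential surjectivity is immediate, and full faithfulness on mapping spaces can either be inherited from the unparametrised statement or verified directly using the mapping space characterisation of $\baseCat$-adjunctions in \cref{paramAdjunctionMappingAnima}.

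Finally, I would promote these fibrewise equivalences to a $\baseCat$-equivalence by checking naturality in $V$: for any $f : W \to V$ in $\baseCat$, pullback along $f$ preserves the adjunction data (units, counits, triangle identities) by \cref{relativeAdjunctionPullbacks}, and hence commutes with the mate construction. The principal obstacle is exactly this coherence step — assembling the fibrewise mate correspondences into a single $\baseCat$-functor rather than a loose collection of equivalences. The cleanest way to handle it is to carry out Lurie's construction directly in the ambient $\infty$-category $\cat_{\baseCat} = \func(\baseCat\op, \cat)$ from the outset, so that naturality is automatic; the fibrewise identification above then recovers the claimed equivalence over each $V$.
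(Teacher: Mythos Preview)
Your plan is viable in outline but the coherence step you flag is exactly where the work lies, and your resolution (``carry out Lurie's construction directly in $\cat_{\baseCat}$'') remains vague: you have not said what concrete $\baseCat$-functor you would write down to realise the mate correspondence globally. Without such a functor in hand, checking that the fibrewise equivalences are compatible with the restriction functors $f^*$ only verifies that a putative $\baseCat$-equivalence, if it existed, would restrict correctly---it does not actually construct one.

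The paper takes a different and cleaner route that sidesteps the coherence problem entirely. Rather than building the equivalence fibrewise and then gluing, it embeds both sides $\baseCat$-fully faithfully into the common target $\underline{\func}_{\baseCat}(\underline{\sC}\times\underline{\D}\vop,\underline{\spc}_{\baseCat})$ via the $\baseCat$-Yoneda embedding (so the global $\baseCat$-functor is $j_*$, already available), and then identifies the essential images: a bifunctor $\varphi$ lies in the image of $\underline{\rfunc}_{\baseCat}(\underline{\sC},\underline{\D})$ precisely when $\varphi(c,-)$ is representable for all $c$ and $\varphi(-,d)$ is corepresentable for all $d$ (the latter condition encoding the existence of a left adjoint via \cref{pointwiseConstructionOfAdjunction}). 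The same characterisation from the other side, combined with \cref{opOfFunctorCats}, gives $\underline{\lfunc}_{\baseCat}(\underline{\D},\underline{\sC})\vop$ the same essential image. This is in fact the parametrised analogue of Lurie's own argument, so your second suggestion, made precise, becomes exactly this. What the Yoneda approach buys is that the comparison map is a concrete $\baseCat$-functor from the outset, so no assembly is needed; what your fibrewise approach would buy, if the coherence were handled, is a more direct link to the classical mate calculus.
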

\begin{proof}
Let $j : \underline{\D} \hookrightarrow \underline{\presheaf}_{\baseCat}(\underline{\D})$ be the $\baseCat$--Yoneda embedding. Then the $\baseCat$--functor 
\[j_* : \underline{\func}_{\baseCat}(\underline{\sC}, \underline{\D}) \hookrightarrow \underline{\func}_{\baseCat}(\underline{\sC}, \underline{\presheaf}_{\baseCat}(\underline{\D})) \simeq \underline{\func}_{\baseCat}(\underline{\sC}\times\underline{\D}\vop, \underline{\spc}_{\baseCat})\]
which is $\baseCat$--fully faithful by \cref{functorsPreserveTFullyFaithfulness} has essential image consisting of those parametrised functors $\varphi : \underline{\sC} \times \underline{\D}\vop\rightarrow \underline{\spc}_{\baseCat}$ such that for all $c \in \underline{\sC}$, $\varphi(c,-) : \underline{\D}\vop \rightarrow \underline{\spc}_{\baseCat}$ is representable. The essential image under $j_*$ of $\underline{\rfunc}_{\baseCat}(\underline{\sC},\underline{\D}) \subseteq \underline{\func}_{\baseCat}(\underline{\sC}, \underline{\D})$ will then be those parametrised functors as above which moreover satisfy that for all $d\in \underline{\D}$, $\varphi(-,d) : \underline{\sC}\rightarrow\underline{\spc}_{\baseCat}$ is corepresentable - this is since $\baseCat$--adjunctions can be constructed objectwise by \cref{pointwiseConstructionOfAdjunction}. Let $\underline{\E} \subseteq \underline{\func}_{\baseCat}(\underline{\sC}\times \underline{\D}\vop, \underline{\spc}_{\baseCat})$ be the $\baseCat$--full subcategory spanned by those functors satisfying these two properties, so that $\underline{\rfunc}_{\baseCat}(\underline{\sC}, \underline{\D}) \xrightarrow{\simeq} \E$.

On the other hand, repeating the above for $\underline{\func}_{\baseCat}(\underline{\D}\vop, \underline{\sC}\vop)$ gives
\[\underline{\func}_{\baseCat}(\underline{\D}\vop, \underline{\sC}\vop) \hookrightarrow \underline{\func}_{\baseCat}(\underline{\D}\vop \times \underline{\sC}, \underline{\spc}_{\baseCat})\] where the essential image of $\underline{\rfunc}_{\baseCat}(\underline{\D}\vop, \underline{\sC}\vop)$ will be precisely those that satisfy the two properties, and so also $\underline{\rfunc}_{\baseCat}(\underline{\D}\vop, \underline{\sC}\vop) \xrightarrow{\simeq} \underline{\E}$. Thus, combining with $\underline{\rfunc}_{\baseCat}(\underline{\D}\vop, \underline{\sC}\vop)\simeq \underline{\lfunc}_{\baseCat}(\underline{\D}, \underline{\sC}) \vop$ from \cref{opOfFunctorCats}, we obtain the desired result.
\end{proof}

\subsection{(Full) faithfulness}
In this subsection we provide the parametrised analogue of the Lurie-Thomason formula for limits in categories, \cref{lurieThomasonFormula}, as well as show that parametrised functor categories preserve (fully) faithfulness in \cref{functorsPreserveTFullyFaithfulness}. 

\begin{nota}
For $p : \underline{\sC} \rightarrow \underline{I}$ a $\baseCat$--functor which is also a cocartesian fibration, we will write $\underline{\Gamma}_{\baseCat}\cocartesian(p)$ for the $\baseCat$--category of cocartesian sections of $p$. In other words, it is the  $\baseCat$--category $\underline{\func}\cocartesian_T(\underline{I}, \underline{\sC}) \times_{\underline{\func}_{\baseCat}(\underline{I},\underline{I})}\terminalTCat $ where $\underline{\func}\cocartesian_T(\underline{I}, \underline{\sC}) $ means the full $\baseCat$--subcategory of those that parametrised functors that preserve cocartesian morphisms over $\underline{I}$, and the $\baseCat$--functor $\terminalTCat \rightarrow \underline{\func}_{\baseCat}(\underline{I}, \underline{I})$ is the section corresponding to the identity on $\underline{I}$.
\end{nota}

The following proof is just a parametrisation of the unparametrised proof that we learnt from \cite[Prop. I.36]{fabianWagner}.
\begin{thm}[Lurie-Thomason formula]\label{lurieThomasonFormula}
Given a $\baseCat$--diagram $F : \underline{I}\rightarrow \underline{\cat}_{\baseCat}$, we get $$\underline{\lim}_{\underline{I}}F \simeq \underline{\Gamma}_T^{\underline{I}\mathrm{-cocart}}(\unstraighten\cocartesian(F))$$ In particular, if it factors through $F : {\underline{I}} \rightarrow \underline{\spc}_{\baseCat}$, then we have $\underline{\lim}_{\underline{I}}F \simeq \underline{\Gamma}_T(\unstraighten\cocartesian(F))$.
\end{thm}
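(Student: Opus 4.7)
The plan is to verify the universal property of the $\baseCat$--limit. For arbitrary $\underline{\E} \in \cat_{\baseCat}$, I want to produce a natural equivalence
\[\map_{\cat_{\baseCat}}(\underline{\E}, \underline{\Gamma}_T^{\underline{I}\mathrm{-cocart}}(\unstraighten\cocartesian(F))) \simeq \map_{\func_{\baseCat}(\underline{I}, \underline{\cat}_{\baseCat})}(q^*\underline{\E}, F),\]
where $q : \underline{I} \rightarrow \terminalTCat$ is the terminal $\baseCat$--functor. The right-hand side equals $\map_{\cat_{\baseCat}}(\underline{\E}, q_*F) = \map_{\cat_{\baseCat}}(\underline{\E}, \underline{\lim}_{\underline{I}} F)$ by the adjunction $q^* \dashv q_*$ from \cref{TColimits}, so producing the above equivalence naturally in $\underline{\E}$ finishes the proof via the $\baseCat$--Yoneda lemma \cref{TYonedaLemma}.

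To unfold the left-hand side, the pullback definition of $\underline{\Gamma}_T^{\underline{I}\mathrm{-cocart}}$ identifies such maps with $\baseCat$--functors $\underline{\E}\times\underline{I} \rightarrow \unstraighten\cocartesian(F)$ that lie over the projection $\underline{\E}\times\underline{I} \rightarrow \underline{I}$ and preserve arrows cocartesian over $\underline{I}$. For the right-hand side, applying the parametrised straightening-unstraightening \cref{parametrisedStraightenUnstraighten} identifies $F$ with the cocartesian fibration $\unstraighten\cocartesian(F) \rightarrow \totalCategory(\underline{I})$, and identifies the constant functor $q^*\underline{\E}$ with the projection $\totalCategory(\underline{\E}) \times_{\baseCat\op} \totalCategory(\underline{I}) \rightarrow \totalCategory(\underline{I})$; so the right-hand side becomes the space of maps of cocartesian fibrations over $\totalCategory(\underline{I})$ between these two total categories.

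To reconcile the two descriptions, I examine cocartesian arrows in the pullback $\totalCategory(\underline{\E}) \times_{\baseCat\op} \totalCategory(\underline{I})$: those cocartesian over $\totalCategory(\underline{I})$ are pairs $(f,g)$ with $f$ cocartesian over $\baseCat\op$, while those cocartesian over the composite to $\baseCat\op$ additionally require $g$ to be cocartesian over $\baseCat\op$. Since any such $\varphi$ is over $\totalCategory(\underline{I})$, a $\baseCat\op$-cocartesian arrow $(f,g)$ in particular is $\totalCategory(\underline{I})$-cocartesian, so its image is $\totalCategory(\underline{I})$-cocartesian, and its image in $\totalCategory(\underline{I})$ remains the original $\baseCat\op$-cocartesian $g$; together these force the image to be $\baseCat\op$-cocartesian. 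Hence preserving $\totalCategory(\underline{I})$-cocartesian arrows automatically implies preserving $\baseCat\op$-cocartesian arrows, so ``map of cocartesian fibrations over $\totalCategory(\underline{I})$'' is equivalent to ``$\baseCat$--functor over $\underline{I}$ preserving $\underline{I}$-cocartesian arrows'', matching the two descriptions.

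Finally, for the space case, when $F$ factors through $\underline{\spc}_{\baseCat}$, the unstraightening is a left fibration over $\totalCategory(\underline{I})$, so every arrow is automatically cocartesian and the preservation condition becomes vacuous, yielding $\underline{\Gamma}_T^{\underline{I}\mathrm{-cocart}} \simeq \underline{\Gamma}_T$. The main obstacle is precisely the two-level bookkeeping in the reconciliation step: disentangling cocartesianness over $\baseCat\op$ (built into being a $\baseCat$--functor) from cocartesianness over $\totalCategory(\underline{I})$ (the extra hypothesis), and verifying they assemble consistently via the explicit description of cocartesian arrows in the composite cocartesian fibration.
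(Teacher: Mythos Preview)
Your proposal is correct and follows essentially the same approach as the paper: both verify the universal property of the right adjoint $q_*$ by translating, via straightening--unstraightening, to maps of cocartesian fibrations over $\totalCategory(\underline{I})$, and both hinge on the same observation that in $\totalCategory(\underline{\E})\times_{\baseCat\op}\totalCategory(\underline{I})$ the $\totalCategory(\underline{I})$-cocartesian morphisms are exactly those $(f,g)$ with $f$ cocartesian over $\baseCat\op$, so that preservation of $\totalCategory(\underline{I})$-cocartesian morphisms already forces the map to be a $\baseCat$--functor. The paper is slightly more explicit about one bookkeeping point you gloss over: it first obtains a bijection on $\pi_0$ of mapping spaces and then constructs an explicit comparison map (via the evaluation $\varepsilon : \underline{\Gamma}^{\underline{I}\mathrm{-cocart}}_T(p)\times\underline{I}\rightarrow\E_F$) to upgrade this to an equivalence of spaces, whereas you implicitly rely on your chain of identifications being equivalences of spaces rather than mere bijections---this is fine provided the currying and pullback steps are understood as such.
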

\begin{proof}
Let $d : {\underline{I}}\rightarrow \terminalTCat$ be the unique map. Since $\underline{\cat}_{\baseCat}$ has all $\baseCat$--limits, we know abstractly that we have the $\baseCat$--right adjoint
\[d^* : \underline{\cat}_{\baseCat} \rightleftarrows \underline{\func}_{\baseCat}({\underline{I}}, \underline{\cat}_{\baseCat}) : d_* =: \underline{\lim}_{\baseCat}\] so now we just need to understand the fibrewise right adjoint formula (by virtue of \cref{criteriaForTAdjunctions}). Without loss of generality, we work with global sections and we want to describe the right adjoint in  
\[d^* : \func(\baseCat\op, {\cat}) \rightleftarrows {\func}_{\baseCat}({\underline{I}}, \underline{\cat}_{\baseCat}) \simeq \func(\totalCategory({\underline{I}}), \cat) : d_*\]
We can now identify $d^*$ concretely via the straightening-unstraightening equivalence to get 
$d^* \colon \cocartesianCategory(\baseCat\op)\rightarrow \cocartesianCategory(\totalCategory({\underline{I}}))$ given by \[\underline{\sC} \mapsto \big(\pi_{\sC} : \totalCategory(\underline{\sC})\times_{\baseCat\op}\totalCategory({\underline{I}}) \rightarrow \totalCategory({\underline{I}})\big)\] Let $(p : \E_F \rightarrow \totalCategory({\underline{I}})) \coloneqq \unstraighten^{\cocartesianCategory}(F)$ be the cocartesian fibration associated to $F : \totalCategory({\underline{I}}) \rightarrow \cat$. We need to show that $\underline{\Gamma}_T^{\underline{I}\mathrm{-cocart}}(p)$  satisfies a natural equivalence
\[\map_{\cocartesianCategory(\totalCategory({\underline{I}}))}(\pi_{\sC}, p) \simeq \map_{\cocartesianCategory(\baseCat\op)}(\underline{\sC}, \underline{\Gamma}_T^{\underline{I}\mathrm{-cocart}}(p))\]
for all $\underline{\sC}\in\cocartesianCategory(\baseCat\op)$. First of all, by definition we have the pullback 
\begin{center}
\adjustbox{scale=0.85}{
    \begin{tikzcd}
    \map_{\cocartesianCategory(\baseCat\op)_{/\totalCategory({\underline{I}})}}(\pi_{\sC}, p)  \rar\dar\ar[dr, phantom, very near start, "\scalebox{1.5}{$\lrcorner$}"] & \map_{\cocartesianCategory(\baseCat\op)}(\totalCategory(\underline{\sC})\times_{\baseCat\op}\totalCategory({\underline{I}}), \E_F) \dar  \\
    \ast \rar["\pi_{\sC}"] & \map_{\cocartesianCategory(\baseCat\op)}(\totalCategory(\underline{\sC})\times_{\baseCat\op}\totalCategory({\underline{I}}), \totalCategory({\underline{I}}))
    \end{tikzcd}
    }
\end{center}
which by currying is the same as the pullback
\begin{center}
    \begin{tikzcd}
    \map_{\cocartesianCategory(\baseCat\op)_{/\totalCategory({\underline{I}})}}(\pi_{\sC}, p)  \rar\dar\ar[dr, phantom, very near start, "\scalebox{1.5}{$\lrcorner$}"] & \map_{\cocartesianCategory(\baseCat\op)}(\underline{\sC}, \underline{\func}_{\baseCat}({\underline{I}},\E_F)) \dar  \\
    \ast \rar["\id_{\underline{I}}"] & \map_{\cocartesianCategory(\baseCat\op)}(\underline{\sC}, \underline{\func}_{\baseCat}({\underline{I}},{\underline{I}}))
    \end{tikzcd}
\end{center}
Now recall that $\map_{\cocartesianCategory(\totalCategory({\underline{I}}))}(\pi_{\sC}, p) \subseteq \map_{\cocartesianCategory(\baseCat\op)_{\totalCategory({\underline{I}})}}(\pi_{\sC}, p) $ consists precisely of those components of functors over $\totalCategory({\underline{I}})$ (in the left diagram)
\begin{center}
\adjustbox{scale=0.95}{
    \begin{tikzcd}
    \totalCategory(\underline{\sC})\times_{\baseCat\op}\totalCategory({\underline{I}}) \ar[r] \ar[dr, "\pi_{\sC}"']& \E_F \ar[d, "p"] & = &\sC \ar[r] \ar[dr, "\pi_{\sC}"']& \underline{\func}_{\baseCat}(\underline{I}, \E_F) \ar[d, "p"] \\
    & \totalCategory({\underline{I}}) & && \underline{\func}_{\baseCat}(\underline{I}, \underline{I})
    \end{tikzcd}
    }
\end{center}
preserving cocartesian morphisms over $\totalCategory(\underline{I})$. Since the cocartesian morphisms in the cocartesian fibration $\pi_{\sC} : \totalCategory(\underline{\sC})\times_{\baseCat\op}\totalCategory({\underline{I}})\rightarrow \totalCategory(\underline{I})$ are precisely the morphisms of $\totalCategory(\underline{I})$ and an equivalence in $\sC$, we see that this condition corresponds in the curried version on the right to those functors landing in $\underline{\func}_{\baseCat}^{\underline{I}\mathrm{-cocart}}(\underline{I}, \E_F)$. Finally for the statement about the case of factoring over $\underline{\spc}_{\baseCat}$ recall that unstraightening brings us to left fibrations $\E_F \rightarrow \totalCategory(\underline{I})$, and since in left fibrations all morphisms are cocartesian, we need not have imposed the condition above. This shows us that we have a bijection of components
\[\pi_0\map_{\cocartesianCategory(\totalCategory(\underline{I}))}(\pi_{\sC}, p) \simeq \pi_0\map_{\cocartesianCategory(\baseCat\op)}(\underline{\sC}, \underline{\Gamma}_T^{\underline{I}\mathrm{-cocart}}(p))\]
We now need to show that this would already imply that we have an equivalence of mapping \textit{spaces}. For this, we will need to first construct a map of spaces realising the bijection above. First note that we have a map of cocartesian fibrations over $\totalCategory(\underline{I})$
\[\varepsilon : \underline{\Gamma}^{\underline{I}\mathrm{-cocart}}_T(p) \times \underline{I} \longrightarrow \E_F\] from the evaluation. Therefore we get the following maps of spaces 

\begin{equation}\label{mappingSpaceComparisonMap}
\begin{split}
     &\map_{\cocartesianCategory(\baseCat\op)}(-,\underline{\Gamma}^{\underline{I}\mathrm{-cocart}}_T(p))\\
     & \xrightarrow{\underline{I} \times -} \map_{\cocartesianCategory(\totalCategory(\underline{I}))}(\underline{I} \times -, \underline{I} \times \underline{\Gamma}^{\underline{I}\mathrm{-cocart}}_T(p))\\
     & \xrightarrow{\varepsilon_*}\map_{\cocartesianCategory(\totalCategory(\underline{I}))}(\underline{I} \times -, \E_F)
\end{split}
\end{equation}
On the other hand, we know by the pullback definition of $\underline{\Gamma}_{\baseCat}$ that 
\begin{equation}\label{mappingSpaceEquivalence}
    \map_{\cocartesianCategory(\baseCat\op)}(-,\underline{\Gamma}_T(p)) \simeq \map_{\cocartesianCategory(\baseCat\op)_{/\totalCategory(\underline{I})}}(\underline{I} \times -, \E_F)
\end{equation}
and so the comparison map \cref{mappingSpaceComparisonMap} is induced by this equivalence. Our bijection on components then gives that the equivalence \cref{mappingSpaceEquivalence} restricts to an equivalence of spaces \cref{mappingSpaceComparisonMap}. This completes the proof of the result.
\end{proof}

As far as we are aware the following proof strategy first appeared in \cite[$\S5$]{gepnerHaugsengNikolaus}.

\begin{prop}[Mapping space formula in $\baseCat$--functor categories]\label{mapingAnimaFormulaFunctorCategories}
Let $\underline{\sC}, \underline{\D} \in \cat_{\baseCat}$  and $F, G : \underline{\sC}\rightarrow \underline{\D}$ be $\baseCat$--functors. Then we have an equivalence of $\baseCat$--spaces
\[\underline{\nattrans}_T(F, G) \simeq \underline{\lim}_{(x\rightarrow y) \in \underline{\twistedArrow}_T(\underline{\sC}) }\myuline{\map}_{\underline{\D}}(F(x), G(y))\in \underline{\spc}_{\baseCat}\]
\end{prop}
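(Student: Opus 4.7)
Following the strategy of Gepner--Haugseng--Nikolaus \cite[$\S5$]{gepnerHaugsengNikolaus} in the unparametrised setting, the plan is to apply the Lurie--Thomason formula (\cref{lurieThomasonFormula}) to rewrite the RHS as the $\baseCat$-category of parametrised sections of a left $\baseCat$-fibration, and then match these sections with $\baseCat$-natural transformations $F \Rightarrow G$.

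Concretely, consider the composite $\baseCat$-functor
\[\Phi \colon \underline{\twistedArrow}_T(\underline{\sC}) \xrightarrow{(s,t)} \underline{\sC}\vop \times \underline{\sC} \xrightarrow{F\vop \times G} \underline{\D}\vop \times \underline{\D} \xrightarrow{\myuline{\map}_{\underline{\D}}} \underline{\spc}_{\baseCat},\]
whose parametrised limit is the RHS. Since $\Phi$ factors through $\underline{\spc}_{\baseCat}$, \cref{lurieThomasonFormula} gives $\underline{\lim}_{\underline{\twistedArrow}_T(\underline{\sC})} \Phi \simeq \underline{\Gamma}_T(E)$, where $E \to \underline{\twistedArrow}_T(\underline{\sC})$ denotes the parametrised unstraightening of $\Phi$. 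Since $\baseCat$-unstraightening respects pullbacks and $(s,t)\colon \underline{\twistedArrow}_T(\underline{\D}) \to \underline{\D}\vop \times \underline{\D}$ unstraightens $\myuline{\map}_{\underline{\D}}$ by \cref{mappingAnimaYoneda}, this unstraightening identifies as the pullback
\[E \simeq \underline{\twistedArrow}_T(\underline{\D}) \times_{\underline{\D}\vop \times \underline{\D}} \underline{\twistedArrow}_T(\underline{\sC}),\]
fibered over $\underline{\twistedArrow}_T(\underline{\sC})$ via the second projection, where the structure map $\underline{\twistedArrow}_T(\underline{\sC}) \to \underline{\D}\vop\times\underline{\D}$ is $(F\vop \times G)\circ (s,t)$. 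Parametrised sections of $E$ are thus equivalently $\baseCat$-functors $\underline{\twistedArrow}_T(\underline{\sC}) \to \underline{\twistedArrow}_T(\underline{\D})$ lying over $(F\vop\times G)\circ (s,t)$. The intended comparison sends a $\baseCat$-natural transformation $\alpha\colon F\Rightarrow G$ to the functor $(f\colon x\to y)\mapsto \bigl(\alpha_y\circ F(f) = G(f)\circ \alpha_x\colon F(x)\to G(y)\bigr)$, with inverse obtained by restricting a section to identity arrows $\id_x\colon x\to x$ to recover the components $\alpha_x\colon F(x)\to G(x)$.

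The main obstacle is upgrading this pointwise correspondence to a genuine equivalence of $\baseCat$-spaces. The cleanest route I envisage is to reduce the claim to the unparametrised Gepner--Haugseng--Nikolaus formula on each fibre $V\in \baseCat$, using that base change commutes with $\underline{\twistedArrow}_T$, $\myuline{\map}$, $\underline{\func}_T$, and parametrised limits (by \cref{TFunctorCategory,mappingAnimaYoneda,relativeAdjunctionPullbacks}); under these compatibilities, the fibre of each side at $V$ becomes the already-known unparametrised identification for $\underline{\sC}_{\underline{V}}$ and $\underline{\D}_{\underline{V}}$. Alternatively, one can verify via the $\baseCat$-Yoneda lemma (\cref{TYonedaLemma}) that both sides corepresent the same $\baseCat$-presheaf on $\underline{\spc}_{\baseCat}$, exploiting the universal property of the $\baseCat$-twisted arrow as the left $\baseCat$-fibration classifying $\myuline{\map}$.
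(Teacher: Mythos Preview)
Your setup matches the paper's exactly: apply the Lurie--Thomason formula to rewrite the limit as $\underline{\Gamma}_T(E)$, identify $E$ as the pullback $\underline{\twistedArrow}_T(\underline{\D})\times_{\underline{\D}\vop\times\underline{\D}}\underline{\twistedArrow}_T(\underline{\sC})$, and thereby identify sections of $E$ with maps $\underline{\twistedArrow}_T(\underline{\sC})\to\underline{\twistedArrow}_T(\underline{\D})$ over $(F\vop\times G)\circ(s,t)$. The divergence is in the final step, which you flag as ``the main obstacle'' but do not carry out.

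Your route (a) is circular as stated. Passing to the fibre over $V$ replaces $\baseCat$ by $\baseCat_{/V}$ everywhere: the $V$-fibre of $\underline{\nattrans}_T(F,G)$ is a mapping space in $\func_{\underline{V}}(\underline{\sC}_{\underline{V}},\underline{\D}_{\underline{V}})$, not in an ordinary functor category, and the $V$-fibre of the RHS is still a parametrised limit over $\underline{\twistedArrow}_{\underline{V}}(\underline{\sC}_{\underline{V}})$. So you land on the same statement for $\baseCat_{/V}$ rather than on the unparametrised GHN formula. Note also that $\totalCategory(\underline{\twistedArrow}_T(\underline{\sC}))$ is not $\twistedArrow(\totalCategory(\underline{\sC}))$, so one cannot simply invoke GHN on total categories either. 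Your route (b) is in the right spirit but is not developed.

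The paper closes the gap differently and cleanly. After reaching $\myuline{\map}_{/\underline{\sC}\vop\times\underline{\sC}}(\underline{\twistedArrow}_T(\underline{\sC}),\underline{P}')$ it applies parametrised straightening (\cref{parametrisedStraightenUnstraighten}) to rewrite this as $\underline{\nattrans}_T\bigl(\myuline{\map}_{\underline{\sC}},\,\myuline{\map}_{\underline{\D}}\circ(F\vop\times G)\bigr)$, then curries along $\underline{\sC}\vop\times\underline{\sC}\to\underline{\sC}$ to obtain $\underline{\nattrans}_T(y_{\underline{\sC}},\,F^*\circ y_{\underline{\D}}\circ G)$, uses the adjunction $F_!\dashv F^*$ and the functoriality-of-presheaves square $F_!\circ y_{\underline{\sC}}\simeq y_{\underline{\D}}\circ F$ to get $\underline{\nattrans}_T(y_{\underline{\D}}\circ F,\,y_{\underline{\D}}\circ G)$, and concludes by $\baseCat$-full faithfulness of $y_{\underline{\D}}$ (\cref{TYonedaLemma}). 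This Yoneda-plus-currying manoeuvre is the missing idea in your proposal; it produces the comparison map and proves it is an equivalence simultaneously, with no fibrewise reduction needed.
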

\begin{proof}
Recall from \cite[$\S10$]{expose1Elements} that by definition, the parametrised mapping spaces are classified by the parametrised twisted arrow categories. By \cref{lurieThomasonFormula} we have 
\[\underline{\lim}_{(x\rightarrow y) \in \underline{\twistedArrow}(\underline{\sC}) }\myuline{\map}_{\underline{\D}}(F(x), G(y)) \simeq \underline{\Gamma}_{\baseCat}\big(\underline{P} \rightarrow \underline{\twistedArrow}_T(\underline{\sC}) \big)\] where $p : \underline{P} \rightarrow \underline{\twistedArrow}_T(\underline{\sC}) $ is the associated unstraightening. By considering the pullbacks
\begin{center}
    \begin{tikzcd}
    \underline{P}\rar\dar\ar[dr, phantom, very near start, "\scalebox{1.5}{$\lrcorner$}"] & \underline{P}' \dar\rar\ar[dr, phantom, very near start, "\scalebox{1.5}{$\lrcorner$}"] & \underline{\twistedArrow}_T(\underline{\D})\dar \\
    \underline{\twistedArrow}_T(\underline{\sC})  \rar["(s{,}t)"'] & \underline{\sC}\vop \times \underline{\sC} \rar["F\vop \times  G"'] & \underline{\D}\vop \times \underline{\D}\ar[rr,"\myuline{\map}_{\underline{\D}}(-{,}-)"'] && \underline{\spc}_{\baseCat}
    \end{tikzcd}
\end{center}
we get that 
\[\underline{\Gamma}_{\baseCat}\big(\underline{P} \rightarrow \underline{\twistedArrow}_T(\underline{\sC}) \big) \simeq \myuline{\map}_{/\underline{\sC}\vop \times \underline{\sC}}(\underline{\twistedArrow}_T(\underline{\sC}) , \underline{P}')\] Now by the parametrised straightening of \cref{parametrisedStraightenUnstraighten} we see furthermore that
\[\myuline{\map}_{/\underline{\sC}\vop\times\underline{\sC}}(\underline{\twistedArrow}_T(\underline{\sC}) , \underline{P}')\simeq \underline{\nattrans}_{\baseCat}\big(\myuline{\map}_{\underline{\sC}}, \myuline{\map}_{\underline{\D}}\circ (F\vop  \times  G)\big)\]
Currying $\underline{\func}_{\baseCat}(\underline{\sC}\vop \times \underline{\sC}, \underline{\spc}_{\baseCat}) \simeq \underline{\func}_{\baseCat}(\underline{\sC}, \underline{\presheaf}_{\baseCat}(\underline{\sC}) )$ we see that \[\underline{\nattrans}_{\baseCat}\big(\myuline{\map}_{\underline{\sC}}, \myuline{\map}_{\underline{\D}}\circ (F\vop  \times  G)\big) \simeq \underline{\nattrans}_{\baseCat}\big(y_{\underline{\sC}}, F^*\circ y_{\underline{\D}}\circ G)\big)\]
But then now we have the sequence of equivalences
\begin{equation*}
    \begin{split}
    \underline{\nattrans}_{\baseCat}\big(y_{\underline{\sC}}, F^*\circ y_{\underline{\D}}\circ G)\big) &\simeq \underline{\nattrans}_{\baseCat}\big(F_!\circ y_{\underline{\sC}}, y_{\underline{\D}}\circ G)\big) \\
    &\simeq \underline{\nattrans}_{\baseCat}\big(y_{\underline{\D}}\circ F,  y_{\underline{\D}}\circ G)\big) \\
    &\simeq \underline{\nattrans}_T(F, G)
    \end{split}
\end{equation*}
where the last equivalence is by \cref{TYonedaLemma} and the second by the square
\begin{center}
    \begin{tikzcd}
    \underline{\sC}\rar["F"]\dar["y_{\underline{\sC}}"', hook] & \underline{\D} \dar["y_{\underline{\D}}", hook]\\
    \underline{\presheaf}_{\baseCat}(\underline{\sC})  \rar[ "F_!"] & \underline{\presheaf}_{\baseCat}(\underline{\D}) 
    \end{tikzcd}
\end{center}
which commutes by functoriality of presheaves.
\end{proof}

\begin{defn}
A $\baseCat$--functor is called $\baseCat$-faithful if it is so fibrewise, where an ordinary functor is called faithful if it induces component inclusions on mapping spaces.
\end{defn}

\begin{obs}
For $f : X \rightarrow Y$ a map of spaces, it being an inclusion of components is equivalent to the condition that for each $x\in X$, the fibre $\fib_{f(x)}\big(X \rightarrow Y\big)$ is contractible. On the other hand, it is an equivalence if and only if for each $y\in Y$, the fibre $\fib_y\big(f : X\rightarrow Y\big)$ is contractible. We learnt of this formulation and of the following proof in the unparametrised case from \cite[Appendix B]{malteThesis}.
\end{obs}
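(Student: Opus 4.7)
The plan is to reduce both statements to the classical Whitehead theorem for spaces (a map inducing isomorphisms on all homotopy groups at every basepoint is an equivalence), combined with the long exact sequence of homotopy groups of the fibre sequence $\fib_y(f) \to X \to Y$. I would prove the equivalence criterion first and then deduce the inclusion-of-components criterion from it.

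For the equivalence criterion, the forward direction is immediate since equivalences have contractible fibres. For the converse, assuming $\fib_y(f) \simeq \ast$ for all $y \in Y$, non-emptiness of each fibre yields $\pi_0$-surjectivity, and any two $x, x' \in X$ with $f(x) = f(x') = y$ both lie in the connected fibre $\fib_y(f)$ and hence in the same component of $X$, giving $\pi_0$-injectivity. For $n \geq 1$, fixing a basepoint $x$ with $f(x) = y$, the portion
$$\pi_{n+1}(Y, y) \to \pi_n(\fib_y(f), x) \to \pi_n(X, x) \to \pi_n(Y, y) \to \pi_{n-1}(\fib_y(f), x)$$
of the long exact sequence together with $\pi_\bullet(\fib_y(f)) = 0$ supplies the $\pi_n$-isomorphism, so Whitehead concludes.

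For the inclusion-of-components criterion, I would first observe that $f$ is an inclusion of components if and only if it factors as $X \xrightarrow{\simeq} Y' \hookrightarrow Y$ with $Y' \subseteq Y$ a union of connected components. If $f$ admits such a factorisation, then for $y = f(x) \in Y'$ the pullback rewrite
$$\fib_y(f) \simeq X \times_{Y'} \fib_y(Y' \hookrightarrow Y) \simeq X \times_{Y'} \ast \simeq \ast$$
gives contractibility, using that component inclusions have contractible fibres over points of the subspace. Conversely, assuming the fibre condition, I would let $Y' \subseteq Y$ be the union of components of $Y$ meeting the image of $f$, so that $f$ factors through $Y'$. For any $y' \in Y'$, a path in $Y'$ from $y'$ to some $f(x)$ gives $\fib_{y'}(X \to Y') \simeq \fib_{f(x)}(X \to Y') \simeq \fib_{f(x)}(X \to Y) \simeq \ast$, the middle equivalence again using that $Y' \hookrightarrow Y$ is a component inclusion. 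Applying the equivalence criterion to $X \to Y'$ produces $X \xrightarrow{\simeq} Y'$, so $f$ is an inclusion of components.

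The only delicate point is the second converse, where one must verify that restricting the target to the image components preserves fibres over points of $Y'$; this is precisely the statement that a component inclusion has contractible fibres over its subspace, which handles both the forward and reverse reductions cleanly. Beyond this, the argument is a routine manipulation of fibre sequences and does not use orbitality or any parametrised structure, as expected for a purely space-level statement.
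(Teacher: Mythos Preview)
The paper does not prove this observation; it is stated without argument and attributed to \cite[Appendix B]{malteThesis}. Your proof is correct and follows the standard route: the equivalence criterion is the usual Whitehead-plus-long-exact-sequence argument, and you reduce the inclusion-of-components criterion to it by restricting the target to the components hit by $f$. One small imprecision: for $\pi_0$-injectivity you only treat the case $f(x)=f(x')$ literally, but what is needed is that $f(x)$ and $f(x')$ lie in the same component of $Y$; this is easily handled by the same fibre argument after transporting along a path in $Y$, and in the $\infty$-categorical context the distinction is essentially cosmetic.
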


\begin{cor} \label{functorsPreserveTFullyFaithfulness}
Let $F : \underline{\sC} \rightarrow \underline{\D}$ be a $\baseCat$--(fully) faithful functor and $\underline{I}$ another $\baseCat$--category. Then $F_* : \underline{\func}_{\baseCat}(\underline{I}, \underline{\sC})  \rightarrow \underline{\func}_{\baseCat}(\underline{I}, \underline{\D})$ is again $\baseCat$--(fully) faithful.
\end{cor}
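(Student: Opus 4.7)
The plan is to deduce both the fully faithful and the faithful cases directly from the mapping space formula of \cref{mapingAnimaFormulaFunctorCategories}, combined with the mapping space criterion for $\baseCat$--fully faithfulness in \cref{fullyFaithfulMappingSpaces}. Given $G, H : \underline{I}\rightarrow \underline{\sC}$, that formula identifies the $\baseCat$--mapping space in $\underline{\func}_{\baseCat}(\underline{I},\underline{\sC})$ with
\[\underline{\nattrans}_{\baseCat}(G, H) \simeq \underline{\lim}_{(x\rightarrow y)\in \underline{\twistedArrow}_T(\underline{I})}\myuline{\map}_{\underline{\sC}}(G(x), H(y)),\]
and likewise for $FG, FH : \underline{I}\rightarrow \underline{\D}$ with $\underline{\sC}$ replaced by $\underline{\D}$. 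By naturality, the map induced by $F_*$ between these two parametrised limits is the $\baseCat$--limit of the pointwise comparison maps
\[\myuline{\map}_{\underline{\sC}}(G(x), H(y)) \longrightarrow \myuline{\map}_{\underline{\D}}(FG(x), FH(y))\]
supplied by $F$ on parametrised mapping spaces.

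For the fully faithful case, \cref{fullyFaithfulMappingSpaces} tells us that $\baseCat$--fully faithfulness of $F$ is exactly the statement that each of these comparison maps is an equivalence of $\baseCat$--spaces. $\baseCat$--limits of pointwise equivalences are again equivalences, so $F_*$ induces equivalences on $\underline{\nattrans}_{\baseCat}$, and then applying \cref{fullyFaithfulMappingSpaces} once more yields that $F_*$ is $\baseCat$--fully faithful.

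For the faithful case, one first translates the condition ``inclusion of components'' into ``$(-1)$--truncated'', i.e.\ the diagonal $X \rightarrow X\times_Y X$ is an equivalence; this is the observation recorded just before the statement. Since both $\baseCat$--faithfulness and $\baseCat$--mapping-space comparisons are fibrewise notions, and since $\baseCat$--(co)limits in functor categories are computed pointwise by \cref{TColimitsFunctorCategories}, we are reduced to showing that a limit of monomorphisms in $\spc$ is again a monomorphism. This is immediate from the diagonal characterisation together with the fact that limits commute with pullbacks: if each $f_i : X_i \rightarrow Y_i$ satisfies $X_i \times_{Y_i} X_i \simeq X_i$, then $\lim X_i \times_{\lim Y_i} \lim X_i \simeq \lim (X_i\times_{Y_i}X_i)\simeq \lim X_i$.

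I do not expect any serious obstacle: the content is really just ``parametrised limits of (mono)equivalences are (mono)equivalences'', once the mapping space formula has reduced $\underline{\nattrans}_{\baseCat}$ to a $\baseCat$--limit of $\myuline{\map}$'s. The only mild subtlety is making the ``fibrewise'' argument rigorous for the faithful case, but this is handled cleanly by the pointwise formula for $\baseCat$--limits in functor categories.
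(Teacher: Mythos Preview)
Your proposal is correct and follows essentially the same strategy as the paper: both reduce to the mapping-space formula \cref{mapingAnimaFormulaFunctorCategories} and then argue that the relevant property is preserved under the resulting $\baseCat$--limit. The only minor difference is in the faithful case: the paper uses the characterisation ``fibre over each $f(x)$ is contractible'' (which is what the observation immediately preceding the statement actually records) and then computes $\underline{\fib}_\eta$ of the limit as a limit of contractible fibres, whereas you use the equivalent diagonal/$(-1)$--truncated characterisation and invoke that limits commute with pullbacks. Both arguments are valid and amount to the same thing; just note that the observation you cite phrases monomorphisms via fibres rather than via the diagonal.
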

\begin{proof}
Since $\baseCat$--(fully) faithfulness was defined as a fibrewise condition, we just assume without loss of generality that $\baseCat$ has a final object and work on global sections. In the faithful case, let $\varphi, \psi : \underline{I}\rightarrow \underline{\sC}$ be two $\baseCat$--functors. We need to show that 
\[\underline{\nattrans}_{\underline{\func}_{\baseCat}(\underline{I},\underline{\sC}) }(\varphi, \psi) \longrightarrow \underline{\nattrans}_{\underline{\func}_{\baseCat}(\underline{I},\underline{\D})}(F\varphi, F\psi)\] is an inclusion of components. By the preceeding observation, we need to show that for each $\eta \in \nattrans_{\func_{\baseCat}(\underline{I},\underline{\sC}) }(\varphi, \psi)$, the fibre
\[\underline{\fib}_{\eta}\big(\underline{\nattrans}_{\underline{\func}_{\baseCat}(\underline{I},\underline{\sC}) }(\varphi, \psi) \rightarrow \underline{\nattrans}_{\underline{\func}_{\baseCat}(\underline{I},\underline{\D})}(F\varphi, F\psi)\big)\in \Gamma(\underline{\spc}_{\baseCat}\rightarrow \baseCat\op)\] is contractible. But then we are now in position to use \cref{mapingAnimaFormulaFunctorCategories}:
\begin{equation*}
    \begin{split}
         &\underline{\fib}_{\eta}\Big(\underline{\nattrans}_{\underline{\func}_{\baseCat}(\underline{I},\underline{\sC}) }(\varphi, \psi) \rightarrow \underline{\nattrans}_{\underline{\func}_{\baseCat}(\underline{I}, \underline{\D})}(F\varphi, F\psi)\Big) \\
        &\simeq \underline{\lim}_{(x\rightarrow y) \in \underline{\twistedArrow}_T(\underline{I})}\underline{\fib}_{\eta}\Big(\myuline{\map}_{\underline{\D}}(\varphi(x), \psi(y)) \rightarrow \myuline{\map}_{\underline{\D}}(F\varphi(x), G\psi(y))\Big)\\
        &\simeq \underline{\lim}_{(x\rightarrow y) \in \underline{\twistedArrow}_T(\underline{I})}\ast_{\baseCat}\simeq \ast_T
    \end{split}
\end{equation*}
as was to be shown, where the second last step is by our hypothesis that $F$ was $\baseCat$--faithful. The case of $\baseCat$--fully faithfulness can be done similarly.
\end{proof}

\subsection{Recollections: filtered colimits and Ind-completions}

\begin{cons}\label{nota:parametrisedInd}
Let $\kappa$ be a regular cardinal. We define the $\baseCat$\textit{-Ind-completion} functor $\underline{\ind}_{\kappa} \colon \cat_{\baseCat}\rightarrow \cat_{\baseCat}$ to be the one obtained by applying $\func(\baseCat\op,-)$ to the ordinary functor $\ind_{\kappa} \colon \cat\rightarrow \cat$.
\end{cons}

\begin{nota}
We will write $\underline{\func}_{\baseCat}\filtered$ for the full $\baseCat$--subcategory of parametrised functors preserving fibrewise $\omega$--filtered colimits, and similarly $\underline{\func}_{\baseCat}\filteredKappa$ for those that preserve fibrewise $\kappa$-filtered colimits.
\end{nota}

\begin{rmk}
This agrees with the definition given in the recent paper \cite{shahPaperII} by virtue of the paragraph after Theorem D therein. As indicated there, $\underline{\ind}_{\kappa}(\underline{\sC}) $ is the minimal $\baseCat$--subcategory of $\underline{\presheaf}_{\baseCat}(\underline{\sC}) $ generated by $\underline{\sC}$ under fibrewise $\kappa$-filtered colimits. In more detail, \cite[Rmk. 9.4]{shahPaperII} showed that the fibrewise presheaf construction $\underline{\presheaf}_{\baseCat}^{\text{fb}}(\underline{\sC}) $ is a $\baseCat$--full subcategory of the $\baseCat$--presheaf $\underline{\presheaf}_{\baseCat}(\underline{\sC}) $ via the fibrewise left Kan extension. In particular, this means that $\underline{\presheaf}_{\baseCat}^{\text{fb}}(\underline{\sC}) \subseteq \underline{\presheaf}_{\baseCat}(\underline{\sC}) $ preserves fibrewise colimits. On the other hand, by construction and \cite[Cor. 5.3.5.4]{lurieHTT}, $\underline{\ind}_{\kappa}(\underline{\sC})  \subseteq \underline{\presheaf}_{\baseCat}^{\text{fb}}(\underline{\sC}) $ is the minimal $\baseCat$--subcategory generated by $\underline{\sC}$ under fibrewise $\kappa$-filtered colimits. Therefore, in total, we see that $\underline{\ind}_{\kappa}(\underline{\sC})  \subseteq \underline{\presheaf}_{\baseCat}(\underline{\sC}) $ is the $\baseCat$--subcategory generated by $\sC$ under fibrewise $\kappa$-filtered colimits.
\end{rmk}

\begin{prop}[Universal property of $\underline{\ind}$, ``{\cite[Prop. 5.3.5.10]{lurieHTT}}'']\label{UnivPropInd}
Let $\underline{\sC}, \underline{\D}$ be $\baseCat$--categories where $\underline{\sC}$ is small and $\underline{\D}$ has fibrewise small $\kappa$-filtered colimits. Then:
\begin{enumerate}
    \item[(1)] $\underline{\ind}_{\kappa}(\underline{\sC})  \subseteq\underline{\presheaf}_{\baseCat}(\underline{\sC}) $ is the $\baseCat$--subcategory generated by $\sC$ under fibrewise $\kappa$-filtered colimits.
    
    \item[(2)] The $\baseCat$--inclusion $i : \underline{\sC} \hookrightarrow \underline{\ind}_{\kappa}(\underline{\sC}) $ induces an equivalence 
\[i^* : \underline{\func}_{\baseCat}\filteredKappa(\underline{\ind}_{\kappa}(\underline{\sC}) , \underline{\D}) \longrightarrow \underline{\func}_{\baseCat}(\underline{\sC},\underline{\D})\]
\end{enumerate}

\end{prop}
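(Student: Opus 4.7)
Part (1) is recorded in the remark immediately preceding the statement, so I focus on part (2). The strategy is to reduce to Lurie's unparametrised Prop. 5.3.5.10 fibrewise. Since both $\underline{\func}_{\baseCat}\filteredKappa(\underline{\ind}_{\kappa}(\underline{\sC}),\underline{\D})$ and $\underline{\func}_{\baseCat}(\underline{\sC},\underline{\D})$ are $\baseCat$--categories, and equivalences of such are tested fibrewise, it suffices to check that $i^*$ restricted to the fibre over each $V \in \baseCat$ is an equivalence of ordinary categories. Using the basechange compatibility of internal $\baseCat$--functor categories (\cref{TFunctorCategory}) together with the observation that $\underline{\ind}_\kappa$ commutes with basechange (since it is defined by postcomposition $\func(\baseCat\op,\ind_\kappa)$, whence $\underline{\ind}_\kappa(\underline{\sC})_{\underline{V}} \simeq \underline{\ind}_\kappa(\underline{\sC}_{\underline{V}})$), we reduce to showing that for each $V$,
\[i^* : \func_{\underline{V}}^{\kappa\text{-filt}}(\underline{\ind}_\kappa(\underline{\sC}_{\underline{V}}),\underline{\D}_{\underline{V}}) \longrightarrow \func_{\underline{V}}(\underline{\sC}_{\underline{V}},\underline{\D}_{\underline{V}})\]
is an equivalence.

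Via straightening, a $\baseCat_{/V}$--functor is the same datum as a natural transformation between the straightenings $(\baseCat_{/V})\op \to \cat$ sending $W \mapsto \ind_\kappa(\sC_W)$ and $W \mapsto \D_W$; the fibrewise $\kappa$--filtered colimit preservation condition unwinds to each component $\ind_\kappa(\sC_W) \to \D_W$ preserving $\kappa$--filtered colimits. For each $W$, the unparametrised universal property yields an equivalence $\func^{\kappa\text{-filt}}(\ind_\kappa(\sC_W),\D_W) \simeq \func(\sC_W,\D_W)$; the hypothesis that $\underline{\D}$ has fibrewise small $\kappa$--filtered colimits supplies, via \cref{omnibusTColimits}(1), both the existence of $\kappa$--filtered colimits in each $\D_W$ and the fact that the transition functors $f^* \colon \D_W \to \D_{W'}$ preserve them.

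The main obstacle is assembling the fibrewise extensions coherently. Given $\varphi : \underline{\sC}_{\underline{V}} \to \underline{\D}_{\underline{V}}$ with fibrewise extensions $\tilde\varphi_W : \ind_\kappa(\sC_W) \to \D_W$, one must check that for each $f : W' \to W$ in $\baseCat_{/V}$, the square formed by $\tilde\varphi_W$, $\tilde\varphi_{W'}$, $\ind_\kappa(f^*)$, and $f^*$ commutes. Both paths preserve $\kappa$--filtered colimits (using that $\ind_\kappa$ applied to any functor preserves them by construction, combined with the observation above that $f^*$ on $\underline{\D}$ preserves them), and both restrict on $\sC_W \subseteq \ind_\kappa(\sC_W)$ to $\varphi_{W'}\circ f^* = f^*\circ \varphi_W$ by naturality of $\varphi$. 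The uniqueness clause of the unparametrised universal property then forces the two paths to coincide.

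A conceptually cleaner packaging, which sidesteps the manual coherence verification, is to invoke the 2--functoriality of $\func(\baseCat\op,-)$: Lurie's unparametrised result can be read as a natural equivalence of functors of two variables $(\A,\B) \mapsto \func^{\kappa\text{-filt}}(\ind_\kappa \A, \B)$ and $(\A,\B) \mapsto \func(\A,\B)$ on the appropriate subcategory of $\cat \times \cat$; applying $\func(\baseCat\op,-)$ preserves this natural equivalence and delivers the parametrised statement directly on straightenings, from which the $\baseCat$--categorical formulation follows by unstraightening.
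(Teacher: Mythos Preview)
Your approach is correct but genuinely different from the paper's. The paper constructs an explicit inverse $i_!$ to $i^*$ as a $\baseCat$--left Kan extension: it embeds $\underline{\D}$ into a $\baseCat$--cocomplete $\underline{\D}'$ via the opposite Yoneda embedding, uses the universal property of $\underline{\presheaf}_{\baseCat}(\underline{\sC})$ to extend any $f \colon \underline{\sC} \to \underline{\D}$ to $\underline{\presheaf}_{\baseCat}(\underline{\sC}) \to \underline{\D}'$, observes that the restriction to $\underline{\ind}_\kappa(\underline{\sC})$ lands back in $\underline{\D}$, and then invokes a small lemma (\cref{restrictedLeftKanExtension}) to identify this with $i_!f$. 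The equivalence $i_!i^*F \simeq F$ is then checked using part (1): both sides agree on $\underline{\sC}$ and preserve fibrewise $\kappa$--filtered colimits, hence agree everywhere.

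Your route instead exploits that $\underline{\ind}_\kappa$ is defined fibrewise and reduces to Lurie's unparametrised statement. The ``manual'' version you sketch is, as you yourself note, incomplete: verifying that the fibrewise extensions $\tilde\varphi_W$ assemble into a $\baseCat_{/V}$--functor requires coherent data, not merely commuting squares for each arrow of $\baseCat_{/V}$. Your 2--functorial packaging is the correct fix and deserves to be stated more sharply: the hypothesis that $\underline{\D}$ admits fibrewise $\kappa$--filtered colimits means, by \cref{omnibusTColimits}(1), that its straightening factors through the non-full subcategory $\cat^{\kappa\text{-filt}} \subset \cat$ of categories with $\kappa$--filtered colimits and functors preserving them; Lurie's result is precisely the statement that $\ind_\kappa \colon \cat \to \cat^{\kappa\text{-filt}}$ is left adjoint to the forgetful in the $(\infty,2)$--categorical sense, and applying the 2--functor $\func((\baseCat_{/V})\op,-)$ transports this adjunction to one between the diagram categories, yielding the desired equivalence on mapping categories.

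The paper's approach has the advantage of explicitly identifying the inverse as a $\baseCat$--left Kan extension, which is conceptually useful downstream. Your approach is shorter and more directly leverages the fibrewise definition of $\underline{\ind}_\kappa$, at the cost of invoking 2--categorical adjunction machinery that the paper otherwise avoids.
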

\begin{proof}
Part (1) is by the remark above. For part (2), we show that the $\baseCat$--left Kan extension functor $i_! : \underline{\func}_{\baseCat}(\underline{\sC},\underline{\D}) \longrightarrow \underline{\func}_{\baseCat}\filteredKappa(\underline{\ind}_{\kappa}(\underline{\sC}) , \underline{\D})$ exists and is an inverse to $i^*$. To do this, it will be enough to show that functors $F : \underline{\sC}\rightarrow \underline{\D}$ can be $\baseCat$--left Kan extended to $i_!F : \underline{\ind}_{\kappa}(\underline{\sC})  \rightarrow \underline{\D}$ and that functors $F : \underline{\ind}_{\kappa}(\underline{\sC})  \rightarrow \underline{\D}$ which preserves fibrewise $\kappa$-filtered colimits satisfy that $i_!i^*F \Rightarrow F$ is an equivalence. This will be enough since we would have shown the natural equivalence $i_!i^* \simeq \id$, and \cref{fullyFaithfulTKanExtension} gives that $i^*i_! \simeq \id$ always.

To show that the $\baseCat$--left Kan extension exists, consider the diagram 
    \begin{center}
        \begin{tikzcd}
        \underline{\sC} \dar[hook] \ar[dr, "f"]\\
        \underline{\ind}_{\kappa}(\underline{\sC}) \ar[r,dashed] \dar[hook] & \underline{\D} \ar[dr,hook]\\
        \underline{\presheaf}_{\baseCat}(\underline{\sC})  \ar[rr, dashed] && \underline{\D}'
        \end{tikzcd}
    \end{center}
    where $\underline{\D} \subseteq \underline{\D}'$ is a strongly $\baseCat$--colimit preserving inclusion into a $\baseCat$--cocomplete $\underline{\D}'$ using the opposite $\baseCat$--Yoneda embedding. In particular by hypothesis $\underline{\D}$ is closed under $\kappa$-filtered colimits in $\underline{\D}'$. The bottom dashed map is gotten from \cref{YonedaUnivProp}, and so strongly preserves $\baseCat$--colimits. Hence restriction to $\underline{\ind}_{\kappa}(\underline{\sC}) $ lands in $\underline{\D}$ so we get middle dashed map, and by the following \cref{restrictedLeftKanExtension}, this is a left Kan extension.
    
Now we show that if $F$ preserves fibrewise $\kappa$-filtered colimits, then the canonical comparison $i_!i^*F \Rightarrow F$ is an equivalence. Again, by \cref{fullyFaithfulTKanExtension} we know that both sides agree on $\underline{\sC} \subseteq \underline{\ind}_{\kappa}(\underline{\sC}) $. Also, both sides preserve $\kappa$-filtered colimits by assumption. Hence, by statement (1) of the proposition, we see that it must be an equivalence as was to be shown.
\end{proof}

\begin{lem}\label{restrictedLeftKanExtension}
Suppose we have fully faithful functors $\underline{\sC} \xhookrightarrow{i} \underline{\D} \xhookrightarrow{j} \underline{\E}$ and functors $\underline{\sC} \xrightarrow{f} \underline{\A} \xhookrightarrow{y} \underline{\B}$, where $\underline{B}$ is $\baseCat$--(co)complete. Suppose we have a factorisation $j^*j_!i_!(y\circ f) \colon \underline{\sC} \xrightarrow{\overline{f}} \underline{\A} \xhookrightarrow{y} \underline{\B}$. Then  $\overline{f} \simeq i_!f \colon \underline{\D} \rightarrow\underline{\A}$.
\end{lem}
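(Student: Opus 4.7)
The plan is to show that $\overline{f}$ satisfies the universal property of the $\baseCat$--left Kan extension $i_!f$, which simultaneously establishes the existence of $i_!f$ and its identification with $\overline{f}$. The key inputs are the $\baseCat$--fully faithfulness of $j$ and of $y$, which let us transfer computations freely between $\underline{\A}$ and $\underline{\B}$.

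First I would observe that since $j \colon \underline{\D}\hookrightarrow\underline{\E}$ is $\baseCat$--fully faithful, \cref{fullyFaithfulTKanExtension} applied to the $\baseCat$--functor $i_!(y\circ f)\colon \underline{\D}\to\underline{\B}$ (which exists by $\baseCat$--cocompleteness of $\underline{\B}$ and \cref{omnibusTKanExtensions}) gives that the unit $i_!(y\circ f)\xrightarrow{\simeq} j^*j_!i_!(y\circ f)$ is an equivalence. Together with the factorisation hypothesis this yields
\[y\circ \overline{f} \;\simeq\; j^*j_!i_!(y\circ f) \;\simeq\; i_!(y\circ f).\]

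Next, for an arbitrary $\baseCat$--functor $g\colon \underline{\D}\to \underline{\A}$, I would run the chain
\begin{align*}
\myuline{\map}_{\underline{\func}_{\baseCat}(\underline{\D},\underline{\A})}(\overline{f}, g) &\simeq \myuline{\map}_{\underline{\func}_{\baseCat}(\underline{\D},\underline{\B})}(y\circ \overline{f},\, y\circ g) \\
&\simeq \myuline{\map}_{\underline{\func}_{\baseCat}(\underline{\D},\underline{\B})}(i_!(y\circ f),\, y\circ g) \\
&\simeq \myuline{\map}_{\underline{\func}_{\baseCat}(\underline{\sC},\underline{\B})}(y\circ f,\, y\circ g \circ i) \\
&\simeq \myuline{\map}_{\underline{\func}_{\baseCat}(\underline{\sC},\underline{\A})}(f,\, g\circ i),
\end{align*}
where the first and last equivalences use that $y_*$ is $\baseCat$--fully faithful by \cref{functorsPreserveTFullyFaithfulness}, the second uses the previous step, and the third is the universal property of the left Kan extension $i_!$ in the $\baseCat$--cocomplete $\underline{\B}$. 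Thus $\overline{f}$ corepresents the $\baseCat$--functor $g\mapsto \myuline{\map}(f, g\circ i)$ on $\underline{\func}_{\baseCat}(\underline{\D},\underline{\A})$, so by the left-adjoint-object criterion of \cref{pointwiseConstructionOfAdjunction} it witnesses the $\baseCat$--left Kan extension $i_!f \simeq \overline{f}$.

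I do not foresee any serious obstacle: the whole argument is the standard ``Kan extension into a fully faithful subcategory'' trick. The only point worth highlighting is that the two fully-faithfulness hypotheses play distinct roles—that of $j$ serves to simplify $j^*j_!$ via \cref{fullyFaithfulTKanExtension}, whereas that of $y$ serves to detect equivalences of $\underline{\A}$--valued functors by comparing them inside $\underline{\B}$.
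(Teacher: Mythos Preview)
Your proof is correct and takes essentially the same approach as the paper's: both verify the universal property of $i_!f$ by passing to $\underline{\B}$ via the fully faithful $y$, using fully faithfulness of $j$ to strip away the $j^*j_!$ (or, in the paper's version, $j^*j_*$) decoration, and then invoking $i_! \dashv i^*$. The only cosmetic difference is that you simplify $j^*j_!i_!(y\circ f)\simeq i_!(y\circ f)$ on the source side first, whereas the paper moves everything to the target side and simplifies $j^*j_*(y\circ\varphi)\simeq y\circ\varphi$ there; your route has the mild advantage of only needing $\baseCat$--cocompleteness of $\underline{\B}$ rather than both completeness and cocompleteness.
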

\begin{proof}
Let $\varphi \colon \underline{\D} \rightarrow \underline{\A}$. We need to show that $\underline{\nattrans}(\overline{f}, \varphi) \simeq \underline{\nattrans}(f, i^*\varphi)$. We compute:
\begin{equation*}
    \begin{split}
        \underline{\nattrans}(\overline{f}, \varphi) & \simeq \underline{\nattrans}(y\circ\overline{f}, y\circ\varphi)\\
        & = \underline{\nattrans}(j^*j_!i_!(y\circ f), y\circ\varphi)\\
        &\simeq \underline{\nattrans}(y\circ f, i^*j^*j_*(y\circ \varphi))\\
        &\simeq \underline{\nattrans}(y\circ f, i^*(y\circ \varphi) \simeq \underline{\nattrans}(f, i^*\varphi)
    \end{split}
\end{equation*}
where the first and last equivalences are since $y$ was fully faithful; the fourth equivalence is since $j$ was fully faithful and so \cref{fullyFaithfulTKanExtension} applies. The relevant Kan extensions exist since $\underline{\B}$ was assumed to be $\baseCat$--(co)complete.
\end{proof}

We learnt of the following proof method from Markus Land.

\begin{lem}\label{2FunctorialityOfInd}
For $\sC, \D\in\cat$, we have a functor 
$\func(\sC, \D) \rightarrow \func(\ind(\sC) , \ind(\D))$ that takes $F : \sC \rightarrow \D$ to $\ind(F) : \ind(\sC)  \rightarrow \ind(\D)$.
\end{lem}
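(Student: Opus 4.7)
The plan is to build the desired functor as a composite, using the universal property of $\ind$-completion (the unparametrised analogue of \cref{UnivPropInd}) together with an obvious post-composition functor, and then verify that on objects this reproduces $\ind(F)$.

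First I would observe that since $\ind(\D)$ admits small filtered colimits by construction, the unparametrised version of \cref{UnivPropInd}(2) with target $\ind(\D)$ supplies an equivalence
\[
i^* \colon \func\filtered\big(\ind(\sC), \ind(\D)\big) \xrightarrow{\;\simeq\;} \func\big(\sC, \ind(\D)\big),
\]
whose inverse is left Kan extension $i_!$ along the canonical fully faithful inclusion $i \colon \sC \hookrightarrow \ind(\sC)$.

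Next, letting $y \colon \D \hookrightarrow \ind(\D)$ denote the (fully faithful) canonical embedding, post-composition yields $y_* \colon \func(\sC, \D) \to \func(\sC, \ind(\D))$. Splicing this with the equivalence $i_!$ above and the fully faithful inclusion $\func\filtered(\ind(\sC), \ind(\D)) \hookrightarrow \func(\ind(\sC), \ind(\D))$ produces the desired functor as the composite
\[
\func(\sC, \D) \xrightarrow{y_*} \func(\sC, \ind(\D)) \xrightarrow{i_!} \func\filtered\big(\ind(\sC), \ind(\D)\big) \hookrightarrow \func\big(\ind(\sC), \ind(\D)\big).
\]

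Finally I would check that on an object $F \colon \sC \to \D$, this composite returns $\ind(F)$. By construction it delivers the (essentially unique) filtered-cocontinuous extension of $y \circ F \colon \sC \to \ind(\D)$ along $i$. Since $\ind$ viewed as a functor $\cat \to \cat$ is characterised by exactly the same universal property --- namely, $\ind(F)$ is the unique filtered-colimit-preserving extension of $y \circ F$ along $i$ --- the two constructions agree by the uniqueness clause of the universal property. I don't anticipate a genuine obstacle here; the only mild subtlety is matching the definition of $\ind(F)$ (however taken as input, e.g.\ from \cite{lurieHTT}) with the extension produced by the universal property, but this is immediate from uniqueness of such extensions up to contractible choice.
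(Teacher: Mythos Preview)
Your proof is correct and takes a genuinely different route from the paper. The paper argues via the complete Segal space model: it uses that $\ind$ preserves finite products, so that for each $[n]$ one has maps $\Delta^n \times \ind(\sC) \to \ind(\Delta^n \times \sC)$, and then assembles a map of simplicial spaces
\[
\func(\Delta^{\bullet}\times \sC, \D)^{\simeq} \longrightarrow \func(\Delta^{\bullet}\times \ind(\sC), \ind(\D))^{\simeq}
\]
which, after unwinding the Segal space model for $\func(-,-)$, yields the desired functor. Your approach is more direct and arguably more elementary: you simply compose two well-defined functors, post-composition with the Yoneda embedding and left Kan extension along $i$, the latter being an honest functor because it is inverse to the equivalence $i^*$ supplied by the universal property. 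What the paper's method buys is a general recipe for extracting $2$-functoriality from $1$-functoriality plus product-preservation, applicable in situations where one does not have such a clean universal property at hand; what your method buys is transparency and the avoidance of any model-specific manipulation. Both methods yield the same functor by the uniqueness clause you invoke.
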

\begin{proof}
We know that $\ind(\E\times \sC)  \simeq \ind(\E) \times \ind(\sC) $.
In particular, we get functors  
\[\Delta^n \times \ind(\sC)  \longrightarrow \ind(\Delta^n\times \sC)  \simeq \ind(\Delta^n) \times \ind(\sC)  \] natural in both $\Delta^n$ and $\sC$. These then induce a map of simplicial spaces \small
\[\func(\Delta^{\bullet}\times \sC, \D)^{\simeq} \longrightarrow \func(\ind(\Delta^{\bullet}\times \sC) , \ind(\D))^{\simeq} \longrightarrow \func(\Delta^{\bullet}\times \ind(\sC) , \ind(\D))^{\simeq} \]\normalsize where the first map is just by the $(\infty,1)$--functoriality of $\ind$. Via the complete Segal space model of $\infty$-categories, we see that we have the desired functor  which behaves as in the statement by looking at the case $\bullet = 0$.
\end{proof}

\begin{lem}[Ind adjunctions]\label{indAdjunctions}
Let $f : \sC \rightleftarrows \D : g$ be an adjunction. Then we also have an adjunction 
$F \coloneqq \ind(f) : \ind(\sC)  \rightleftarrows \ind(\D) : \ind(g) =: G$.
\end{lem}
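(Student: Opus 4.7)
My plan is to establish the adjunction $F \dashv G$ by producing a natural equivalence of mapping spaces $\map_{\ind(\D)}(Fx, y) \simeq \map_{\ind(\sC)}(x, Gy)$ on $\ind(\sC)\op \times \ind(\D)$ and then invoking the standard mapping-space criterion for adjunctions (the unparametrised analogue of \cref{paramAdjunctionMappingAnima}). The guiding observation is that both bifunctors are controlled by their restrictions to $\sC\op \times \D$ via the universal property of Ind from \cref{UnivPropInd}, and on that restriction the desired equivalence is just the original adjunction $f \dashv g$ combined with full faithfulness of the Yoneda embeddings $\sC \hookrightarrow \ind(\sC)$ and $\D \hookrightarrow \ind(\D)$.

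The reduction proceeds in two stages. First, for each fixed $y \in \ind(\D)$, both $\map_{\ind(\D)}(F-, y)$ and $\map_{\ind(\sC)}(-, Gy)$ send filtered colimits in $\ind(\sC)$ to limits: the left side because $F = \ind(f)$ preserves filtered colimits by construction and $\map_{\ind(\D)}(-, y)$ is continuous; the right side, directly by continuity of $\map_{\ind(\sC)}(-, Gy)$. So it suffices to check agreement for $x \in \sC$. Second, fixing such an $x$, both $\map_{\ind(\D)}(Fx, -)$ and $\map_{\ind(\sC)}(x, G-)$ preserve filtered colimits in $y$: on the left because $Fx = f(x) \in \D$ is compact in $\ind(\D)$ by \cref{UnivPropInd}; on the right because $x \in \sC$ is compact in $\ind(\sC)$ and $G = \ind(g)$ preserves filtered colimits. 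So it further suffices to check on $y \in \D$. For $x \in \sC$ and $y \in \D$, the Yoneda embeddings being fully faithful reduces both sides to $\map_{\D}(f(x), y)$ and $\map_{\sC}(x, g(y))$ respectively, which are naturally equivalent by the given adjunction $f \dashv g$.

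The main obstacle is not any calculation but the coherence of assembling the pointwise identifications into a genuine natural equivalence of bifunctors. I would handle this by applying \cref{UnivPropInd} twice: once to extend the equivalence from $\D$ to $\ind(\D)$ in the second variable, using the filtered-colimit-preservation noted above, and once from $\sC$ to $\ind(\sC)$ in the first variable (dually, using that the functors convert filtered colimits to limits there). The crucial input beyond the original adjunction is simply that $F$ and $G$ are characterised as the filtered-colimit-preserving extensions of $f$ and $g$, which is precisely the content of \cref{UnivPropInd}, together with the $(2,1)$-functoriality of $\ind$ from \cref{2FunctorialityOfInd} to organise the naturality in $f$ and $g$.
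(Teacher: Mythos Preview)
Your approach is correct but takes a genuinely different route from the paper's. The paper's proof is essentially a two-line argument: an adjunction is encoded by a unit and a counit satisfying the triangle identities (citing Riehl--Verity), and the $(2,1)$-functoriality of $\ind$ from \cref{2FunctorialityOfInd} transports this entire package from $\func(\sC,\sC)$ and $\func(\D,\D)$ to $\func(\ind(\sC),\ind(\sC))$ and $\func(\ind(\D),\ind(\D))$. In other words, the whole proof is the slogan ``2-functors preserve adjunctions.''

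Your argument instead works at the level of mapping spaces, using \cref{UnivPropInd} to extend the natural equivalence $\map_\D(f-,-)\simeq\map_\sC(-,g-)$ from $\sC\op\times\D$ to $\ind(\sC)\op\times\ind(\D)$ one variable at a time. This is valid: the key point, which you correctly flag, is that \cref{UnivPropInd} is an equivalence of \emph{functor categories}, so it transports natural equivalences and not merely objectwise identifications; the two-step reduction (first in $x$ via limit-preservation, then in $y$ via compactness) then goes through as you describe. The paper's route is shorter and makes transparent that the result holds for any $(2,1)$-functor; yours is more hands-on and would be the natural argument if one had only the universal property but not the $2$-functoriality. Your closing invocation of \cref{2FunctorialityOfInd} is actually superfluous for your own line of reasoning---once you know that $F$ and $G$ preserve filtered colimits and restrict along the Yoneda embeddings to $f$ and $g$, the universal property alone carries the coherence.
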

\begin{proof}
By \cite[Def. 1.1.2]{riehlVerityYTM} we know that such an adjunction is tantamount to the data of $\eta : \id_{\sC} \Rightarrow gf$ and $\varepsilon : fg \Rightarrow \id_{\D}$ such that we have the triangle identities
\begin{center}
    \begin{tikzcd}
    & \sC \ar[rr,equal]\ar[dr, "f"]\ar[d, Rightarrow, "\varepsilon"]&{}\dar[Rightarrow, "\eta"]& \sC  && = && \sC\\
    \D \ar[ur, "g"]\ar[rr,equal]&{}& \D\ar[ur, "g"'] &&& && \D\uar[bend left = 60, "g"]\uar[bend right=60, "g"']\uar[phantom, "{}^{\Rightarrow}_{\id_g}"]
    \end{tikzcd}
\end{center}
and the analogous other triangle. Now, we have
$\func(\sC,\sC)  \rightarrow \func(\ind(\sC) ,\ind(\sC) )$ by \cref{2FunctorialityOfInd} and so the the triangle identity on the source gets sent to a triangle identity on the target.
\end{proof}

\begin{thm}[Diagram decomposition, {\cite[Thm. 8.1]{shahPaperII}}]\label{filteredDiagramDecomposition}
Let $\underline{\sC}$ be a $\baseCat$--category, $J$ a category, and $p_{\bullet} : J \rightarrow ({\cat}_{\baseCat})_{/\underline{\sC}}$ a  functor with colimit the $\baseCat$--functor $p : \underline{K}\rightarrow \underline{\sC}$ and suppose that for all $j \in J$, the $\baseCat$--functor $p_j : \underline{K}_j \rightarrow \underline{\sC}$ admits a $\baseCat$--colimit $\sigma_j$. Then the $\sigma_j$'s assemble to a $\baseCat$--functor $\sigma_{\bullet} : \tconstant_\baseCat(J)\rightarrow \underline{\sC}$ so that if $\sigma_{\bullet}$ admits a $\baseCat$--colimit $\sigma$, then $p$ admits a $\baseCat$--colimit given by $\sigma$.
\end{thm}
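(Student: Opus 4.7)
The plan is to first assemble the objects $\sigma_j$ into a coherent $\baseCat$--functor $\sigma_\bullet : \tconstant_\baseCat(J) \to \underline{\sC}$, and then to verify the universal property of $\sigma$ as a $\baseCat$--colimit of $p$ using the mapping-space characterisation of $\baseCat$--colimits together with the strong $\baseCat$--limit preservation of the $\baseCat$--Yoneda embedding (\cref{continuityTYoneda}).

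For the assembly, I would exploit that $\underline{\presheaf}_\baseCat(\underline{\sC})$ is $\baseCat$--cocomplete by \cref{YonedaUnivProp}. Postcomposing $p_\bullet$ with the $\baseCat$--Yoneda embedding $y : \underline{\sC} \hookrightarrow \underline{\presheaf}_\baseCat(\underline{\sC})$ produces a $J$--indexed diagram of $\baseCat$--functors into a $\baseCat$--cocomplete target, so the pointwise $\baseCat$--colimits $y(\sigma_j) \simeq \underline{\colim}\,(y\circ p_j)$ automatically assemble into a $\baseCat$--functor $\tilde\sigma_\bullet : \tconstant_\baseCat(J) \to \underline{\presheaf}_\baseCat(\underline{\sC})$ via the global $\baseCat$--colimit $\baseCat$--functor on the $\baseCat$--cocomplete $\underline{\presheaf}_\baseCat(\underline{\sC})$. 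Since every $\tilde\sigma_j$ lies in the essential image of $y$ by assumption, $\baseCat$--full faithfulness of $y$ (\cref{TYonedaLemma}) produces the desired lift $\sigma_\bullet : \tconstant_\baseCat(J) \to \underline{\sC}$.

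For the main claim, assume $\sigma$ is a $\baseCat$--colimit of $\sigma_\bullet$. By the mapping-space characterisation of $\baseCat$--colimits (equivalently, by applying $\baseCat$--Yoneda and using its strong $\baseCat$--limit detection), it suffices to produce, for each test object $d$ in $\underline{\sC}$, a natural equivalence
\[
\myuline{\map}_{\underline{\sC}}(\sigma, d) \;\simeq\; \underline{\lim}_{\underline{K}\vop}\,\myuline{\map}_{\underline{\sC}}(p(-), d).
\]
Since $\underline{K} \simeq \colim_J \underline{K}_j$ in $\cat_\baseCat$ (given to us already over $\underline{\sC}$), the standard contravariant interchange between colimits of indexing $\baseCat$--categories and $\baseCat$--limits of values rewrites the right-hand side as
\[
\lim_{j \in J\op}\,\underline{\lim}_{\underline{K}_j\vop}\,\myuline{\map}_{\underline{\sC}}(p_j(-), d) \;\simeq\; \lim_{J\op}\,\myuline{\map}_{\underline{\sC}}(\sigma_j, d),
\]
where the second equivalence uses the hypothesis that each $\sigma_j$ is the $\baseCat$--colimit of $p_j$. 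Finally, the assumption that $\sigma = \underline{\colim}_{\tconstant_\baseCat(J)} \sigma_\bullet$ together with the same mapping-space characterisation identifies this limit with $\myuline{\map}_{\underline{\sC}}(\sigma, d)$, closing the chain.

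The main obstacle is the coherent assembly of the $\sigma_j$ into $\sigma_\bullet$: a pointwise family of $\baseCat$--colimits is not \emph{a priori} a $\baseCat$--functor out of $\tconstant_\baseCat(J)$. The detour through the $\baseCat$--cocomplete $\underline{\presheaf}_\baseCat(\underline{\sC})$ (where the global $\baseCat$--colimit $\baseCat$--functor supplies the coherence for free) followed by descent along $\baseCat$--fully faithful $\baseCat$--Yoneda sidesteps this issue cleanly. The Fubini-type interchange in the main chain is then routine given that $\underline{K} = \colim_J \underline{K}_j$ holds already in $\cat_\baseCat$ and that $\myuline{\map}_{\underline{\sC}}(-, d) : \underline{\sC}\vop \to \underline{\spc}_\baseCat$ is a $\baseCat$--functor sending $\baseCat$--colimits to $\baseCat$--limits.
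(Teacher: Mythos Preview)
The paper does not give its own proof of this statement: it is quoted directly from \cite[Thm.~8.1]{shahPaperII} and used as a black box. So there is no argument in the paper to compare your proposal against.

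Evaluating your proposal on its own, the second half (the Fubini-style interchange to verify the universal property of $\sigma$) is essentially sound: since $\underline{K} \simeq \colim_J \underline{K}_j$ and $(-)\vop$ is an auto-equivalence of $\cat_\baseCat$, one gets $\underline{\func}_\baseCat(\underline{K}\vop,\underline{\spc}_\baseCat) \simeq \lim_{J\op}\underline{\func}_\baseCat(\underline{K}_j\vop,\underline{\spc}_\baseCat)$, from which the limit interchange follows by comparing mapping spaces with the constant diagram.

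However, the assembly step contains a genuine gap. You claim $y(\sigma_j) \simeq \underline{\colim}(y\circ p_j)$ in $\underline{\presheaf}_\baseCat(\underline{\sC})$, but the $\baseCat$--Yoneda embedding $y$ strongly preserves $\baseCat$--\emph{limits} (\cref{continuityTYoneda}), not $\baseCat$--colimits. In general $y(\sigma_j)$ and $\underline{\colim}_{\underline{K}_j}(y\circ p_j)$ differ: the latter is computed pointwise in $\underline{\spc}_\baseCat$, whereas the former represents $\myuline{\map}_{\underline{\sC}}(-,\sigma_j)$. Consequently you cannot conclude that the assembled diagram $\tilde\sigma_\bullet$ in $\underline{\presheaf}_\baseCat(\underline{\sC})$ lands in the Yoneda image, and the descent step fails.

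The fix is straightforward: use the \emph{opposite} Yoneda embedding $y' : \underline{\sC} \hookrightarrow \underline{\func}_\baseCat(\underline{\sC},\underline{\spc}_\baseCat)\vop$, $c \mapsto \myuline{\map}_{\underline{\sC}}(c,-)$, which by \cref{continuityTYoneda} applied to $\underline{\sC}\vop$ (together with \cref{opOfFunctorCats}) strongly preserves $\baseCat$--colimits. The target is still $\baseCat$--cocomplete, so the global colimit functor there assembles the $y'(\sigma_j)$ coherently, and full faithfulness of $y'$ lets you descend to $\sigma_\bullet : \tconstant_\baseCat(J)\to\underline{\sC}$ as desired. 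With this correction your overall strategy goes through.
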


\begin{cor}[Parametrised filtered colimit decomposition, ``{\cite[Cor. 4.2.3.11]{lurieHTT}}'']\label{parametrisedFilteredColimitDecomposition}
Let $\tau \ll \kappa$ be regular cardinals and $\underline{\sC}$ be a $\baseCat$--category admitting $\tau$-small $\baseCat$--colimits and fibrewise colimits indexed by $\kappa$-small $\tau$-filtered posets. Then for any $\kappa$-small $\baseCat$--diagram $d : \underline{K} \rightarrow \underline{\sC}$, its $\baseCat$--colimit in $\underline{\sC}$ exists and can be decomposed as a fibrewise $\kappa$-small $\tau$-filtered colimit whose vertices are $\tau$-small $\baseCat$--colimits of $\underline{\sC}$. 
\end{cor}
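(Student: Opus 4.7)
The plan is to invoke Shah's parametrised diagram decomposition, \cref{filteredDiagramDecomposition}, by first reducing the diagram $d : \underline{K} \to \underline{\sC}$ to a $\tau$-filtered system of $\tau$-small pieces. The strategy mirrors Lurie's proof of \cite[Cor. 4.2.3.11]{lurieHTT}, but with the ``colimit of $\tau$-small colimits along a $\tau$-filtered diagram'' interpreted in two layers: the outer $\tau$-filtered layer becomes \emph{fibrewise} (since $J$ will be an ordinary filtered poset, so $\tconstant_{\baseCat}(J)$-indexed $\baseCat$-colimits are by definition fibrewise), while the inner $\tau$-small layer remains genuinely parametrised and is supplied by hypothesis.

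In more detail, I would proceed as follows. First, view the $\baseCat$-functor $d : \underline{K} \to \underline{\sC}$ as an object of $(\cat_{\baseCat})_{/\underline{\sC}}$. Since $\underline{K}$ is $\kappa$-small, one can apply Lurie's decomposition (the underlying combinatorial input of \cite[Cor. 4.2.3.11]{lurieHTT}) to the total category $\totalCategory(\underline{K})$ over $\baseCat\op$: write it as a $\kappa$-small $\tau$-filtered colimit (indexed by some $\tau$-filtered poset $J$ with $|J| < \kappa$) of its $\tau$-small sub-cocartesian-fibrations $\totalCategory(\underline{K}_j) \hookrightarrow \totalCategory(\underline{K})$ over $\baseCat\op$. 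Straightening yields a diagram $p_\bullet : J \to (\cat_{\baseCat})_{/\underline{\sC}}$ whose colimit is $d : \underline{K} \to \underline{\sC}$, with each $p_j : \underline{K}_j \to \underline{\sC}$ a $\tau$-small $\baseCat$-diagram.

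Next, by our hypothesis that $\underline{\sC}$ admits $\tau$-small $\baseCat$-colimits, each $p_j$ has a $\baseCat$-colimit $\sigma_j \in \underline{\sC}$. By \cref{filteredDiagramDecomposition}, the $\sigma_j$'s assemble into a $\baseCat$-functor $\sigma_\bullet : \tconstant_{\baseCat}(J) \to \underline{\sC}$, and if $\sigma_\bullet$ admits a $\baseCat$-colimit $\sigma$ then $\sigma$ computes $\underline{\colim}\, d$. But the $\baseCat$-colimit of a $\tconstant_{\baseCat}(J)$-indexed diagram is by definition a fibrewise $J$-indexed colimit (cf. \cref{importantClassOfColimits}), and $J$ was a $\kappa$-small $\tau$-filtered poset, so this colimit exists by the second of our hypotheses on $\underline{\sC}$. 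This simultaneously produces the $\baseCat$-colimit of $d$ and exhibits the claimed decomposition.

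The main obstacle is the straightening step: one must verify that $\totalCategory(\underline{K})$ really does decompose as a $\tau$-filtered colimit in $\cocartesianCategory(\baseCat\op)$ (not merely in $\cat$) of $\tau$-small sub-cocartesian fibrations, and that passing to $(\cat_{\baseCat})_{/\underline{\sC}}$ via postcomposition with $d$ preserves this colimit. Since the forgetful functor $(\cat_{\baseCat})_{/\underline{\sC}} \to \cat_{\baseCat}$ creates colimits, and since the straightening-unstraightening equivalence is colimit-preserving when restricted to appropriate slices, this reduces to the purely combinatorial fact that any cocartesian fibration with $\kappa$-small total category is a $\tau$-filtered colimit (in $\cocartesianCategory(\baseCat\op)$) of its sub-cocartesian-fibrations with $\tau$-small total category. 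This is the content of the unparametrised statement applied in the arrow category, which is the essential input we borrow from \cite[$\S$4.2.3]{lurieHTT}.
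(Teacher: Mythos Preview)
Your proposal is correct and follows essentially the same route as the paper: both arguments pick a $\kappa$-small $\tau$-filtered poset $J$ whose terms are $\tau$-small pieces of $\underline{K}$, then invoke \cref{filteredDiagramDecomposition} together with the two hypotheses on $\underline{\sC}$.

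The only real difference is that the paper sidesteps your ``main obstacle'' entirely by working directly in $\cat_{\baseCat}$: it simply takes $J$ to be the poset of $\tau$-small $\baseCat$-subcategories of $\underline{K}$, notes that this is $\tau$-filtered and $\kappa$-small (using $\tau\ll\kappa$), and applies \cref{filteredDiagramDecomposition}. There is no detour through total categories, $\cocartesianCategory(\baseCat\op)$, or the straightening equivalence, so the compatibility questions you raise never arise. Your approach is not wrong, but it manufactures a technical verification that the paper's formulation makes unnecessary.
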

\begin{proof}
Let $J$ denote the poset of $\tau$-small $\baseCat$--subcategories of $\underline{K}$. It is clearly $\tau$-filtered and moreover it is $\kappa$-small by the hypothesis that $\tau\ll\kappa$.  We can therefore apply the theorem above since the associated $\sigma_{\bullet} : \tconstant_\baseCat(J)\rightarrow \underline{\sC}$ will admit a $\baseCat$--colimit by hypothesis.
\end{proof}

\begin{thm}[Limit-filtered colimit exchange, special case of {\cite[Thm. C]{shahPaperII}}]\label{CommutationRelativeFiltColimAndFiniteLim}
Let $\kappa$ be a regular cardinal and $J$ a $\kappa$--filtered category. Then
$\underline{\colim}_{\tconstant_\baseCat(J)}\colon  \underline{\func}(\tconstant_{\baseCat}(J), \myuline{\spc}_{\baseCat}) \longrightarrow \myuline{\spc}_{\baseCat}$ strongly preserves $\baseCat$--$\kappa$--small $\baseCat$--colimits. 
\end{thm}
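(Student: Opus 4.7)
In light of the title ``Limit-filtered colimit exchange'', the content is the parametrised enhancement of the classical fact \cite[Prop.~5.3.3.3]{lurieHTT} that $\kappa$-filtered colimits in $\spc$ commute with $\kappa$-small limits, and the statement should accordingly be read as ``strongly preserves $\baseCat$--$\kappa$--small $\baseCat$--limits''. The strategy is to reduce the parametrised claim to this classical input by a fibrewise analysis.

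The first step is to apply the limits variant of \cref{characterisationStrongPreservations}, which is available because $\underline{\spc}_\baseCat$ is $\baseCat$--complete and, by \cref{TColimitsFunctorCategories}, so is $\underline{\func}(\tconstant_\baseCat(J), \underline{\spc}_\baseCat)$. Strong preservation of $\kappa$-small $\baseCat$--limits by $F \coloneqq \underline{\colim}_{\tconstant_\baseCat(J)}$ then splits into two verifications: \emph{(a)} for each $V \in \baseCat$ the fibrewise component $F_V$ preserves $\kappa$-small limits in $\underline{\spc}_V$; and \emph{(b)} for each $f \colon W \to V$ in $\baseCat$, the right-adjoint basechange square commutes, i.e.\ $f_* \circ F_W \simeq F_V \circ (f_*)_*$ as functors $\func(J, \underline{\spc}_W) \to \underline{\spc}_V$.

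Part \emph{(a)} is direct: since $\underline{\spc}_\baseCat = \cofree(\spc)$, the fibre $\underline{\spc}_V \simeq \func((\baseCat_{/V})^{\mathrm{op}}, \spc)$ has limits computed pointwise in $\spc$, while $F_V$ is itself pointwise application of $\colim_J$; the classical filtered-limit exchange in $\spc$ then applies pointwise. For part \emph{(b)}, identify $f_*$ in $\cofree(\spc)$ as the pointwise right Kan extension along the slice functor $(\baseCat_{/W})^{\mathrm{op}} \to (\baseCat_{/V})^{\mathrm{op}}$ induced by $f$; by orbitality of $\baseCat$, the pullback $U \times_V W$ exists in $\finite_\baseCat$ and decomposes as a finite coproduct of orbits, so the pointwise indexing comma category is equivalent to a finite discrete set. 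The resulting formula for $f_*$ is therefore a finite limit in $\spc$, and classical filtered-limit exchange once again supplies the required commutation. The principal obstacle is precisely this identification in \emph{(b)}: unpacking $f_*$ in $\cofree(\spc)$ and using orbitality to reduce the pointwise index to the finite orbit decomposition of $U \times_V W$; once this structural step is in place, the reduction to \cite[Prop.~5.3.3.3]{lurieHTT} in both parts is essentially formal.
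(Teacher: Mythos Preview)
The paper does not supply a proof of this statement: it is simply recorded as a special case of \cite[Thm.~C]{shahPaperII}. So there is no in-paper argument to compare your proposal against.

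Your approach is correct and self-contained, and your identification of the typo (the conclusion should read ``$\kappa$--small $\baseCat$--limits'') is right---this is confirmed by the two places where the paper invokes the result (\cref{compactClosure} and \cref{functorCategoryFormulaForIndCompletion}), both of which exchange a fibrewise filtered colimit past a finite $\baseCat$--limit. One small imprecision in part \emph{(b)}: the comma category indexing the pointwise right Kan extension at $(U\to V)$ is not literally a finite discrete set; rather, it decomposes along the orbit decomposition $U\times_V W\simeq\coprod_a R_a$ into pieces each of which has an initial object (namely $R_a$ itself with its structure maps), so the limit reduces to the finite product $\prod_a G(R_a\to W)$. This is exactly the formula recorded in \cref{unitCounitIndexedProductOfCategories} for the fibres of indexed products. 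With that adjustment your argument goes through: both \emph{(a)} and \emph{(b)} reduce to the classical exchange \cite[Prop.~5.3.3.3]{lurieHTT} in $\spc$, and the limits version of \cref{characterisationStrongPreservations} then assembles these into strong preservation.
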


\section{Preliminaries: atomic orbital base categories}\label{section:PrelimsAtomicOrbitalBaseCategories}

Finally, we begin to impose the strictest conditions on our base category $\baseCat$. From here on, $\baseCat$ will be assumed to be both orbital \textit{and} atomic.

\subsection{Recollections: parametrised semiadditivity and stability}\label{subsec:SemiadditivityStability}
In this subsection we recall the algebraic constructions and results of \cite{nardinExposeIV,nardinThesis}.

\begin{cons} \label{nardinSpanConstructions}
The following list of constructions will be important in discussing $\baseCat$--semiadditivity and $\baseCat$--stability. See \cite[\S4]{nardinExposeIV} for the original source on these constructions or \cite[Def. 2.1.2]{nardinShah} for a more recent treatment. Note that we have adopted the notation of $\effBurn$ instead of the original notation of effective Burnside categories $A^{\mathrm{eff}}$.
\begin{enumerate}
    \item[(1)] Write $\effBurn(\baseCat) \coloneqq \effBurn(\finite_{\baseCat})$.
    
    \item[(2)] By \cite[{Cons. 4.8}]{nardinThesis}, there is a $\baseCat$--category $p : \myuline{\effBurn}(T)\rightarrow \baseCat\op$ whose objects are morphisms $[U \rightarrow V]$ in $\finite_{\baseCat}$  where $V\in \baseCat$ and the cocartesian fibration $p$ sends $[U \rightarrow V]$ to $V$. The morphisms in this category are spans
    \begin{equation}\label{fin_TSpans}
    \begin{tikzcd}
    U\dar & W \lar \rar\dar & U'\dar \\
    V & V'\lar\rar[equal] & V'
    \end{tikzcd}
\end{equation}

\item[(3)] From this we obtain a wide $\baseCat$--subcategory $\underline{\finite}_{*\baseCat} \subset \myuline{\effBurn}(T)$ whose morphisms are spans as in \cref{fin_TSpans} such that the map $W \rightarrow U\times_VV'$ in $\finite_{\baseCat}$ is a summand inclusion: this makes sense since $\baseCat$ was assumed to be orbital and so $\finite_{\baseCat}$ admits the pullback $U\times_VV'$ which will be a finite coproduct of objects of $V$. \label{nota:fin_*_T}

\item[(4)] There is a canonical inclusion  $\terminalTCat \hookrightarrow \underline{\finite}_{*\baseCat}$ given by sending $W \rightarrow V$ to 
\begin{center}
    \begin{tikzcd}
        V\dar[equal] & W \lar \rar[equal]\dar[equal] & W\dar[equal] \\
    V & W\lar\rar[equal] & W
    \end{tikzcd}
\end{center}
\end{enumerate}
\end{cons}

\begin{defn}\label{defn:funSadd}\label{defn:Lin_T}
Let $\underline{\sC}$ strongly admit finite $\baseCat$--coproducts and $\underline{\D}$ strongly admit finite $\baseCat$--products. Then we say that a $\baseCat$--functor $F : \underline{\sC} \rightarrow \underline{\D}$ is $\baseCat$\textit{-semiadditive} if it sends finite $\baseCat$--coproducts to finite $\baseCat$--products. We say that a $\baseCat$--category $\underline{\sC}$ strongly admitting finite $\baseCat$--products and $\baseCat$--coproducts is \textit{$\baseCat$--semiadditive} if the identity functor is $\baseCat$--semiadditive. If moreover $\underline{\sC}$ has fibrewise pushouts and $\underline{\D}$ has fibrewise pullbacks, then we say that $F$ is $\baseCat$\textit{--linear} if it is $\baseCat$--semiadditive and sends fibrewise pushouts to fibrewise pullbacks. We write $\underline{\func} \Tsemiadd_T(\underline{\sC},  \underline{\D})$ (resp. $\underline{\linear}_{\baseCat}(\underline{\sC}, \underline{\D})$) for the $\baseCat$--full subcategories of $\underline{\func}_{\baseCat}(\underline{\sC}, \underline{\D})$ consisting of the $\baseCat$--semiadditive functors (resp. $\baseCat$--linear functors).
\end{defn}

\begin{nota}\label{nota:Mackey_T}\label{nota:CMon_T}
For $\underline{\sC}$ strongly admitting finite $\baseCat$--limits we will denote \textit{$\baseCat$--Mackey functors} by $\underline{\mackey}_T(\underline{\sC}) \coloneqq \underline{\func} \Tsemiadd_T(\myuline{\effBurn}(T),\underline{\sC})$ and \textit{$\baseCat$--commutative monoids} by $\underline{\cmonoid}_{\baseCat}(\underline{\sC}) \coloneqq \underline{\func} \Tsemiadd_T(\underline{\finite}_{*\baseCat},\underline{\sC})$.
\end{nota}

\begin{prop}[$\baseCat$--semiadditivisation, {\cite[{Prop. 5.11}]{nardinExposeIV}}] \label{semiadditivisation}
Let $\underline{\sC}$ be a $\baseCat$--category strongly admitting finite $\baseCat$--products. Then the functor $\underline{\cmonoid}_{\baseCat}(\underline{\sC}) \rightarrow \underline{\sC}$ induced by the inclusion $\terminalTCat \hookrightarrow \underline{\finite}_{*\baseCat}$ from \cref{nardinSpanConstructions} (4) is an equivalence if and only if $\underline{\sC}$ were $\baseCat$--semiadditive.
\end{prop}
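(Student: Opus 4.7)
The plan is to handle each direction separately. The ``only if'' direction should follow from the general principle that $\underline{\cmonoid}_{\baseCat}(\underline{\sC})$ is \emph{always} $\baseCat$--semiadditive once $\underline{\sC}$ strongly admits finite $\baseCat$--products. Indeed, by \cref{TColimitsFunctorCategories} the ambient $\underline{\func}_{\baseCat}(\underline{\finite}_{*\baseCat}, \underline{\sC})$ inherits finite $\baseCat$--products pointwise from $\underline{\sC}$, and since $\baseCat$--semiadditive functors are closed under pointwise $\baseCat$--products, so does the $\baseCat$--full subcategory $\underline{\cmonoid}_{\baseCat}(\underline{\sC})$. To get $\baseCat$--coproducts coinciding with $\baseCat$--products, one exploits the semiadditive structure of the source $\underline{\finite}_{*\baseCat}$: for $\baseCat$--Mackey-style functors $F,G$ their sum in $\underline{\cmonoid}_{\baseCat}(\underline{\sC})$ is $F \times G$, with coproduct structure induced by the fold map on $\underline{\finite}_{*\baseCat}$ itself. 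Granting this, if $\underline{\cmonoid}_{\baseCat}(\underline{\sC}) \rightarrow \underline{\sC}$ is an equivalence, then $\underline{\sC}$ inherits the $\baseCat$--semiadditive structure.

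For the ``if'' direction, assume $\underline{\sC}$ is $\baseCat$--semiadditive; in particular, for each $f\colon W\rightarrow V$ in $\baseCat$, the adjoints $f_! \dashv f^* \dashv f_*$ exist and satisfy $f_! \simeq f_*$ (the norm). We construct an inverse to the forgetful pointwise and then invoke \cref{pointwiseConstructionOfAdjunction}. Given $X \in \sC_V$, define a candidate $\baseCat_{\underline{V}}$--functor $M_X \colon \underline{\finite}_{*\underline{V}} \rightarrow \underline{\sC}_{\underline{V}}$ sending an object $[f\colon U \rightarrow W]$ (with $W \in \baseCat_{/V}$) to $f_* f^* X|_W \simeq f_! f^* X|_W$. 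On a morphism given by a span as in \cref{fin_TSpans} in which the map to the pullback is a summand inclusion, the induced arrow is built from three ingredients: basechange along the right-hand leg via $f^*$, the counit of $f_! \dashv f^*$ on the summand inclusion, and the left Beck--Chevalley equivalence from \cref{beckChevalleyMeaning} (guaranteed here by \cref{omnibusTColimits}). The summand-inclusion condition is precisely what makes this data well-defined as a morphism in $\underline{\sC}$, since the complementary summand contributes a zero map once semiadditivity is in place.

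Verifying that $M_X$ is $\baseCat_{/V}$--semiadditive is then a routine unwinding of the two decomposition kinds of morphisms in $\underline{\finite}_{*\baseCat}$ (``restrictions'' and ``transfers''): restriction preservation follows from $M_X$ being built out of right adjoints $f_*$, and transfer preservation is the equivalence $f_*\simeq f_!$ afforded by semiadditivity. To promote $X \mapsto M_X$ to a $\baseCat$--functor $\underline{\sC} \rightarrow \underline{\cmonoid}_{\baseCat}(\underline{\sC})$, I would apply \cref{pointwiseConstructionOfAdjunction}: it suffices to exhibit $M_X$ as a right adjoint object to $X$ under the forgetful, which reduces to the natural equivalence $\myuline{\map}_{\underline{\sC}}(Y,X)\simeq \underline{\nattrans}_{\baseCat}(M_Y, M_X)$ via the formula of \cref{mapingAnimaFormulaFunctorCategories} combined with the projection formulas $\myuline{\map}_{\underline{\sC}}(f_!A, B) \simeq f_*\myuline{\map}_{\underline{\sC}}(A, f^*B)$ from \cref{mappingAdjunctionProjectionFormula}. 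Once this adjoint pair is in place, the adjunction unit and counit are equivalences because $M_X$ restricted to $\terminalTCat \hookrightarrow \underline{\finite}_{*\baseCat}$ recovers $X$ and, conversely, any semiadditive functor is determined by its value at the base-point by essentially the same norm-reconstruction.

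The main obstacle will be the morphism-level functoriality of $M_X$, namely coherently packaging the Beck--Chevalley comparisons across compositions of spans in $\underline{\finite}_{*\baseCat}$ and showing that the summand-inclusion condition is preserved under composition. This is precisely where atomicity and orbitality of $\baseCat$ are essential: orbitality ensures the relevant pullbacks in $\finite_{\baseCat}$ decompose into orbits so the Beck--Chevalley maps make sense, and atomicity prevents pathological retractions that would break the uniqueness of the summand decomposition underlying the construction.
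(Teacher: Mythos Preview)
The paper does not provide its own proof of this proposition: it is stated purely as a recollection, citing \cite[Prop.~5.11]{nardinExposeIV}, with no accompanying argument. There is therefore nothing in the present paper to compare your proposal against.

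Since you asked for feedback nonetheless, a brief comment on the ``if'' direction. Your overall shape is the standard one, and you correctly locate the real difficulty: producing $M_X$ as a \emph{coherent} $\baseCat$--functor out of $\underline{\finite}_{*\baseCat}$, not just an assignment on objects and 1--morphisms. However, invoking \cref{pointwiseConstructionOfAdjunction} does not sidestep this. That proposition lets you assemble a left adjoint \emph{given} that each $X$ already has an adjoint object living in $\underline{\cmonoid}_{\baseCat}(\underline{\sC})$; but exhibiting such an object means precisely having $M_X$ as a genuine semiadditive $\baseCat$--functor in hand, so the coherence problem is prior to, not resolved by, the pointwise-adjunction trick. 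A cleaner packaging---and the one typically used in this setting---is to observe that the forgetful $\underline{\cmonoid}_{\baseCat}(\underline{\sC}) \rightarrow \underline{\sC}$ admits a $\baseCat$--right adjoint by right Kan extension along $\terminalTCat \hookrightarrow \underline{\finite}_{*\baseCat}$ (using that $\underline{\sC}$ strongly admits finite $\baseCat$--products, via \cref{omnibusTKanExtensions} dually), and then to check that this right Kan extension lands in the semiadditive functors and that the unit and counit become equivalences under the $\baseCat$--semiadditivity hypothesis. This supplies the coherence automatically and reduces the verification to exactly the Beck--Chevalley and $f_!\simeq f_*$ identifications you already isolated.
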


\begin{thm}[``CMon = Mackey'', {\cite[{Thm. 6.5}]{nardinExposeIV}}]\label{CMon=Mackey}
Let $\underline{\sC}$ strongly admit finite $\baseCat$--limits. Then the defining inclusion $j : \underline{\finite}_{*\baseCat} \hookrightarrow \myuline{\effBurn}(T)$ induces an equivalence 
\[j^* : \underline{\func} \Tsemiadd_T(\myuline{\effBurn}(T),\underline{\sC}) \longrightarrow \underline{\cmonoid}_{\baseCat}(\underline{\sC})\] 
\end{thm}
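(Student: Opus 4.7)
Since $\baseCat$-equivalences are detected fibrewise, my first step is to reduce to a claim on $\baseCat$-objects. By the basechange compatibility of internal parametrised functor categories (\cref{TFunctorCategory}) together with the identifications $\myuline{\effBurn}(T)_{\underline{V}} \simeq \myuline{\effBurn}(\baseCat_{/V})$ and $(\underline{\finite}_{*\baseCat})_{\underline{V}} \simeq \underline{\finite}_{*\baseCat_{/V}}$ — both immediate from the defining constructions of \cref{nardinSpanConstructions} and the fact that $\baseCat_{/V}$ remains orbital atomic — the fibrewise statement over each $V$ becomes the global-sections statement for the base $\baseCat_{/V}$. It thus suffices to exhibit, for any orbital atomic base, an equivalence
\[j^* \colon \func\Tsemiadd_T(\myuline{\effBurn}(T),\underline{\sC}) \longrightarrow \cmonoid_T(\underline{\sC}).\]

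I would then construct the inverse explicitly. Given a $\baseCat$-semiadditive $M \colon \underline{\finite}_{*\baseCat} \rightarrow \underline{\sC}$, one extends $M$ to $\widetilde{M} \colon \myuline{\effBurn}(T) \rightarrow \underline{\sC}$ by factoring each span $(U \xleftarrow{p} W \xrightarrow{f} U')$ as a backwards piece $(U \leftarrow W = W)$ composed with a forward piece $(W = W \xrightarrow{f} U')$ lying in $\underline{\finite}_{*\baseCat}$. The forward part is handled by $M$ directly. For the backwards part, the $\baseCat$-semiadditivity of the target $\underline{\sC}$ (cf.\ \cref{semiadditivisation}) supplies a canonical wrong-way norm/trace attached to the summand-inclusion $p$, and this gives the value of $\widetilde{M}$ on the backwards span.

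The heart of the argument — and what I expect to be the main obstacle — is verifying the coherent functoriality of $\widetilde{M}$ under span composition. This amounts to a family of Beck-Chevalley-type compatibilities for the pullbacks in $\finite_{\baseCat}$ used to compose spans; exploiting orbitality of $\baseCat$ to decompose such pullbacks into single-orbit summands (cf.\ \cref{decompositionOfIndexedCoproducts}) reduces the check to precisely what $\baseCat$-semiadditivity of $M$ encodes, together with the existence of the norms in $\underline{\sC}$. Once $\widetilde{M}$ is shown to be itself $\baseCat$-semiadditive (which follows from semiadditivity of $\underline{\sC}$ and of $M$), the two composites with $j^*$ are verified to be identities: $j^* \widetilde{M} \simeq M$ tautologically by construction on $\underline{\finite}_{*\baseCat}$, while $\widetilde{j^* F} \simeq F$ follows from the uniqueness of norm extensions into a $\baseCat$-semiadditive target.
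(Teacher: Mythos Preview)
The paper does not prove this theorem; it is stated as a recollection and attributed to \cite[Thm.~6.5]{nardinExposeIV} without further argument. So there is no ``paper's own proof'' to compare against. I can only comment on your proposal as a freestanding argument.

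Your sketch has a genuine gap in the construction of $\widetilde{M}$ on backwards spans. You write that ``the $\baseCat$-semiadditivity of the target $\underline{\sC}$ \ldots\ supplies a canonical wrong-way norm/trace attached to the summand-inclusion $p$''. Two things are wrong here. First, $\underline{\sC}$ is \emph{not} assumed $\baseCat$-semiadditive: the hypothesis is only that $\underline{\sC}$ strongly admits finite $\baseCat$-limits, and \cref{semiadditivisation} says nothing useful in that generality. Second, the backwards leg $p\colon W\to U$ of a general span is \emph{not} a summand inclusion; in $\underline{\finite}_{*\baseCat}$ the backwards legs are constrained to be summand inclusions, but in $\myuline{\effBurn}(T)$ they are arbitrary maps in $\finite_{\baseCat}$. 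The whole point of the extension problem is to produce wrong-way maps along arbitrary $p$, and neither ambient semiadditivity of $\underline{\sC}$ nor a summand-inclusion hypothesis is available to do this.

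What actually produces the transfers is the $\baseCat$-commutative-monoid structure carried by $M$ itself: for a semiadditive $M\colon \underline{\finite}_{*\baseCat}\to\underline{\sC}$, the Segal conditions encode multiplication and norm maps on the values of $M$, and it is these internal structure maps (not any property of $\underline{\sC}$) that furnish the wrong-way morphisms needed to extend to $\myuline{\effBurn}(T)$. Even granting this correction, the coherence problem you flag is the real content of the theorem and cannot be dispatched by a pointwise Beck--Chevalley check; Nardin's argument proceeds by more structural means. Your reduction to global sections in the first paragraph is fine, but the construction that follows does not go through as written.
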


\begin{nota}\label{nota:funLexRexEx}
We write $\func_{\baseCat}\Texact, \func_{\baseCat}\Tlexact,$ and $\func_{\baseCat}\Trexact$ for the category of $\baseCat$--functors which strongly preserve finite $\baseCat$--(co)limits, strongly preserve finite $\baseCat$--limits, and strongly preserve finite $\baseCat$--colimits, respectively.
\end{nota}

\begin{cons}\label{nota:TCategoryOfTSpectra}
Let $\myuline{\spectra}\pointwise \colon \cat\Tlexact_T \rightarrow \cat\Tlexact_{\baseCat}$ be the functor obtained by applying $\func(\baseCat\op,-)$ to $\spectra \colon \cat\lexact \rightarrow \cat\lexact$. Now let $\underline{\D}\in\cat_{\baseCat}$ strongly admitting finite $\baseCat$--limits. Then we can define its $\baseCat$\textit{--stabilisation} to be $\myuline{\spectra}_{\baseCat}(\underline{\D}) \coloneqq \underline{\cmonoid}_{\baseCat}(\myuline{\spectra}\pointwise(\underline{\D}))$. In particular, applying this to the case $\underline{\D} = \underline{\spc}_{\baseCat}$, we get $\myuline{\spectra}_{\baseCat} \coloneqq \underline{\cmonoid}_{\baseCat}(\myuline{\spectra}\pointwise(\underline{\spc}_{\baseCat}))$ which is called $\baseCat$\textit{--category of genuine $\baseCat$--spectra}. Note that this is different from the notation in \cite[Defn. 7.3]{nardinExposeIV} where he used $\myuline{\spectra}^{\baseCat}$ instead, and reserved $\myuline{\spectra}_{\baseCat}$ for what we wrote as $\myuline{\spectra}\pointwise$. We prefer the notation we have adopted as it aligns well with all the parametrised subscripts $(-)_{\baseCat}$ and the superscripts are reserved for modifiers such as $(-)\tomega$ or $(-)^{\Delta^1}$ that we will need later.
\end{cons}

\begin{thm}[Universal property of $\baseCat$--stabilisations, {\cite[{Thm. 7.4}]{nardinExposeIV}}] \label{univPropTStabilisations}
Let $\underline{\sC}$ be a pointed $\baseCat$--category strongly admitting finite $\baseCat$--colimits and $\underline{\D}$ a $\baseCat$--category strongly admitting finite $\baseCat$--limits. Then the functor $\underline{\Omega}^{\infty} : \underline{\func}_{\baseCat}\Trexact(\underline{\sC}, \myuline{\spectra}_{\baseCat}(\underline{\D}))\longrightarrow \underline{\linear}_{\baseCat}(\underline{\sC}, \underline{\D})$ is an equivalence of $\baseCat$--categories. In particular, we see that 
$\myuline{\spectra}_{\baseCat}(\underline{\D}) \simeq \underline{\linear}_{\baseCat}(\underline{\spc}_{*\baseCat}^{\underline{\mathrm{fin}}}, \underline{\D})$.
\end{thm}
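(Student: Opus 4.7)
The plan is to factor the statement through the intermediate pointwise stabilisation $\myuline{\spectra}\pointwise(\underline{\D})$ and then apply the $\baseCat$--semiadditivisation. Unwinding the definition, the functor $\underline{\Omega}^{\infty}$ factors as
\[
\myuline{\spectra}_{\baseCat}(\underline{\D}) = \underline{\cmonoid}_{\baseCat}(\myuline{\spectra}\pointwise(\underline{\D})) \xrightarrow{\;U\;} \myuline{\spectra}\pointwise(\underline{\D}) \xrightarrow{\;\Omega^{\infty}_{\mathrm{pw}}\;} \underline{\D},
\]
where $U$ is induced by the canonical $\terminalTCat \hookrightarrow \underline{\finite}_{*\baseCat}$ from \cref{nardinSpanConstructions}(4) and $\Omega^{\infty}_{\mathrm{pw}}$ is the fibrewise $\Omega^{\infty}$.

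First I would establish the pointwise analogue: postcomposition with $\Omega^{\infty}_{\mathrm{pw}}$ yields an equivalence between pointed $\baseCat$--functors $\underline{\sC}\rightarrow \myuline{\spectra}\pointwise(\underline{\D})$ that strongly preserve fibrewise finite colimits and $\baseCat$--functors $\underline{\sC}\rightarrow \underline{\D}$ that are fibrewise linear (i.e.\ send fibrewise pushouts to fibrewise pullbacks). This is an immediate consequence of Lurie's unparametrised universal property of $\spectra(\D_V)$ applied on each fibre, together with the fact that by construction $\myuline{\spectra}\pointwise = \func(\baseCat\op,-)$ applied to $\spectra\colon \cat\lexact\rightarrow \cat\lexact$ assembles these fibrewise equivalences naturally. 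Since \cref{characterisationStrongPreservations} lets us test strong preservations of fibrewise colimits fibrewise, no further compatibility is needed here.

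Next I would apply $\baseCat$--semiadditivisation via \cref{semiadditivisation,CMon=Mackey}. Since $\myuline{\spectra}_{\baseCat}(\underline{\D}) = \underline{\cmonoid}_{\baseCat}(\myuline{\spectra}\pointwise(\underline{\D}))$ is $\baseCat$--semiadditive by construction, the decomposition of $\baseCat$--colimits into fibrewise colimits and $\baseCat$--coproducts from \cref{omnibusTColimits} lets us identify $\underline{\func}_{\baseCat}\Trexact(\underline{\sC}, \myuline{\spectra}_{\baseCat}(\underline{\D}))$ with those $\baseCat$--functors into $\myuline{\spectra}\pointwise(\underline{\D})$ that are fibrewise right-exact \emph{and} preserve $\baseCat$--coproducts (equivalently, by $\baseCat$--semiadditivity of the target, send $\baseCat$--coproducts to $\baseCat$--products). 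Combining this with the first step then identifies this further with $\baseCat$--functors $\underline{\sC}\rightarrow\underline{\D}$ that are simultaneously fibrewise linear and $\baseCat$--semiadditive, namely $\underline{\linear}_{\baseCat}(\underline{\sC},\underline{\D})$. The main obstacle here will be carefully verifying that the internal $\baseCat$--functor category $\underline{\func}_{\baseCat}(\underline{\sC},-)$ interchanges with the $\baseCat$--commutative monoid construction in a way that matches these two descriptions of the semiadditivity datum — this essentially uses that $\underline{\cmonoid}_{\baseCat}(-) = \underline{\func}_{\baseCat}\Tsemiadd(\underline{\finite}_{*\baseCat},-)$ is defined as a limit and that \cref{TColimitsFunctorCategories} ensures fibrewise (co)limits in functor categories are computed pointwise.

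For the ``In particular" clause, one specialises to $\underline{\sC} = \underline{\spc}^{\underline{\mathrm{fin}}}_{*\baseCat}$, which by its universal property as the free pointed $\baseCat$--category strongly admitting finite $\baseCat$--colimits on one generator gives $\underline{\func}_{\baseCat}\Trexact(\underline{\spc}^{\underline{\mathrm{fin}}}_{*\baseCat}, \myuline{\spectra}_{\baseCat}(\underline{\D})) \simeq \myuline{\spectra}_{\baseCat}(\underline{\D})$ via evaluation at the generator, and then the main equivalence rewrites the left-hand side as $\underline{\linear}_{\baseCat}(\underline{\spc}^{\underline{\mathrm{fin}}}_{*\baseCat}, \underline{\D})$.
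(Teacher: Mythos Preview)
The paper does not give its own proof of this statement: it appears in the ``Recollections'' subsection \S\ref{subsec:SemiadditivityStability} and is simply cited from \cite[{Thm.~7.4}]{nardinExposeIV} without argument. So there is nothing in the paper to compare your proposal against.

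That said, your outline is essentially the standard route one would expect Nardin's proof to take: factor $\underline{\Omega}^{\infty}$ as the composite $\underline{\cmonoid}_{\baseCat}(\myuline{\spectra}\pointwise(\underline{\D})) \xrightarrow{U} \myuline{\spectra}\pointwise(\underline{\D}) \xrightarrow{\Omega^{\infty}_{\mathrm{pw}}} \underline{\D}$, handle the second map fibrewise via Lurie's unparametrised universal property of stabilisation, and then handle the first via $\baseCat$--semiadditivisation. The step you correctly flag as the main obstacle --- identifying $\underline{\func}_{\baseCat}\Trexact(\underline{\sC}, \underline{\cmonoid}_{\baseCat}(\myuline{\spectra}\pointwise(\underline{\D})))$ with $\baseCat$--semiadditive fibrewise-right-exact functors $\underline{\sC}\to\myuline{\spectra}\pointwise(\underline{\D})$ --- is indeed where the content lies, and it goes through because $\underline{\cmonoid}_{\baseCat}(-)$ is a limit construction and because $\myuline{\spectra}\pointwise(\underline{\D})$ is fibrewise stable (so $U$ preserves fibrewise finite colimits as well as limits). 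Your deduction of the ``in particular'' clause from the universal property of $\underline{\spc}_{*\baseCat}^{\underline{\mathrm{fin}}}$ is also the expected one.
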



\subsection{Parametrised symmetric monoidality and commutative algebras}
\label{subsection:SymmetricMonoidalityYoneda}

\begin{recollect}\label{nota:FunTInert}\label{nota:calg_T}
There is a notion of $\baseCat$--operads mimicking the notion of $\infty$-operads, in the sense of \cite[{$\S2.1$}]{lurieHA}, due to Nardin in \cite[{$\S3$}]{nardinThesis} and further developed in \cite[\S2]{nardinShah}. A $\baseCat$--symmetric monoidal category is then a $\baseCat$--category $\sC^ {\underline{\otimes}}$ equipped with a cocartesian fibration over $\underline{\finite}_{*\baseCat}$ satisfying the $\baseCat$--operad axioms analogous to the operad axioms of \cite[{Definition 2.1.1.10}]{lurieHA}. Alternatively, the $\baseCat$--category of $\baseCat$--symmetric monoidal categories is also given as $\underline{\cmonoid}(\underline{\cat})$ much like in the unparametrised setting. Furthermore, there is also the attendant notion of $\baseCat$\textit{-inert morphisms} defined as those morphisms in $\underline{\finite}_{*\baseCat}$ where the the map $W \rightarrow U'$ is an equivalence (cf. the span notation in \cref{fin_TSpans}). The $\baseCat$--category of $\baseCat$\textit{-commutative algebras} $\myuline{\calg}_{\baseCat}(\underline{\sC}\totimes)$ of a $\baseCat$--symmetric monoidal category $\underline{\sC}^ {\underline{\otimes}}$ is then defined to be $\underline{\func}_{\underline{\finite}_{*\baseCat}}^{\mathrm{\underline{\mathrm{inert}}}}(\underline{\finite}_{*\baseCat}, \sC^ {\underline{\otimes}})$ where $\underline{\func}_{\underline{\finite}_{*\baseCat}}^{\mathrm{\underline{\mathrm{inert}}}}\subseteq \underline{\func}_{\underline{\finite}_{*\baseCat}}$ is the $\baseCat$--full subcategory of functors over $\underline{\finite}_{*\baseCat}$ preserving $\baseCat$--inert morphisms. We refer the reader to the original source \cite[{$\S3.1$}]{nardinThesis} or to \cite[\S2]{nardinShah} for details on this.
\end{recollect}

\begin{terminology}\label{term:TSymmetricMonoidalLocalisation}
Let $\underline{\sC}^ {\underline{\otimes}}, \underline{\D}^{\underline{\otimes}}$ be $\baseCat$--symmetric monoidal categories. By a $\baseCat$\textit{-symmetric monoidal localisation} $L^{\underline{\otimes}} : \underline{\sC}^ {\underline{\otimes}}\rightarrow \underline{\D}^{\underline{\otimes}}$ we mean a $\baseCat$--symmetric monoidal functor whose underlying $\baseCat$--functor is a $\baseCat$--Bousfield localisation. By the proof of \cite[{Prop. 3.5}]{nardinThesis}, we see that the $\baseCat$--right adjoint canonically refines to a $\baseCat$--lax symmetric functor. Hence in this situation we obtain a relative adjunction over $\underline{\finite}_{*\baseCat}$ 
\begin{center}
    \begin{tikzcd}
        \underline{\sC}^{\underline{\otimes}} \ar[rr, shift left = 2, "L^{\underline{\otimes}}"]\ar[dr] && \underline{\D}^{\underline{\otimes}}\ar[ll] \ar[dl]\\
        & \underline{\finite}_{*\baseCat}
    \end{tikzcd}
\end{center}
in the sense of \cite[{$\S7.3.2$}]{lurieHA} whose counit is moreover an equivalence.
\end{terminology}

\begin{lem}[$\baseCat$--adjunction on $\baseCat$--commutative algebras, ``{\cite[{Lem. 3.6}]{GGN}}'']
\label{TAdjunctionOnTCommutativeAlgebras}
Let $\underline{\sC}^{\underline{\otimes}}, \underline{\D}^{\underline{\otimes}}$ be $\baseCat$--symmetric monoidal categories and $L^{\underline{\otimes}} : \underline{\sC}^{\underline{\otimes}} \rightarrow \underline{\D}^{\underline{\otimes}}$ a $\baseCat$--symmetric monoidal localisation. Then there is an induced $\baseCat$--Bousfield localisation $L' : \myuline{\calg}_{\baseCat}(\underline{\sC})\rightarrow \myuline{\calg}_{\baseCat}(\underline{\D})$ such that the diagram
\begin{center}
    \begin{tikzcd}
    \myuline{\calg}_{\baseCat}(\underline{\sC})\rar["L'", shift left = 2] \dar& \myuline{\calg}_{\baseCat}(\underline{\D}) \lar[shift left =1 , "R'"]\dar\\
    \underline{\sC} \rar[shift left =2, "L"] & \underline{\D} \lar[shift left =1 , "R"]
    \end{tikzcd}
\end{center}
commutes, where the vertical maps are given by 
\[\myuline{\calg}_{\baseCat}(\underline{\sC}) \coloneqq \underline{\func}_{\baseCat}\Tinert(\underline{\finite}_{*\baseCat}, \underline{\sC}^{\underline{\otimes}})\times_{\underline{\func}_{\baseCat}(\underline{\finite}_{*\baseCat}, \underline{\finite}_{*\baseCat})}\terminalTCat  \longrightarrow \underline{\func}_{\baseCat}(\terminalTCat, \underline{\sC}) \simeq \underline{\sC}\] induced by the inclusion $\terminalTCat \hookrightarrow \underline{\finite}_{*\baseCat}$, which lands in the $\baseCat$--inerts. Moreover, given $A \in \myuline{\calg}_{\baseCat}(\underline{\sC})$ there is a unique $\baseCat$--commutative algebra structure on $RLA$ such that the unit map $A \rightarrow RLA$ enhances to a morphism of $\baseCat$--commutative algebras.
\end{lem}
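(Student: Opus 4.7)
The plan is to descend the relative $\baseCat$-adjunction $L^{\underline{\otimes}} \dashv R^{\underline{\otimes}}$ over $\underline{\finite}_{*\baseCat}$ (supplied by \cref{term:TSymmetricMonoidalLocalisation}, with $R^{\underline{\otimes}}$ $\baseCat$-lax symmetric monoidal and counit an equivalence) to a $\baseCat$-Bousfield localisation at the algebra level via postcomposition. As a preliminary, both $L^{\underline{\otimes}}$ and $R^{\underline{\otimes}}$ preserve $\baseCat$-inert morphisms: the former because a $\baseCat$-symmetric monoidal functor preserves $\underline{\finite}_{*\baseCat}$-cocartesian arrows, and the latter by the operadic definition of $\baseCat$-lax symmetric monoidal functors. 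Postcomposition therefore induces well-defined $\baseCat$-functors $L' \colon \myuline{\calg}_{\baseCat}(\underline{\sC}) \to \myuline{\calg}_{\baseCat}(\underline{\D})$ and $R' \colon \myuline{\calg}_{\baseCat}(\underline{\D}) \to \myuline{\calg}_{\baseCat}(\underline{\sC})$ via the pullback presentation of $\myuline{\calg}_{\baseCat}$, and the asserted square commutes tautologically since precomposition by $\terminalTCat \hookrightarrow \underline{\finite}_{*\baseCat}$ commutes with postcomposition by $L^{\underline{\otimes}}, R^{\underline{\otimes}}$.

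To promote $(L', R')$ to a $\baseCat$-adjunction, I would invoke the pointwise construction \cref{pointwiseConstructionOfAdjunction}. Given $A \in \myuline{\calg}_{\baseCat}(\underline{\sC})$, whiskering the relative unit $\eta^{\underline{\otimes}} \colon \id \Rightarrow R^{\underline{\otimes}}L^{\underline{\otimes}}$ with the inert-preserving section $A \colon \underline{\finite}_{*\baseCat} \to \underline{\sC}^{\underline{\otimes}}$ yields a candidate unit $\eta'_A \colon A \to R'L'A$ (well-defined into $\myuline{\calg}_{\baseCat}$ since both $L^{\underline{\otimes}}, R^{\underline{\otimes}}$ preserve inerts). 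To verify this exhibits $L'A$ as a $\baseCat$-left adjoint object, I would use the fibrewise criterion \cref{criteriaForTAdjunctions} together with \cref{criteriaRelativeAdjunctions}, reducing to the statement that at each $V \in \baseCat$, natural transformations of $\underline{\finite}_{*\baseCat,/V}$-sections between $\underline{\sC}^{\underline{\otimes}}_{\underline{V}}$ and $\underline{\D}^{\underline{\otimes}}_{\underline{V}}$ correspond bijectively via the fibrewise relative adjunction — essentially tautological once one unfolds mapping spaces of sections as limits of pointwise mapping spaces. The induced counit $L'R' \Rightarrow \id$ is then the whiskering of $\varepsilon^{\underline{\otimes}} \colon L^{\underline{\otimes}}R^{\underline{\otimes}} \Rightarrow \id$, which is an equivalence by hypothesis, so $R'$ is $\baseCat$-fully faithful, yielding the $\baseCat$-Bousfield localisation.

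For the ``moreover'' clause, $R'L'A$ has underlying object $RLA$ by the commuting square, and the unit $\eta'_A$ covers the underlying unit $A \to RLA$; this furnishes the desired algebra structure. Uniqueness follows from the universal property of the $\baseCat$-Bousfield localisation combined with conservativity of the forgetful $\myuline{\calg}_{\baseCat}(\underline{\sC}) \to \underline{\sC}$: any alternative algebra lift $\widetilde{B}$ on $RLA$ with enhanced unit has underlying object $RLA = R(LA)$ which is local, so by $\baseCat$-fully faithfulness of $R'$ one has $\widetilde{B} \simeq R'C$ for some $C$; the adjoint map $L'A \to C$ is an equivalence on underlying objects by the underlying Bousfield adjunction, hence an equivalence of algebras by conservativity. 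The main technical obstacle is the middle step: the relative adjunction over $\underline{\finite}_{*\baseCat}$ is stronger than a plain $\baseCat$-adjunction, so one must carefully track that the relative unit and counit lie over identities in $\underline{\finite}_{*\baseCat}$ in order for mapping spaces of sections to inherit adjoint behaviour, and verify that this descent restricts cleanly to the inert-preserving subcategories defining $\myuline{\calg}_{\baseCat}$.
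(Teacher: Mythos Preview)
Your strategy is correct and arrives at the same conclusion, but the paper takes a more structural route that avoids the pointwise construction \cref{pointwiseConstructionOfAdjunction} altogether. Instead of building the adjunction $(L',R')$ object-by-object, the paper applies \cref{omnibusTAdjunctions} directly to the $\baseCat$-adjunction $L^{\underline{\otimes}} \dashv R^{\underline{\otimes}}$ to obtain a $\baseCat$-adjunction $(L^{\underline{\otimes}})_* \dashv (R^{\underline{\otimes}})_*$ on $\underline{\func}_{\baseCat}(\underline{\finite}_{*\baseCat},-)$ with counit an equivalence; since both functors preserve $\baseCat$-inerts this restricts to $\underline{\func}_{\baseCat}\Tinert$, and one then pulls back along $\terminalTCat \to \underline{\func}_{\baseCat}(\underline{\finite}_{*\baseCat},\underline{\finite}_{*\baseCat})$ using stability of relative adjunctions under pullback (\cref{relativeAdjunctionPullbacks}). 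This packages your hands-on ``unfold mapping spaces of sections as limits of pointwise mapping spaces'' step into a single citation, and the technical obstacle you correctly flag --- that the unit and counit lie over identities in $\underline{\finite}_{*\baseCat}$ --- is exactly what makes the pullback legitimate.

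One small gap in your ``moreover'' argument: you claim that $\widetilde{B} \simeq R'C$ for some $C$ by $\baseCat$-fully faithfulness of $R'$, but fully faithfulness alone does not characterise the essential image. What is actually needed is that an inert-preserving section with underlying object in the image of $R$ is, by the Segal condition, level-wise in the image of $R^{\underline{\otimes}}$ and hence factors through it. The paper sidesteps this by writing the competing structure directly as $R'B$ and using the universal property of the unit $\eta'$ to produce a comparison $\phi \colon R'L'A \to R'B$, then concluding via conservativity of the forgetful functor (citing \cite[Lem.~3.2.2.6]{lurieHA}) that $\phi$ is an equivalence since it forgets to the identity.
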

\begin{proof}
First note that we have the adjunction squares
\begin{center}
    \begin{tikzcd}
     \underline{\func}_{\baseCat}\Tinert(\underline{\finite}_{*\baseCat},\underline{\sC}^{\underline{\otimes}}) \rar[shift left = 2, "L'"]\dar[hook] & \underline{\func}_{\baseCat}\Tinert(\underline{\finite}_{*\baseCat},\underline{\D}^{\underline{\otimes}}) \lar[shift left =1, "R'"]\dar[hook]\\
    \underline{\func}_{\baseCat}(\underline{\finite}_{*\baseCat},\underline{\sC}^ {\underline{\otimes}})\rar[shift left =2, "L_*^{\underline{\otimes}}"] & \underline{\func}_{\baseCat}(\underline{\finite}_{*\baseCat},\underline{\D}^{\underline{\otimes}})\lar[shift left =1, "R_*^{\underline{\otimes}}"] 
    \end{tikzcd}
\end{center}
where the bottom $\baseCat$--adjunction is by \cref{omnibusTAdjunctions} and has the property that the counit is an equivalence. Now \cite[{Prop. 7.3.2.5}]{lurieHA} says that relative adjunctions are stable under pullbacks and the property of being $\baseCat$--functors is of course preserved by pullbacks too, and so we get the square 

\begin{center}
\adjustbox{scale=0.85}{
    \begin{tikzcd}
    \myuline{\calg}_{\baseCat}(\underline{\sC})\dar[equal] & \myuline{\calg}_{\baseCat}(\underline{\D})\dar[equal]\\
    \underline{\func}_{\baseCat}\Tinert(\underline{\finite}_{*\baseCat},\underline{\sC}^{\underline{\otimes}})\times_{\underline{\func}_{\baseCat}(\underline{\finite}_{*\baseCat}, \underline{\finite}_{*\baseCat})}\terminalTCat \rar[shift left = 2, "L'"]\dar[hook] & \underline{\func}_{\baseCat}\Tinert(\underline{\finite}_{*\baseCat},\underline{\D}^{\underline{\otimes}})\times_{\underline{\func}_{\baseCat}(\underline{\finite}_{*\baseCat}, \underline{\finite}_{*\baseCat})}\terminalTCat\lar[shift left =1, "R'", hook]\dar[hook]\\
    \underline{\func}_{\baseCat}(\underline{\finite}_{*\baseCat},\underline{\sC}^ {\underline{\otimes}})\times_{\underline{\func}_{\baseCat}(\underline{\finite}_{*\baseCat}, \underline{\finite}_{*\baseCat})}\terminalTCat\rar[shift left =2, "L_*^{\underline{\otimes}}"] & \underline{\func}_{\baseCat}(\underline{\finite}_{*\baseCat},\underline{\D}^{\underline{\otimes}})\times_{\underline{\func}_{\baseCat}(\underline{\finite}_{*\baseCat}, \underline{\finite}_{*\baseCat})}\terminalTCat\lar[shift left =1, "R_*^{\underline{\otimes}}", hook] 
    \end{tikzcd}
    }
\end{center}

\normalsize

Then the square in the statement of the result is just composition of this square with the one induced by the inclusion $\terminalTCat \hookrightarrow \underline{\finite}_{*\baseCat}$ namely 
\begin{center}
\adjustbox{scale=0.85}{
    \begin{tikzcd}
    \myuline{\calg}_{\baseCat}(\underline{\sC})  \rar[shift left = 2, "L'"]\dar[hook] & \myuline{\calg}_{\baseCat}(\underline{\D})\lar[shift left =1, "R'", hook]\dar[hook]\\
    \underline{\func}_{\baseCat}(\underline{\finite}_{*\baseCat},\underline{\sC})\times_{\underline{\func}_{\baseCat}(\underline{\finite}_{*\baseCat}, \underline{\finite}_{*\baseCat})}\terminalTCat\rar[shift left =2, "L_*^{\underline{\otimes}}"] \dar& \underline{\func}_{\baseCat}(\underline{\finite}_{*\baseCat},\underline{\D})\times_{\underline{\func}_{\baseCat}(\underline{\finite}_{*\baseCat}, \underline{\finite}_{*\baseCat})}\terminalTCat\lar[shift left =1, "R_*^{\underline{\otimes}}", hook]\dar \\
    \underline{\sC} = \underline{\func}_{\baseCat}(\terminalTCat, \underline{\D}) \rar[shift left =2, "L_*"]   & \underline{\D} = \underline{\func}_{\baseCat}(\terminalTCat, \underline{\D}) \lar[shift left =1, "R_*", hook] 
    \end{tikzcd}
    }
\end{center}

For the next part, we know already that $R'L'A$ comes with a canonical $\baseCat$--commutative algebra map $\eta' : A \rightarrow R'L'A$ given by the $L'\dashv R'$ unit evaluated at $A$. By the square in the statement we see that this forgets to the $L\dashv R$ unit $\eta : A \rightarrow RLA$. Now if $\eta'' : A \rightarrow R'B$ is another such map of $\baseCat$--commutative algebras, then by universality of $\eta'$ we have an essentially unique factorisation $\phi \circ \eta' : A \rightarrow R'L'A \rightarrow R'B$. Now $\forget \colon \myuline{\calg}_{\baseCat}(\underline{\sC}) \rightarrow \underline{\sC}$ is conservative by \cite[{Lem. 3.2.2.6}]{lurieHA}, thus since $\phi$ forgets to the identity, $\phi$ must have been an equivalence in $\myuline{\calg}_{\baseCat}(\underline{\sC})$ as required.
\end{proof}

\subsection{Indexed (co)products of categories}\label{sec3:indexedConstructions}
We now investigate various permanence properties of indexed products on categories. To begin with, recall the following for which a good summary is \cite[Ex. 5.20]{quigleyShah}.

\begin{cons}[Indexed products of categories]\label{indexedProductsOfCategories}
Let  $f : U \rightarrow U'$ be a map of finite $\baseCat$--sets. Then \cref{cofreeParametrisations} gives us the equivalences in 
\small
\[f^* : \func(\totalCategory(\underline{U}'),\cat)\simeq \func_{\baseCat}(\underline{U}',\underline{\cat}_{\baseCat}) \rightarrow \func_{\baseCat}(\underline{U},\underline{\cat}_{\baseCat}) \simeq \func(\totalCategory(\underline{U}),\cat)\]\normalsize This has a right adjoint $f_*$ (also written $\prod_f$).  Thus, for $\underline{\sC} \in \underline{\cat}_{\underline{U}}$ and $\underline{\D} \in \underline{\cat}_{\underline{\D}}$ we have 
\[\underline{\func}_{\underline{U}'}\Big(\underline{\D}, f_*\underline{\sC}\Big)\simeq \underline{\func}_{\underline{U}}(f^*\underline{\D}, \underline{\sC})\]
By setting $\underline{\D} = \underline{V}$ we see that $f_*\underline{\sC}$ is a $\baseCat_{/U'}$--category with $V$--fibre given by 
\[\func_{\underline{U}}(\underline{U}_{\underline{V}},\underline{\sC}) \simeq \prod_{O\in\operatorname{Orbit}(U\times_{U'}V)}\sC_O\] where $\underline{U}_{\underline{V}}$ is the model for the corepresentable $\baseCat$--category associated to $U\times_{U'}V$ whose fibre over $[W\rightarrow U]$ is given by the space of commutative squares in $\finite_{\baseCat}$
\begin{center}
    \begin{tikzcd}
    W \rar\dar& U \dar \\
    V\rar & U'
    \end{tikzcd}
\end{center}
\end{cons}

\begin{lem}[Indexed constructions preserve adjunctions]\label{indexedConstructionsPreserveAdjunctions}
Let $f : W\rightarrow V$ be in $\baseCat$. Let $L : \underline{\sC} \rightleftarrows \underline{\D} : R$ be a $\baseCat_{/W}$--adjunction and $M : \underline{\A} \rightleftarrows \underline{\B} : N$ be a $\baseCat_{/V}$--adjunction. Then
\[f_*L : f_*\underline{\sC} \rightleftarrows f_*\underline{\D} : f_*R\quad\quad \quad f^*M : f^*\underline{\A} \rightleftarrows f^*\underline{\B} : f^*N\] are $\baseCat_{/V}$-- and $\baseCat_{/W}$--adjunctions respectively.
\end{lem}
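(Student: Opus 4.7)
The plan is to apply the fibrewise criterion for $\baseCat$--adjunctions (\cref{criteriaForTAdjunctions}) in both cases, using the explicit fibre descriptions of $f^*$ and $f_*$ recorded in \cref{indexedProductsOfCategories}. Throughout, the fact that $f_*L$, $f_*R$ (resp.\ $f^*M$, $f^*N$) are $\baseCat_{/V}$--functors (resp.\ $\baseCat_{/W}$--functors) is automatic, since $f_*$ and $f^*$ are functors between the $\baseCat$--categories of $\baseCat$--categories.

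For the $f^*$ case, the pullback $f^*$ is induced by post-composition with $f$, so the fibre of $f^*\underline{\A}$ at a morphism $g : V' \to W$ is simply $\A_{fg}$, and similarly for $f^*\underline{\B}$. The fibrewise adjunctions $M_{fg} \dashv N_{fg}$ furnished by $M \dashv N$ then directly supply fibrewise adjunctions for $f^*M \dashv f^*N$. For the commutativity squares required by \cref{criteriaForTAdjunctions}, any morphism $h : V'' \to V'$ in $\baseCat_{/W}$ is in particular a morphism in $\baseCat_{/V}$, and the square we need to commute for $f^*M$ at $h$ coincides literally with the corresponding Beck--Chevalley square for $M \dashv N$ at $h$, which commutes by the hypothesis that $M \dashv N$ is a $\baseCat_{/V}$--adjunction.

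For the $f_*$ case, \cref{indexedProductsOfCategories} identifies the fibre of $f_*\underline{\sC}$ at $g : V' \to V$ with $\prod_{O \in \operatorname{Orbit}(V' \times_V W)} \sC_O$, and the restriction of $f_*L$ to this fibre is the product of the individual $L_O$, with componentwise right adjoint $\prod_O R_O$ equal to the fibre of $f_*R$ at $g$. Given a morphism $h : V'' \to V'$ in $\baseCat_{/V}$, the basechange to pullbacks yields a morphism $V'' \times_V W \to V' \times_V W$ over $W$, and the cocartesian lift in $f_*\underline{\sC}$ over $h$ is assembled out of the cocartesian lifts of $\underline{\sC}$ over $\baseCat_{/W}$ along the induced maps between orbits. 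Componentwise, commutativity with $\prod_O L_O$ and $\prod_O R_O$ thus reduces to the Beck--Chevalley squares of $L \dashv R$ over $\baseCat_{/W}$, which hold by hypothesis. Applying \cref{criteriaForTAdjunctions} then delivers the desired $\baseCat_{/V}$--adjunction $f_*L \dashv f_*R$.

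The main obstacle is the bookkeeping in the $f_*$ case: one must check carefully that the pushforward map of $\baseCat_{/V}$--fibres induced by $h : V'' \to V'$ is indeed assembled componentwise from the cocartesian lifts in $\underline{\sC}$ over $\baseCat_{/W}$, using the corepresentable model of $\underline{U}_{\underline{V}}$ from \cref{indexedProductsOfCategories}. Once this identification is made, both the fibrewise adjunction and the Beck--Chevalley conditions reduce transparently to the hypotheses.
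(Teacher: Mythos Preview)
Your proposal is correct and follows the same approach as the paper: apply \cref{criteriaForTAdjunctions} and verify the fibrewise conditions using the explicit fibre formulas from \cref{indexedProductsOfCategories}. The paper's proof is considerably terser, however, because it omits your verification of the commuting squares. Since you already observe that $f_*R$ and $f^*N$ are parametrised functors (as $f_*$ and $f^*$ are functors between categories of parametrised categories), the commuting-square condition in \cref{criteriaForTAdjunctions} is automatic once the fibrewise adjunctions are established; the paper therefore only checks that products of adjunctions are adjunctions (for $f_*$) and that the fibres literally agree (for $f^*$). Your componentwise Beck--Chevalley analysis in the $f_*$ case is correct but redundant.
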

\begin{proof}
By \cref{criteriaForTAdjunctions}, we need to show that these induce fibrewise adjunctions. This is clear for the pair $(f^*M, f^*N)$ since fibrewise they are the same as $(M, N)$; for $(f_*L, f_*R)$, we use that (unparametrised) products of adjunctions are again adjunctions.
\end{proof}

\begin{lem}[(Co)unit of indexed products]\label{unitCounitIndexedProductOfCategories}
The $\baseCat$--cofree category $\underline{\cat}_{\baseCat}$ strongly admits $\baseCat$--products, and for $f : W \rightarrow V$, $X \in \baseCat_{/W}$, and $Y \in \baseCat_{/V}$, we have that 
$(f_*\underline{\D})_Y \simeq \prod_{M \in \operatorname{Orbit}(Y\times_VW)}\D_M$ and moreover:
\begin{itemize}
    \item The unit is given by 
$\eta = F^* : \sC_Y \longrightarrow (f_*f^*\underline{\sC})_Y = \prod_{M\in \operatorname{Orbit}(Y\times_VW)}\sC_M$ where $F : Y\times_VW\rightarrow Y$ is the structure map from the pullback,
\item The counit is given by 
$\varepsilon  = \mathrm{proj} : (f^*f_*\underline{\sC})_X = \prod_{N\in\operatorname{Orbit}( X\times_VW)}\D_N \longrightarrow \D_X$ the component projection (see the proof for why we have this).
\end{itemize}
\end{lem}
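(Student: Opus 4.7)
The plan is to derive both the adjunction and the explicit (co)unit formulas directly from \cref{indexedProductsOfCategories}, which already computes the right adjoint of $f^*$ in the cofree category; the work remaining is to verify strong admission (Beck-Chevalley) and to unwind the (co)unit formulas from the resulting adjunction.

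First I would show the strong admission of $\baseCat$-products together with the fibre formula. For each $f : W \to V$ in $\baseCat$, viewed as a morphism in $\finite_{\baseCat}$, \cref{indexedProductsOfCategories} directly provides a right $\baseCat_{/V}$-adjoint $f_*$ to $f^* \colon (\underline{\cat}_{\baseCat})_{\underline{V}} \to (\underline{\cat}_{\baseCat})_{\underline{W}}$ with the stated fibre formula. Arbitrary indexed $\baseCat$-products then reduce to this case (dually to \cref{decompositionOfIndexedCoproducts}) combined with fibrewise products in $\cat$. For strong admission I would invoke the dual of \cref{omnibusTColimits}: the Beck-Chevalley isomorphism across a pullback square in $\baseCat$ is immediate, since the formula $\prod_{M \in \operatorname{Orbit}(- \times_V W)}$ is manifestly natural under basechange in the first slot — pullbacks compose and orbit decompositions transport along them.

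Next I would identify the unit. The $Y$-component of $\eta : \underline{\sC} \to f_*f^*\underline{\sC}$ corresponds, via the adjunction equivalence
\[
\func_{\underline{V}}(\underline{Y}, f_*f^*\underline{\sC}) \simeq \func_{\underline{W}}(f^*\underline{Y}, f^*\underline{\sC}),
\]
to the identity on $f^*\underline{\sC}|_{\underline{Y \times_V W}}$. Decomposing $f^*\underline{Y} \simeq \underline{Y \times_V W} \simeq \coprod_M \underline{M}$ via orbitality and using that the corepresentable $\underline{M}$ maps into $f^*\underline{\sC}$ with value $\sC_M$ (since $M \to W \to V$), the identity unwinds to the restriction $F^* \colon \sC_Y \to \prod_M \sC_M$ along the pullback projection $F : Y \times_V W \to Y$, as stated.

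Finally I would deduce the counit formula using the triangle identity $\varepsilon \circ f^*\eta \simeq \id$. At level $X \in \baseCat_{/W}$, the previous paragraph gives that $(f^*\eta)_X$ is the restriction map $\sC_X \to \prod_{N \in \operatorname{Orbit}(X \times_V W)} \sC_N$ along the projection $X \times_V W \to X$. Here one uses that because $\baseCat$ is atomic and orbital, the canonical section $(\id_X, X \to W) \colon X \to X \times_V W$ realises $X$ itself as an orbit summand $N_0 \subseteq X \times_V W$ on which the projection restricts to $\id_X$; plugging into the triangle identity then forces $\varepsilon_X$ to be the projection onto the $N_0$-factor, as claimed. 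The main obstacle I anticipate is justifying that the canonical section $X \to X \times_V W$ is a summand inclusion in an atomic orbital $\baseCat$ — this is the essential use of atomicity and is what makes the distinguished component formula for the counit sensible; once established, the triangle identity closes the argument cleanly.
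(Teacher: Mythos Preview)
Your approach to the unit is correct and arguably cleaner than the paper's: rather than positing the formula and verifying it, you extract it directly from the adjunction equivalence of \cref{indexedProductsOfCategories}, which is the natural thing to do. The identification of the atomicity issue (that $X$ appears as an orbit summand of $X\times_VW$) is also exactly right and matches the paper.

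However, there is a genuine gap in your counit argument. You claim that the single triangle identity $\varepsilon \circ f^*\eta \simeq \id$ ``forces $\varepsilon_X$ to be the projection onto the $N_0$-factor''. This does not follow: the triangle identity only says that $\varepsilon_X$ is \emph{some} retraction of the restriction map $(f^*\eta)_X$, and retractions are not unique. Concretely, if there were another orbit $N_1$ with structure map $\xi_1\colon N_1\to X$ such that $\xi_1^*$ admits a left inverse $\ell$, then $\ell\circ\mathrm{proj}_{N_1}$ would equally satisfy the first triangle identity. Knowing that the adjunction exists and that $\eta$ has been identified does guarantee that $\varepsilon$ is \emph{uniquely determined}, but a single triangle identity is not enough to \emph{compute} it. A further subtlety is that your triangle-identity argument only addresses the counit at objects of the form $f^*\underline{\sC}$, whereas the statement concerns the counit at an arbitrary $\baseCat_{/W}$-category $\underline{\D}$.

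The paper avoids this by taking the opposite route: it \emph{posits} both the unit and counit formulas explicitly and then verifies \emph{both} triangle identities (the second one requiring an auxiliary iterated-pullback diagram and is described as ``slightly more intricate''). To close your argument you should either do the same, or compute the counit directly as the adjoint of the identity on $f_*\underline{\D}$ via the adjunction equivalence, just as you did for the unit.
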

\begin{proof}
We know that $f^* : \func((\baseCat_{/V})\op, \cat) \longrightarrow \func((\baseCat_{/W})\op, \cat)$  abstractly has a right adjoint $f_*$ via right Kan extension, and the formula for ordinary right Kan extensions gives us the required description (which is also gotten from \cref{indexedProductsOfCategories}).

To describe the (co)units, we have to check the triangle identities
\begin{equation}\label{triangleIdentities}
    \begin{tikzcd}
    f^* \rar["f^*\eta"]\ar[dr, equal] & f^*f_*f^* \dar["\varepsilon_{f^*}"] &&& f_* \rar["\eta_{f_*}"] \ar[dr,equal]& f_*f^*f_*\dar["f_*\varepsilon"]\\
    & f^* &&&& f_*
    \end{tikzcd}
\end{equation}
First of all we clarify why we have the counit map as stated. For this it will be helpful to write carefully the datum $\varphi : X \rightarrow W$ instead of just $X$. Consider
\begin{center}
    \begin{tikzcd}
    X \ar[ddr, "\varphi"', bend right = 20]\ar[drr,equal, bend left = 20] \ar[dr, dashed]\\
    & X\times_VW \rar\dar \ar[dr, phantom, very near start, "\scalebox{1.5}{$\lrcorner$}"] & X \dar["f\varphi"]\\
    & W \rar["f"] & V
    \end{tikzcd}
\end{center}
This shows that $X$ is a retract of $X\times_VW$, and so by atomicity, we get that $X$ was an orbit in the orbit decomposition of $X\times_VW$, and so the component projection $\varepsilon  : (f^*f_*\underline{\D})_X = \prod_{N\in \operatorname{Orbit}(X\times_VW)}\D_N \longrightarrow \D_X$ is well-defined

To check the first triangle identity, let $(\varphi: X \rightarrow W) \in \baseCat_{/W}$ and consider
\begin{center}
    \begin{tikzcd}
    \coprod_a N_a \rar["\coprod_a\xi_a"]\dar\ar[dr, phantom, very near start, "\scalebox{1.5}{$\lrcorner$}"] & X \dar["f\varphi"]\\
    W\rar["f"] & V
    \end{tikzcd}
\end{center}
where one of the $N_a$'s is $X$, by the argument above. Then we have that the composition in the first triangle in \cref{triangleIdentities} is
\begin{center}
\adjustbox{scale=0.9}{
    \begin{tikzcd}
        \Big((f^*\underline{\sC})_X \rar["f^*\eta"] & (f^*f_*f^*\underline{\sC})_X \rar["\varepsilon_{f^*}"] & (f^*\underline{\sC})_X \Big)\: \simeq\:\Big(\sC_X \rar["\prod_a\xi_a^*"] & \prod_a\sC_{N_a} \rar["\mathrm{proj}"] & \sC_X\Big)
    \end{tikzcd}
    }
\end{center}
which is of course the identity since $\xi_a = \id$ in the case $N_a = X$.

The second triangle identity is slightly more intricate. Let $(\psi : Y \rightarrow V)\in \baseCat_{/V}$. We consider two pullbacks (where the right square is for each $b$ appearing in the left square)
\begin{center}
    \begin{tikzcd}
    \coprod_b M_b \rar["\coprod_b\zeta_b"]\dar["\coprod_b\rho_b"']\ar[dr, phantom, very near start, "\scalebox{1.5}{$\lrcorner$}"] & Y \dar["\psi"] &&&\coprod_{c_b} \widetilde{M}_{c_b} \rar["\coprod_{c_b}\ell_{c_b}"]\dar\ar[dr, phantom, very near start, "\scalebox{1.5}{$\lrcorner$}"] & M_b \dar["f\rho_b"] \\
    W\rar["f"] & V &&& W \rar["f"] & V
    \end{tikzcd}
\end{center}
From this, the composition in the second triangle in \cref{triangleIdentities} is 
\begin{equation*}
    \begin{split}
        & \Big((f_*\underline{\D})_Y \xlongrightarrow{\eta_{f_*}} (f_*f^*f_*\underline{\D})_Y \xlongrightarrow{f_*\varepsilon}  (f_*\underline{\D})_Y \Big)\\
        &\simeq\Big(\prod_b\D_{M_b} \xlongrightarrow{\prod_b\prod_{c_b}\ell^*_{c_b}}\prod_b\prod_{c_b}\D_{\widetilde{M}_{c_b}} \xrightarrow{\prod_b\mathrm{proj}}  \prod_b\D_{M_b}\Big)
    \end{split}
\end{equation*}
which is the identity map as wanted since $M_b$ is one of the orbits in $\coprod_{c_b}\widetilde{M}_{c_b}$ by the argument above. Here we have used the diagram
\begin{center}
    \begin{tikzcd}
    (f_*\underline{\D})_Y\dar[equal] \ar[rr, "\eta_{f_*} = \prod_b\zeta_b^*"] && \prod_b (f_*\underline{\D})_{M_b}\dar[equal]\\
    \prod_b \D_{M_b} \ar[rr, "\prod_b\prod_{c_b}\ell^*_{c_b}"] && \prod_b\prod_{c_b}\D_{\widetilde{M}_{c_b}}
    \end{tikzcd}
\end{center}
to analyse the map $\eta_{f_*}$, which in turn comes from the top square in
\begin{center}
    \begin{tikzcd}
        & \coprod_{c_b}\widetilde{M}_{c_b}\ar[dd]\ar[ddrr, phantom, "\scalebox{1.5}{$\lrcorner$}", very near start] \ar[rr, "\coprod_{c_b}\ell_{c_b}"] \ar[dl,"\coprod_{c_b}\ell_{c_b}"']\ar[ddrr, phantom, "\scalebox{1.5}{$\lrcorner$}", very near start]&& M_b\ar[dl, "\zeta_b"]\ar[dd]\\
        \coprod_{b}M_b\ar[ddrr, phantom, "\scalebox{1.5}{$\lrcorner$}", very near start]\ar[dd]\ar[rr,crossing over]&& Y \\
        & W \ar[rr, "f"{xshift=-15pt}]\ar[dl,equal] && V\ar[dl,equal]\\
        W \ar[rr, "f"] && V \ar[uu, leftarrow, crossing over]
    \end{tikzcd}
\end{center}
This finishes the proof.
\end{proof}

\subsection{Norms and adjunctions}\label{subsec:normsAndAdjunctions}

We now recall the notion of $\baseCat$--distributivity and indexed tensor products (also termed \textit{norms}) of categories introduced in \cite[{$\S3.3$ and $\S3.4$}]{nardinThesis} and a nice summary of which can be found for instance in \cite[$\S5.1$]{quigleyShah}.

\begin{defn}
Let $f : U \rightarrow V$ be a map in $\finite_{\baseCat}$, $\underline{\sC} \in \cat_{\baseCat_{/U}}$, $\underline{\D} \in \cat_{\baseCat_{/V}}$, and $F : f_*\underline{\sC} \rightarrow \underline{\D}$ be a $\baseCat_{/V}$--functor. Then we say that $F$ is $\baseCat_{/V}$\textit{--distributive} if for every pullback
\begin{center}
    \begin{tikzcd}
    U'\rar["f'"]\dar["g'"']\ar[dr, phantom, very near start, "\scalebox{1.5}{$\lrcorner$}"]&V'\dar["g"]\\
    U\rar["f"] & V
    \end{tikzcd}
\end{center}
in $\finite_{\baseCat}$ and $\baseCat_{/U'}$--colimit diagram $p : \underline{K}^{\underline{\triangleright}}\rightarrow g'^*\underline{\sC}$, the $\myuline{V}'$--functor
\[(f_*'\underline{K})^{\underline{\triangleright}}\xrightarrow{\canonical} f'_*(\underline{K}^{\underline{\triangleright}}) \xrightarrow{f'_*p} f'_*p'^*\underline{\sC} \simeq g^*f_*\underline{\sC} \xrightarrow{g^*F}g^*\underline{\D}\] is a $\baseCat_{/V'}$--colimit. We write $\func^{\delta}_{\underline{V}}(f_*\underline{\sC},  \underline{\D})$ for the subcategory of $\baseCat_{/V}$--distributive functors.\label{nota:distributiveFunctors}
\end{defn}

\begin{cons}[Norms of categories]\label{normsOfCategories}
Let $f : U \rightarrow V$ be a map in $\finite_{\baseCat}$ and $\underline{\sC}$ a $\baseCat_{/U}$--category which is $\baseCat_{/U}$--cocomplete. Then we define the \textit{$f$-norm} $f_{\otimes}\underline{\sC}$, if it exists, to be a $\baseCat_{/V}$--cocomplete category admitting a $\baseCat_{/V}$--distributive functor $\tau : f_*\underline{\sC} \rightarrow f_{\otimes}\underline{\sC}$ such that for any other $\baseCat_{/V}$--cocomplete category, the following functor is an equivalence
\[\tau^* : {\func}^L_{\underline{V}}(f_{\otimes}\underline{\sC},  \underline{\D}) \rightarrow {\func}^{\delta}_{\underline{V}}(f_*\underline{\sC},  \underline{\D})\]
We also write this as $f_{\otimes} = \bigotimes_f$.
\end{cons}

\begin{lem}[Norms preserve adjunctions]\label{normsPreserveAdjunctions}
Let $F : \underline{\sC} \rightleftarrows \underline{\D} : G$ be a $\baseCat_{/U}$--adjunction such that $G$ itself admits a right adjoint and $f : U \rightarrow V$ be a map in $\finite_{\baseCat}$. Then this induces a $\baseCat_{/V}$--adjunction 
\[f_{\otimes}F : f_{\otimes}\underline{\sC} \rightleftarrows f_{\otimes}\underline{\D} : f_{\otimes}G \]
\end{lem}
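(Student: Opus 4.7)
The plan is to lift the adjunction $F \dashv G$ through the universal property of the norm $f_\otimes$, using that both $F$ and $G$ are $\baseCat_{/U}$-left adjoints under the given hypothesis and hence strongly preserve $\baseCat_{/U}$-colimits by \cref{omnibusTAdjunctions}.

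First I would construct $f_\otimes F$ and $f_\otimes G$ via the universal property of \cref{normsOfCategories}. Applying $f_*$ gives a $\baseCat_{/V}$-adjunction $f_*F \dashv f_*G$ by \cref{indexedConstructionsPreserveAdjunctions}. The compositions $\tau_{\underline{\D}} \circ f_*F : f_*\underline{\sC} \to f_\otimes \underline{\D}$ and $\tau_{\underline{\sC}} \circ f_*G : f_*\underline{\D} \to f_\otimes \underline{\sC}$ are then $\baseCat_{/V}$-distributive: unwinding the definition, one checks that precomposition with $f_*F$ (respectively $f_*G$) sends distributive functors to distributive functors, because the required colimit-preservation after basechange along any $g' \colon U' \to U$ follows from $F$ (resp.\ $G$) strongly preserving $\baseCat_{/U}$-colimits. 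The universal property then produces strongly $\baseCat_{/V}$-cocontinuous lifts $f_\otimes F$ and $f_\otimes G$ characterised by $f_\otimes F \circ \tau_{\underline{\sC}} \simeq \tau_{\underline{\D}} \circ f_*F$ and $f_\otimes G \circ \tau_{\underline{\D}} \simeq \tau_{\underline{\sC}} \circ f_*G$.

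Next I would transport the unit and counit. Starting from the unit $\eta : \id_{\underline{\sC}} \Rightarrow GF$, apply $f_*$ and whisker with $\tau_{\underline{\sC}}$ to obtain a $\baseCat_{/V}$-natural transformation $\tau_{\underline{\sC}} \Rightarrow \tau_{\underline{\sC}} \circ f_*G \circ f_*F$ in the functor category of distributive $\baseCat_{/V}$-functors $f_*\underline{\sC} \to f_\otimes \underline{\sC}$. Using the identifications from the first step, the target rewrites as $(f_\otimes G \circ f_\otimes F) \circ \tau_{\underline{\sC}}$, so under the equivalence $\tau_{\underline{\sC}}^* \colon \func^L_{\underline{V}}(f_\otimes \underline{\sC}, f_\otimes \underline{\sC}) \xrightarrow{\simeq} \func^{\delta}_{\underline{V}}(f_*\underline{\sC}, f_\otimes \underline{\sC})$ of \cref{normsOfCategories}, this corresponds to a unique natural transformation $\eta_\otimes : \id_{f_\otimes \underline{\sC}} \Rightarrow f_\otimes G \circ f_\otimes F$. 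The counit $\varepsilon_\otimes : f_\otimes F \circ f_\otimes G \Rightarrow \id_{f_\otimes \underline{\D}}$ is produced symmetrically from $\varepsilon : FG \Rightarrow \id_{\underline{\D}}$.

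Finally I would verify the triangle identities. The triangle identities for $F \dashv G$ push through $f_*$ to give those for $f_*F \dashv f_*G$; whiskering these with $\tau_{\underline{\sC}}$ (respectively $\tau_{\underline{\D}}$) produces equalities of distributive natural transformations, which by the fully faithfulness of $\tau_{(-)}^*$ translate precisely to the triangle identities for the quadruple $(f_\otimes F, f_\otimes G, \eta_\otimes, \varepsilon_\otimes)$. The hardest part of this plan will be the verification of the distributivity compatibility in the first step: one has to unpack what distributivity says in terms of pullback squares in $\finite_{\baseCat}$ and check that the colimit-preservation properties of $F$ and $G$, after an arbitrary basechange, indeed suffice to conclude distributivity of the relevant composites. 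Once this compatibility is in hand, the remainder is a formal exercise in the universal property.
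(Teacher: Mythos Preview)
Your proposal is correct and follows essentially the same strategy as the paper: both arguments first obtain the $\baseCat_{/V}$--adjunction $f_*F \dashv f_*G$ via \cref{indexedConstructionsPreserveAdjunctions}, use that $G$ is itself a left adjoint so that both $f_*F$ and $f_*G$ strongly preserve $\baseCat_{/V}$--colimits, and then transport the unit, counit, and triangle identities through the universal equivalence $\tau^* \colon \func^L_{\underline{V}}(f_\otimes(-), -) \xrightarrow{\simeq} \func^\delta_{\underline{V}}(f_*(-), -)$. The paper packages this last step as a single commutative $3\times 3$ diagram of functor categories, whereas you spell it out in stages, but the content is identical; your remark that the main work lies in checking distributivity of the relevant composites is exactly the point the paper handles with the sentence ``this is since $G$ strongly preserves $\baseCat$--colimits by hypothesis.''
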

\begin{proof}
Recall from \cref{indexedConstructionsPreserveAdjunctions} that we have a $\baseCat_{/V}$--adjunction
$f_*F : f_*\underline{\sC}\rightleftarrows f_*\underline{\D} : f_*G$ and since $G$ itself has a right adjoint, both $f_*F$ and $f_*G$ strongly preserve $\baseCat_{/V}$--colimits. Now observe that this adjunction can equivalently be encoded by the data of morphisms \[\big(\eta\colon \id \Rightarrow (f_*G)\circ (f_*F)\big) \in \underline{\func}^L_{\underline{V}}(f_*\underline{\sC},  f_*\underline{\sC})\]\[\big(\varepsilon\colon (f_*F)\circ (f_*G) \Rightarrow \id\big) \in \underline{\func}^L_{\underline{V}}(f_*\underline{\D}, f_*\underline{\D})\] whose images under the functors 
\[(f_*F)_* \colon {\func}^L_{\underline{V}}(f_*\underline{\sC},  f_*\underline{\sC}) \rightarrow {\func}^L_{\underline{V}}(f_*\underline{\sC},  f_*\underline{\D})\]\[(f_*F)^* \colon {\func}^L_{\underline{V}}(f_*\underline{\D}, f_*\underline{\D}) \rightarrow {\func}^L_{\underline{V}}(f_*\underline{\sC},  f_*\underline{\D})\]respectively compose to a morphism equivalent to the identity
\begin{center}
    \begin{tikzcd}
    f_*F \rar["f_*F(\eta)", Rightarrow]\ar[dr,equal] & (f_*F)\circ(f_*G)\circ(f_*F)\dar["\varepsilon_{f_*F}", Rightarrow]\\
    & f_*F
    \end{tikzcd}
\end{center}
and similarly for the other triangle identity. Now, we have commutative squares
\begin{center}
    \begin{tikzcd}
    f_*\underline{\sC} \dar["\varphi"']\rar[shift left = 1, "f_*F"] & f_*\underline{\D} \lar[shift left =1 , "f_*G"]\dar["\psi"]\\
    f_{\otimes}\underline{\sC} \rar[shift left = 1, "f_{\otimes}F"] & f_{\otimes}\underline{\D} \lar["f_{\otimes}G", shift left = 1]
    \end{tikzcd}
\end{center}
where $\varphi : f_*\underline{\sC} \rightarrow f_{\otimes}\underline{\sC}$, $\psi : f_*\underline{\D} \rightarrow f_{\otimes}\underline{\D}$ are the universal distributive functors: this is since $G$ strongly preserves $\baseCat$--colimits by hypothesis. This yields
\begin{center}
    \begin{tikzcd}
    {\func}^L_{\underline{V}}(f_*\underline{\sC},  f_*\underline{\sC}) \dar["\varphi_*"'] \rar["(f_*F)_*"]& {\func}^L_{\underline{V}}(f_*\underline{\sC},  f_*\underline{\D})\dar["\psi_*"] & {\func}^L_{\underline{V}}(f_*\underline{\D}, f_*\underline{\D})\lar["(f_*F)^*"']\dar["\psi_*"]\\
    {\func}^{\delta}_{\underline{V}}(f_*\underline{\sC},  f_{\otimes}\underline{\sC}) \rar["(f_{\otimes}F)_*"] & {\func}^{\delta}_{\underline{V}}(f_*\underline{\sC},  f_{\otimes}\underline{\D}) & {\func}^{\delta}_{\underline{V}}(f_*\underline{\D}, f_{\otimes}\underline{\D})\lar["(f_{\otimes}F)^*"']\\
    {\func}^L_{\underline{V}}(f_{\otimes}\underline{\sC},  f_{\otimes}\underline{\sC}) \uar["\varphi^*", "\simeq"'] \rar["(f_{\otimes}F)_*"]& {\func}^L_{\underline{V}}(f_{\otimes}\underline{\sC},  f_{\otimes}\underline{\D})\uar["\varphi^*", "\simeq"']& {\func}^L_{\underline{V}}(f_{\otimes}\underline{\D}, f_{\otimes}\underline{\D})\lar["(f_{\otimes}F)^*"']\uar["\psi^*", "\simeq"']
    \end{tikzcd}
\end{center}
Then the morphism $\big(\eta : \id \Rightarrow (f_{*}G)\circ (f_{*}F)\big) \in \underline{\func}^L_{\underline{V}}(f_*\underline{\sC},  f_*\underline{\sC})$ in the top left corner gets sent to a morphism $\big(\widetilde{\eta} : \id \Rightarrow (f_{\otimes}G)\circ (f_{\otimes}F)\big)\in \underline{\func}^L_{\underline{V}}(f_{\otimes}\underline{\sC},  f_{\otimes}\underline{\sC})$ in the bottom left,  and similarly for $\varepsilon$. Then by the characterisation of adjunctions above, since the composition of the images in the middle top term is equivalent to the identity, so is the image in the middle bottom term, that is, we have the commuting diagram
\begin{center}
    \begin{tikzcd}
    f_{\otimes}F \rar["f_{\otimes}F(\widetilde{\eta})", Rightarrow]\ar[dr,equal] & (f_{\otimes}F)\circ(f_{\otimes}G)\circ(f_{\otimes}F)\dar["\widetilde{\varepsilon}_{f_{\otimes}F}", Rightarrow]\\
    & f_{\otimes}F
    \end{tikzcd}
\end{center}
and similarly for the other triangle identity. This witnesses that we have a $\baseCat_{/V}$--adjunction $f_{\otimes}F \dashv f_{\otimes}G$ as required.
\end{proof}

\begin{rmk}
\label{RecollectionOnNormMorphisms}
Let $V, W \in \baseCat$ and $\sC^ {\underline{\otimes}}$ a $\baseCat$--symmetric monoidal category. Concretely speaking, we get the structure of tensor products and norm functors as follows:
\begin{itemize}
    \item (Tensor functor): Consider the morphism in $\underline{\finite}_{*\baseCat}$ given by
    \begin{center}
        \begin{tikzcd}
        V\coprod V \rar[equal]\dar["\nabla"] & V\coprod V \dar["\nabla"] \rar["\nabla"]& V\dar[equal]  \\
        V \rar[equal] & V \rar[equal] & V
        \end{tikzcd}
    \end{center}
    The cocartesian lifts along this morphism give us the tensor product
    \[\otimes : \sC_V\times \sC_V \simeq \sC_{V\coprod V} \longrightarrow \sC_V\] 
    \item (Norm functor): Suppose $f : V \rightarrow W$ is a morphism in $\baseCat$. Consider
    \begin{center}
        \begin{tikzcd}
        V \rar[equal]\dar["f"] & V \dar["f"] \rar["f"]& W\dar[equal]  \\
        W \rar[equal] & W \rar[equal] & W
        \end{tikzcd}
    \end{center}
    The cocartesian lifts along this morphism give us the norm functor \label{nota:normFunctor}
    \[\norm^f : \sC_V \simeq \sC^ {\underline{\otimes}}_{[f : V\rightarrow W]} \longrightarrow \sC^ {\underline{\otimes}}_{[W=W]} \simeq \sC_W\]  Note that it might have been tempting to define the norm functor as the pushforward along the more obvious morphism
    \begin{center}
        \begin{tikzcd}
        V \rar[equal]\dar[equal] & V \dar[equal] \rar["f"]& W\dar[equal]  \\
        V \rar[equal] & V \rar["f"] & W
        \end{tikzcd}
    \end{center} 
    instead, but the problem is that this is not a morphism in $\underline{\finite}_{*\baseCat}$ because by definition the bottom right map needs to be the identity! 
    
\end{itemize}
\end{rmk}

\section{Parametrised smallness adjectives}
\label{sec4:Compactness}

We now introduce the notion of $\baseCat$--compactness and $\baseCat$--idempotent-completeness. Not only are these notions crucial in proving the characterisations of $\baseCat$--presentables in \cref{simpsonTheorem}, they are also fundamental for the applications we have in mind for parametrised algebraic K--theory in \cite{kaifNoncommMotives}. The moral of this section is that these are essentially fibrewise notions and should present no conceptual difficulties to those already familiar with the unparametrised versions. Recall that we will assume throughout that $\baseCat$ is orbital.

\subsection{Parametrised compactness}
Recall that an object $X$ in a category $\sC$ is compact if $\map_{\sC}(X,-) : \sC \rightarrow \spc$ commutes with filtered colimits (cf. {\cite[$\S5.3.4$]{lurieHTT}}). In this subsection we introduce the parametrised analogue of this notion and study its interaction with Ind-completions.

\begin{defn}\label{defn:parametrisedCompact}
Let $\underline{\sC}$ be a $\baseCat$--category and $V\in \baseCat$. A $V$-object in $\underline{\sC}$ (ie. an object in $\func_{\baseCat}(\underline{V},\underline{\sC}) $) is $\baseCat_{/V}$--$\kappa$-\textit{compact} if it is fibrewise $\kappa$-compact. We will also use the terminology \textit{parametrised-$\kappa$-compact objects} when we allow $V$ to vary. We write $\underline{\sC}\tkappa$ for the $\baseCat$--subcategory of parametrised-$\kappa$-compact objects, that is, $(\underline{\sC}\tkappa)_V$ is given by the full subcategory of $\baseCat_{/V}$--$\kappa$-compact objects.
\end{defn}

\begin{nota}\label{nota:parametrisedCompactPreservingFunctors}
We write $\underline{\func}_{\baseCat}\tkappa$ for the full $\baseCat$--subcategory of parametrised functors preserving parametrised $\kappa$-compact objects.
\end{nota}

\begin{warning}
In general, for $V\in \baseCat\op$, the inclusion $(\underline{\sC}\tkappa)_V \subseteq (\sC_V)^{{\kappa}}$ is not an equivalence - the point is that parametrised-$\kappa$-compactness must be preserved under the cocartesian lifts $f^* : \sC_V \rightarrow \sC_W$ for all $f :  W \rightarrow V$, but these do not preserve $\kappa$-compactness in general.
\end{warning}

This definition of compactness makes sense by virtue of the following:

\begin{prop}[Characterisation of parametrised-compactness]\label{characterisationOfTCompactness}
Let $\underline{\sC}$ admit fibrewise $\kappa$-filtered $\baseCat$--colimits. A $\baseCat$--object $C \in \func_{\baseCat}(\terminalTCat, \underline{\sC}) $ is $\kappa$--$\baseCat$--compact in the sense above if and only if for all $V\in \baseCat$ and all fibrewise $\kappa$-filtered $\baseCat_{/V}$--diagram $d : \tconstant_{\underline{V}}(K) \rightarrow \underline{\sC}_{\underline{V}}$ the comparison 
\[\underline{\colim}_{\tconstant_{\underline{V}}(K)}\myuline{\map}_{\underline{\sC}_{\underline{V}}}(C_{\underline{V}},d) \rightarrow \myuline{\map}_{\underline{\sC}_{\underline{V}}}(C_{\underline{V}},\underline{\colim}_{\tconstant_{\underline{V}}(K)}d)\]
is an equivalence.
\end{prop}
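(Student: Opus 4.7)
The plan is to reduce the parametrised statement to the classical one by checking the comparison of $\baseCat_{/V}$--spaces on fibres, using that both fibrewise $\kappa$--filtered colimits and parametrised mapping spaces are transparently controlled fibrewise.

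First, I would collect three supporting facts. By the fibrewise criterion (\cref{omnibusTColimits}(1)), the hypothesis that $\underline{\sC}$ admits fibrewise $\kappa$--filtered $\baseCat$--colimits implies that each $\sC_W$ has $\kappa$--filtered colimits, each cocartesian pushforward $f^*$ preserves them, and such $\baseCat$--colimits in $\underline{\sC}$ (and also in the $\baseCat$--cocomplete $\underline{\spc}_{\baseCat}$) are computed fibrewise. Next, by the explicit description in \cref{mappingAnimaYoneda}, for $c,c' \in \sC_W$ the parametrised mapping space $\myuline{\map}_{\underline{\sC}}(c,c')$ has fibre over $(g\colon U\rightarrow W)$ given by $\map_{\sC_U}(g^*c, g^*c')$. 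Finally, under the equivalence $\func_{\underline{V}}(\tconstant_{\underline{V}}(K),\underline{\sC}_{\underline{V}}) \simeq \func(K,\sC_V)$ coming from the fact that $\id_V$ is initial in $\baseCat_{/V}\op$, a fibrewise $\kappa$--filtered $\baseCat_{/V}$--diagram $d$ is determined by a functor $d_V\colon K \rightarrow \sC_V$, with $W$--fibre $d_W = f^*\circ d_V$ for $(f\colon W\rightarrow V)$.

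Equivalences of $\baseCat_{/V}$--spaces are detected on fibres, so I would test the comparison at an arbitrary $(f\colon W\rightarrow V) \in \baseCat_{/V}$ and further at $(g\colon U\rightarrow W) \in \baseCat_{/W}$. Unwinding the three facts above, the comparison map becomes, on this iterated fibre, the ordinary map of spaces
\[
\colim_K \map_{\sC_U}\!\bigl(C_U, (fg)^*d_V(k)\bigr) \longrightarrow \map_{\sC_U}\!\bigl(C_U, (fg)^*\!\colim_K d_V(k)\bigr),
\]
where we have used strong preservation of $\kappa$--filtered colimits by $(fg)^*$ to commute it past the colimit on the right-hand side. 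Since $(fg)^*\circ d_V\colon K\rightarrow \sC_U$ is a $\kappa$--filtered diagram, this map is an equivalence precisely when $C_U$ is $\kappa$--compact in $\sC_U$ in the ordinary sense.

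This now gives both directions immediately. For the only if direction, assuming $C$ is fibrewise $\kappa$--compact means every $C_U$ is $\kappa$--compact in $\sC_U$, so every such fibre map is an equivalence and hence the parametrised comparison is an equivalence. For the if direction, fix $W\in\baseCat$ and specialise to $V=W$, $f=g=\id_W$: the hypothesis applied to an arbitrary $\kappa$--filtered diagram $e\colon K\rightarrow\sC_W$ (viewed as a fibrewise diagram in $\underline{\sC}_{\underline{W}}$) yields that $\map_{\sC_W}(C_W,-)$ commutes with $K$--indexed colimits, i.e.\ $C_W$ is $\kappa$--compact. The only substantive step is bookkeeping, namely carefully unfolding the double layer of parametrisation and confirming that the fibrewise formulas for $\underline{\colim}$ and $\myuline{\map}$ combine cleanly, after which the equivalence is a restatement of the unparametrised notion of compactness.
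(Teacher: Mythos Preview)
Your proof is correct and follows essentially the same route as the paper: both arguments reduce the parametrised comparison to the ordinary compactness statement by unwinding the fibrewise descriptions of $\underline{\colim}$ and $\myuline{\map}$, then read off each direction. Your presentation is slightly more streamlined in that you derive the iterated-fibre formula once and use it for both implications, whereas the paper treats the two directions separately (and in the forward direction first reduces to the case where $\baseCat$ has a final object), but there is no substantive difference in strategy.
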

\begin{proof}
Suppose $C$ is $\kappa$--$\baseCat$--compact. We are already provided with the comparison map above, and we just need to check that it is an equivalence, which can be done by checking fibrewise. Since $\totalCategory(\underline{V}) = (\baseCat_{/V})\op$ has an initial object, we can assume that $\baseCat$ has a final object. So let $W\in \baseCat$. Recall that as in the proof of \cref{paramAdjunctionMappingAnima} we have 
\[\Big(\myuline{\map}_{\underline{\sC}}(C,d)\Big)_W \simeq \Big(\map_{\sC_{\bullet}}(C_{\bullet}, d_{\bullet})\Big)_{\bullet \in (\baseCat_{/W})\op}\in \func((\baseCat_{/W})\op, \spc)\]
Then 
\begin{equation*}
    \begin{split}
        \Big(\underline{\colim}_{\tconstant_{\underline{V}}(K)}\myuline{\map}_{\underline{\sC}_{\underline{V}}}(C_{\underline{V}},d)\Big)_W & \simeq \colim_K\Big(\map_{\sC_{\bullet}}(C_{\bullet}, d_{\bullet})\Big)_{\bullet \in (\baseCat_{/W})\op}\\ & \xrightarrow{\simeq} \Big(\map_{\sC_{\bullet}}(C_{\bullet}, \colim_Kd_{\bullet})\Big)_{\bullet \in (\baseCat_{/W})\op}
    \end{split}
\end{equation*}
where the first equivalence is since fibrewise parametrised colimits are computed fibrewise, and the comparison map is an equivalence since colimits in  $\func((\baseCat_{/V})\op, \spc)$ are computed pointwise, and $C$ is pointwise $\kappa$-compact by hypothesis. 

Now for the reverse direction, let $C\in \underline{\sC}$ satisfy the property in the statement and $V\in \baseCat$ arbitrary. We want to show that $C_V\in \sC_V$ is $\kappa$-compact, that is: for any ordinary small $\kappa$-filtered diagram $d : K \rightarrow \sC_V$, we have that 
\[\colim_K\map_{\sC_V}(C_V, d) \rightarrow \map_{\sC_V}(C_V, \colim_K d)\] is an equivalence. Now recall that $\sC_V = \func_{\underline{V}}(\underline{V}, \underline{\sC}_{\underline{V}})$ by \cref{categoryOfPoints} and so by adjunction we obtain from $d : K \rightarrow \sC_V$ a $\baseCat_{/V}$--functor
$\overline{d} : \tconstant_{\underline{V}}(K) \longrightarrow \underline{\sC}_{\underline{V}}$. In this case the desired comparison is an equivalence by virtue of the following diagram
\begin{center}
    \begin{tikzcd}
    \colim_K\map_{\sC_V}(C_V, d) \dar[equal]\rar & \map_{\sC_V}(C_V, \colim_K d) \dar[equal]\\
    \big(\underline{\colim}_{\tconstant_{\underline{V}}(K)}\myuline{\map}_{\underline{\sC}_{\underline{V}}}(C_{\underline{V}},d)\big)_V \rar["\simeq"] & \big(\myuline{\map}_{\underline{\sC}_{\underline{V}}}(C_{\underline{V}},\underline{\colim}_{\tconstant_{\underline{V}}(K)}\overline{d})\big)_V
    \end{tikzcd}
\end{center}
where the bottom map is an equivalence by hypothesis. This finishes the proof.
\end{proof}

\begin{obs}\label{observationYonedaLandCompacts}
By the characterisation of $\baseCat$--compactness above together with the $\baseCat$--Yoneda  \cref{TYonedaLemma}, and that $\baseCat$--colimits in $\baseCat$--functor categories are computed in the target by \cref{TColimitsFunctorCategories} we see that the $\baseCat$--Yoneda embedding lands in $\underline{\presheaf}_{\baseCat}(\underline{\sC}) \tkappa$.
\end{obs}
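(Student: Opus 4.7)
The plan is to verify the hypothesis of \cref{characterisationOfTCompactness} directly, which amounts to a short chase through the $\baseCat$-Yoneda lemma and the pointwise formula for colimits in $\baseCat$-functor categories.

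First, fix $V \in \baseCat$ and $X \in \sC_V$, and consider $j(X) \in \underline{\presheaf}_{\baseCat}(\underline{\sC})_V$. By \cref{defn:parametrisedCompact}, to show $j(X)$ is $\baseCat_{/V}$-$\kappa$-compact we need to check that it is fibrewise $\kappa$-compact after basechange to $\underline{V}$. By \cref{characterisationOfTCompactness} applied to the basechanged $\baseCat_{/V}$-category $\underline{\presheaf}_{\baseCat}(\underline{\sC})_{\underline{V}}$, it suffices to show that for every $W \to V$ in $\baseCat$ and every fibrewise $\kappa$-filtered diagram
\[d \colon \tconstant_{\underline{W}}(K) \longrightarrow \underline{\presheaf}_{\baseCat}(\underline{\sC})_{\underline{W}}\]
the canonical comparison
\[\underline{\colim}_{\tconstant_{\underline{W}}(K)} \myuline{\map}\bigl(j(X)_{\underline{W}},\, d\bigr) \longrightarrow \myuline{\map}\bigl(j(X)_{\underline{W}},\, \underline{\colim}_{\tconstant_{\underline{W}}(K)} d\bigr)\]
is an equivalence of $\baseCat_{/W}$-spaces.

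Next, applying the $\baseCat$-Yoneda lemma \cref{TYonedaLemma} pointwise in $d$, the mapping space $\myuline{\map}(j(X)_{\underline{W}}, d)$ is naturally equivalent to the evaluation $\mathrm{ev}_X \circ d$ (after the appropriate basechange of $X$ to $\underline{W}$). Thus the comparison above becomes the canonical map
\[\underline{\colim}_{\tconstant_{\underline{W}}(K)} (\mathrm{ev}_X \circ d) \longrightarrow \mathrm{ev}_X\bigl(\underline{\colim}_{\tconstant_{\underline{W}}(K)} d\bigr),\]
and by \cref{TColimitsFunctorCategories}, $\baseCat$-colimits in $\underline{\presheaf}_{\baseCat}(\underline{\sC})_{\underline{W}} = \underline{\func}_{\underline{W}}(\underline{\sC}\vop \times \underline{W}, \underline{\spc}_{\underline{W}})$ are inherited pointwise from $\underline{\spc}_{\underline{W}}$, so this map is an equivalence.

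The only step needing care is the compatibility of the $\baseCat$-Yoneda identification with the comparison map in the statement of \cref{characterisationOfTCompactness}: i.e.\ that the canonical comparison constructed abstractly from $\myuline{\map}(j(X)_{\underline{W}}, -)$ agrees, under Yoneda, with the pointwise-evaluation comparison. This is a formal naturality statement for the Yoneda equivalence applied to the cocone $d \Rightarrow \underline{\colim}_K d$, which follows from the naturality of the equivalence in \cref{TYonedaLemma} in the functor variable. Once that is noted, the argument is complete.
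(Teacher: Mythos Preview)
Your proposal is correct and follows exactly the approach the paper sketches in the observation itself: invoke the characterisation of parametrised compactness (\cref{characterisationOfTCompactness}), identify the mapping space via the $\baseCat$-Yoneda lemma (\cref{TYonedaLemma}) with evaluation at $X$, and then use that $\baseCat$-colimits in functor categories are computed pointwise (\cref{TColimitsFunctorCategories}). You have simply spelled out the details of the paper's one-sentence justification, including the naturality check that makes the two comparison maps match up.
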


\begin{prop}[$\baseCat$--compact closure, ``{\cite[Cor. 5.3.4.15]{lurieHTT}}'']\label{compactClosure}
Let $\kappa$ be a regular cardinal and $\underline{\sC}$ be $\baseCat$--cocomplete. Then $\underline{\sC}\tkappa$ is closed under $\kappa$-small $\baseCat$--colimits in $\underline{\sC}$, and hence is $\kappa$--$\baseCat$--cocomplete.
\end{prop}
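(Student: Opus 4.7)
The plan is to use the mapping-space characterisation of parametrised-$\kappa$-compactness from \cref{characterisationOfTCompactness}, the continuity of the $\baseCat$--Yoneda embedding from \cref{continuityTYoneda}, and the exchange of fibrewise $\kappa$--filtered colimits with $\kappa$--small $\baseCat$--limits in $\underline{\spc}_{\baseCat}$ from \cref{CommutationRelativeFiltColimAndFiniteLim}. Since parametrised-$\kappa$-compactness is a fibrewise condition and all the relevant structures---$\underline{\sC}\tkappa$ itself, parametrised mapping spaces, and $\baseCat$--colimits---basechange compatibly to $\baseCat_{/V}$ for any $V \in \baseCat$, I would first reduce to the following global statement: for any $\kappa$--small $\baseCat$--diagram $d \colon \underline{K} \to \underline{\sC}$ factoring through $\underline{\sC}\tkappa$, the $\baseCat$--colimit $X \coloneqq \underline{\colim}_{\underline{K}} d$ (which exists by $\baseCat$--cocompleteness of $\underline{\sC}$) is itself parametrised-$\kappa$-compact.

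To verify this, by \cref{characterisationOfTCompactness} it is enough to check that $\myuline{\map}_{\underline{\sC}}(X, -)$ carries fibrewise $\kappa$--filtered $\baseCat$--colimits in $\underline{\sC}$ to the corresponding $\baseCat$--colimits in $\underline{\spc}_{\baseCat}$. So let $p \colon \tconstant_\baseCat(J) \to \underline{\sC}$ be such a diagram with $J$ a $\kappa$--filtered category. Applying the continuity of $\baseCat$--Yoneda (\cref{continuityTYoneda}) turns the $\baseCat$--colimit $X = \underline{\colim}_{\underline{K}} d$ appearing in the first slot of $\myuline{\map}$ into a $\baseCat$--limit over $\underline{K}\vop$, while the hypothesis that each $d(k)$ is parametrised-$\kappa$-compact implies that $\myuline{\map}_{\underline{\sC}}(d(k), -)$ commutes with $\underline{\colim}_J$. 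Putting the two together, both sides of the canonical comparison map
$$\underline{\colim}_J \myuline{\map}_{\underline{\sC}}(X, p) \longrightarrow \myuline{\map}_{\underline{\sC}}(X, \underline{\colim}_J p)$$
can be rewritten as $\underline{\colim}_J \underline{\lim}_{k \in \underline{K}\vop} \myuline{\map}(d(k), p)$ and $\underline{\lim}_{k \in \underline{K}\vop} \underline{\colim}_J \myuline{\map}(d(k), p)$ respectively. Hence the required equivalence is precisely the exchange of a fibrewise $\kappa$--filtered $\baseCat$--colimit with a $\kappa$--small $\baseCat$--limit in $\underline{\spc}_{\baseCat}$, which is supplied by \cref{CommutationRelativeFiltColimAndFiniteLim}.

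The second clause, that $\underline{\sC}\tkappa$ is $\kappa$--$\baseCat$--cocomplete, follows formally: since $\underline{\sC}$ is $\baseCat$--cocomplete, every $\kappa$--small $\baseCat$--diagram in $\underline{\sC}\tkappa$ has a $\baseCat$--colimit computed in $\underline{\sC}$, and by the closure property just established that colimit lies again in $\underline{\sC}\tkappa$. The main obstacle is really securing the parametrised form of the limit--filtered-colimit exchange; once \cref{CommutationRelativeFiltColimAndFiniteLim} is granted, the rest is a direct double invocation of Yoneda continuity together with the compactness hypothesis on the values of $d$.
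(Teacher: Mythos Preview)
Your proof is correct and uses the same essential ingredients as the paper: the mapping-space characterisation \cref{characterisationOfTCompactness}, Yoneda continuity to rewrite $\myuline{\map}_{\underline{\sC}}(\underline{\colim}_{\underline{K}}d,-)$ as a $\underline{K}\vop$--indexed $\baseCat$--limit, and the filtered-colimit/limit exchange \cref{CommutationRelativeFiltColimAndFiniteLim} in $\underline{\spc}_{\baseCat}$. The only organisational difference is that the paper first invokes the decomposition principle \cref{omnibusTColimits}(3) to split into the fibrewise case (dispatched directly by the unparametrised \cite[Cor.~5.3.4.15]{lurieHTT}) and the indexed-coproduct case (where the exchange result is applied with $\underline{K}=\underline{V}$ corepresentable, hence $\underline{V}\vop\simeq\underline{V}$), whereas you treat arbitrary $\kappa$--small $\underline{K}$ uniformly. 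Your route is slightly cleaner but leans on the exchange result in its full $\kappa$--small generality; the paper's decomposition has the mild advantage that it only needs \cref{CommutationRelativeFiltColimAndFiniteLim} for corepresentable indexing diagrams.
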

\begin{proof}
Let $d : K\rightarrow \underline{\sC}\tkappa$ be a $\kappa$-small $\baseCat$--diagram. Since all $\kappa$-small $\baseCat$--colimits can be decomposed as $\kappa$-small fibrewise $\baseCat$--colimits and $\baseCat$--coproducts by the decomposition principle in \cref{omnibusTColimits} (3), we just have to treat these two special cases. The former case is clear by \cite[Cor. 5.3.4.15]{lurieHTT} since everything is fibrewise. For the latter case, let $\underline{V}$ be a corepresentable $\baseCat$--category, $A$ be a $\kappa$-filtered category, and $f : \tconstant_\baseCat(A)\rightarrow \underline{\sC}$ be a $\kappa$-filtered fibrewise $\baseCat$--diagram. We need to show that the map in $\underline{\spc}_{\baseCat}$
\[\underline{\colim}_{\tconstant_\baseCat(A)}\myuline{\map}_{\underline{\sC}}(\underline{\colim}_{\underline{V}}d, f)\longrightarrow\myuline{\map}_{\underline{\sC}}(\underline{\colim}_{\underline{V}}d, \underline{\colim}_{\tconstant_\baseCat(A)}f)\] is an equivalence.
In this case, since we have for the source \[\underline{\colim}_{\tconstant_\baseCat(A)}\myuline{\map}_{\underline{\sC}}(\underline{\colim}_{\underline{V}}d, f) \simeq \underline{\colim}_{\tconstant_\baseCat(A)}\underline{\lim}_{{\underline{V}}\vop}\myuline{\map}_{\underline{\sC}}(d, f)\] and for the target 
\[\myuline{\map}_{\underline{\sC}}(\underline{\colim}_{\underline{V}}d, \underline{\colim}_{\tconstant_\baseCat(A)}f) \simeq \underline{\lim}_{{\underline{V}}\vop}\underline{\colim}_{\tconstant_\baseCat(A)}\myuline{\map}_{\underline{\sC}}(d, f), \]
\cref{CommutationRelativeFiltColimAndFiniteLim} gives the required equivalence, using also that $\underline{V}\vop$ is still corepresentable by \cref{oppositeCorepresentability}.
\end{proof}

\subsection{Parametrised Ind-completions and accessibility}

\begin{prop}[$\underline{\ind}$ fully faithfulness, ``{\cite[Prop. 5.3.5.11]{lurieHTT}}''] \label{TIndFullyFaithfulness}
Let $\underline{\sC} \in \cat_{\baseCat}$ and  $\underline{\D} \in \widehat{\cat}_{\baseCat}$ which strongly admits fibrewise $\kappa$-filtered colimits. Suppose $F : \underline{\ind}_{\kappa}\underline{\sC}  \rightarrow \underline{\D}$ strongly preserves fibrewise $\kappa$-filtered colimits and $f  = F\circ j : \underline{\sC} \rightarrow \underline{\D}$.
\begin{enumerate}
    \item If $f$ is $\baseCat$--fully faithful and the $\baseCat$--essential image lands in $\underline{\D}\tkappa$, then $F$ is $\baseCat$--fully faithful.
    \item If $f$ is $\baseCat$--fully faithful, lands in $\underline{\D}\tkappa$, and the $\baseCat$--essential image of $f$ generates $\underline{\D}$ under fibrewise $\kappa$-filtered colimits, then $F$ is moreover a $\baseCat$--equivalence.
\end{enumerate}
\end{prop}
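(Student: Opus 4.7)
The plan for part (1) is to show that the natural comparison
\[\myuline{\map}_{\underline{\ind}_\kappa\underline{\sC}}(X, Y) \longrightarrow \myuline{\map}_{\underline{\D}}(FX, FY)\]
in $\underline{\spc}_{\baseCat}$ is an equivalence for every pair $X, Y \in \underline{\ind}_\kappa\underline{\sC}$; by \cref{fullyFaithfulMappingSpaces} this is exactly what $\baseCat$--fully faithfulness of $F$ amounts to. The strategy is a two-step reduction. First I would use \cref{UnivPropInd}(1) to write $X \simeq \underline{\colim}_i j(x_i)$ as a fibrewise $\kappa$-filtered $\baseCat$--colimit of representables. By the $\baseCat$--continuity of the $\baseCat$--Yoneda embedding applied to $\underline{\sC}\vop$ (\cref{continuityTYoneda}), mapping out of this colimit on the left becomes a $\baseCat$--limit; on the right the same happens because $F$ strongly preserves fibrewise $\kappa$-filtered colimits. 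Both sides thus become limits of componentwise comparisons, reducing the problem to the case $X = j(x)$.

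The second step repeats the procedure in the second variable: write $Y \simeq \underline{\colim}_k j(y_k)$. By \cref{observationYonedaLandCompacts} each $j(x)$ lies in $\underline{\presheaf}_{\baseCat}(\underline{\sC})\tkappa$, and since $\underline{\ind}_\kappa\underline{\sC} \subseteq \underline{\presheaf}_{\baseCat}(\underline{\sC})$ is closed under fibrewise $\kappa$-filtered colimits (cf. the remark following \cref{nota:parametrisedInd}), those colimits are computed there and $j(x)$ remains $\baseCat$--$\kappa$--compact in $\underline{\ind}_\kappa\underline{\sC}$. Hence \cref{characterisationOfTCompactness} lets one pull the colimit out of $\myuline{\map}_{\underline{\ind}_\kappa\underline{\sC}}(j(x), -)$. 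The analogous step on the right works because $f(x) = Fj(x) \in \underline{\D}\tkappa$ by hypothesis and $F$ preserves the colimit. We are thereby reduced to the case $X = j(x), Y = j(y)$, where the $\baseCat$--Yoneda lemma (\cref{TYonedaLemma}) identifies the left hand side with $\myuline{\map}_{\underline{\sC}}(x, y)$ and the comparison with the map induced by $f$, which is an equivalence by hypothesis.

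For part (2), granted (1), what remains is to verify fibrewise essential surjectivity, since $\baseCat$--equivalences are checked fibrewise. Fix $V \in \baseCat$ and $D \in \underline{\D}_V$. The generation hypothesis furnishes a fibrewise $\kappa$-filtered presentation $D \simeq \colim_i f(x_i)$, and strong preservation of fibrewise $\kappa$-filtered colimits by $F$ then yields $D \simeq \colim_i Fj(x_i) \simeq F\bigl(\underline{\colim}_i j(x_i)\bigr)$, exhibiting $D$ in the essential image of $F_V$. The one point requiring slight care is the compactness transfer used in the second reduction of part (1), i.e. passing from $\baseCat$--$\kappa$--compactness in $\underline{\presheaf}_{\baseCat}(\underline{\sC})$ to the same in $\underline{\ind}_\kappa\underline{\sC}$; once this is in place, the rest is bookkeeping with the parametrised mapping space machinery established earlier.
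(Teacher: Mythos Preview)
Your proposal is correct and follows essentially the same approach as the paper's proof: a two-step reduction of the comparison of parametrised mapping spaces to the case of representables, using that mapping out of a colimit is a limit for one variable and $\kappa$-compactness for the other. The only cosmetic differences are that the paper performs the reductions in the opposite order (it first fixes the source in $\underline{\sC}$ and reduces the target, then treats a general source) and, rather than arguing that $j(x)$ remains $\baseCat$--$\kappa$--compact in $\underline{\ind}_\kappa\underline{\sC}$, it simply computes the mapping space directly in $\underline{\presheaf}_{\baseCat}(\underline{\sC})$ using that the inclusion $\underline{\ind}_\kappa\underline{\sC} \subseteq \underline{\presheaf}_{\baseCat}(\underline{\sC})$ is $\baseCat$--fully faithful; this sidesteps the compactness transfer you flag as requiring care.
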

\begin{proof}
We prove (i) two steps. The goal is to show that 
\[\myuline{\map}_{\underline{\ind}_{\kappa}\underline{\sC} }(A,B) \rightarrow \myuline{\map}_{\underline{\D}}(FA,FB)\] is an equivalence. First suppose $A \in \underline{\sC}$ and write $B \simeq \underline{\colim}_iB_i$ as a fibrewise filtered colimit where $B_i \in \underline{\sC}$. We can equivalently compute $\myuline{\map}_{\underline{\ind}_{\kappa}\underline{\sC} }(A,B)$ as $\myuline{\map}_{\underline{\presheaf}_{\baseCat}(\underline{\sC}) }(A,B)$, and so 
\begin{equation*}
    \begin{split}
        \myuline{\map}_{\underline{\ind}_{\kappa}\underline{\sC} }(A,B)\simeq \myuline{\map}_{\underline{\presheaf}_{\baseCat}(\underline{\sC}) }(A,\underline{\colim}_iB_i) &\simeq \underline{\colim}_i\myuline{\map}_{\underline{\presheaf}_{\baseCat}(\underline{\sC}) }(A,B_i)\\
        &\simeq \underline{\colim}_i\myuline{\map}_{\underline{\ind}_{\kappa}\underline{\sC} }(A,B_i)
    \end{split}
\end{equation*}
and \[ \myuline{\map}_{\underline{\D}}(FA,F\underline{\colim}_iB_i) \simeq \underline{\colim}_i\myuline{\map}_{\underline{\D}}(fA,fB_i)\] where for the second equivalence we have used both hypotheses that $F$ preserves fibrewise $\kappa$-filtered colimits and that the image lands in $\underline{\D}\tkappa$. This completes this case. For a general $A \simeq \underline{\colim}_iA_i$ where $A_i \in \underline{\sC}$ and the $\baseCat$--colimit is fibrewise $\kappa$-filtered,  we have 
\begin{equation*}
    \begin{split}
        \myuline{\map}_{\underline{\presheaf}_{\baseCat}(\underline{\sC}) }(A,B) \simeq \myuline{\map}_{\underline{\presheaf}_{\baseCat}(\underline{\sC}) }(\underline{\colim}_iA_i,B) & \simeq \underline{\lim}_i\myuline{\map}_{\underline{\presheaf}_{\baseCat}(\underline{\sC}) }(A_i,B)\\
        &\xrightarrow{\simeq} \underline{\lim}_i\myuline{\map}_{\underline{\D}}(FA_i,FB) \\
        &\simeq \myuline{\map}_{\underline{\D}}(FA, FB)
    \end{split}
\end{equation*}
 where the third equivalence is by the special case above, and so we are done. For (ii), we have shown $\baseCat$--fully faithfulness, and $\baseCat$--essential surjectivity is by hypothesis.
\end{proof}

\begin{lem}\label{indYonedaPreserveFiniteColimits}
Let $\underline{\D}\in\cat_{\baseCat}$. Then the $\baseCat$--Yoneda embedding $y \colon \underline{\D}\hookrightarrow \underline{\func}_{\baseCat}\Tlexact(\underline{\D}\vop, \underline{\spc}_{\baseCat})$ strongly preserves finite $\baseCat$--colimits.
\end{lem}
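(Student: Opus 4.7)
The plan is to verify the universal property of finite $\baseCat$--colimits directly, reducing to a mapping-space computation inside the ambient $\baseCat$--presheaf category via the $\baseCat$--Yoneda lemma.

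First I would check that $y$ actually factors through $\underline{\func}_{\baseCat}\Tlexact(\underline{\D}\vop, \underline{\spc}_{\baseCat}) \hookrightarrow \underline{\presheaf}_{\baseCat}(\underline{\D})$: for any $D$ in some fibre $\D_V$, the functor $\myuline{\map}_{\underline{\D}_{\underline{V}}}(-, D) \colon \underline{\D}\vop_{\underline{V}} \rightarrow \underline{\spc}_{\underline{V}}$ strongly preserves $\baseCat_{/V}$--limits by the universal property of $\baseCat$--colimits applied after any further basechange $W \rightarrow V$ (representable functors always send $\baseCat$--colimits in $\underline{\D}$ to $\baseCat$--limits in $\underline{\spc}_{\baseCat}$, a fact essentially already used in \cref{mappingAdjunctionProjectionFormula}). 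Since strong preservation is checked fibrewise, this is enough.

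For the main claim, fix $V \in \baseCat$ and let $\bar{d} \colon \underline{K}\tcocone \to \underline{\D}_{\underline{V}}$ be a finite $\baseCat_{/V}$--colimit diagram with cocone point $c$ and restriction $d \coloneqq \bar{d}|_{\underline{K}}$. Using that $\underline{\func}_{\baseCat}$ basechanges well (\cref{TFunctorCategory}), the fibre at $V$ of the target identifies with $\underline{\func}_{\underline{V}}\Tlexact(\underline{\D}\vop_{\underline{V}}, \underline{\spc}_{\underline{V}})$, and $y$ restricts to the $\baseCat_{/V}$--Yoneda of $\underline{\D}_{\underline{V}}$. Since the lex subcategory is a full $\baseCat_{/V}$--subcategory of $\underline{\presheaf}_{\underline{V}}(\underline{\D}_{\underline{V}})$, mapping spaces coincide with those of the ambient presheaf. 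Hence to show that $y \circ \bar{d}$ is a $\baseCat_{/V}$--colimit cocone in the lex subcategory, it suffices to show that for every $F$ in the lex subcategory, the canonical comparison
\[\myuline{\map}(y(c), F) \longrightarrow \underline{\lim}_{\underline{K}\vop}\myuline{\map}(y \circ d, F)\]
induced by precomposing with the cocone $y \circ d \rightarrow y(c)$ is an equivalence of $\baseCat_{/V}$--spaces.

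By the $\baseCat$--Yoneda lemma \cref{TYonedaLemma}, the left-hand side is $F(c)$ and the right-hand side is $\underline{\lim}_{\underline{K}\vop} F \circ d$, with the comparison map being the one induced by $d \to c$ viewed as a $\baseCat_{/V}$--limit diagram in $\underline{\D}\vop_{\underline{V}}$. Since $F$ strongly preserves $\baseCat_{/V}$--limits by hypothesis and $c = \underline{\colim}_{\underline{K}} d$ in $\underline{\D}_{\underline{V}}$ is by definition a $\baseCat_{/V}$--limit of $d$ in $\underline{\D}\vop_{\underline{V}}$, this comparison is an equivalence, proving the claim. The argument is a formal consequence of the $\baseCat$--Yoneda lemma, the universal property of $\baseCat$--colimits, and basechange compatibility of the parametrised functor category; no real obstacle is anticipated, the main care being the correct identification of the basechange of the lex subcategory.
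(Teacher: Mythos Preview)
Your proposal is correct and follows essentially the same approach as the paper: both arguments test the colimit comparison by mapping into an arbitrary lex presheaf $F$, apply the $\baseCat$--Yoneda lemma to rewrite this as $F(\underline{\colim}_{\underline{K}}d)\to\underline{\lim}_{\underline{K}\vop}F(d)$, and conclude by lex-ness of $F$. You add the preliminary verification that $y$ lands in the lex subcategory and spell out the basechange to $V$ more carefully, but the core idea is identical.
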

\begin{proof}
Suppose $k : \underline{K} \rightarrow \underline{\D}$ is a finite $\baseCat$--diagram. We need to show that the map 
\[\underline{\colim}_{\underline{K}}\myuline{\map}_{\underline{\D}}(-,k)\rightarrow \myuline{\map}_{\underline{\D}}(-,\underline{\colim}_{\underline{K}}k)\] in $\underline{\func}_{\baseCat}\Tlexact(\underline{\D}\vop, \underline{\spc}_{\baseCat})$ is an equivalence. So let $\varphi \in \underline{\func}_{\baseCat}\Tlexact(\underline{\D}\vop, \underline{\spc}_{\baseCat})$ be an arbitrary object. Then mapping the morphism above into this and using Yoneda, we obtain
\[\varphi(\underline{\colim}_{\underline{K}}k) \longrightarrow\underline{\lim}_{\underline{K}\vop}\varphi(k)\] which is an equivalence since $\varphi$ is a $\baseCat$--left exact functor. 
\end{proof}

We thank Maxime Ramzi for teaching us the following slick proof, which is different from the standard one from \cite[Prop. 3.2]{BGT13}, for instance.

\begin{prop}\label{functorCategoryFormulaForIndCompletion}
Let $\underline{\D}\in \cat_{\baseCat}$. Then $\underline{\ind}(\underline{\D})\simeq \underline{\func}\Tlexact(\underline{\D}\vop, \underline{\spc}_{\baseCat})$. In particular, if  $\underline{\D}$ were $\baseCat$--stable, then $\underline{\ind}(\underline{\D}) \simeq \underline{\func}\Texact(\underline{\D}\vop, \myuline{\spectra}_{\baseCat})$.
\end{prop}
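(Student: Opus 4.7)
The plan is to build a comparison $\baseCat$-functor via the universal property of $\underline{\ind}$, show it is $\baseCat$-fully faithful using \cref{TIndFullyFaithfulness}, and then verify essential surjectivity by a parametrised Yoneda density argument.

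First, I would observe that by \cref{indYonedaPreserveFiniteColimits} the $\baseCat$-Yoneda embedding factors as $y\colon \underline{\D}\hookrightarrow \underline{\func}_{\baseCat}\Tlexact(\underline{\D}\vop,\underline{\spc}_{\baseCat})$, and since fibrewise filtered $\baseCat$-colimits commute with finite $\baseCat$-limits in $\underline{\spc}_{\baseCat}$ by \cref{CommutationRelativeFiltColimAndFiniteLim}, the $\baseCat$-full subcategory $\underline{\func}_{\baseCat}\Tlexact(\underline{\D}\vop,\underline{\spc}_{\baseCat})\subseteq \underline{\presheaf}_{\baseCat}(\underline{\D})$ is closed under, and hence strongly admits, fibrewise filtered $\baseCat$-colimits. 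By \cref{UnivPropInd}, $y$ extends uniquely to a fibrewise-filtered-colimit-preserving $\baseCat$-functor $F\colon\underline{\ind}(\underline{\D})\to\underline{\func}_{\baseCat}\Tlexact(\underline{\D}\vop,\underline{\spc}_{\baseCat})$.

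Next, for $\baseCat$-fully faithfulness of $F$, I would apply \cref{TIndFullyFaithfulness}(i): $y$ is $\baseCat$-fully faithful by the parametrised Yoneda lemma; moreover $y$ lands in the parametrised-compacts of $\underline{\presheaf}_{\baseCat}(\underline{\D})$ by \cref{observationYonedaLandCompacts}, and since the inclusion $\underline{\func}_{\baseCat}\Tlexact\hookrightarrow\underline{\presheaf}_{\baseCat}$ preserves fibrewise filtered $\baseCat$-colimits, each representable remains parametrised-compact in $\underline{\func}_{\baseCat}\Tlexact(\underline{\D}\vop,\underline{\spc}_{\baseCat})$ as well. For essential surjectivity, given a $\baseCat$-lex $\varphi\colon\underline{\D}\vop\to \underline{\spc}_{\baseCat}$, I would apply $\baseCat$-Yoneda density (\cref{TYonedaDensity}) inside $\underline{\presheaf}_{\baseCat}(\underline{\D})$ to write $\varphi$ as a $\baseCat$-colimit of representables indexed by a parametrised category of elements, and then argue that $\baseCat$-lexness of $\varphi$ forces this indexing category to be fibrewise filtered, reducing on each fibre to the classical fact that the elements category of a lex presheaf is filtered.

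For the stable enhancement, when $\underline{\D}$ is $\baseCat$-stable so is $\underline{\D}\vop$, and a $\baseCat$-lex functor from a $\baseCat$-stable $\baseCat$-category is automatically $\baseCat$-linear. By the universal property of $\baseCat$-stabilisation (\cref{univPropTStabilisations}) one gets $\underline{\linear}_{\baseCat}(\underline{\D}\vop,\underline{\spc}_{\baseCat})\simeq \underline{\func}_{\baseCat}\Trexact(\underline{\D}\vop,\myuline{\spectra}_{\baseCat}) = \underline{\func}_{\baseCat}\Texact(\underline{\D}\vop,\myuline{\spectra}_{\baseCat})$, where the last equality uses that $\baseCat$-rex functors between $\baseCat$-stable categories are automatically $\baseCat$-exact. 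The main obstacle I expect is the essential surjectivity step: parametrising the classical filteredness argument for the category of elements requires care in matching the fibrewise interpretation of $\baseCat$-lexness with fibrewise filteredness of the parametrised indexing category, although in principle this should reduce cleanly on each fibre to the unparametrised result.
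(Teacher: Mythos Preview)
Your setup of the comparison functor and the $\baseCat$-fully faithfulness argument via \cref{TIndFullyFaithfulness} are correct and match the paper, as does the stable addendum via \cref{univPropTStabilisations}. The essential surjectivity step, however, has a genuine gap. Parametrised Yoneda density exhibits $\varphi$ as a $\baseCat$-colimit over a parametrised indexing $\baseCat$-category $\underline{K}$, but even if each fibre $K_V$ were filtered, a $\baseCat$-colimit over $\underline{K}$ is \emph{not} a fibrewise filtered colimit in the sense you need: fibrewise filtered colimits are by definition indexed by $\tconstant_{\baseCat}(J)$ for some ordinary filtered $J$, whereas general $\baseCat$-colimits decompose into indexed coproducts and fibrewise pieces via \cref{omnibusTColimits}. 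Since $\underline{\ind}(\underline{\D})$ is the \emph{fibrewise} Ind-completion, landing in it requires the former. Relatedly, your fibrewise reduction does not go through as stated: the fibre over $V$ of $\underline{\func}_{\baseCat}\Tlexact(\underline{\D}\vop,\underline{\spc}_{\baseCat})$ is the category of $\baseCat_{/V}$-lex functors $\underline{\D}\vop_{\underline{V}}\to\underline{\spc}_{\underline{V}}$, not the classical $\func^{\mathrm{lex}}(\D_V^{\mathrm{op}},\spc)$, so the unparametrised filteredness-of-elements criterion is not directly applicable.

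The paper sidesteps this entirely. After $\baseCat$-fully faithfulness, it observes that the comparison $\overline{y}$ strongly preserves all small $\baseCat$-colimits (it preserves fibrewise filtered colimits by construction and finite $\baseCat$-colimits by \cref{indYonedaPreserveFiniteColimits}), and hence admits a $\baseCat$-right adjoint $R$ by the adjoint functor theorem \cref{parametrisedAdjointFunctorTheorem} (a forward reference the paper explicitly notes is non-circular). Conservativity of $R$ is then immediate from Yoneda by mapping out of representables, and a $\baseCat$-fully faithful left adjoint with conservative right adjoint is an equivalence. This argument requires no analysis of parametrised categories of elements.
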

\begin{proof}
First of all, note that $\underline{\func}\Tlexact(\underline{\D}\vop, \underline{\spc}_{\baseCat}) \subseteq \underline{\func}(\underline{\D}\vop, \underline{\spc}_{\baseCat})$ is closed under fibrewise filtered colimits since fibrewise filtered colimits commutes with finite $\baseCat$--limits in $\underline{\spc}_{\baseCat}$ by \cref{CommutationRelativeFiltColimAndFiniteLim}. Hence $y \colon \underline{\D} \hookrightarrow \underline{\func}\Tlexact(\underline{\D}\vop, \underline{\spc}_{\baseCat})$ induces 
$\overline{y} \colon \underline{\ind}(\underline{\D}) \longrightarrow \underline{\func}\Tlexact(\underline{\D}\vop, \myuline{\spc}_{\baseCat})$
which we then know is $\baseCat$--fully faithful by \cref{TIndFullyFaithfulness}. Moreover, since $y$ strongly preserves finite $\baseCat$--colimits by \cref{indYonedaPreserveFiniteColimits}, $\overline{y}$ strongly preserves small $\baseCat$--colimits. Hence, by \cref{parametrisedAdjointFunctorTheorem}, it has a right adjoint $R : \underline{\func}\Texact(\underline{\D}\vop, \myuline{\spc}_{\baseCat}) \rightarrow \underline{\ind}(\underline{\D})$ (we are free to use this result here since the present situation will not feature anywhere in the proof of adjoint functor theorem). If we can show that this right adjoint is conservative, then we would have shown that $\overline{y}$ and $R$ are inverse equivalences. But conservativity is clear by mapping from representable functors and an immediate application of Yoneda. Finally, the statement for the $\baseCat$--stable case is a straightforward consequence of \cref{univPropTStabilisations}.
\end{proof}

\begin{prop}[``{\cite[Prop. 5.3.5.12]{lurieHTT}}'']\label{whenCocompletionsAreIndCompletions}
Let $\underline{\sC}\in \cat_{\baseCat}$  and $\kappa$ a regular cardinal. Then the canonical functor $F : \underline{\ind}_{\kappa}(\underline{\presheaf}_{\baseCat}(\underline{\sC}) \tkappa) \rightarrow \underline{\presheaf}_{\baseCat}(\underline{\sC}) $ is  an equivalence.
\end{prop}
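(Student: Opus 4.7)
The plan is to apply \cref{TIndFullyFaithfulness}(2) to the canonical $\baseCat$-fully faithful inclusion
\[j \colon \underline{\presheaf}_{\baseCat}(\underline{\sC})\tkappa \hookrightarrow \underline{\presheaf}_{\baseCat}(\underline{\sC}),\]
whose extension under the universal property of $\underline{\ind}_\kappa$ from \cref{UnivPropInd} is exactly $F$; the universal property applies because $\underline{\presheaf}_{\baseCat}(\underline{\sC})$ is $\baseCat$-cocomplete and so in particular strongly admits fibrewise $\kappa$-filtered colimits. Two of the three hypotheses of \cref{TIndFullyFaithfulness}(2) are immediate: $j$ is $\baseCat$-fully faithful by definition, and its essential image lands in $\underline{\presheaf}_{\baseCat}(\underline{\sC})\tkappa$ tautologically.

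The content is therefore to verify that the essential image of $j$ generates $\underline{\presheaf}_{\baseCat}(\underline{\sC})$ under fibrewise $\kappa$-filtered colimits. The starting point is that representable $\baseCat$-presheaves are $\baseCat$-$\kappa$-compact by \cref{observationYonedaLandCompacts}, and by $\baseCat$-Yoneda density (\cref{TYonedaDensity}) every $P \in \underline{\presheaf}_{\baseCat}(\underline{\sC})$ may be written as a $\baseCat$-colimit $\underline{\colim}_{\underline{K}} p$ of representables for some (possibly large) $\baseCat$-diagram $p \colon \underline{K} \to \underline{\presheaf}_{\baseCat}(\underline{\sC})$. I would then invoke the diagram decomposition theorem \cref{filteredDiagramDecomposition}: taking $J$ to be the $\kappa$-filtered poset of $\kappa$-small $\baseCat$-subcategories of $\underline{K}$, whose colimit in $(\cat_{\baseCat})_{/\underline{\presheaf}_{\baseCat}(\underline{\sC})}$ recovers $p$, each restricted diagram $p_j \colon \underline{K}_j \to \underline{\presheaf}_{\baseCat}(\underline{\sC})$ admits a $\baseCat$-colimit $\sigma_j$, which lies in $\underline{\presheaf}_{\baseCat}(\underline{\sC})\tkappa$ by the $\baseCat$-compact closure result \cref{compactClosure} (using that representables are $\baseCat$-$\kappa$-compact and $\underline{K}_j$ is $\kappa$-small). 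Then \cref{filteredDiagramDecomposition} identifies $P \simeq \underline{\colim}_{\underline{K}} p$ with the fibrewise $\kappa$-filtered $\baseCat$-colimit of $\sigma_\bullet \colon \tconstant_\baseCat(J) \to \underline{\presheaf}_{\baseCat}(\underline{\sC})$, which is precisely a fibrewise $\kappa$-filtered colimit of objects in the essential image of $j$, as required.

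The main obstacle I anticipate is bookkeeping rather than depth: one must verify that the poset $J$ of $\kappa$-small $\baseCat$-subcategories is $\kappa$-filtered as an unparametrised category (so that $\tconstant_\baseCat(J)$ really indexes a fibrewise $\kappa$-filtered $\baseCat$-colimit), that this colimit exists in $\underline{\presheaf}_{\baseCat}(\underline{\sC})$ (automatic from $\baseCat$-cocompleteness), and that $p$ genuinely arises as the $(\cat_{\baseCat})_{/\underline{\presheaf}_{\baseCat}(\underline{\sC})}$-colimit of the inclusions $\underline{K}_j \hookrightarrow \underline{K}$. Once these technical points are checked, the hypotheses of \cref{TIndFullyFaithfulness}(2) are in hand and the theorem gives that $F$ is an equivalence.
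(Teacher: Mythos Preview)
Your proposal is correct and follows essentially the same route as the paper's proof: both apply \cref{TIndFullyFaithfulness}(2), note that full faithfulness and landing in $\baseCat$-$\kappa$-compacts are immediate, and reduce the generation claim to combining $\baseCat$-Yoneda density with a decomposition into a fibrewise $\kappa$-filtered colimit of $\kappa$-small $\baseCat$-colimits of representables (then using \cref{compactClosure}). The only cosmetic difference is that you invoke \cref{filteredDiagramDecomposition} directly with the explicit poset $J$ of $\kappa$-small $\baseCat$-subcategories, whereas the paper cites the packaged \cref{parametrisedFilteredColimitDecomposition}; the latter's proof is exactly your construction, so the bookkeeping worries you flag are already handled there.
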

\begin{proof}
To see that $F$ is an equivalence, we want to apply  \cref{TIndFullyFaithfulness}. Let $j : \underline{\presheaf}_{\baseCat}(\underline{\sC}) \tkappa\hookrightarrow \underline{\ind}_{\kappa}(\underline{\presheaf}_{\baseCat}(\underline{\sC}) \tkappa)$ be the canonical embedding. That the composite $f \coloneqq F\circ j$  is $\baseCat$--fully faithful and lands in $\underline{\presheaf}_{\baseCat}(\underline{\sC}) \tkappa$ is clear. To see that the essential image of $f$ generates $\underline{\presheaf}_{\baseCat}(\underline{\sC}) $ under fibrewise $\kappa$-filtered colimits, recall that any $X\in\underline{\presheaf}_{\baseCat}(\underline{\sC}) $ can be written as a small $\baseCat$--colimit of a diagram valued in $\underline{\sC} \subseteq \underline{\presheaf}_{\baseCat}(\underline{\sC}) $ by \cref{TYonedaDensity}. Then \cref{parametrisedFilteredColimitDecomposition} gives 
that $X$ can be written as a fibrewise $\kappa$-filtered colimit taking values in $\underline{\E}\subseteq \underline{\presheaf}_{\baseCat}(\underline{\sC}) $ where each object of $\underline{\E}$ is itself a $\kappa$-small $\baseCat$--colimit of some diagram taking values in $\underline{\sC} \subseteq \underline{\presheaf}_{\baseCat}(\underline{\sC}) \tkappa.$ But then by \cref{compactClosure} we know that $\underline{\E} \subseteq \underline{\presheaf}_{\baseCat}(\underline{\sC}) \tkappa$, and so this completes the proof.
\end{proof}

\begin{prop}[Characterisation of $\baseCat$--compacts in $\baseCat$--presheaves, ``{\cite[Prop. 5.3.4.17]{lurieHTT}}'']\label{TCompactsInTPresheaves}
Let $\underline{\sC}\in\cat_{\baseCat}$ and $\kappa$ a regular cardinal. Then a $\baseCat$--object $C\in \underline{\presheaf}_{\baseCat}(\underline{\sC}) $ is $\kappa$--$\baseCat$--compact if and only if it is a retract of a $\kappa$-small $\baseCat$--colimit indexed in $\underline{\sC} \subseteq \underline{\presheaf}_{\baseCat}(\underline{\sC}) $. 
\end{prop}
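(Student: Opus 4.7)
The plan is to prove both directions separately, mirroring the strategy of \cite[Prop.~5.3.4.17]{lurieHTT}.

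\textbf{Sufficiency.} First I note that the $\baseCat$--Yoneda embedding already lands in $\underline{\presheaf}_{\baseCat}(\underline{\sC})\tkappa$ by \cref{observationYonedaLandCompacts}, and that $\underline{\presheaf}_{\baseCat}(\underline{\sC})\tkappa$ is closed under $\kappa$--small $\baseCat$--colimits in $\underline{\presheaf}_{\baseCat}(\underline{\sC})$ by \cref{compactClosure}. Therefore any $\kappa$--small $\baseCat$--colimit of representables is $\baseCat$--$\kappa$--compact. Closure under retracts is then immediate: parametrised-$\kappa$-compactness is a fibrewise condition by \cref{defn:parametrisedCompact}, and retracts of $\kappa$--compact objects in an ordinary category are $\kappa$--compact.

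\textbf{Necessity.} After basechanging to $\underline{V}$ if necessary, I may reduce to the case where $\baseCat$ has a terminal object and $C$ is a genuine $\baseCat$--object in $\underline{\presheaf}_{\baseCat}(\underline{\sC})$. Combining the $\baseCat$--Yoneda density \cref{TYonedaDensity} with the parametrised filtered colimit decomposition \cref{parametrisedFilteredColimitDecomposition}, just as was done in the proof of \cref{whenCocompletionsAreIndCompletions}, I can present
\[C \simeq \underline{\colim}_{\tconstant_\baseCat(J)} Y_{\bullet}\]
as a fibrewise $\kappa$--filtered $\baseCat$--colimit indexed by a $\kappa$--filtered poset $J$, with each $Y_j$ itself a $\kappa$--small $\baseCat$--colimit of representables from $\underline{\sC}$. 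Applying the characterisation \cref{characterisationOfTCompactness} of parametrised $\kappa$--compactness to $C$ against this diagram then yields an equivalence
\[\colim_J \map_{\underline{\presheaf}_{\baseCat}(\underline{\sC})}(C, Y_{\bullet}) \xrightarrow{\simeq} \map_{\underline{\presheaf}_{\baseCat}(\underline{\sC})}(C, C)\]
on global sections, so that $\id_C$ on the right lifts to some index $j \in J$ and a map $f \colon C \rightarrow Y_j$ whose composition with the structure map $Y_j \rightarrow C$ recovers $\id_C$. This exhibits $C$ as a retract of the $\kappa$--small $\baseCat$--colimit of representables $Y_j$, as required.

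The main delicacy is that $\baseCat$--Yoneda density produces $C$ only as a \emph{small} $\baseCat$--colimit of representables, whereas \cref{parametrisedFilteredColimitDecomposition} is stated for $\kappa$--small diagrams; however the fix is identical to that invoked in the proof of \cref{whenCocompletionsAreIndCompletions}, by choosing $\tau \ll \kappa$ suitably and decomposing at the appropriate size level. The rest is formal manipulation of parametrised mapping spaces.
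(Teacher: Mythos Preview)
Your proof is correct and follows essentially the same route as the paper: sufficiency via \cref{observationYonedaLandCompacts} and \cref{compactClosure}, and necessity via \cref{TYonedaDensity} combined with \cref{parametrisedFilteredColimitDecomposition} to write $C$ as a fibrewise $\kappa$--filtered $\baseCat$--colimit of $\kappa$--small $\baseCat$--colimits of representables, then invoking \cref{characterisationOfTCompactness} to factor $\id_C$ through one of the terms. One small wording quibble in your final paragraph: the cardinal adjustment needed is to take $\kappa \ll \lambda$ (so that the original small diagram is $\lambda$--small) and apply \cref{parametrisedFilteredColimitDecomposition} with $(\tau,\kappa)$ there replaced by $(\kappa,\lambda)$, rather than ``choosing $\tau \ll \kappa$''; but this is exactly what both you and the paper intend and is purely notational.
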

\begin{proof}
The if direction is clear since $\underline{\sC} \subseteq \underline{\presheaf}_{\baseCat}(\underline{\sC}) \tkappa$ and by the compact closure of \cref{compactClosure} we know that $\kappa$--$\baseCat$--compacts are closed under $\kappa$-small $\baseCat$--colimits and retracts.

Now suppose $C$ is $\kappa$--$\baseCat$--compact. First of all recall by \cref{TYonedaDensity} that $C \simeq \underline{\colim}_aj(B_a)$ where $j : \underline{\sC} \hookrightarrow \underline{\presheaf}_{\baseCat}(\underline{\sC}) $ is the $\baseCat$--Yoneda embedding and $B_a\in \underline{\sC}$. Combining this with \cref{parametrisedFilteredColimitDecomposition} yields
\[C = \underline{\colim}_aj(B_a) \simeq \underline{\colim}_{f\in \tconstant_\baseCat(F)} \underline{\colim}_{\big(p_f : K_f \rightarrow \underline{\sC} \subseteq \underline{\presheaf}_{\baseCat}(\underline{\sC}) \big)}p_f\] where $F$ is a $\kappa$-filtered category. But then by \cref{characterisationOfTCompactness} we then have that 
\[\id_C \in \myuline{\map}_{\underline{\sC}}(C, C) \simeq \underline{\colim}_{f\in \tconstant_\baseCat(F)}\myuline{\map}_{\underline{\sC}}(C, \underline{\colim}_{\big(p_f : K_f \rightarrow \underline{\sC} \subseteq \underline{\presheaf}_{\baseCat}(\underline{\sC}) \big)}p_f)\] Hence we see that $C$ is a retract of some $\underline{\colim}_{\big(p_f : K_f \rightarrow \underline{\sC} \subseteq \underline{\presheaf}_{\baseCat}(\underline{\sC}) \big)}p_f$ as required.
\end{proof}

\begin{defn}
Let $\kappa$ be a regular cardinal and $\underline{\sC}$ a $\baseCat$--category. We say that $\underline{\sC}$ is $\kappa$--$\baseCat$--\textit{accessible} if there is a small $\baseCat$--category $\underline{\sC}^0$ and a $\baseCat$--equivalence $\underline{\ind}_{\kappa}(\underline{\sC}^0)\rightarrow \underline{\sC}$. We say that $\underline{\sC}$ is $\baseCat$--\textit{accessible} if it is $\kappa$--$\baseCat$--accessible for some regular cardinal $\kappa$. A $\baseCat$--functor out of a $\baseCat$--accessible $\underline{\sC}$ is said to be $\baseCat$\textit{-accessible} if it strongly preserves all fibrewise $\kappa$-filtered colimits for some regular cardinal $\kappa$.
\end{defn}

\begin{lem}[$\baseCat$--accessibility of $\baseCat$--adjoints, ``{\cite[Prop. 5.4.7.7]{lurieHTT}}'']\label{accessibilityOfAdjoints}
Let $G : \underline{\sC} \rightarrow \underline{\sC}'$ be a $\baseCat$--functor between $\baseCat$--accessibles. If $G$ admits a right or a left $\baseCat$--adjoint, then $G$ is $\baseCat$--accessible.
\end{lem}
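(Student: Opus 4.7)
The plan is to reduce to Lurie's unparametrised result \cite[Prop. 5.4.7.7]{lurieHTT} by passing to fibres, exploiting that both $\baseCat$--accessibility of categories (by construction of $\underline{\ind}_{\kappa}$, see \cref{nota:parametrisedInd}) and $\baseCat$--accessibility of $\baseCat$--functors are fibrewise notions in the relevant sense.

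First I would dispatch the case where $G$ admits a $\baseCat$--right adjoint: by \cref{omnibusTAdjunctions}(2), $G$ then strongly preserves all $\baseCat$--colimits, and in particular all fibrewise $\kappa$--filtered colimits for any regular cardinal $\kappa$, so it is trivially $\baseCat$--accessible. The substantive case is therefore when $G$ admits a $\baseCat$--left adjoint $F$.

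For that case, I would first pass to fibres: by \cref{criteriaForTAdjunctions}, the $\baseCat$--adjunction $F \dashv G$ supplies fibrewise adjunctions $F_V \dashv G_V$ for every $V \in \baseCat$. Since $\underline{\sC} \simeq \underline{\ind}_{\kappa_0}(\underline{\sC}^0)$ for some small $\baseCat$--category $\underline{\sC}^0$ and some regular cardinal $\kappa_0$, and since $\underline{\ind}_{\kappa_0}$ is constructed fibrewise (\cref{nota:parametrisedInd}), each $\sC_V \simeq \ind_{\kappa_0}(\sC^0_V)$ is an ordinary accessible $\infty$--category, and similarly for each $\sC'_V$. Applying \cite[Prop. 5.4.7.7]{lurieHTT} fibrewise, we obtain for each $V \in \baseCat$ a regular cardinal $\kappa_V$ such that $G_V$ preserves $\kappa_V$--filtered colimits.

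To produce a single witnessing cardinal, I would then use the smallness of $\baseCat$ (standard for orbital categories) to pick a regular cardinal $\kappa$ exceeding $\kappa_0$ and all the $\kappa_V$, so that every $G_V$ preserves $\kappa$--filtered colimits. Finally, by \cref{strongPreservationObservation}(3), strong preservation of fibrewise $\baseCat$--colimits is equivalent to preservation on each fibre, and hence $G$ strongly preserves fibrewise $\kappa$--filtered colimits, which is precisely the definition of $\baseCat$--accessibility of $G$.

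The only genuine subtlety is the uniformity of the fibrewise cardinals $\kappa_V$, which is handled by the smallness of $\baseCat$; in the motivating examples such as $\baseCat = \orbit_G$ for a finite group $G$, this is immediate. Everything else is a faithful transport of the unparametrised argument through the dictionary \cref{criteriaForTAdjunctions}, \cref{nota:parametrisedInd}, and \cref{strongPreservationObservation}(3).
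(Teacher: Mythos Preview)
Your argument is correct and takes a genuinely different route from the paper. You reduce to Lurie's unparametrised \cite[Prop.~5.4.7.7]{lurieHTT} as a black box applied to each fibre, and then uniformise the resulting cardinals $\kappa_V$ by taking a regular cardinal above their supremum; this last step indeed requires $\baseCat$ to be small, which is a standing implicit assumption throughout the paper (needed already for $\underline{\spc}_{\baseCat}$ and $\underline{\presheaf}_{\baseCat}$ to be sensible), though note that orbitality alone does not literally force smallness. The paper instead mimics Lurie's argument internally in the parametrised world: writing $\underline{\sC}' \simeq \underline{\ind}_{\kappa}\underline{\D}$, it chooses a single $\tau \gg \kappa$ so that the composite $F \circ j : \underline{\D} \to \underline{\sC}$ lands in $\tau$--$\baseCat$--compacts, and then shows via the parametrised Yoneda lemma and the adjunction identity $\myuline{\map}(j(D), G(-)) \simeq \myuline{\map}(Fj(D), -)$ that $G$ strongly preserves fibrewise $\tau$--filtered colimits. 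Your approach is shorter and more modular, invoking the unparametrised statement directly; the paper's approach is more self-contained and makes the uniform choice of cardinal explicit from the outset rather than via a supremum, and also illustrates how the parametrised mapping-space machinery from \cref{mappingAnimaYoneda} and \cref{paramAdjunctionMappingAnima} interacts with compactness. Both rely on the same underlying smallness to produce a single witnessing cardinal.
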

\begin{proof}
The case of left $\baseCat$--adjoints is clear since these strongly preserve all $\baseCat$--colimits, so suppose $G\dashv F$. Choose a regular cardinal $\kappa$ so that $\underline{\sC}'$ is $\kappa$-accessible, ie. $\underline{\sC}' = \underline{\ind}_{\kappa}\underline{\D}$ for some $\underline{\D}$ small. Consider the composite
$\underline{\D} \xrightarrow{j} \underline{\ind}_{\kappa}\underline{\D} \xrightarrow{F} \underline{\sC}$. Since $\underline{\D}$ is small there is a regular cardinal $\tau \gg \kappa$ so that both $\underline{\sC}$ is $\tau$-accessible and the essential image of $F\circ j$ consists of $\tau$-$\baseCat$--compact objects of $\underline{\sC}$. We will show that $G$ strongly preserves fibrewise $\tau$-filtered colimits.

Since $\underline{\ind}_{\kappa}\underline{\D} \subseteq \underline{\presheaf}_{\baseCat}(\underline{\D})$ is stable under small $\tau$-filtered colimits by \cref{UnivPropInd} it will suffice to prove that
\[G' : \underline{\sC} \xrightarrow{G}\underline{\ind}_{\kappa}\underline{\D} \rightarrow \underline{\presheaf}_{\baseCat}(\underline{\D})\] preserves fibrewise $\tau$-filtered colimits. Since colimits in presheaf categories are computed pointwise by \cref{TColimitsFunctorCategories} it suffices to show this when evaluated at each $D\in \D_V$ for all $V \in \baseCat$. Without loss of generality we just work with $D \in \D$, ie. a $\baseCat$--object $D \in \func_{\baseCat}(\terminalTCat, \underline{\D})$. In other words, by the $\baseCat$--Yoneda lemma we just need to show that
\[G'_D : \underline{\sC} \xrightarrow{G}\underline{\ind}_{\kappa}\underline{\D} \hookrightarrow \underline{\presheaf}_{\baseCat}(\underline{\D}) \xrightarrow{\myuline{\map}_{\underline{\presheaf}_{\baseCat}(\underline{\D})}(j(D), -)} \underline{{\spc}}_{\baseCat}\] preserves fibrewise $\tau$-filtered colimits. But $G$ is a right adjoint and so by \cref{paramAdjunctionMappingAnima}
\[\myuline{\map}_{\underline{\presheaf}_{\baseCat}(\underline{\D})}(j(D), G(-)) \simeq \myuline{\map}_{\underline{\ind}_{\kappa}\underline{\D}}(j(D), G(-)) \simeq \myuline{\map}_{\underline{\sC}}(Fj(D), -)\] By assumption on $\tau$, $Fj$ lands in $\tau$-compact objects, completing the proof.
\end{proof}

\subsection{Parametrised idempotent-completeness}
Recall that every retraction $r : X \rightleftarrows M : i$ gives rise to an idempotent self-map $i\circ r$ of $X$ since $(i\circ r)\circ (i\circ r) \simeq i\circ (r\circ i)\circ r \simeq i\circ r$. On the other hand, in general, not every idempotent self-map of an object in a category arises in this way, and a category is defined to be idempotent-complete if every idempotent self-map of an object arises from a retraction (cf. \cite[$\S4.4.5$]{lurieHTT}). We now introduce the parametrised version of this.
\begin{defn}
A $\baseCat$--category is said to be $\baseCat$\textit{-idempotent-complete} if it is so fibrewise. A $\baseCat$--functor $f : \underline{\sC} \rightarrow \underline{\D}$ is said to be a $\baseCat$\textit{-idempotent-completion} if it is fibrewise  an idempotent-completion (cf. \cite[Def. 5.1.4.1]{lurieHTT}).
\end{defn}

\begin{obs}[Consequences of fibrewise definitions]\label{consequencesOfFibrewiseDefinitions}
Here are some facts we can immediately glean from our fibrewise definitions.
\begin{enumerate}
    \item We know that for $\underline{\sC}$ small, $\ind_{\kappa}(\ind_{\kappa}(\sC) ^{\kappa})\simeq \ind_{\kappa}(\sC) $, and so since $\baseCat$--compactness and $\baseCat$--Ind objects are fibrewise notions, we also get that for any small $\baseCat$--category $\underline{\sC}$ we have $\underline{\ind}_{\kappa}\big(\underline{\ind}_{\kappa}(\underline{\sC}) \tkappa\big) \simeq \underline{\ind}_{\kappa}\underline{\sC} $. Here we have used crucially that $\underline{\ind}_{\kappa}(\underline{\sC}) \tkappa$ is really just fibrewise compact, that is, that the cocartesian lifts of the cocartesian fibration $\underline{\ind}_{\kappa}\underline{\sC}  \rightarrow \baseCat\op$ preserve $\kappa$-compact objects. This is because $\ind_{\kappa}(-)^{\kappa}$ computes the idempotent-completion by \cite[Lem. 5.4.2.4]{lurieHTT}, which is a functor.
    \item By the same token, $\underline{\sC} \rightarrow (\underline{\ind}_{\kappa}\underline{\sC}) \tkappa$ exhibits the $\baseCat$--idempotent-completion of $\underline{\sC}$ for any small $\baseCat$--category $\underline{\sC}$.
\end{enumerate}

\end{obs}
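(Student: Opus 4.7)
The plan is to deduce both claims from their unparametrised counterparts by pushing everything through $\func(\baseCat\op, -)$. The key preliminary point is that the assignment $\sC \mapsto \ind_\kappa(\sC)^\kappa$ underlies an endofunctor of $\cat$ (implicit in \cite[Lem. 5.4.2.4]{lurieHTT}), equipped with a natural unit $\sC \rightarrow \ind_\kappa(\sC)^\kappa$ exhibiting a fibrewise idempotent-completion. This naturality is precisely what lets us carry the unparametrised statements over to the parametrised setting.

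For claim (1), I would begin by identifying $\underline{\ind}_\kappa(\underline{\sC})\tkappa$ with the image of $\underline{\sC}$ under $\func(\baseCat\op, \ind_\kappa(-)^\kappa)$. Concretely, for each $f\colon W \rightarrow V$ in $\baseCat$, the cocartesian lift $f^*\colon \ind_\kappa(\sC_V)\rightarrow \ind_\kappa(\sC_W)$ is $\ind_\kappa$ applied to $f^*\colon \sC_V \rightarrow \sC_W$, and hence preserves $\kappa$-compact objects by the functoriality of $\ind_\kappa(-)^\kappa$. Thus the fibrewise $\kappa$-compacts really do assemble into a $\baseCat$-subcategory, and the natural unparametrised equivalence $\ind_\kappa(\ind_\kappa(\sC)^\kappa) \xrightarrow{\simeq} \ind_\kappa(\sC)$ fed into $\func(\baseCat\op, -)$ produces the desired $\baseCat$-equivalence $\underline{\ind}_\kappa\bigl(\underline{\ind}_\kappa(\underline{\sC})\tkappa\bigr)\simeq \underline{\ind}_\kappa(\underline{\sC})$. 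For claim (2), $\baseCat$-idempotent-completion has been defined fibrewise, so applying $\func(\baseCat\op, -)$ to the natural unit $\sC \rightarrow \ind_\kappa(\sC)^\kappa$ produces a $\baseCat$-functor $\underline{\sC}\rightarrow (\underline{\ind}_\kappa\underline{\sC})\tkappa$ whose restriction to each fibre is the unparametrised idempotent-completion, and nothing else needs to be checked.

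The only genuine subtlety is the functoriality argument underpinning claim (1) — that fibrewise $\kappa$-compactness is preserved by cocartesian lifts — and this is a direct consequence of $\ind_\kappa(-)^\kappa$ being a functor on $\cat$, rather than a merely objectwise construction. Everything else is routine propagation of unparametrised results through $\func(\baseCat\op, -)$.
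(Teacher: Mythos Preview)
Your proposal is correct and follows essentially the same approach as the paper's own argument: both deduce the parametrised statements from the unparametrised ones via $\func(\baseCat\op,-)$, with the only nontrivial point being that the cocartesian lifts preserve $\kappa$-compacts, justified by the functoriality of $\ind_\kappa(-)^\kappa$ as the idempotent-completion. The paper's treatment is embedded in the statement of the observation itself rather than set out as a separate proof, but the content is the same.
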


The following result will be crucial in the proof of  \cref{simpsonTheorem}.

\begin{prop}[$\baseCat$--Yoneda of idempotent-complete, ``{\cite[Prop. 5.3.4.18]{lurieHTT}}'']\label{yonedaOfIdempotentComplete}
Let $\underline{\sC}$ be a small $\baseCat$--{idempotent-complete} $\baseCat$--category which is $\kappa$--$\baseCat$--cocomplete. Then the $\baseCat$--Yoneda embedding $j : \underline{\sC} \rightarrow \underline{\presheaf}_{\baseCat}(\underline{\sC}) \tkappa$ has a $\baseCat$--left adjoint.
\end{prop}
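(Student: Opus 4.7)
The strategy is to apply the pointwise construction of adjunctions \cref{pointwiseConstructionOfAdjunction}: for each object $P$ of $\underline{\presheaf}_{\baseCat}(\underline{\sC})\tkappa$, we construct a left adjoint object $LP \in \underline{\sC}$ together with a unit $\eta \colon P \to j(LP)$. By \cref{TCompactsInTPresheaves}, we may write $P$ as a retract of $Q \coloneqq \underline{\colim}_{\underline{K}}(j \circ r)$ for some $\kappa$--small parametrised diagram $r \colon \underline{K} \to \underline{\sC}$; denote the retraction data by $i \colon P \rightleftarrows Q \colon p$ with $pi \simeq \id_P$, giving rise to the idempotent $e_P \coloneqq ip \colon Q \to Q$ on $Q$.

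Now form $LQ \coloneqq \underline{\colim}_{\underline{K}} r \in \underline{\sC}$, which exists by $\kappa$--$\baseCat$--cocompleteness. The cocone $r \to LQ$ induces a canonical comparison $\alpha \colon Q \to j(LQ)$ in $\underline{\presheaf}_{\baseCat}(\underline{\sC})$. For any $z \in \underline{\sC}$, the computation
\[
\myuline{\map}(Q, j(z)) \simeq \underline{\lim}_{\underline{K}} \myuline{\map}(jr, jz) \simeq \underline{\lim}_{\underline{K}} \myuline{\map}_{\underline{\sC}}(r, z) \simeq \myuline{\map}_{\underline{\sC}}(LQ, z) \simeq \myuline{\map}(j(LQ), j(z))
\]
--- using parametrised Yoneda \cref{TYonedaLemma} and $\baseCat$--full faithfulness of $j$ --- identifies $\alpha^*$ as an equivalence on representables of $\underline{\sC}$. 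Transporting $e_P^*$ across this chain produces a natural idempotent on $\myuline{\map}_{\underline{\sC}}(LQ, -)$, which by $\baseCat$--Yoneda comes from an idempotent $e \colon LQ \to LQ$ in $\underline{\sC}$. Using $\baseCat$--idempotent-completeness of $\underline{\sC}$, this splits as $LQ \xrightarrow{\pi} LP \xrightarrow{\iota} LQ$ with $\pi\iota \simeq \id_{LP}$ and $\iota\pi \simeq e$; this defines $LP \in \underline{\sC}$, and we set $\eta$ to be the composite $P \xrightarrow{i} Q \xrightarrow{\alpha} j(LQ) \xrightarrow{j\pi} j(LP)$.

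It remains to check that $(LP, \eta)$ is a left adjoint object, which since $j$ is $\baseCat$--fully faithful amounts to showing $\eta^* \colon \myuline{\map}(j(LP), j(z)) \to \myuline{\map}(P, j(z))$ is a parametrised equivalence for all $z \in \underline{\sC}$. Tracing the identifications above, $\myuline{\map}(j(LP), j(z)) \simeq \myuline{\map}_{\underline{\sC}}(LP, z)$ appears as the splitting image of $e^*$ on $\myuline{\map}_{\underline{\sC}}(LQ, z)$, while $\myuline{\map}(P, j(z))$ is the splitting image of $e_P^*$ on $\myuline{\map}(Q, j(z))$; these two splittings correspond under the $\alpha^*$-equivalence, and $\eta^*$ realises precisely this correspondence. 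The chief technical subtlety of the argument is this second move: transferring retract data between presheaves and $\underline{\sC}$ requires both $\baseCat$--Yoneda (to recognise the natural idempotent on $\myuline{\map}_{\underline{\sC}}(LQ,-)$ as coming from a genuine morphism in $\underline{\sC}$) and $\baseCat$--idempotent-completeness of $\underline{\sC}$ (to split it).
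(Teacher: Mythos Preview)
Your proof is correct and follows essentially the same approach as the paper: both invoke \cref{pointwiseConstructionOfAdjunction}, use \cref{TCompactsInTPresheaves} to write a $\kappa$--compact presheaf as a retract of a $\kappa$--small colimit of representables, use $\kappa$--$\baseCat$--cocompleteness to handle the colimit, and use $\baseCat$--idempotent-completeness to handle the retract. The only organisational difference is that the paper packages the argument by defining the full $\baseCat$--subcategory $\underline{\D} \subseteq \underline{\presheaf}_{\baseCat}(\underline{\sC})$ of presheaves admitting a left adjoint object and showing it is closed under $\kappa$--small $\baseCat$--colimits and retracts, whereas you carry out the construction of $LP$ explicitly for each $P$; the paper's phrasing has the minor advantage of not needing to track the coherent idempotent data by hand, since closure under retracts in an idempotent-complete full subcategory is immediate.
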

\begin{proof}
By \cref{pointwiseConstructionOfAdjunction} we construct the adjunction objectwise. Let $\underline{\D} \subseteq \underline{\presheaf}_{\baseCat}(\underline{\sC})$ be the full subcategory generated by all presheaves $M$ where there exists $\ell M \in \underline{\sC}$ satisfying 
\[\myuline{\map}_{\underline{\presheaf}_{\baseCat}(\underline{\sC})}(M, j(-)) \simeq \myuline{\map}_{\underline{\sC}}(\ell M, -)\]
By definition, the desired left adjoint exists on this full subcategory, and hence it would suffice now to show that $\underline{\presheaf}_{\baseCat}(\underline{\sC})\tkappa\subseteq \underline{\D}$.

We first claim that $\underline{\D}$ is closed under retracts and inherits $\kappa$--$\baseCat$--cocompleteness from $\underline{\presheaf}_{\baseCat}(\underline{\sC})$. If $\myuline{\map}_{\underline{\presheaf}_{\baseCat}(\underline{\sC})}(N, j(-))$ is a retract of $\myuline{\map}_{\underline{\presheaf}_{\baseCat}(\underline{\sC})}(M, j(-))$ inside $\underline{\presheaf}_{\baseCat}(\underline{\sC})$. But then $\myuline{\map}_{\underline{\presheaf}_{\baseCat}(\underline{\sC})}(M, j(-))$ is in the Yoneda image from $\underline{\sC}$, which is idempotent-complete, and hence its retract is also in the Yoneda image. 

To see that $\underline{\D}\subseteq\underline{\presheaf}_{\baseCat}(\underline{\sC})$ inherits $\kappa$--$\baseCat$--cocompleteness, consider
\begin{equation*}
        \begin{split}
            \myuline{\map}_{\underline{\presheaf}_{\baseCat}(\underline{\sC})}(\underline{\colim}_{\underline{K}}M_k, j(-)) &\simeq \underline{\lim}_{\underline{K}\vop}\myuline{\map}_{\underline{\presheaf}_{\baseCat}(\underline{\sC})}(M_k, j(-))\\
            &\simeq \underline{\lim}_{\underline{K}\vop}\myuline{\map}_{\underline{\sC}}(\ell M_k, -)\\
            &\simeq \myuline{\map}_{\underline{\sC}}(\underline{\colim}_{\underline{K}}\ell M_k, -)
        \end{split}
\end{equation*}
where the last is since $\underline{\sC}$ is $\kappa$--$\baseCat$--cocomplete by hypothesis. 

Now \cref{TCompactsInTPresheaves} says that everything in $\underline{\presheaf}_{\baseCat}(\underline{\sC})$ is a retract of $\kappa$-small $\baseCat$--colimits of the Yoneda image $\underline{\sC}\subseteq\underline{\presheaf}_{\baseCat}(\underline{\sC})$. Hence, since $\underline{\sC}\subseteq \underline{\D}$ clearly, the paragraphs above yield that 
$\underline{\presheaf}_{\baseCat}(\underline{\sC})\tkappa \subseteq \underline{\D}$ as required.
\end{proof}

\section{Parametrised presentability}
\label{sec5:presentability}

We are now ready to formulate and prove two of the main results in this paper, namely the characterisations of $\baseCat$--presentables in \cref{simpsonTheorem} and the $\baseCat$--adjoint functor theorem,  \cref{parametrisedAdjointFunctorTheorem}. As we shall see, given all the technology that we have, the proofs for these parametrised versions will present us with no especial difficulties either because we can mimic the proofs of \cite{lurieHTT} almost word-for-word, or because we can deduce them from the unparametrised versions (as in the cases of the adjoint functor theorem or the presentable Dwyer-Kan localisation \cref{parametrisedPresentableDwyerKanLocalisation}). In subsections \cref{subsec5.3:DwyerKan} and \cref{subsec5.4:localisationCocompletions} we will also develop the important construction of \textit{localisation--cocompletions}. We will then prove the parametrised analogue of the correspondence between presentable categories and small idempotent-complete ones in \cref{TPresentableIdempotentCorrespondence} as well as record the  various expected permanence properties for parametrised presentability in \cref{subsec4.6:functorCategoriesAndPresentables} and \cref{indexedProductsPresentables}.

\subsection{Characterisations of parametrised presentability}

\begin{defn}
A  $\baseCat$--category $\underline{\sC}$ is $\baseCat$--\textit{presentable} if $\underline{\sC}$ is $\baseCat$--accessible and is $\baseCat$--cocomplete.
\end{defn}

We are now ready for the Lurie-Simpson-style characterisations of parametrised presentability. Note that characterisation (7) is a purely parametrised phenomenon and has no analogue in the unparametrised world. The proofs for the equivalences between the first six characterisations is exactly the arguments in \cite{lurieHTT} and so the expert reader might want to jump ahead to the parts that concern point (7).

\begin{thm}[Characterisations for parametrised presentability, ``{\cite[Thm. 5.5.1.1]{lurieHTT}}'']\label{simpsonTheorem}
Let $\underline{\sC}$ be a $\baseCat$--category. Then the following are equivalent:
\begin{enumerate}
    \item[(1)] $\underline{\sC}$ is $\baseCat$--presentable.
    \item[(2)] $\underline{\sC}$ is $\baseCat$--accessible, and for every regular cardinal $\kappa$, $\underline{\sC}\tkappa$ is $\kappa$--$\baseCat$--cocomplete.
    \item[(3)] There exists a regular cardinal $\kappa$ such that $\underline{\sC}$ is $\kappa$--$\baseCat$--accessible and $\underline{\sC}\tkappa$ is $\kappa$--$\baseCat$--cocomplete
    \item[(4)] There exists a regular cardinal $\kappa$, a small $\baseCat$--idempotent-complete and $\kappa$--$\baseCat$--cocomplete category $\underline{\D}$, and an equivalence $\underline{\ind}_{\kappa}\underline{\D} \rightarrow \underline{\sC}$. In fact, this $\underline{\D}$ can be chosen to be $\underline{\sC}\tkappa$.
    \item[(5)] There exists a small $\baseCat$--idempotent-complete category $\underline{\D}$ such that $\underline{\sC}$ is a $\kappa$--$\baseCat$--accessible Bousfield localisation of $\underline{\presheaf}_{\baseCat}(\underline{\D})$. By definition, this means that the image is $\kappa$--$\baseCat$--accessible, and so by \cref{accessibilityOfAdjoints} the $\baseCat$--right adjoint is also a $\kappa$--$\baseCat$--accessible functor and hence the Bousfield localisation preserves $\kappa$--$\baseCat$--compacts. 
    \item[(6)] $\underline{\sC}$ is locally small and is $\baseCat$--cocomplete, and there is a regular cardinal $\kappa$ and a small set $\sG$ of  T-$\kappa$-compact objects of $\underline{\sC}$ such that every $\baseCat$--object of $\underline{\sC}$ is a small $\baseCat$--colimit of objects in $\sG$.
    \item[(7)] $\underline{\sC}$ satisfies the left Beck-Chevalley condition (\cref{beckChevalleyMeaning}) and there is a regular cardinal $\kappa$ such that the straightening $C : \baseCat\op \longrightarrow \widehat{\cat} $ factors through  $C : \baseCat\op \longrightarrow \presentable_{L,\kappa}$.
\end{enumerate}
\end{thm}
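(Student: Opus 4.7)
The plan is to prove the cycle of equivalences (1) through (6) by adapting Lurie's arguments from \cite[Thm. 5.5.1.1]{lurieHTT} to the parametrised setting using the machinery of \cref{sec4:Compactness}, and then to handle the genuinely parametrised equivalence (1) $\Longleftrightarrow$ (7) separately.

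For the cycle (1) $\Rightarrow$ (2) $\Rightarrow$ (3) $\Rightarrow$ (4) $\Rightarrow$ (5) $\Rightarrow$ (1), together with (1) $\Rightarrow$ (6) $\Rightarrow$ (3): (1) $\Rightarrow$ (2) follows from \cref{compactClosure}, and (2) $\Rightarrow$ (3) is immediate. For (3) $\Rightarrow$ (4) one takes $\underline{\D} \coloneqq \underline{\sC}\tkappa$; by \cref{consequencesOfFibrewiseDefinitions} this is $\baseCat$-idempotent-complete, and the equivalence $\underline{\ind}_\kappa\underline{\sC}\tkappa \simeq \underline{\sC}$ is obtained from \cref{TIndFullyFaithfulness} upon verifying that $\underline{\sC}\tkappa$ generates $\underline{\sC}$ under fibrewise $\kappa$-filtered colimits, which uses $\underline{\sC}\simeq\underline{\ind}_\kappa\underline{\sC}^0$ together with compact closure. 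For (4) $\Rightarrow$ (5) one embeds $\underline{\D} \hookrightarrow \underline{\presheaf}_{\baseCat}(\underline{\D})$ via the $\baseCat$-Yoneda embedding and extends to $\underline{\ind}_\kappa\underline{\D} \hookrightarrow \underline{\presheaf}_{\baseCat}(\underline{\D})$ using \cref{UnivPropInd}; the $\baseCat$-left adjoint is then furnished by \cref{yonedaOfIdempotentComplete}. (5) $\Rightarrow$ (1) uses that $\baseCat$-presheaves are themselves $\baseCat$-presentable (via \cref{TCompactsInTPresheaves} and \cref{whenCocompletionsAreIndCompletions}), together with \cref{colimitsInBousfieldLocals} for $\baseCat$-cocompleteness, with $\baseCat$-accessibility coming from the hypothesis. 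The equivalence (1) $\Leftrightarrow$ (6) proceeds similarly, with $\sG$ taken to be a small set of representatives of $\underline{\sC}\tkappa$ and (6) $\Rightarrow$ (3) shown by identifying the $\baseCat$-subcategory generated by $\sG$ with $\underline{\sC}\tkappa$ and applying \cref{TIndFullyFaithfulness}.

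For (1) $\Rightarrow$ (7), fix $\kappa$ witnessing $\kappa$-$\baseCat$-accessibility. Each fibre $\sC_V \simeq \ind_\kappa(\sC_V^0)$ is $\kappa$-accessible and inherits cocompleteness from $\baseCat$-cocompleteness of $\underline{\sC}$ via \cref{omnibusTColimits}(1), hence is presentable. For $f \colon W \to V$ in $\baseCat$, the cocartesian pushforward $f^*$ admits a left adjoint $f_!$ by \cref{omnibusTColimits}(2); moreover $f^*$ preserves all fibrewise colimits, so by the unparametrised adjoint functor theorem applied fibrewise it also admits a right adjoint $f_*$, making $f^*$ itself a left adjoint, with $\kappa$-accessibility automatic as it preserves all colimits. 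The left Beck-Chevalley condition is encoded in $\baseCat$-cocompleteness by \cref{omnibusTColimits}(2).

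The main technical obstacle is the converse (7) $\Rightarrow$ (1), which I propose to establish by verifying characterisation (3). Fibrewise cocompleteness combined with colimit-preservation of each $f^*$ and the left Beck-Chevalley hypothesis yields $\baseCat$-cocompleteness via \cref{omnibusTColimits}. For $\baseCat$-accessibility, the crucial observation is that since each $f^*$ is a $\kappa$-accessible left adjoint between presentables, it preserves $\kappa$-compact objects; therefore the subcategories $(\sC_V)^\kappa$ assemble into an essentially small $\baseCat$-subcategory $\underline{\sC}\tkappa \subseteq \underline{\sC}$. The canonical $\baseCat$-functor $F \colon \underline{\ind}_\kappa(\underline{\sC}\tkappa) \to \underline{\sC}$ provided by \cref{UnivPropInd} is then fibrewise an equivalence by classical presentability of each $\sC_V$, and hence a $\baseCat$-equivalence. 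The delicate point is precisely the assembly of the fibrewise $\kappa$-compacts into a cocartesian subfibration of $\underline{\sC} \to \baseCat\op$, which is exactly what $\kappa$-accessibility of the $f^*$ is designed to guarantee; without this refinement the subcategory $\underline{\sC}\tkappa$ would not even be well-defined as a $\baseCat$-subcategory, which explains why this condition is bundled into (7).
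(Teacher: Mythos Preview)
Your proposal is essentially correct and tracks the paper's approach closely for the cycle (1)--(6); the paper runs the implications as $(1)\Rightarrow(2)\Rightarrow(3)\Rightarrow(4)\Rightarrow(5)\Rightarrow(6)\Rightarrow(1)$ rather than your $(5)\Rightarrow(1)$ with $(6)$ handled separately, but this is a cosmetic difference and your direct $(5)\Rightarrow(1)$ via \cref{colimitsInBousfieldLocals} is if anything slightly shorter. Your $(4)\Rightarrow(5)$ sketch is correct in spirit but compressed: \cref{yonedaOfIdempotentComplete} only gives the left adjoint $\ell$ to $j\colon\underline{\D}\hookrightarrow\underline{\presheaf}_{\baseCat}(\underline{\D})\tkappa$ at the level of compacts; the paper then applies $\underline{\ind}_\kappa$ to the pair $(\ell,j)$ via \cref{indAdjunctions} and invokes \cref{whenCocompletionsAreIndCompletions} to identify $\underline{\ind}_\kappa(\underline{\presheaf}_{\baseCat}(\underline{\D})\tkappa)\simeq\underline{\presheaf}_{\baseCat}(\underline{\D})$, which is the step you elide.

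The genuine point of comparison is how (7) is linked to the rest. You argue $(1)\Rightarrow(7)$ directly from \cref{omnibusTColimits} and the unparametrised adjoint functor theorem, whereas the paper proves $(5)\Rightarrow(7)$ by explicitly manufacturing $f_!$ and $f_*$ out of the Bousfield localisation data. Your route is more economical, but it contains a small gap: you assert that ``$\kappa$-accessibility [of $f^*$] is automatic as it preserves all colimits'', which establishes only that $f^*$ preserves $\kappa$-filtered colimits. Landing in $\presentable_{L,\kappa}$ requires the stronger (and dual) condition that $f^*$ preserves $\kappa$-\emph{compact objects}. This is easy to repair --- since $\underline{\sC}\simeq\underline{\ind}_\kappa(\underline{\sC}^0)$ is a fibrewise $\ind$-completion, each $f^*$ is of the form $\ind_\kappa(g)$ for $g\colon\sC^0_V\to\sC^0_W$ and therefore sends $\kappa$-compacts to $\kappa$-compacts --- but as written your argument does not supply it. The paper's $(5)\Rightarrow(7)$ argument deals with exactly this point by showing $f_*G_W\simeq G_Vf_*$ and deducing $\kappa$-accessibility of $f_*$ from that of $G$. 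Your $(7)\Rightarrow(1)$ is essentially identical to the paper's, though note that what you actually verify is (1) (accessibility plus cocompleteness), not (3) as you announce.
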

\begin{proof}
That (1) implies (2) is immediate from \cref{compactClosure}. That (2) implies (3)  is because by definition of $\baseCat$--accessibility, there is a $\kappa$ such that $\underline{\sC}$ is $\kappa$--$\baseCat$--accessible, and since the second part of (2) says that $\underline{\sC}^{\myuline{\tau}}$ is $\tau$-$\baseCat$--cocomplete for all $\tau$, this is true in particular for $\tau = \kappa$ so chosen. To see (3) implies (4), note that accessibility is a fibrewise condition and so we can apply the characterisation of accessibility in \cite[Prop. 5.4.2.2 (2)]{lurieHTT}. To see (4) implies (5), let $\underline{\D}$ be given by (4). We want to show that $\underline{\sC}$ is a  $\baseCat$--accessible Bousfield localisation of $\underline{\presheaf}_{\baseCat}(\underline{\D})$. Consider the $\baseCat$--Yoneda embedding (it lands in $\kappa$--$\baseCat$--compacts by \cref{observationYonedaLandCompacts})
\[j : \underline{\D} \hookrightarrow\underline{\presheaf}_{\baseCat}(\underline{\D})\tkappa\] This has a $\baseCat$--left adjoint $\ell$ by \cref{yonedaOfIdempotentComplete}. Define $L \coloneqq \underline{\ind}_{\kappa}(\ell)$ and $J \coloneqq \underline{\ind}_{\kappa}(j)$, so that, since $\underline{\ind}_{\kappa}$ is a fibrewise construction, we have a $\baseCat$--adjunction by \cref{indAdjunctions}
\[L : \underline{\ind}_{\kappa}(\underline{\presheaf}_{\baseCat}(\underline{\D})\tkappa) \rightleftarrows \underline{\ind}_{\kappa}\underline{\D} : J\] where $J$ is $\baseCat$--fully faithful by \cref{TIndFullyFaithfulness}. But then by \cref{whenCocompletionsAreIndCompletions}, we get $\underline{\ind}_{\kappa}(\underline{\presheaf}_{\baseCat}(\underline{\D})\tkappa) \simeq \underline{\presheaf}_{\baseCat}(\underline{\D})$ and this completes this implication.

To see (5) implies (6), first of all $\underline{\presheaf}_{\baseCat}(\underline{\D})$ is locally small and so $\underline{\sC}\subseteq \underline{\presheaf}_{\baseCat}(\underline{\D})$ is too. Moreover, Bousfield local $\baseCat$--subcategories always admit $\baseCat$--colimits admitted by the ambient category and so $\underline{\sC}$ is $\baseCat$--cocomplete. For the last assertion, consider the composite 
\[\varphi : \underline{\D} \hookrightarrow \underline{\presheaf}_{\baseCat}(\underline{\D}) \xrightarrow{L} \underline{\sC}\] Since $\underline{\presheaf}_{\baseCat}(\underline{\D})$ is generated by $\underline{\D}$ under small $\baseCat$--colimits by \cref{TYonedaDensity} and since $L$ preserves $\baseCat$--colimits, we see that $\underline{\sC}$ is generated under $\baseCat$--colimits by $\im
\varphi.$ To see that $\im\varphi\subseteq \underline{\sC}\tkappa$, note that since by hypothesis $\underline{\sC}$ was $\kappa$--$\baseCat$--accessible, we know from \cref{accessibilityOfAdjoints} that the $\baseCat$--right adjoint of $L$ is automatically $\baseCat$--accessible, and so $L$ preserves $\kappa$--$\baseCat$--compacts, and we are done.

To see (6) implies (1), by definition, we just need to check that $\underline{\sC}$ is $\kappa$--$\baseCat$--accessible. Assumption (6) says that everything is a $\baseCat$--colimit of $\baseCat$--compacts, but we need to massage this to say that everything is a fibrewise $\kappa$-filtered $\baseCat$--colimit of an essentially small subcategory - note this is where we need the assumption about $\sG$ and not just use all of $\underline{\sC}\tkappa$, the problem being that the latter is not necessarily small. Let $\underline{\sC}'\subseteq \underline{\sC}\tkappa$ be generated by $\sG$ and $\underline{\sC}'\subseteq \underline{\sC}''\subseteq \underline{\sC}\tkappa$ be the $\kappa$--$\baseCat$--colimit closure of $\underline{\sC}'$: here we are using that $\underline{\sC}'' \subseteq \underline{\sC}\tkappa$ since $\kappa$--$\baseCat$--compacts are closed under $\kappa$-small $\baseCat$--colimits \cref{compactClosure}. Then since small $\baseCat$--colimits decompose as $\kappa$-small $\baseCat$--colimits and fibrewise $\kappa$-filtered colimits, we get that $\underline{\sC}$ is generated by $\underline{\sC}''\subseteq \underline{\sC}$ under $\kappa$-filtered colimits, as required.

Now to see (5) implies (7), suppose we have a  $\baseCat$--Bousfield localisation $F : \underline{\presheaf}_{\baseCat}(\underline{\sC})  \rightleftarrows \underline{\D} : G$. For $f : W \rightarrow V$ in $\baseCat$ we have 
\begin{center}
    \begin{tikzcd}
    \underline{\presheaf}_{\baseCat}(\underline{\sC}) _V = \func(\totalCategory(\underline{\sC}\vop\times\underline{V}), \spc) \dar["f^*"] \rar[shift left = 2, "F_V"] & \D_V\dar["f^*"]\lar[hook, "G_V"]\\
    \underline{\presheaf}_{\baseCat}(\underline{\sC}) _W = \func(\totalCategory(\underline{\sC}\vop\times\underline{W}), \spc) \ar[u, "f_!", shift left = 6]\rar[shift left = 2, "F_W"] \uar["f_*"', shift right = 6]& \D_W\lar[hook, "G_W"]\ar[u, "f_!", shift left = 5,dashed]\uar["f_*"', shift right = 5,dashed]
    \end{tikzcd}
\end{center}
where all the solid squares commute. We need to show a few things, namely:
\begin{itemize}
    \item That the dashed adjoints exist.
    \item That $f^* : \D_V\rightarrow \D_W$ preserves $\kappa$-compacts.
    \item That $f_! \dashv f^*$ on $\underline{\D}$ satisfies  the left Beck-Chevalley conditions. 
\end{itemize}
To see that the dashed arrows exist, define $f_!$ to be $F_V\circ f_!\circ G_W$. This works since
\begin{equation*}
    \begin{split}
        \map_{\D_V}(F_V\circ f_!\circ G_W-, -) & \simeq \map_{\underline{\presheaf}_{\baseCat}(\underline{\sC}) _W}(G_W-, f^*\circ G_V-)\\
        & \simeq\map_{\underline{\presheaf}_{\baseCat}(\underline{\sC}) _W}(G_W-, G_W\circ f^* -)\\
        &\simeq \map_{\D_W}(-, f^*-)
    \end{split}
\end{equation*}
To see that $f_*$ exists, we need to see that $f^*$ preserves ordinary colimits. For this, we use the description of colimits in Bousfield local subcategories. So let $\varphi : K \rightarrow \D_V$ be a diagram. Then
\begin{equation*}
    \begin{split}
        f^*\colim_{K\subseteq \D_V}\varphi & \simeq f^*F_V\big(\colim_{K\subseteq \presheaf_V}G_V\circ \varphi\big)\\
        & \simeq F_Wf^*\big(\colim_{K\subseteq \presheaf_V}G_V\circ \varphi\big)\\
        & \simeq F_W\big(\colim_{K\subseteq \presheaf_W}f^*\circ G_V\circ \varphi\big)\\
        &\simeq F_W\big(\colim_{K\subseteq \presheaf_W} G_W\circ f^*\circ \varphi\big)\\
        & =: \colim_{K\subseteq \D_W}f^*\circ \varphi
    \end{split}
\end{equation*}
And hence $f^*$ preserves colimits as required, and so by presentability, we obtain a right adjoint $f_*$. This completes the first point. Now to see that $f^* : \D_V\rightarrow \D_W$ preserves $\kappa$-compacts, note that $f^* : \underline{\presheaf}_{\baseCat}(\underline{\sC}) _V\rightarrow \underline{\presheaf}_{\baseCat}(\underline{\sC}) _W$ does since $f_* : \underline{\presheaf}_{\baseCat}(\underline{\sC}) _W\rightarrow \underline{\presheaf}_{\baseCat}(\underline{\sC}) _V$ is $\kappa$-accessible by \cref{accessibilityOfAdjoints}. Hence since $f^*F_V\simeq F_Wf^*$, taking right adjoints we get $f_*G_W\simeq G_Vf_*$. By hypothesis (5), $G$ was $\kappa$-accessible and so since it is also fully faithful fibrewise, we get that $f_* : \D_W\rightarrow \D_V$ is $\kappa$-accessible, as required. For the third point, we already know from \cref{colimitsInBousfieldLocals} that $\underline{\D}$ is $\baseCat$--cocomplete, and so $f_!$ must \textit{necessarily} give the indexed coproducts which satisfy the left Beck-Chevalley condition by \cref{omnibusTColimits}.

Finally to see (7) implies (1), \cref{omnibusTColimits} says that $\underline{\sC}$ is $\baseCat$--cocomplete, and so we are left to show that it is $\kappa$--$\baseCat$--accessible. But then this is just because $\underline{\sC} \simeq \underline{\ind}_{\kappa}(\underline{\sC}\tkappa)$ by \cite[Prop. 5.3.5.12]{lurieHTT}  (since parametrised-compacts and ind-completion is just fibrewise ordinary compacts/ind-completion because the straightening lands in $\presentable_{L,\kappa}$). This completes the proof for this step and for the theorem.
\end{proof}

\subsection{The adjoint functor theorem}

We now deduce the parametrised version of the adjoint functor theorem from the unparametrised version using characterisation (7) of \cref{simpsonTheorem}. Interestingly, and perhaps instructively, the proof shows us precisely where we need the notion of strong preservation and not just preservation (cf. \cref{definitionStrongPreservation} and the discussion in \cref{strongPreservationObservation}).

\begin{thm}[Parametrised adjoint functor theorem]\label{parametrisedAdjointFunctorTheorem}
Let $F : \underline{\sC} \rightarrow \underline{\D}$ be a $\baseCat$--functor between $\baseCat$--presentable categories. Then:
\begin{enumerate}
    \item[(1)] If $F$ strongly preserves $\baseCat$--colimits, then $F$ admits a $\baseCat$--right adjoint.
    \item[(2)] If $F$ strongly preserves $\baseCat$--limits and is $\baseCat$--accessible, then $F$ admits a $\baseCat$--left adjoint.
\end{enumerate}
\end{thm}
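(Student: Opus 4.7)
The plan is to deduce this from the classical adjoint functor theorem \cite[Cor. 5.5.2.9]{lurieHTT} by using characterisation (7) of \cref{simpsonTheorem}, which tells us that every fibre $\sC_V$ and $\D_V$ is an ordinary presentable category. The assembly of fibrewise adjoints into a $\baseCat$--adjoint will be governed by the fibrewise criterion \cref{criteriaForTAdjunctions}, and the compatibility it demands will come from a mate-exchange argument applied to the strong-preservation squares of \cref{characterisationStrongPreservations}.

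For part (1): by \cref{characterisationStrongPreservations}, strong preservation of $\baseCat$--colimits gives that each $F_V \colon \sC_V \to \D_V$ preserves ordinary colimits, and that for every $f \colon W \to V$ in $\baseCat$ we have a natural equivalence $F_V \circ f_! \simeq f_! \circ F_W$. The classical adjoint functor theorem applied fibrewise produces a right adjoint $G_V \dashv F_V$ for each $V$. Passing to mates with respect to the adjunctions $f_! \dashv f^*$ and $F_V \dashv G_V$, $F_W \dashv G_W$ — equivalently, taking right adjoints of both sides of the commuting square above — we obtain the Beck--Chevalley equivalence
\[ G_W \circ f^* \simeq f^* \circ G_V, \]
which is precisely the compatibility required by \cref{criteriaForTAdjunctions} for the $G_V$'s to assemble into a $\baseCat$--right adjoint of $F$.

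Part (2) is formally dual. Strong preservation of $\baseCat$--limits provides commuting squares $F_V \circ f_* \simeq f_* \circ F_W$. Since $\baseCat$--accessibility of $F$ amounts by \cref{strongPreservationObservation}(3) to each $F_V$ being $\kappa$--accessible for a single $\kappa$, the classical adjoint functor theorem again yields fibrewise left adjoints $L_V \dashv F_V$. Taking left adjoints (mates) of the strong-preservation squares using $f^* \dashv f_*$ together with $L_V \dashv F_V$, $L_W \dashv F_W$ gives
\[ L_W \circ f^* \simeq f^* \circ L_V, \]
which by \cref{criteriaForTAdjunctions} packages these fibrewise left adjoints into a $\baseCat$--left adjoint of $F$.

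The substantive conceptual point — and really the only obstacle — is recognising why one needs \emph{strong} preservation rather than mere preservation of $\baseCat$--(co)limits: strong preservation translates via \cref{characterisationStrongPreservations} into commuting squares at the level of \emph{fibres}, which is exactly what feeds into both the mate calculus and \cref{criteriaForTAdjunctions}. This observation is what lets us avoid rerunning Lurie's proof in the parametrised setting and instead extract the parametrised adjoint functor theorem cleanly from its unparametrised counterpart.
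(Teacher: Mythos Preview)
Your proposal is correct and takes essentially the same approach as the paper: both arguments obtain fibrewise adjoints from the classical adjoint functor theorem (via characterisation (7) of \cref{simpsonTheorem}) and then invoke the mate calculus to pass from the strong-preservation squares $F_V \circ f_! \simeq f_! \circ F_W$ (resp.\ $F_V \circ f_* \simeq f_* \circ F_W$) to the compatibility $f^* \circ G_V \simeq G_W \circ f^*$ (resp.\ $f^* \circ L_V \simeq L_W \circ f^*$) required by \cref{criteriaForTAdjunctions}. Your write-up is slightly more explicit in naming \cref{characterisationStrongPreservations} and the mate step, but the argument is identical.
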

\begin{proof}
We want to apply \cref{criteriaForTAdjunctions}. To see (1), observe that the ordinary adjoint functor theorem gives us fibrewise right adjoints $F_V : \sC_V \rightleftarrows \D_V : G_V$. To see that this assembles to a $\baseCat$--functor $G$, we just need to check that the dashed square in the diagram 
\begin{center}
    \begin{tikzcd}
    \sC_V \ar[rr, shift left = 2, "F_V"] \dar[shift right = 2, "f_!"'] && \D_V \dar[shift right =2, "f_!"']\ar[ll, shift left =2 , "G_W",dashed]\\
    \sC_W \ar[rr, shift left = 2, "F_W"] \ar[u, shift right =2 , "f^*"',dashed]&& \D_W\ar[u, shift right =2 , "f^*"',dashed]\ar[ll, shift left =2 , "G_W",dashed]
    \end{tikzcd}
\end{center}
commutes. But then the left adjoints of the dashed compositions are the solid ones, which we know to be commutative by hypothesis that $F$ strongly preserves $\baseCat$--colimits (and so in particular indexed coproducts, see \cref{strongPreservationObservation}). Hence we are done for this case and part (2) is similar. 
\end{proof}

We will need the following characterisation of functors that strongly preserve $\baseCat$--colimits between $\baseCat$--presentables in order to understand the correspondence between $\baseCat$--presentable categories and small $\baseCat$--idempotent-complete ones.

\begin{prop}[``{\cite[Prop. 5.5.1.9]{lurieHTT}}'']\label{characterisationTLeftAdjointsPresentables}
Let $f : \underline{\sC} \rightarrow \underline{\D}$ be a $\baseCat$--functor between $\baseCat$--presentables and suppose $\underline{\sC}$ is $\kappa$--$\baseCat$--accessible. Then the following are equivalent:
\begin{itemize}
    \item[(a)] The functor $f$ strongly preserves $\baseCat$--colimits
    \item[(b)] The functor $f$ strongly preserves fibrewise $\kappa$-filtered colimits, and the restriction $f|_{\underline{\sC}\tkappa}$ strongly preserves $\kappa$--$\baseCat$--colimits.
\end{itemize}
\end{prop}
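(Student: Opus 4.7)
The direction $(a)\Rightarrow(b)$ is essentially a free consequence of the setup: if $f$ strongly preserves all $\baseCat$--colimits then it certainly strongly preserves the fibrewise $\kappa$--filtered ones, and for the second clause we use that $\underline{\sC}\tkappa \subseteq \underline{\sC}$ is closed under $\kappa$-small $\baseCat$--colimits by \cref{compactClosure}, so $\kappa$-small $\baseCat$--colimits in $\underline{\sC}\tkappa$ are computed in $\underline{\sC}$ and are therefore preserved by $f|_{\underline{\sC}\tkappa}$.

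The substantive content lies in $(b)\Rightarrow(a)$. My plan is to base-change to $\underline{\sC}_{\underline{V}}$ for each $V\in\baseCat$ (all hypotheses are preserved under base-change since strong preservation and $\baseCat$--accessibility are), so it suffices to show that $f$ preserves $\baseCat$--colimits. By the decomposition principle of \cref{omnibusTColimits}(3), every $\baseCat$--colimit can be decomposed into fibrewise colimits and indexed $\baseCat$--coproducts, and via \cref{parametrisedFilteredColimitDecomposition} any such diagram $p\colon \underline{K}\rightarrow\underline{\sC}$ can further be written as a fibrewise $\kappa$-filtered colimit of $\kappa$-small $\baseCat$--diagrams. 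Combining these, it suffices to show that $f$ preserves $\kappa$-small $\baseCat$--colimits (we already have fibrewise $\kappa$-filtered preservation by hypothesis).

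So let $p\colon\underline{K}\rightarrow\underline{\sC}$ be a $\kappa$-small $\baseCat$--diagram. The key step is to lift $p$ to a fibrewise $\kappa$-filtered $\baseCat$--colimit of diagrams $p_\alpha\colon\underline{K}\rightarrow\underline{\sC}\tkappa$. This follows from applying $\kappa$-$\baseCat$-accessibility of $\underline{\sC}$ inside the $\baseCat$--functor category $\underline{\func}_\baseCat(\underline{K},\underline{\sC})$: since $\underline{K}$ is $\kappa$-small and $\underline{\sC}\simeq\underline{\ind}_\kappa(\underline{\sC}\tkappa)$, any $\underline{K}$--diagram in $\underline{\sC}$ is the fibrewise $\kappa$-filtered colimit of its factorisations through $\underline{\sC}\tkappa$, where the latter form a fibrewise $\kappa$-filtered poset; this is the parametrised version of the classical fact that $\func(K,\ind_\kappa(\A)) \simeq \ind_\kappa(\func(K,\A))$ for $K$ being $\kappa$-small, which reduces fibrewise to \cite[Prop.~5.3.5.15]{lurieHTT}. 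Granting this, we then compute
\begin{equation*}
\begin{split}
f\bigl(\underline{\colim}_{\underline{K}} p\bigr) &\simeq f\bigl(\underline{\colim}_\alpha \underline{\colim}_{\underline{K}} p_\alpha\bigr) \simeq \underline{\colim}_\alpha f\bigl(\underline{\colim}_{\underline{K}} p_\alpha\bigr) \\
&\simeq \underline{\colim}_\alpha \underline{\colim}_{\underline{K}} f\circ p_\alpha \simeq \underline{\colim}_{\underline{K}} \underline{\colim}_\alpha f\circ p_\alpha \simeq \underline{\colim}_{\underline{K}} f\circ p,
\end{split}
\end{equation*}
where we have used, respectively: the diagram lift together with commuting the outer colimit past the inner one (since filtered colimits commute with all small colimits in the diagram category, this is valid for the computation in $\underline{\sC}$); fibrewise $\kappa$-filtered preservation of $f$; hypothesis (b) applied to each $p_\alpha$, which lands in $\underline{\sC}\tkappa$ (and whose $\underline{K}$--colimit is in $\underline{\sC}\tkappa$ by \cref{compactClosure}); commuting colimits in $\underline{\D}$; and finally fibrewise $\kappa$-filtered preservation of $f$ once more.

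The main obstacle I expect is the lifting step: articulating carefully that a $\kappa$-small $\baseCat$--diagram in $\underline{\sC}=\underline{\ind}_\kappa(\underline{\sC}\tkappa)$ can be presented as a fibrewise $\kappa$-filtered $\baseCat$--colimit of diagrams factoring through $\underline{\sC}\tkappa$. Fibrewise this is standard from \cite{lurieHTT}, but one has to verify that the cocartesian transition functors of $\underline{\sC}\rightarrow\baseCat\op$ (which restrict to $\underline{\sC}\tkappa\rightarrow\baseCat\op$ by definition) intertwine these presentations, so that the fibrewise lifts assemble into a genuine $\baseCat$--diagram; alternatively, one can appeal to a parametrised version of the Ind-functoriality in $\underline{K}$ using \cref{2FunctorialityOfInd} applied fibrewise and the fact that $\underline{\ind}_\kappa$ is defined by post-composition with $\ind_\kappa$.
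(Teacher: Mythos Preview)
Your $(a)\Rightarrow(b)$ argument matches the paper's. For $(b)\Rightarrow(a)$, however, you take a genuinely different route from the paper, and the gap you flag at the end is real and not easily closed with the tools available.

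The paper does \emph{not} try to decompose an arbitrary $\kappa$-small $\baseCat$--diagram into a filtered colimit of diagrams valued in $\underline{\sC}\tkappa$. Instead, it passes to the presheaf category: using the $\baseCat$--Bousfield localisation $L:\underline{\presheaf}_{\baseCat}(\underline{\sC}\tkappa)\rightleftarrows\underline{\sC}:k$ from \cref{simpsonTheorem}(4)$\Rightarrow$(5), it compares two functors out of $\underline{\presheaf}_{\baseCat}(\underline{\sC}\tkappa)$, namely the left Kan extension $F\coloneqq y_!(f\circ j)$ and $F'\coloneqq f\circ L$. The Bousfield unit gives $\beta:F\Rightarrow F'$, and one shows $\beta$ is an equivalence by checking on $\underline{\presheaf}_{\baseCat}(\underline{\sC}\tkappa)\tkappa$ using \cref{TCompactsInTPresheaves} and hypothesis (b). Since $F$ strongly preserves all $\baseCat$--colimits by construction, so does $F'=f\circ L$, hence so does $f$. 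The point is that the presheaf category supplies a universal cocompletion in which the comparison can be made globally, without ever needing to lift a particular diagram.

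Your lifting step, by contrast, asks for a parametrised analogue of the identification $\func(K,\ind_\kappa\A)\simeq\ind_\kappa\func(K,\A)$ for $\kappa$-small $K$. The obstruction is that the fibre of $\underline{\func}_{\baseCat}(\underline{K},\underline{\sC})$ over $V$ is $\func_{\underline{V}}(\underline{K}_{\underline{V}},\underline{\sC}_{\underline{V}})$, \emph{not} $\func(K_V,\sC_V)$, so the claim does not reduce fibrewise to \cite[Prop.~5.3.5.15]{lurieHTT} as you suggest; one would need to control $\baseCat_{/V}$--functors out of $\underline{K}_{\underline{V}}$, whose total category need not be $\kappa$-small even when each fibre is. Your alternative via \cref{2FunctorialityOfInd} also does not help: that lemma concerns functoriality of $\ind$ in the source category, not commutation of $\ind$ with functor categories. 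Establishing the lifting step would amount to a parametrised accessibility result for $\underline{\func}_{\baseCat}(\underline{K},-)$ that is not developed in the paper, whereas the presheaf manoeuvre bypasses it entirely.
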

\begin{proof}
That (a) implies (b) is clear since $\underline{\sC}\tkappa \subseteq \underline{\sC}$ creates $\baseCat$--colimits by  \cref{compactClosure}. Now to see (b) implies (a), let $\underline{\sC} = \underline{\ind}_{\kappa}(\underline{\sC}\tkappa)$ where $\underline{\sC}\tkappa$ is $\kappa$--$\baseCat$--cocomplete and $\baseCat$--idempotent-complete category by \cref{compactClosure}. Now by the proof of (4) implies (5) in \cref{simpsonTheorem} we have a $\baseCat$--Bousfield adjunction
\[L : \underline{\presheaf}(\underline{\sC}\tkappa) \rightleftarrows \underline{\sC} : k\] Now consider the composite
\[j^*f : \underline{\sC}\tkappa \xrightarrow{\: j\:} \underline{\sC} \xrightarrow{\: f\:} \underline{\D}\] By the universal property of $\baseCat$--presheaves we get a strongly $\baseCat$--colimit-preserving functor $F$ fitting into the diagram
\begin{center}
    \begin{tikzcd}
    \underline{\sC}\tkappa \rar["j^*f"]\dar["j"', hook]\ar[dd, bend right = 70, "y"', hook] & \underline{\D}\\
    \underline{\sC}\dar["k"', hook]\ar[ur, "f"]\\
    \underline{\presheaf}_{\baseCat}(\underline{\sC}\tkappa)\ar[uur, "F\coloneqq y_!j^*f"']
    \end{tikzcd}
\end{center}
We know then that $f \simeq k^*y_!j^*f = k^*F$. On the other hand, we can define a functor 
\[F' \coloneqq f\circ L \simeq F\circ k\circ L : \underline{\presheaf}_{\baseCat}(\underline{\sC}\tkappa)\longrightarrow \underline{\sC} \longrightarrow \underline{\D}\]
The $\baseCat$--Bousfield adjunction unit $\id_{\underline{\presheaf}_{\baseCat}}\Rightarrow k\circ L$ gives us a natural transformation
\[\beta : F \Longrightarrow F' = F\circ k\circ L\] If we can show that $\beta$ is an equivalence then we would be done, since $F$, and so $F' = f\circ L$, strongly preserves $\baseCat$--colimits. Hence since $L$ was a $\baseCat$--Bousfield localisation, $f$ also strongly preserves $\baseCat$--colimits, as required.

To see that $\beta$ is an equivalence, let $\underline{\E}\subseteq \underline{\presheaf}_{\baseCat}(\underline{\sC}\tkappa)$ be the full $\baseCat$--subcategory on which $\beta$ is an equivalence. Since both $F$ and $F'$ strongly preserve fibrewise $\kappa$-filtered colimits, we see that $\underline{\E}$ is stable under such. Hence it suffices to show that  $\underline{\presheaf}_{\baseCat}(\underline{\sC}\tkappa)\tkappa \subseteq \underline{\E}$ since the inclusion will then induce the $\baseCat$--functor $\underline{\presheaf}_{\baseCat}(\underline{\sC}\tkappa) \simeq \underline{\ind}_{\kappa}(\underline{\presheaf}_{\baseCat}(\underline{\sC}\tkappa)\tkappa) \rightarrow \underline{\E}$ which is an equivalence by \cref{TIndFullyFaithfulness} (2).

Since $L\circ k \simeq \id$ we clearly have $\underline{\sC}\tkappa\subseteq  \underline{\E}$, ie. that $\beta : F \Rightarrow F'$ is an equivalence on $\underline{\sC}\tkappa\subseteq  \underline{\presheaf}_{\baseCat}(\underline{\sC}\tkappa)$. On the other hand, by \cref{compactClosure} we know that  $\underline{\presheaf}_{\baseCat}(\underline{\sC}\tkappa)\tkappa$ is $\kappa$--$\baseCat$--cocomplete, and its objects are retracts of $\kappa$-small $\baseCat$--colimits valued in $\underline{\sC}\tkappa \subseteq \underline{\presheaf}_{\baseCat}(\underline{\sC}\tkappa)$ by \cref{TCompactsInTPresheaves}. Thus it suffices to show that $F$ and $F'$ strongly preserve $\kappa$-small $\baseCat$--colimits when restricted to $\underline{\presheaf}_{\baseCat}(\underline{\sC}\tkappa)\tkappa$. That $F$ does is clear since it in fact strongly preserves all small $\baseCat$--colimits. That $F'$ does is because it can be written as the composition
\[F'|_{\underline{\presheaf}_{\baseCat}(\underline{\sC}\tkappa)\tkappa} : \underline{\presheaf}_{\baseCat}(\underline{\sC}\tkappa)\tkappa \xrightarrow{L} \underline{\sC}\tkappa \xrightarrow{f} \underline{\D}\] where $L$ is a $\baseCat$--left adjoint and $f$ strongly preserves $\kappa$-small $\baseCat$--colimits by assumption. Here we have crucially used that $L$ lands in $\underline{\sC}\tkappa$ since this category is $\baseCat$--idempotent-complete and $\kappa$--$\baseCat$--cocomplete.
\end{proof}

\subsection{Dwyer-Kan localisations}\label{subsec5.3:DwyerKan}
\begin{terminology}\label{nota:dwyerKanLocalisations}
We recall the clarifying terminology of \cite{hinich} in distinguishing between Bousfield localisations, as defined in \cref{bousfieldLocalisationDefinition}, and \textit{Dwyer-Kan localisations}. By the latter, we will mean the following: let $\underline{\sC}$ be a $\baseCat$--category and $S$ a class of morphisms in $\underline{\sC}$ such that $f^*(S_W) \subseteq S_V$ for all $f \colon V \rightarrow W$ in $\baseCat$. Suppose a $\baseCat$--category $S^{-1}\underline{\sC}$ exists and is equipped with a map $f : \underline{\sC} \rightarrow S^{-1}\underline{\sC}$ inducing the equivalence
\[f^* : \underline{\func}_{\baseCat}(S^{-1}\underline{\sC}, \underline{\D}) \xrightarrow{\simeq} \underline{\func}_{\baseCat}^{S^{-1}}(\underline{\sC}, \underline{\D})\] for all $\baseCat$--categories $\underline{\D}$, where $\underline{\func}_{\baseCat}^{S^{-1}}(\underline{\sC}, \underline{\D}) \subseteq \underline{\func}_{\baseCat}(\underline{\sC}, \underline{\D})$ is the full subcategory of parametrised functors sending morphisms in $S$ to equivalences. Such a $\baseCat$--category must necessarily be unique if it exists, and this is then defined to be \textit{the $\baseCat$--Dwyer-Kan localisation of} $\underline{\sC}$ \textit{with respect to} $S$. The following proposition shows that being a $\baseCat$--Bousfield localisation is stronger than that of being a $\baseCat$--Dwyer-Kan localisation.
\end{terminology}

\begin{prop}[Bousfield implies Dwyer-Kan]\label{bousfieldLocalisationsAreDwyerKan}
Let $\underline{\sC}, L\underline{\sC}$ be $\baseCat$--categories and $L : \underline{\sC} \rightleftarrows L\underline{\sC} : i$ be a $\baseCat$--Bousfield localisation. Let $S$ be the collection of morphisms in $\underline{\sC}$ that are sent to equivalences under $L$. Then the functor $L$ induces an equivalence
$L^* : \underline{\func}_{\baseCat}(L\underline{\sC}, \underline{\D}) \xrightarrow{\simeq} \underline{\func}_{\baseCat}^{S^{-1}}(\underline{\sC}, \underline{\D})$
for any $\baseCat$--category $\underline{\D}$ so that $L\underline{\sC}$ is a Dwyer-Kan localisation against $S$.
\end{prop}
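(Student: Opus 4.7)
The strategy is to recognise $L^*$ as the fully faithful right adjoint in a $\baseCat$--adjunction on $\baseCat$--functor categories, and then to identify its essential image with the full $\baseCat$--subcategory of $S$--inverting $\baseCat$--functors. The ingredients are already in hand: the $\baseCat$--adjoint lifting on functor categories, the fact that a $\baseCat$--fully faithful right adjoint has equivalence counit, and the triangle identities.

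First, applying \cref{omnibusTAdjunctions} to the $\baseCat$--Bousfield adjunction $L \dashv i$ and pre-composing into $\underline{\D}$ yields a $\baseCat$--adjunction
\[
i^* : \underline{\func}_{\baseCat}(\underline{\sC}, \underline{\D}) \rightleftarrows \underline{\func}_{\baseCat}(L\underline{\sC}, \underline{\D}) : L^*.
\]
Its unit at $G$ is the whiskering $G\eta : G \Rightarrow G \circ i \circ L$ of the Bousfield unit $\eta : \id_{\underline{\sC}} \Rightarrow iL$, and its counit at $F$ is the whiskering $F\varepsilon : F \circ L \circ i \Rightarrow F$ of the Bousfield counit $\varepsilon : Li \Rightarrow \id_{L\underline{\sC}}$.

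Second, since $i$ is $\baseCat$--fully faithful by the definition of a $\baseCat$--Bousfield localisation, $\varepsilon$ is a fibrewise (hence $\baseCat$--) equivalence, so the counit of $i^* \dashv L^*$ is as well. This makes $L^*$ $\baseCat$--fully faithful.

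Third, I would identify the essential image of $L^*$ with $\underline{\func}_{\baseCat}^{S^{-1}}(\underline{\sC}, \underline{\D})$. The forward containment is immediate: for any $F \in \underline{\func}_{\baseCat}(L\underline{\sC}, \underline{\D})$ and $s \in S$, the morphism $L^*(F)(s) = F(Ls)$ is an equivalence because $Ls$ is one by definition of $S$. For the reverse, full faithfulness of $L^*$ means that $G \in \underline{\func}_{\baseCat}(\underline{\sC}, \underline{\D})$ lies in the essential image iff the unit component $G\eta : G \Rightarrow G \circ i \circ L$ is an equivalence. The triangle identity $(L\eta) \cdot (\varepsilon_L) \simeq \id_L$ combined with $\varepsilon$ being an equivalence gives that $L\eta$ is an equivalence, so each component of $\eta$ is inverted by $L$ and therefore lies in $S$. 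Consequently, if $G$ inverts $S$, it inverts $\eta$, so $G\eta$ is an equivalence. This gives essential surjectivity onto $\underline{\func}_{\baseCat}^{S^{-1}}(\underline{\sC}, \underline{\D})$ and completes the argument.

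I do not anticipate a real obstacle: every step is a clean parametrised transcription of the standard unparametrised reasoning, and each statement reduces fibrewise. The only mild subtlety is being careful that the essential image of a $\baseCat$--fully faithful $\baseCat$--functor is characterised fibrewise by the unit being an equivalence, but this is automatic from the fibrewise definition of $\baseCat$--fully faithfulness.
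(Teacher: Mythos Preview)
Your proof is correct and follows essentially the same route as the paper: both establish the induced $\baseCat$--adjunction $i^* \dashv L^*$ on functor categories via \cref{omnibusTAdjunctions}, deduce $\baseCat$--full faithfulness of $L^*$ from the counit $\varepsilon$ being an equivalence, and then check essential surjectivity onto $S$--inverting functors by observing that $L\eta$ is an equivalence (hence $\eta \in S$) so that any $S$--inverting $G$ satisfies $G\eta$ an equivalence. Your write-up is slightly more explicit about invoking the triangle identity for $L\eta$ being an equivalence, but the argument is otherwise identical.
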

\begin{proof}
Since $L \dashv i$ was a $\baseCat$--Bousfield localisation, we know that $i^* : \underline{\func}_{\baseCat}(L\underline{\sC}, \underline{\D}) \rightleftarrows \underline{\func}_{\baseCat}(\underline{\sC}, \underline{\D}) : L^*$ is also a $\baseCat$--Bousfield localisation by \cref{omnibusTAdjunctions}, and so in particular $L^*$ is $\baseCat$--fully faithful. The image of $L^*$ also clearly lands in $\underline{\func}_{\baseCat}^{S^{-1}}(\underline{\sC}, \underline{\D})$, and so we are left to show $\baseCat$--essential surjectivity. By basechanging if necessary, we just show this on $\func_{\baseCat}^{S^{-1}}(\underline{\sC}, \underline{\D})$. Let $\varphi : \underline{\sC} \rightarrow \underline{\D}$ be a $\baseCat$--functor that inverts morphisms in $S$. We aim to show that $\varphi \Rightarrow \varphi\circ i\circ L$ is an equivalence. Since $L\dashv i$ was a $\baseCat$--Bousfield localisation, the unit $\eta : \id \Rightarrow  i\circ L$ gets sent to an equivalence under $L$, and so $\eta \in S$. Since $\varphi$ inverts $S$ by assumption, in particular it inverts $\eta$.
\end{proof}

\begin{prop}\label{presentablesAreTComplete}\label{TPresentablesAreTComplete}
$\baseCat$--presentable categories are $\baseCat$--complete.
\end{prop}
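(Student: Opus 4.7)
The strategy is to apply the dual of the decomposition principle in \cref{omnibusTColimits} (3): $\underline{\sC}$ is $\baseCat$--complete if and only if it strongly admits all fibrewise limits and strongly admits finite $\baseCat$--products. Both conditions will fall out of characterisation (7) of \cref{simpsonTheorem}, which identifies $\baseCat$--presentability of $\underline{\sC}$ with the statement that the straightening $C \colon \baseCat\op \to \widehat{\cat}_{\infty}$ factors through $\presentable^{\mathrm{L}}$ and that each cocartesian lift $f^*$ admits a further left adjoint $f_!$ satisfying the left Beck-Chevalley condition of \cref{beckChevalleyMeaning}.

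For fibrewise limits: each fibre $\sC_V$ is presentable, hence complete, and each cocartesian lift $f^* \colon \sC_V \to \sC_W$ is by (7) a left adjoint (with right adjoint $f_*$) that in addition admits a further left adjoint $f_!$. As a left adjoint, $f^*$ preserves all fibrewise limits, so the fibrewise criterion in \cref{omnibusTColimits} (1) immediately gives that $\underline{\sC}$ strongly admits all fibrewise limits. For finite $\baseCat$--products, the existence of the required fibrewise right adjoints $f_*$ is already provided by (7), and each $f^*$ preserves finite products because it is a left adjoint.

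The main obstacle is then the right Beck-Chevalley condition, which I will deduce from its left counterpart by passage to mates. Given a pullback
\[
\begin{tikzcd}
\coprod_a R_a \ar[r, "\coprod g_a"] \ar[d, "\coprod f_a"'] & W \ar[d, "f"] \\ Y \ar[r, "g"] & V
\end{tikzcd}
\]
in $\finite_{\baseCat}$, the left Beck-Chevalley hypothesis applied to the pullback with the roles of the two legs into $V$ interchanged supplies an equivalence $\coprod_a f_{a,!}\, g_a^* \xrightarrow{\simeq} g^* f_!$. Every functor in this composite admits a right adjoint: the $f^*_\bullet, g^*_\bullet$ and $f_{\bullet,!}, g_{\bullet,!}$ by (7), and the fibrewise coproduct $\coprod_a$ by the presentability of $\sC_Y$. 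Mate-passage therefore converts the above equivalence into an equivalence
\[
f^* g_* \xrightarrow{\; \simeq \;} \prod_a g_{a,*}\, f_a^*
\]
where the right-hand side comes from the right adjoint of $\coprod_a f_{a,!}\, g_a^*$ (a standard calculation using $\map_{\sC_Y}(\coprod_a f_{a,!} g_a^* X, Z) \simeq \prod_a \map_{\sC_W}(X, g_{a,*} f_a^* Z)$). This is precisely the right Beck-Chevalley condition of \cref{beckChevalleyMeaning}. Combining fibrewise limits with finite $\baseCat$--products satisfying right Beck-Chevalley, the limit-version of \cref{omnibusTColimits} (3) yields the $\baseCat$--completeness of $\underline{\sC}$.
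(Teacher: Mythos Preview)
Your argument is essentially correct but takes a different route from the paper. The paper's proof is much shorter: it invokes characterisation~(5) of \cref{simpsonTheorem}, writing $\underline{\sC}$ as a $\baseCat$--Bousfield localisation of a $\baseCat$--presheaf category $\underline{\presheaf}_{\baseCat}(\underline{\D})$, then observes that $\underline{\presheaf}_{\baseCat}(\underline{\D})$ is $\baseCat$--complete and that Bousfield local subcategories are closed under ambient $\baseCat$--limits (since they are described by a mapping-into property). Your proof instead uses characterisation~(7) and verifies the hypotheses of the limit-version of \cref{omnibusTColimits} directly; the mate argument converting left Beck--Chevalley into right Beck--Chevalley is valid and is a useful observation in its own right. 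The paper's approach is slicker but hides the Beck--Chevalley content, whereas yours makes the structure of $\baseCat$--limits in a presentable category completely explicit.

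One correction you must make: you twice write that ``as a left adjoint, $f^*$ preserves limits'' (and similarly for finite products). This is backwards---left adjoints preserve \emph{colimits}. The reason $f^*$ preserves limits is that it is a \emph{right} adjoint, namely the right adjoint of $f_!$, which you yourself noted exists by characterisation~(7). The conclusion you draw is correct, but the justification as written is wrong; replace ``as a left adjoint'' with ``since $f^*$ admits a left adjoint $f_!$, it is a right adjoint and hence''.
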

\begin{proof}
Let $\underline{\sC}$ be $\baseCat$--presentable so that it is a $\baseCat$--Bousfield localisation of some $\baseCat$--presheaf category $ \underline{\presheaf}_{\baseCat}(\underline{\D})$ by description (5) of \cref{simpsonTheorem}. We know that $\underline{\presheaf}_{\baseCat}(\underline{\D})$ is $\baseCat$--complete and so all we need to show is that $\baseCat$--Bousfield local subcategories are closed under $\baseCat$--limits which exist in the ambient category. But this is clear since $\baseCat$--Bousfield local subcategories can be described by a mapping-into property.
\end{proof}

\begin{terminology}
For $S \subseteq \underline{\sC}$ a collection of morphisms, an object $X \in \underline{\sC}$ is said to be $S$--local if ${\map}_{\underline{\sC}}(-,X)$ sends morphisms in $S$ to equivalences.
\end{terminology}

In fact, as in the unparametrised case, we can give a precise description of maps that get inverted in a Bousfield localisation against an arbitrary collection of morphisms $S$, generalising the usual theory available for instance in \cite[$\S5.5.4$]{lurieHTT}.

\begin{defn}\label{defn:GSaturationAxioms}
Let $\overline{S}$ be a $\baseCat$--collection of morphisms in a ${\baseCat}$--category $\underline{\sC}$. We say that it is \textit{${\baseCat}$--strongly saturated} if the following conditions are satisfied:
\begin{enumerate}
    \item (Pushout closure) Suppose we have a fibrewise pushout square in $\underline{\sC}$
    \begin{center}
        \begin{tikzcd}
        A \rar \dar \ar[dr, phantom, very near end, "\scalebox{1.5}{$\ulcorner$}"]& B \dar \\
        C \rar & D.
        \end{tikzcd}
    \end{center}
    If the left vertical is in $\overline{S}$, then the right vertical is also in $\overline{S}$.
    
    \item (${\baseCat}$-colimit closure) The ${\baseCat}$-full subcategory $\underline{\func}^{\overline{S}}(\Delta^1, \underline{\sC}) \subseteq \underline{\func}(\Delta^1, \underline{\sC})$ of morphisms in $\overline{S}$ is closed under ${\baseCat}$-colimits.
    
    \item (2-out-of-3) If any two of the three morphisms in 
    \begin{center}
        \begin{tikzcd}
        A \rar\ar[rr, bend left = 30] & B\rar & C
        \end{tikzcd}
    \end{center}
    are in $\overline{S}$, then the third one is too.
\end{enumerate}
For any $\baseCat$--collection of morphisms $S$, we define its \textit{$\baseCat$--strong saturation closure} $\overline{S}$ to be the smallest ${\baseCat}$--collection containing $S$ which is ${\baseCat}$--strongly saturated.
\end{defn}

\begin{prop}\label{strongSaturationYoga}
Let $\underline{\sC}$ be a ${\baseCat}$-presentable category and $S$ a $\baseCat$--collection of morphisms in $\underline{\sC}$, and $\overline{S}$ its ${\baseCat}$-strong saturation. Let $L : \underline{\sC} \rightarrow L_S\underline{\sC}$ be the ${\baseCat}$-Bousfield localisation at $S$. Then the collection of $L$-equivalences consists precisely of the collection $\overline{S}$.
\end{prop}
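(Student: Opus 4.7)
Let $W$ denote the $\baseCat$--collection of $L$--equivalences in $\underline{\sC}$. The proof splits into the two inclusions $\overline{S} \subseteq W$ and $W \subseteq \overline{S}$: the first is formal from the fact that $L$ is a $\baseCat$--left adjoint, while the second reduces (modulo identifying $S$--local with $\overline{S}$--local objects) to the non-trivial assertion that every unit $\eta_{Z} \colon Z \to iLZ$ lies in $\overline{S}$.

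For $\overline{S} \subseteq W$, I would first verify that $W$ is itself $\baseCat$--strongly saturated. Since $L$ is a $\baseCat$--Bousfield localisation, it strongly preserves $\baseCat$--colimits by \cref{omnibusTAdjunctions}. Pushout closure of $W$ follows directly: given $A \to B$ in $W$ and a fibrewise pushout square producing $C \to D$, the image under $L$ is again a fibrewise pushout, now of the equivalence $LA \to LB$, hence itself an equivalence. Closure under $\baseCat$--colimits of arrows uses that postcomposition with $L$ on $\underline{\func}(\Delta^{1}, -)$ strongly preserves $\baseCat$--colimits by \cref{TColimitsFunctorCategories}, together with the fact that equivalences in the target are closed under arbitrary $\baseCat$--colimits of arrows. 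The 2-out-of-3 property for $W$ is immediate. Since $S \subseteq W$ by the defining property of the localisation at $S$, strong saturation gives $\overline{S} \subseteq W$.

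For the reverse inclusion, I would first show that $S$--local objects coincide with $\overline{S}$--local objects: for any $Z \in \underline{\sC}$, the class of morphisms $f$ with $\myuline{\map}_{\underline{\sC}}(f, Z)$ an equivalence is $\baseCat$--strongly saturated—pushouts in the source become pullback squares of $\baseCat$--spaces, $\baseCat$--colimits become $\baseCat$--limits (by \cref{continuityTYoneda} applied to $\underline{\sC}\vop$), and 2-out-of-3 is automatic—so this class contains $S$ if $Z$ is $S$--local, and therefore contains $\overline{S}$. Consequently $L$ is simultaneously the $\baseCat$--Bousfield localisation at $\overline{S}$. Given $f \colon X \to Y$ in $W$, the naturality square supplies $\eta_{Y} \circ f \simeq Lf \circ \eta_{X}$ with $Lf$ an equivalence (hence in $\overline{S}$); assuming units lie in $\overline{S}$, two applications of 2-out-of-3 force $f \in \overline{S}$.

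The main obstacle is therefore showing $\eta_{Z} \in \overline{S}$ for every $Z$. The strategy is a small object argument adapted to the parametrised setting: using characterisation (7) of \cref{simpsonTheorem} to work fibre by fibre, I would construct $iLZ$ as a fibrewise transfinite composite of pushouts along coproducts of maps drawn from $S$, the standard procedure for localising a presentable category at a set of morphisms. Each such pushout lies in $\overline{S}$ by pushout closure of the strong saturation, and the transfinite composite, being a fibrewise filtered $\baseCat$--colimit of arrows in $\overline{S}$, remains in $\overline{S}$ by $\baseCat$--colimit closure. The delicate point is ensuring these fibrewise constructions assemble functorially along the cocartesian pushforwards $f^{*} \colon \sC_{W} \to \sC_{V}$: this uses the stability hypothesis $f^{*}(S_{W}) \subseteq S_{V}$ together with the strong preservation of $\baseCat$--colimits by $f^{*}$ (which holds since $f^{*}$ is itself a $\baseCat$--left adjoint in the presentable setting), guaranteeing that the fibrewise units glue into a bona fide $\baseCat$--morphism whose constituents all lie in $\overline{S}$.
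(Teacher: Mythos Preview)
Your outline is correct and coincides with the paper's up to the decisive step: both show that the class $W$ of $L$--equivalences is $\baseCat$--strongly saturated (so $\overline{S}\subseteq W$), and both reduce $W\subseteq\overline{S}$ via the naturality square of the unit and 2-out-of-3 to the assertion that every unit $\eta_Z\colon Z\to iLZ$ lies in $\overline{S}$.

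The difference is in how this last assertion is handled. You propose to rerun a parametrised small object argument, building $iLZ$ fibre by fibre as a transfinite composite of pushouts along maps in $S$ and then worrying about compatibility with the cocartesian pushforwards $f^*$. The paper instead bootstraps directly from the unparametrised statement \cite[Prop.~5.5.4.15]{lurieHTT}: since a $\baseCat$--Bousfield localisation is in particular a fibrewise Bousfield localisation, the unit at each level $V$ lies in the ordinary strong saturation of $(S_{\underline{\amalg}})_V$ inside $\sC_V$. One then only needs that $(\overline{S})_V$ contains this ordinary strong saturation, which follows because $(\overline{S})_V$ is itself ordinarily strongly saturated (fibrewise colimits being a special case of $\baseCat$--colimits in condition (2) of \cref{defn:GSaturationAxioms}) and contains $(S_{\underline{\amalg}})_V\subseteq(\overline{S})_V$. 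This is shorter, avoids reproving the small object argument, and aligns with the paper's general strategy of leveraging characterisation (7) of \cref{simpsonTheorem} to reduce to \cite{lurieHTT}.

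Your gluing concern is also unnecessary: the unit $\eta$ is already a $\baseCat$--natural transformation because $L$ is given as a $\baseCat$--Bousfield localisation, so membership of $\eta_Z$ in $\overline{S}$ is a purely fibrewise check. Similarly, the detour through ``$S$--local equals $\overline{S}$--local'' is not needed for either argument.
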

\begin{proof}
We will bootstrap the parametrised statement from the unparametrised version in \cite[Prop. 5.5.4.15]{lurieHTT}. Let $T$ be the collection of $L$-equivalences. First of all, note that we have  $\overline{S} \subseteq T$ since it is straightforward to check that $T$ is a $\baseCat$--strongly saturated collection containing $S$ and $\overline{S}$ is by definition the minimal such collection. To see the reverse inclusion, let $f : X \rightarrow Y$ be an $L$-equivalence and  consider the square
\begin{center}
    \begin{tikzcd}
    X \rar["f"] \dar & Y \dar\\
    LX \rar["Lf"', "\simeq"] & LY
    \end{tikzcd}
\end{center}
Now since a $\baseCat$-Bousfield localisation is in particular a fibrewise Bousfield localisation, we can apply \cite[Prop. 5.5.4.15 (1)]{lurieHTT} to see that the vertical maps in the square are in $\overline{S}$. And hence by 2-out-of-3, we see that $f$ was also in $\overline{S}$, as desired.
\end{proof}

The following result, which will be crucial for our application in \cite{kaifNoncommMotives}, is another example of the value of characterisation (7) from \cref{simpsonTheorem}. The proof of the unparametrised result, given by Lurie in \cite[$\S5.5.4$]{lurieHTT}, is long and technical, and characterisation (7) allows us to obviate this difficulty by bootstrapping from Lurie's statement.

\begin{thm}[Parametrised presentable Dwyer-Kan localisations]\label{parametrisedPresentableDwyerKanLocalisation}
Let $\underline{\sC}$ be a $\baseCat$-presentable category and $S$ a small collection of $\:{\baseCat}$-morphisms of $\underline{\sC}$ (ie. if $f : V\rightarrow W$ in $\baseCat$ and $y \rightarrow z$ a morphism in $S_W$, then $f^*y\rightarrow f^*z$ is in $S_V$). Then: 
\begin{enumerate}
    \item[(1)] Writing $S_{\underline{\amalg}}\supset S$ for the closure of $S$ under finite indexed coproducts, the fibrewise full subcategory $S^{-1}_{\underline{\amalg}}\underline{\sC}\subseteq \underline{\sC}$  of $S_{\underline{\amalg}}$-local objects assembles to a $\baseCat$--full subcategory.
    
    \item[(2)] We have a  $\baseCat$--accessible $\baseCat$--Bousfield localisation
$L : \underline{\sC} \rightleftarrows S^{-1}_{\underline{\amalg}}\underline{\sC} : i$.

\item[(3)] For any $\baseCat$--category $\underline{\D}$, the $\baseCat$--functor $L^* : \underline{\func}^L_{\baseCat}(S^{-1}_{\underline{\amalg}}\underline{\sC}, \underline{\D}) \longrightarrow \underline{\func}^{L, \overline{S}^{-1}}_{\baseCat}(\underline{\sC}, \underline{\D})$ is an equivalence. Moreover, the inclusion $\underline{\func}^{L, \overline{S}^{-1}}_{\baseCat}(\underline{\sC}, \underline{\D})\subseteq \underline{\func}^{L, S^{-1}}_{\baseCat}(\underline{\sC}, \underline{\D})$ is an equivalence.
\end{enumerate}
\end{thm}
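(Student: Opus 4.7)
The plan is to bootstrap all three parts from the unparametrised presentable localisation theorem \cite[Prop. 5.5.4.15]{lurieHTT}, using characterisation (7) of \cref{simpsonTheorem} to have presentability fibrewise with controlled adjoint/Beck--Chevalley structure. Parts (1) and (2) will produce the $\baseCat$--Bousfield localisation by verifying the fibrewise criterion \cref{criteriaForTAdjunctions}, and part (3) will be deduced from \cref{bousfieldLocalisationsAreDwyerKan} together with the strong saturation yoga of \cref{strongSaturationYoga}.

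For (1), I would check that the fibrewise collection $\{(S_{\underline{\amalg}}^{-1}\sC)_V\}$ is stable under the cocartesian pushforwards $f^* \colon \sC_W \to \sC_V$ for $f \colon V \to W$ in $\baseCat$. Since $\underline{\sC}$ is $\baseCat$--cocomplete, we have an adjunction $f_! \dashv f^*$, and for $X \in \sC_W$ that is $S_{\underline{\amalg},W}$--local and $s \in S_{\underline{\amalg},V}$ we compute by adjunction $\map(s, f^*X) \simeq \map(f_! s, X)$; the crucial point is that $f_!$ is an indexed coproduct functor, so $f_!(S_{\underline{\amalg},V}) \subseteq S_{\underline{\amalg},W}$ by definition of the $\underline{\amalg}$--closure, and locality of $X$ then finishes the argument. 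This is precisely the point that forces the closure under indexed coproducts.

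For (2), fibrewise Lurie's Proposition 5.5.4.15 provides accessible Bousfield localisations $L_V \colon \sC_V \rightleftarrows (S_{\underline{\amalg}}^{-1}\sC)_V \colon i_V$ for each $V$, since each $\sC_V$ is presentable by characterisation (7) of \cref{simpsonTheorem} and each $S_{\underline{\amalg},V}$ is small. Part (1) guarantees that the inclusions $i_V$ commute with the $f^*$'s, so the right adjoint criterion \cref{criteriaForTAdjunctions} assembles these into a $\baseCat$--adjunction, which is a $\baseCat$--Bousfield localisation by fibrewise verification. For $\baseCat$--accessibility of $S_{\underline{\amalg}}^{-1}\underline{\sC}$, I would apply characterisation (7) of \cref{simpsonTheorem} to the localised straightening: fibrewise presentability with a common $\kappa$ comes from the fibrewise localisations, accessibility of the $f^*$ restricted to the locals is gotten from \cref{accessibilityOfAdjoints} applied to the fibrewise Bousfield adjoints, and the left Beck--Chevalley condition for the induced $f_!^{\mathrm{loc}} \simeq L \circ f_! \circ i$ is inherited from the left Beck--Chevalley in $\underline{\sC}$ using that $L$ strongly preserves $\baseCat$--colimits (\cref{omnibusTAdjunctions}) and is identified with the $\baseCat$--coproduct structure by \cref{colimitsInBousfieldLocals}.

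Part (3) will follow in two steps. First, by \cref{bousfieldLocalisationsAreDwyerKan}, $L$ exhibits $S_{\underline{\amalg}}^{-1}\underline{\sC}$ as a Dwyer--Kan localisation with respect to the collection $T$ of $L$-inverted morphisms, and by \cref{strongSaturationYoga} we have $T = \overline{S_{\underline{\amalg}}}$; this coincides with $\overline{S}$ because $S_{\underline{\amalg}}$ is obtained from $S$ by finite indexed coproducts, which are themselves $\baseCat$--colimits in $\underline{\func}_{\baseCat}(\Delta^1, \underline{\sC})$, hence already in $\overline{S}$. Restricting along the universal property to strongly $\baseCat$--cocontinuous functors yields the first equivalence. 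The second (inclusion) equivalence amounts to saying that a strongly $\baseCat$--cocontinuous functor inverting $S$ automatically inverts $\overline{S}$, which holds because the preimage of the equivalences under such a functor is itself $\baseCat$--strongly saturated (pushout closure by $\baseCat$--cocontinuity, $\baseCat$--colimit closure by \cref{TColimitsFunctorCategories} applied to $\underline{\func}_{\baseCat}(\Delta^1, \underline{\D})$, and 2-out-of-3 is automatic), so by minimality of $\overline{S}$ it must contain $\overline{S}$. The main obstacle is the $\baseCat$--accessibility assertion in (2), where one must carefully check Beck--Chevalley and common-$\kappa$-accessibility survive the fibrewise localisation — all other steps are more or less formal once the fibrewise pieces are glued.
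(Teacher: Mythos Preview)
Your treatments of (1) and (3) are essentially the paper's: for (1) you use the $f_!\dashv f^*$ adjunction together with the closure of $S_{\underline{\amalg}}$ under indexed coproducts, and for (3) you combine \cref{bousfieldLocalisationsAreDwyerKan} with \cref{strongSaturationYoga} and the observation $\overline{S}=\overline{S_{\underline{\amalg}}}$. Your argument for the second equivalence in (3), via showing that the preimage of equivalences under a strongly $\baseCat$--cocontinuous functor is $\baseCat$--strongly saturated, is in fact a bit cleaner than what the paper writes.

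There is, however, a genuine gap in your argument for (2). You write that ``Part (1) guarantees that the inclusions $i_V$ commute with the $f^*$'s, so the right adjoint criterion \cref{criteriaForTAdjunctions} assembles these into a $\baseCat$--adjunction.'' But \cref{criteriaForTAdjunctions} does not say this. Part (1) only tells you that $i$ is a $\baseCat$--functor; to obtain its $\baseCat$--\emph{left} adjoint you must verify the Beck--Chevalley condition $L_V f^* \simeq f^* L_W$, which is the nontrivial content of \cref{criteriaRelativeAdjunctions}(2). Knowing that the right adjoints $i_V$ commute with $f^*$ does not formally imply that the left adjoints $L_V$ do---the mate of the square $f^*_{\sC} i_W \simeq i_V f^*_{\D}$ is a transformation $f^*_{\D} L_W \Rightarrow L_V f^*_{\sC}$ whose invertibility is precisely what needs checking.

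The paper fills this gap by a different route: it uses the \emph{right} adjoint $f_*$ to $f^*$, which exists because $\underline{\sC}$ is $\baseCat$--complete by \cref{presentablesAreTComplete}. One then checks directly that $f_*$ preserves $S_{\underline{\amalg}}$--local objects, using only the defining property $f^*(S_W)\subseteq S_V$ and the adjunction $\map_W(s, f_*x) \simeq \map_V(f^*s, x)$. This gives $i_W f_* \simeq f_* i_V$, and now passing to left adjoints yields exactly the required $L_V f^* \simeq f^* L_W$. Your proposal can be repaired by inserting this step; the rest of your accessibility and Beck--Chevalley discussion is then fine (and in any case once the $\baseCat$--Bousfield localisation is in hand, \cref{colimitsInBousfieldLocals} and \cref{omnibusTColimits} handle the $\baseCat$--cocompleteness and Beck--Chevalley of the local subcategory automatically).
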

\begin{proof}
For (1), we just nee to show that $S^{-1}_{\underline{\amalg}}\underline{\sC}$ is closed under the restriction functors in $\underline{\sC}$. Let $f\colon V\rightarrow W$ be a map in $\baseCat$ and let $x\in (S^{-1}_{\underline{\amalg}}\underline{\sC})_W$. We need to show that $f^*x\in (S^{-1}_{\underline{\amalg}}\underline{\sC})_V$ is again $S_{\underline{\amalg}}$--local. So let $\varphi \colon a \rightarrow b$ be a morphism in $S_{\underline{\amalg}}$. Because $S_{\underline{\amalg}}$ is closed under finite indexed coproducts, we have the equivalence
\[{\map}_V(b,f^*x) \simeq {\map}_W(f_!b,x) \xrightarrow[\simeq]{(f_!\varphi)^*}{\map}_W(f_!a,x)\simeq {\map}_V(a,f^*x)\]
as wanted.
For (2), we know from \cite[Prop. 5.5.4.15]{lurieHTT} that we already have fibrewise Bousfield localisations, and all we need to do is show that these assemble to a $\baseCat$--Bousfield localisation via \cref{criteriaForTAdjunctions}. Let $f : V \rightarrow W$ be in $\baseCat$. We need to show that
\begin{center}
    \begin{tikzcd}
    \sC_V \rar["L_V"] & S^{-1}_{\underline{\amalg}}\sC_V \\
    \sC_W \rar["L_W"]\uar["f^*"] & S^{-1}_{\underline{\amalg}}\sC_W\uar["f^*"']
    \end{tikzcd}
\end{center}
commutes, and for this, we first note that the diagram
\begin{center}
    \begin{tikzcd}
    \sC_V \dar["f_*"'] & S^{-1}_{\underline{\amalg}}\sC_V \lar["i_V"', hook]\dar["f_*"]\\
    \sC_W & S^{-1}_{\underline{\amalg}}\sC_W\lar["i_W", hook]
    \end{tikzcd}
\end{center}
commutes where here $f_*$ exists since $\underline{\sC}$ is $\baseCat$--complete by \cref{presentablesAreTComplete}. Now recall by definition that $f^*(S_W)\subseteq S_V$ and so
for $y\rightarrow z$ in $S_W$ the map
\[\map_{W}(z, f_*x) \simeq \map_{V}(f^*z, x) \longrightarrow \map_{V}(f^*y, x) \simeq \map_{V}(y,f_*x)\] is an equivalence, which implies that $f_*$ takes $S_{\underline{\amalg}}$-local objects to $S_{\underline{\amalg}}$-local objects. Now by uniqueness of left adjoints, the first diagram commutes, as required. Finally, the first sentence of (3) is just a consequence of \cref{bousfieldLocalisationsAreDwyerKan} and \cref{strongSaturationYoga}, noting also that $\overline{S}=\overline{S}_{\underline{\amalg}}$. That the inclusion is an equivalence is because if $F\in \underline{\func}^{L, S^{-1}}_{\baseCat}(\underline{\sC}, \underline{\D})$ and we have a morphism in $\overline{S}$ of the form $\underline{\colim}_{\underline{J}}\varphi\colon \underline{\colim}_{\underline{J}}a\rightarrow \underline{\colim}_{\underline{J}}b$ for some $\underline{J}$--indexed diagram of morphisms all landing in $S$, then $F\underline{\colim}_{\underline{J}}\varphi\simeq \underline{\colim}_{\underline{J}}F\varphi$ is an equivalence by the hypothesis on $F$.
\end{proof}

\subsection{Localisation--cocompletions}\label{subsec5.4:localisationCocompletions}
In this subsection we formulate and prove the construction of \textit{localisation--cocompletions} whose proof is exactly analogous to that of \cite{lurieHTT}. As far as we can see, unfortunately the proof cannot be bootstrapped from the unparametrised statement as with the proof of \cref{parametrisedPresentableDwyerKanLocalisation} because the notion of a parametrised collection of diagrams might involve diagrams that are not fibrewise in the sense of \cref{importantClassOfColimits}.

\begin{defn}[Parametrised collection of diagrams]\label{parametrisedCollectionOfDiagrams}
Let $\underline{\sC}\in\cat_{\baseCat}$. A \textit{parametrised collection of diagrams in } $\underline{\sC}$ is defined to be a triple $(\underline{\sC}, \K, \R)$ where:
\begin{itemize}
    \item $\K$ is a collection of small categories parametrised over $\baseCat\op$, ie. a collection $\K_V$ of small $\baseCat_{/V}$--categories for each $V\in\baseCat$. 
    \item $\R$ is a parametrised collection of diagrams in $\underline{\sC}$ whose indexing categories belong to $\K$, ie. for each $V\in \baseCat$ a collection of coconed diagrams $\R_V$ indexed over categories in $\K_V$.
\end{itemize}
\end{defn}

\begin{thm}[$\baseCat$--localisation--cocompletions, ``{\cite[Prop. 5.3.6.2]{lurieHTT}}'']\label{arbitraryCocompletions}\label{nota:localisationCocompletions}
Let $(\underline{\sC}, \K, \R)$ be a parametrised collection of diagrams in $\underline{\sC}$. Then there is a $\baseCat$--category $\underline{\presheaf}^{\K}_{\R}(\underline{\sC}) $ and a $\baseCat$--functor $j : \underline{\sC} \rightarrow\underline{\presheaf}^{\K}_{\R}(\underline{\sC}) $ such that:
\begin{enumerate}
    \item The category $\underline{\presheaf}^{\K}_{\R}(\underline{\sC}) $ is $\K$--$\baseCat$--cocomplete, ie. it strongly admits $\K$--indexed $\baseCat$--colimits, $\underline{\sC}_{\underline{V}}$ admits $K$--indexed $\baseCat_{/V}$--colimits.
    \item For every $\K$-$\baseCat$--cocomplete category $\underline{\D}$, the map $j$ induces an equivalence of $\baseCat$--categories
    \[j^* : \underline{\func}_{\baseCat}^{\K}(\underline{\presheaf}^{\K}_{\R}(\underline{\sC}) ,\underline{\D}) \longrightarrow \underline{\func}_{\baseCat}^{\R}(\underline{\sC},\underline{\D})\] where the source denotes the $\baseCat$--category of functors which strongly preserve $\K$--indexed colimits and the target consists of those functors carrying each diagram in $\R$ to a parametrised colimit diagram in $\underline{\D}$.
    \item If each member of $\R$ were already a $\baseCat$--colimit diagram in $\underline{\sC}$, then in fact $j$ is $\baseCat$--fully faithful.
\end{enumerate}
\end{thm}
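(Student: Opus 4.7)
The plan is a two-step construction: first produce the free $\K$-$\baseCat$-cocompletion $\underline{\presheaf}^{\K}(\underline{\sC})$ (treating the case $\R = \emptyset$), then impose the relations $\R$ via a $\baseCat$-Bousfield localisation. Both steps are realised inside $\underline{\presheaf}_{\baseCat}(\underline{\sC})$, whose universal property as a free $\baseCat$-cocompletion (\cref{YonedaUnivProp}) furnishes the base case.

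For step one, define $\underline{\presheaf}^{\K}(\underline{\sC})$ to be the smallest $\baseCat$-full subcategory of $\underline{\presheaf}_{\baseCat}(\underline{\sC})$ containing the image of the $\baseCat$-Yoneda embedding $y$ and strongly closed under $\K$-indexed $\baseCat$-colimits computed in $\underline{\presheaf}_{\baseCat}(\underline{\sC})$, built by transfinite iteration of $\K$-colimit closure. Property (1) is then automatic, and (3) when $\R = \emptyset$ follows from $\baseCat$-fully faithfulness of $y$ via \cref{TYonedaLemma}. For the universal property (2) with $\R = \emptyset$, given a $\K$-$\baseCat$-cocomplete $\underline{\D}$ and a $\baseCat$-functor $f \colon \underline{\sC} \to \underline{\D}$, choose an embedding $y_{\underline{\D}} \colon \underline{\D} \hookrightarrow \underline{\B}$ into a $\baseCat$-cocomplete $\underline{\B}$ under which the image of $\underline{\D}$ is closed under $\K$-colimits (a sufficiently large Ind-completion of $\underline{\D}$ suffices, using \cref{UnivPropInd}). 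Extend $y_{\underline{\D}} \circ f$ uniquely to a strongly $\baseCat$-colimit-preserving $\widetilde{f} \colon \underline{\presheaf}_{\baseCat}(\underline{\sC}) \to \underline{\B}$ via \cref{YonedaUnivProp}, and show by transfinite induction along the construction of $\underline{\presheaf}^{\K}(\underline{\sC})$ that $\widetilde{f}|_{\underline{\presheaf}^{\K}(\underline{\sC})}$ factors through $\underline{\D}$. The identification of this factorisation as a $\baseCat$-left Kan extension of $f$ along $j$ uses \cref{restrictedLeftKanExtension}, yielding the inverse to $j^*$.

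For step two, for each $\overline{p} \in \R$ form the canonical comparison $\alpha_{\overline{p}} \colon \underline{\colim}_{\underline{K}}(y \circ p) \to y(\overline{p}(\infty))$ in $\underline{\presheaf}_{\baseCat}(\underline{\sC})$, and let $S$ be this collection closed under restriction so as to form a parametrised collection. Since $\underline{\presheaf}_{\baseCat}(\underline{\sC})$ is $\baseCat$-presentable (criterion (7) of \cref{simpsonTheorem} applies: its straightening is pointwise a presheaf category with pullback restrictions satisfying the left Beck-Chevalley condition), \cref{parametrisedPresentableDwyerKanLocalisation} applied at $S$ produces a $\baseCat$-Bousfield localisation $L \colon \underline{\presheaf}_{\baseCat}(\underline{\sC}) \to L\,\underline{\presheaf}_{\baseCat}(\underline{\sC})$. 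Define $\underline{\presheaf}^{\K}_{\R}(\underline{\sC})$ as the smallest $\baseCat$-full subcategory of $L\,\underline{\presheaf}_{\baseCat}(\underline{\sC})$ containing $L \circ y(\underline{\sC})$ and closed under $\K$-colimits computed therein. Properties (1) and (2) follow by composing the universal property of step one (applied to the localised context) with \cref{bousfieldLocalisationsAreDwyerKan}. For (3), when each $\overline{p} \in \R$ is already a $\baseCat$-colimit in $\underline{\sC}$, every representable $y(c)$ is $S$-local since the locality condition at $\alpha_{\overline{p}}$ unwinds to the equivalence $\myuline{\map}_{\underline{\sC}}(\overline{p}(\infty), c) \simeq \underline{\lim}_{\underline{K}\vop} \myuline{\map}_{\underline{\sC}}(p, c)$ witnessed by $\overline{p}$; hence $L \circ y \simeq y$ and $j$ retains $\baseCat$-fully faithfulness from $y$.

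The main obstacle is the transfinite induction in step one showing that $\widetilde{f}|_{\underline{\presheaf}^{\K}(\underline{\sC})}$ lands in $\underline{\D}$. Since $\K$ can contain $\baseCat$-categories of varied parametric shape (not merely fibrewise-constant ones), at each inductive stage we must verify that the $\K$-colimit taken in $\underline{\B}$ of objects already placed in $\underline{\D}$ remains in $\underline{\D}$, and compatibly with all basechanges in $\baseCat$. Strong $\K$-cocompleteness of $\underline{\D}$ combined with the fact that the chosen embedding $y_{\underline{\D}}$ strongly preserves $\K$-colimits handles this, but requires careful bookkeeping of how the relevant cocones restrict under basechange in $\baseCat$.
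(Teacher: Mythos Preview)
Your construction is exactly the paper's: form the comparison morphisms $\alpha_{\overline{p}}$ in $\underline{\presheaf}_{\baseCat}(\underline{\sC})$, Bousfield-localise at the resulting set $S$ via \cref{parametrisedPresentableDwyerKanLocalisation}, and take the smallest $\K$-cocomplete $\baseCat$-full subcategory of the localisation containing $L\circ y(\underline{\sC})$. Your argument for (iii) is also verbatim the paper's. The organisation of (ii) differs, but the underlying ingredients match.

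A few points deserve tightening. First, the suggested embedding of $\underline{\D}$ into an Ind-completion does not in general yield a $\baseCat$-cocomplete target: $\underline{\ind}_{\kappa}(\underline{\D})$ is cocomplete only when $\underline{\D}$ already has all $\kappa$-small $\baseCat$-colimits, which $\K$-cocompleteness need not guarantee. The paper instead uses the opposite $\baseCat$-Yoneda embedding $\underline{\D}\hookrightarrow\underline{\presheaf}_{\baseCat}(\underline{\D}\vop)\vop$, which is $\baseCat$-bicomplete and along which all existing colimits are preserved; you should do the same. Second, your step~2 claim that the universal property follows ``by composing step one with \cref{bousfieldLocalisationsAreDwyerKan}'' hides the real work: one must show that the Kan-extended functor $\widetilde{f}$ inverts $S$ (hence factors through $L$), that the resulting $g\colon L\,\underline{\presheaf}_{\baseCat}(\underline{\sC})\to\underline{\B}$ still strongly preserves colimits, and then rerun the step-one generation argument inside the localised category. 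The paper does this in one pass via the intermediate category $\underline{\E}=L^{-1}\underline{\presheaf}^{\K}_{\R}(\underline{\sC})$ and the adjunction $j_!\dashv j^*$ on functors out of $\underline{\E}$, which avoids doing the transfinite induction twice. Third, the genuinely nontrivial direction of the equivalence is $j_!j^*\simeq\id$ (i.e.\ that every $\K$-preserving functor out of $\underline{\presheaf}^{\K}_{\R}(\underline{\sC})$ is recovered from its restriction), which you do not address; this is where the paper uses that $\underline{\presheaf}^{\K}_{\R}(\underline{\sC})$ is generated under $\K$-colimits by the image of $\underline{\sC}$. Your identified ``main obstacle''---the transfinite induction showing the image lands in $\underline{\D}$---is by contrast routine once the embedding of $\underline{\D}$ is chosen correctly.
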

\begin{proof}
We give first all the constructions. By enlarging the universe, if necessary, we may reduce to the case where:
\begin{itemize}
    \item Every element of $\K$ is small
    \item That $\underline{\sC}$ is small
    \item The collection of diagrams $\R$ is small
\end{itemize}
Let $y : \underline{\sC} \hookrightarrow \underline{\presheaf}_{\baseCat}(\underline{\sC}) $ be the $\baseCat$--yoneda embedding and let $V\in \baseCat$.  For a $\baseCat_{/V}$--diagram $\Bar{p} : K\tcocone \rightarrow \underline{\sC}_{\underline{V}}$ with cone point $Y$, let $X$ denote the $\baseCat_{/V}$--colimit of $y\circ p : K \rightarrow \underline{\presheaf}_{\baseCat}(\underline{\sC}) _{\underline{V}}$. This induces a $\baseCat_{/V}$--morphism in $\underline{\presheaf}_{\baseCat}(\underline{\sC}) _{\underline{V}}$ 
\[s : X \rightarrow y(Y)\]  Here we have used that $\underline{\presheaf}_{\baseCat}(\underline{\sC}) _{\underline{V}}\simeq \underline{\presheaf}_{\underline{V}}(\underline{\sC}_{\underline{V}})$ by \cref{TFunctorCategory}. Now let $S$ be the set of all such $\baseCat_{/V}$--morphisms running over all $V\in \baseCat$. This is small by our assumption and so let $L : \underline{\presheaf}_{\baseCat}(\underline{\sC})  \rightarrow S^{-1}\underline{\presheaf}_{\baseCat}(\underline{\sC}) $ denote the $\baseCat$--Bousfield localisation from \cref{parametrisedPresentableDwyerKanLocalisation}.  Now we define $\underline{\presheaf}_{\R}^{\K}(\underline{\sC})  \subseteq S^{-1}\underline{\presheaf}_{\baseCat}(\underline{\sC}) $ to be the smallest $\K$--cocomplete full $\baseCat$--subcategory containing the image of $L\circ y : \underline{\sC} \rightarrow \underline{\presheaf}_{\baseCat}(\underline{\sC})  \rightarrow S^{-1}\underline{\presheaf}_{\baseCat}(\underline{\sC}) $. We show that this works and prove each point in turn.

Point (i) is true by construction, and so there is nothing to do. For point (ii), let $\underline{\D}$ be $\K$--$\baseCat$--cocomplete. We now perform a reduction to the case when $\underline{\D}$ is $\baseCat$--cocomplete. By taking the opposite Yoneda embedding we see that $\underline{\D}$ sits $\baseCat$--fully faithfully in a $\baseCat$--cocomplete category $\underline{\D}'$ and the inclusion strongly preserves $\K$--colimits. We now have a square of $\baseCat$--categories (where the vertical functors are $\baseCat$--fully faithful by \cref{functorsPreserveTFullyFaithfulness}) 
    \begin{center}
        \begin{tikzcd}
        \underline{\func}_{\baseCat}^{\K}(\underline{\presheaf}^{\K}_{\R}(\underline{\sC}) ,\underline{\D}) \rar["\phi \coloneqq j^*"] \dar[hook] & \underline{\func}_{\baseCat}^{\R}(\underline{\sC},\underline{\D}) \dar[hook]\\
       \underline{\func}_{\baseCat}^{\K}(\underline{\presheaf}^{\K}_{\R}(\underline{\sC}) ,\underline{\D}') \rar["\phi' \coloneqq j^*"] & \underline{\func}_{\baseCat}^{\R}(\underline{\sC},\underline{\D}') 
        \end{tikzcd}
    \end{center}
    We claim this is cartesian in $\widehat{\cat}_{\baseCat}$ if $\phi'$ were an equivalence: given this, to prove that $\phi$ is an equivalence, it suffices to prove that $\phi'$ is an equivalence.  For this, we need to show that the map into the pullback is an equivalence. That $\phi'$ is an equivalence ensures that the map into the pullback is fully faithful. To see essential surjectivity, let $F : \underline{\presheaf}^{\K}_{\R}(\underline{\sC})  \rightarrow \underline{\D}'$ be a strongly $\K$--colimit preserving functor that restricts to $\underline{\sC} \rightarrow \underline{\D}$. Then in fact $F$ lands in $\underline{\D}\subseteq \underline{\D}'$ since $\underline{\D} \subseteq \underline{\D}'$ is stable under $\K$--indexed colimits, and by construction, $\underline{\presheaf}^{\K}_{\R}(\sC) $ is generated under $\K$--indexed colimits by $\underline{\sC}$. 
    
    Now we turn to showing $\phi$ is an equivalence in the case $\underline{\D}$ is $\baseCat$--cocomplete. Let $\underline{\E} \subseteq \underline{\presheaf}_{\baseCat}(\underline{\sC}) $ be the inverse image $L^{-1}\underline{\presheaf}^{\K}_{\R}(\underline{\sC}) $ and $\overline{S}$ be the collection of all morphisms $\alpha$ in $\underline{\E}$ such that $L\alpha$ is an equivalence. Since the $\baseCat$--Bousfield localisation $L : \underline{\presheaf}(\underline{\sC})  \rightleftarrows S^{-1}\underline{\presheaf}(\underline{\sC})  : i$ induces a $\baseCat$--Bousfield localisation $L : \underline{\E} \rightleftarrows \underline{\presheaf}^{\K}_{\R}(\underline{\sC})  : i$ we see by \cref{bousfieldLocalisationsAreDwyerKan} that $L^* : \underline{\func}_{\baseCat}(\underline{\presheaf}^{\K}_{\R}(\underline{\sC}) ,\underline{\D}) \rightarrow \underline{\func}_{\baseCat}^{\overline{S}^{-1}}(\underline{\E},\underline{\D})$ is  an equivalence. Furthermore, by using the description of colimits in $\baseCat$--Bousfield local subcategories as being given by applying the localisation $L$ to the colimit in the ambient category, we see that $f : \underline{\presheaf}^{\K}_{\R}(\underline{\sC})  \rightarrow \underline{\D}$  strongly preserves $\K$--colimits if and only if $f\circ L : \underline{\E} \rightarrow \underline{\D}$ does. This gives us the following factorisation of $\phi$ 
    \[\phi : \underline{\func}_{\baseCat}^{\K}(\underline{\presheaf}^{\K}_{\R}(\underline{\sC}) ,\underline{\D}) \xrightarrow[\simeq]{L^*} \underline{\func}_{\baseCat}^{\overline{S}^{-1}, \K}(\underline{\E},\underline{\D}) \xrightarrow{j^*} \underline{\func}_{\baseCat}^{\R}(\underline{\sC},\underline{\D}) \] and hence we need to show that the functor $j^*$ is an equivalence. Since $\underline{\D}$ is $\baseCat$--cocomplete, we can consider the $\baseCat$--adjunction
    $j_! : \underline{\func}_{\baseCat}^{\R}(\underline{\sC},\underline{\D})\rightleftarrows \underline{\func}_{\baseCat}^{\K}(\underline{\E},\underline{\D}) : j^*$. We need to show:
    \begin{itemize}
        \item that $j_!$ lands in $\underline{\func}_{\baseCat}^{\overline{S}^{-1},\K}(\underline{\E}, \underline{\D})$,
        \item that $j_!\circ j^* \simeq \id$ on $\underline{\func}_{\baseCat}^{\overline{S}^{-1},\K}(\underline{\E},\underline{\D})$ and $j^*\circ j_! \simeq \id.$
    \end{itemize}
    For the first point, fix a $V\in \baseCat$. Since relative adjunctions are closed under pullbacks by \cref{relativeAdjunctionPullbacks} and since $\underline{\func}_{\baseCat}(\underline{\sC}, \underline{\D})_{\underline{V}}\simeq \underline{\func}_{\underline{V}}(\underline{\sC}_{\underline{V}}, \underline{\D}_{\underline{V}})$ by \cref{TFunctorCategory}, we also get a $\baseCat_{/V}$-adjunction 
    $j_! : \underline{\func}_{\underline{V}}^{\R_{\underline{V}}}(\underline{\sC}_{\underline{V}},\underline{\D}_{\underline{V}})\rightleftarrows \underline{\func}_{\underline{V}}^{\K_{\underline{V}}}(\underline{\E}_{\underline{V}},\underline{\D}_{\underline{V}}) : j^*$. Suppose $F : \underline{\sC}_{\underline{V}} \rightarrow \underline{\D}$ is a ${\underline{V}}$--functor that sends $\R_{\underline{V}}$ to ${\underline{V}}$--colimit diagrams. We want to show that $j_!F : \underline{\E}_{\underline{V}} \rightarrow \underline{\D}_{\underline{V}}$ inverts maps in $\overline{S}$, ie. those maps that get inverted by $L_{\underline{V}}$. Consider
    \begin{center}
        \begin{tikzcd}
        \underline{\sC}_{\underline{V}} \dar[hook, "j"']\ar[dr,"F"]\ar[dd, bend right = 50, "y"', hook] \\
        \underline{\E}_{\underline{V}} \dar[hook, "k"'] & \underline{\D}_{\underline{V}}\\
        \underline{\presheaf}_{\underline{V}}(\underline{\sC}_{\underline{V}})\ar[ur, "y_!F"'] 
        \end{tikzcd}
    \end{center}
Note that $j_!F \simeq k^*y_!F$ since $j_!=\id\circ j_!\simeq k^*k_!j_! \simeq k^*y_!$. Now since $y_!F : \underline{\presheaf}_{\underline{V}}(\underline{\sC}_{\underline{V}}) \rightarrow \underline{\D}_{\underline{V}}$ strongly preserves ${\underline{V}}$--colimits and since $\underline{\E}_{\underline{V}}$ is stable under $\K$--indexed colimits in $\underline{\presheaf}_{\underline{V}}(\underline{\sC}_{\underline{V}})$ (since $\underline{\presheaf}^{\K_{\underline{V}}}_{\R_{\underline{V}}}(\underline{\sC}_{\underline{V}})$ was closed under $\K$--colimits by construction) it follows that $j_!F \simeq k^*y_!F$ strongly preserves $\K$--colimits. Now note that the maps in $S \subseteq \underline{\presheaf}_{\underline{V}}(\underline{\sC}_{\underline{V}})$ are inverted by $y_!F$ since these were the maps comparing colimit in $\underline{\presheaf}_{\underline{V}}(\underline{\sC}_{\underline{V}})$ and cone point in $\underline{\sC}_{\underline{V}}$, and by hypothesis, $F$, and hence $y_!F$ turns these into equivalences. Therefore, by the universal property of Dwyer-Kan localisations \cref{parametrisedPresentableDwyerKanLocalisation}, $y_!F : \underline{\presheaf}_{\underline{V}}(\underline{\sC}_{\underline{V}}) \rightarrow \underline{\D}$ factors through the Bousfield localisation $L$, and so in particular inverts $\overline{S}$, so that $j_!F \simeq k^*y_!F$ does too. Also $y_!F$ strongly preserves all ${\underline{V}}$--colimits by the universal property of presheaves, and so $j_!F \simeq k^*y_!F$ strongly preserves $\K$--colimits since the inclusion $k : \underline{\E}_{\underline{V}} \hookrightarrow \underline{\presheaf}_{\underline{V}}(\underline{\sC}_{\underline{V}})$ does. 

For the second point, since $j$ was $\baseCat$--fully faithful, we have that $j^*\circ j_!\simeq \id$ as usual by \cref{fullyFaithfulTKanExtension}. For the equivalence $j_!\circ j^* \simeq \id$, suppose $F \in \underline{\func}_{\baseCat}^{\overline{S}^{-1},\K}(\underline{\E},\underline{\D})$. Write $F' \coloneqq j_!j^*F$. By universal property of Kan extensions we have $\alpha : F' = j_!j^*F \rightarrow F$ and we want to show this is an equivalence. Since $F$ inverts $\overline{S}$ by hypothesis and $j_!j^*F$ also inverts $\overline{S}$ by the claim of the previous paragraph, we get the diagram
    \begin{center}
        \begin{tikzcd}
        \underline{\sC}_{\underline{V}} \dar[hook, "j"'] \\
        \underline{\E}_{\underline{V}} \ar[dr,"L_{\underline{V}}"']\ar[r, "F"]\rar[bend left = 50, "F'"] & \underline{\D}_{\underline{V}}\\
        & \underline{\presheaf}^{\K_{\underline{V}}}_{\R_{\underline{V}}}(\underline{\sC}_{\underline{V}})\uar["f'"]\uar[bend right = 50, "f"']
        \end{tikzcd}
    \end{center}
    The transformation $\alpha$ induces a transformation $\beta : f' \rightarrow f$  since $\underline{\func}_{\underline{V}}^{ \overline{S}^{-1}}(\underline{\E}_{\underline{V}},\underline{\D}_{\underline{V}}) \simeq \underline{\func}_{\underline{V}}(\underline{\presheaf}^{\K_{\underline{V}}}_{\R_{\underline{V}}}(\underline{\sC}_{\underline{V}}),\underline{\D}_{\underline{V}})$ and we want to show that $\beta$ is an equivalence. To begin with, note that it is an equivalence on the image of the embedding $j : \underline{\sC}_{\underline{V}} \hookrightarrow \underline{\presheaf}^{\K_{\underline{V}}}_{\R_{\underline{V}}}(\underline{\sC}_{\underline{V}})$. Since $F$ and $F'$ strongly preserve $\K$--colimits, hence so do $f'$ and $f$. Therefore, $f' \rightarrow f$ is an equivalence on all of $\underline{\presheaf}^{\K_{\underline{V}}}_{\R_{\underline{V}}}(\underline{\sC}_{\underline{V}})$ since this ${\underline{V}}$-category was by construction generated under these colimits by $\underline{\sC}$. This completes the proof of point (ii). 
    
    Finally, for point (iii), suppose every element of $\R$ were already a colimit diagram in $\underline{\sC}$. The Yoneda map  can be factored, by construction, as $j : \underline{\sC} \hookrightarrow \underline{\E} \xrightarrow{L} \underline{\presheaf}^{\K}_{\R}(\underline{\sC})$     where the first map is $\baseCat$--fully faithful. Since the restriction $L|_{S^{-1}\underline{\presheaf}_{\baseCat}(\underline{\sC}) } \simeq \id$, it will suffice to show that $j$ lands in $S^{-1}\underline{\presheaf}_{\baseCat}(\underline{\sC})$. That is, that $\underline{\sC}$ is $S$-local, ie. for each $V\in \baseCat$ and $C\in \sC_V$, and for each $f : W\rightarrow V$ in $\baseCat$ and $s : X \rightarrow jY$ in $S_W$, we need to see that 
    \[s^* : \map_{\underline{\presheaf}_{\baseCat}(\underline{\sC}) _W}(jY, jf^*C) \longrightarrow \map_{\underline{\presheaf}_{\baseCat}(\underline{\sC}) _W}(X,jf^*C) \] is an equivalence. To see this, the hypothesis of (iii) gives $Y = \underline{\colim}_{K\subseteq \sC_{\underline{W}}} \varphi$. Then
    \small
    \[\myuline{\map}_{\underline{\presheaf}_{\baseCat}(\underline{\sC}) _{\underline{W}}}(jY, jf^*C)  \simeq 
    \myuline{\map}_{\underline{\sC}_{\underline{W}}}(\underline{\colim}_{K\subseteq \sC_{\underline{W}}} \varphi, f^*C)\simeq \underline{\lim}_{K\vop\subseteq \sC_{\underline{W}}{}\vop}\map_{\sC}(\varphi, f^*C)\]
    \normalsize
    where the first equivalence is by Yoneda. On the other hand,
    \begin{equation*}
        \begin{split}
            \myuline{\map}_{\underline{\presheaf}_{\baseCat}(\underline{\sC}) _{\underline{W}}}(X, jf^*C) & \simeq \myuline{\map}_{\underline{\presheaf}_{\baseCat}(\underline{\sC}) _{\underline{W}}}(\underline{\colim}_Kj\circ \varphi, jf^*C)\\ & \simeq \underline{\lim}_{K\vop}\myuline{\map}_{\underline{\presheaf}_{\baseCat}(\underline{\sC}) _{\underline{W}}}(j\varphi, jf^*C) \\ & \simeq \underline{\lim}_{K\vop}\myuline{\map}_{\underline{\sC}_{\underline{W}}}(\varphi, f^*C)
        \end{split}
    \end{equation*}
    and so taking the section over $W$, one checks that these two identifications are compatible with the map $s^*$. This completes the proof of (iii).
\end{proof}

\subsection{The presentables--idempotents equivalence}
We want to formulate the equivalence between presentables and idempotent-completes in the parametrised world, and so we need to introduce some definitions. To avoid potential confusion, we will for example use the terminology \textit{parametrised}-accessibles instead of $\baseCat$--accessibles to indicate that we take $\baseCat_{/V}$--accessibles in the fibre over $V$.

\begin{defn}
Let $\kappa$ be a regular cardinal.
\begin{itemize}
    \item Let $\underline{\accessible}_{\baseCat,\kappa} \subset \underline{\widehat{\cat}}_{\baseCat}$ \label{nota:TAccessibles} be the non-full $\baseCat$--subcategory of $\kappa$-parametrised-accessible categories and $\kappa$-parametrised-accessible functors preserving $\kappa$-parametrised-compacts. 
    \item Let $\underline{\cat}_{\baseCat}^{\underline{\idem}} \subseteq \underline{\widehat{\cat}}_{\baseCat}$ \label{nota:parametrisedIdempotentCompleteCategories} be the full $\baseCat$--subcategory on the small parametrised-idempotent-complete categories. 
    
    \item Let $\underline{{\cat}}_{\baseCat}^{\underline{\mathrm{rex}}(\kappa)}\subset \underline{\widehat{\cat}}_{\baseCat}$ be the non-full subcategory whose objects are $\kappa$-parametrised--cocomplete small categories and morphisms those parametrised--functors that strongly preserve $\kappa$-small parametrised--colimits.

    \item Let $\underline{\cat}_{\baseCat}^{\underline{\idem}(\kappa)}\subseteq \underline{{\cat}}_{\baseCat}^{\underline{\mathrm{rex}}(\kappa)}$\label{nota:parametrisedIdempotentCompleteCategoriesKappa} be the full subcategory whose objects are $\kappa$-parametrised--cocomplete small parametrised-idempotent-complete categories.
    
    \item Let $\underline{\presentable}_{\baseCat, L, \kappa} \subset \underline{\accessible}_{\baseCat,\kappa}$ \label{nota:TCategoryOfTPresentables} be the non-full $\baseCat$--subcategory whose objects are parametrised-presentables and whose morphisms are parametrised-left adjoints that preserve $\kappa$-parametrised-compacts.
    
    \item Let $\underline{\presentable}_{\baseCat, R,\kappa\operatorname{-filt}} \subset \underline{\widehat{\cat}}_{\baseCat}$ be the non-full $\baseCat$--subcategory of parametrised presentable categories and morphisms the parametrised $\kappa$-accessible functors which strongly preserve parametrised limits.
\end{itemize}
\end{defn}

\begin{nota}
Let $\underline{\func}_{\baseCat}\tkappa\subseteq \underline{\func}_{\baseCat}$ be the full subcategory of $\kappa$--$\baseCat$--compact-preserving functors.
\end{nota}

\begin{lem}[``{\cite[Prop. 5.4.2.17]{lurieHTT}}'']\label{TAccessibleScaffolding}
Let $\kappa$ be a regular cardinal. Then $(-)\tkappa : \underline{\accessible}_{\baseCat,\kappa} \longrightarrow \underline{\widehat{\cat}}_{\baseCat}$ induces an equivalence to $\underline{\cat}_{\baseCat}^{\underline{\idem}}$, whose inverse $\underline{\cat}_{\baseCat}^{\underline{\idem}}\rightarrow \underline{\accessible}_{\baseCat,\kappa}$ is $\underline{\ind}_{\kappa}$. 
\end{lem}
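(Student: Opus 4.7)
The plan is to bootstrap this result from the unparametrised equivalence $\ind_{\kappa}\colon \cat^{\idem}\rightleftarrows \accessible_{\kappa}\colon (-)^{\kappa}$ supplied by \cite[Prop.~5.4.2.17]{lurieHTT}, exploiting that both constructions in play are fibrewise by design. Recall that $\underline{\ind}_{\kappa}$ was defined in \cref{nota:parametrisedInd} by applying $\func(\baseCat\op,-)$ to the 2-functor $\ind_{\kappa}\colon \cat \rightarrow \cat$ whose existence was recorded in \cref{2FunctorialityOfInd}. The first step is to upgrade $(-)^{\kappa}$ similarly into a 2-functor $(-)^{\kappa}\colon \accessible_{\kappa}\rightarrow \cat^{\idem}$, which is straightforward since morphisms in $\accessible_{\kappa}$ are by definition compact-preserving accessible functors, and so commute with passing to compacts. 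Then applying $\func(\baseCat\op,-)$ to the unparametrised equivalence yields an equivalence $\func(\baseCat\op, \cat^{\idem})\simeq \func(\baseCat\op, \accessible_{\kappa})$.

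Next I would identify these two functor categories with the parametrised ones appearing in the statement. On the idempotent-complete side, $\func(\baseCat\op,\cat^{\idem}) \simeq \underline{\cat}_{\baseCat}^{\underline{\idem}}$ is immediate from the fact that both parametrised-idempotent-completeness and parametrised functors are fibrewise notions. On the accessible side, $\func(\baseCat\op, \accessible_{\kappa}) \simeq \underline{\accessible}_{\baseCat, \kappa}$ requires unpacking: an object $\underline{\sC}$ of the left-hand side is a $\baseCat$-category each of whose fibres is $\kappa$-accessible and whose cocartesian lifts preserve $\kappa$-compact objects, which is equivalent to $\underline{\sC}\simeq \underline{\ind}_{\kappa}(\underline{\sC}^0)$ for some small $\underline{\sC}^0$ by the fibrewise description of $\underline{\ind}_{\kappa}$ recalled in \cref{consequencesOfFibrewiseDefinitions}; the morphisms also match on both sides since being $\kappa$-accessible and preserving $\kappa$-compacts are both fibrewise conditions.

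Finally I would verify that the two composites corresponding to the unit and counit of the equivalence, namely $\underline{\ind}_{\kappa}(\underline{\sC}\tkappa)\rightarrow \underline{\sC}$ and $\underline{\D}\rightarrow \underline{\ind}_{\kappa}(\underline{\D})\tkappa$, are equivalences of $\baseCat$-categories. Both can be checked fibrewise where they reduce directly to the unparametrised result \cite[Prop.~5.4.2.17]{lurieHTT}, the second being in fact precisely \cref{consequencesOfFibrewiseDefinitions}~(2).

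The main obstacle in carrying this out is purely notational and bookkeeping: one has to confirm that the fibrewise extraction of $\kappa$-compacts $\underline{\sC}\tkappa$ genuinely assembles into a $\baseCat$-subcategory (this is where we need that the cocartesian lifts $f^*\colon \sC_V\rightarrow \sC_W$ preserve $\kappa$-compact objects, which is implicit in the definition of $\underline{\accessible}_{\baseCat,\kappa}$), and that the two 2-functors $\underline{\ind}_{\kappa}$ and $(-)\tkappa$ indeed match the functoriality in morphisms obtained by pointwise application of the unparametrised equivalence. No genuinely parametrised phenomenon arises in the proof, which is why the argument bootstraps cleanly.
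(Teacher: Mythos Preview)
Your approach is correct and arrives at the same conclusion by essentially the same fibrewise reduction, though the order of operations differs slightly from the paper. The paper establishes $\baseCat$-fully faithfulness of $(-)\tkappa$ directly by invoking the parametrised universal property of $\underline{\ind}_{\kappa}$ (\cref{UnivPropInd}) to produce a chain of equivalences
\[
\underline{\func}_{\baseCat}^{\kappa\operatorname{-filt},\underline{\kappa}}(\underline{\ind}_{\kappa}\underline{\sC},\underline{\ind}_{\kappa}\underline{\D}) \xrightarrow{\simeq} \underline{\func}_{\baseCat}\tkappa(\underline{\ind}_{\kappa}(\underline{\sC})\tkappa,\underline{\ind}_{\kappa}\underline{\D}) \xrightarrow{\simeq} \underline{\func}_{\baseCat}(\underline{\ind}_{\kappa}(\underline{\sC})\tkappa,\underline{\ind}_{\kappa}(\underline{\D})\tkappa),
\]
and then treats essential surjectivity and the inverse via \cref{consequencesOfFibrewiseDefinitions} exactly as you do. Your route instead packages the whole equivalence in one step by applying $\func(\baseCat\op,-)$ to the unparametrised equivalence of \cite[Prop.~5.4.2.17]{lurieHTT} as a black box. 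This is marginally slicker but trades the paper's self-contained parametrised argument for an appeal to 2-functoriality; in particular you should make explicit that the identification $\func(\baseCat\op,\accessible_{\kappa})\simeq \underline{\accessible}_{\baseCat,\kappa}$ is to be applied at every $V\in\baseCat$ (replacing $\baseCat$ by $\baseCat_{/V}$) to obtain an equivalence of $\baseCat$-categories rather than just of their global sections. With that clarification your argument goes through.
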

\begin{proof}
To see $\baseCat$--fully faithfulness, \cref{UnivPropInd} gives
\begin{equation*}
    \begin{split}
        \underline{\func}_{\baseCat}^{\kappa\operatorname{-filt},\underline{\kappa}}(\underline{\ind}_{\kappa}\underline{\sC} , \underline{\ind}_{\kappa}\underline{\D}) &\xrightarrow{\simeq} \underline{\func}_{\baseCat}\tkappa(\underline{\ind}_{\kappa}(\underline{\sC})\tkappa,\underline{\ind}_{\kappa}\underline{\D})\\
        &\xrightarrow{\simeq} \underline{\func}_{\baseCat}(\underline{\ind}_{\kappa}(\underline{\sC})\tkappa,\underline{\ind}_{\kappa}(\underline{\D})\tkappa)
    \end{split}
\end{equation*}
where we have also used, by \cref{consequencesOfFibrewiseDefinitions} (1), that $\underline{\ind}_{\kappa}(\underline{\ind}_{\kappa}(\underline{\sC})\tkappa)\simeq \underline{\ind}_{\kappa}\underline{\sC} $. As for the essential image, let $\underline{\sC}$ be a small $\baseCat$--idempotent-complete category. Then by \cref{consequencesOfFibrewiseDefinitions} (2) we know that $\underline{\sC} \simeq \underline{\ind}_{\kappa}(\underline{\sC})\tkappa$, and so it is in the essential image as required. Finally to see the statement about the inverse, just note that we already have the functors and the appropriate natural transformations on compositions. Then using \cref{consequencesOfFibrewiseDefinitions} again, we see that the transformations are pointwise equivalences, and so equivalences.
\end{proof}

\begin{thm}[$\baseCat$--presentable-idempotent correspondence, ``{\cite[Prop. 5.5.7.8 and Rmk. 5.5.7.9]{lurieHTT}}'']\label{TPresentableIdempotentCorrespondence}
Let $\kappa$ be a regular cardinal. Then 
$(-)\tkappa : \underline{\presentable}_{\baseCat,L,\kappa} \longrightarrow \underline{\widehat{\cat}}_{\baseCat}^{\underline{\mathrm{rex}}(\kappa)}$ is $\baseCat$--fully faithful with essential image $\underline{\cat}_{\baseCat}^{\underline{\idem}(\kappa)}$, and inverse $\underline{\cat}_{\baseCat}^{\underline{\idem}(\kappa)}\rightarrow \underline{\presentable}_{\baseCat,L,\kappa}$ given by $\underline{\ind}_{\kappa}$. 
\end{thm}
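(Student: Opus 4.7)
The plan is to leverage the already-established $\baseCat$--accessible/idempotent-complete correspondence of \cref{TAccessibleScaffolding} and upgrade it by tracking the extra structure (cocompleteness and strong preservation of $\kappa$-small $\baseCat$--colimits) on both sides. The key technical input will be \cref{characterisationTLeftAdjointsPresentables}, which converts ``strongly preserves all $\baseCat$--colimits'' into the combination ``strongly preserves fibrewise $\kappa$-filtered colimits + restriction to $\kappa$-compacts strongly preserves $\kappa$-small $\baseCat$--colimits''.

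First I would verify that $(-)\tkappa$ actually lands in $\underline{\cat}_{\baseCat}^{\underline{\idem}(\kappa)}$. Given $\underline{\sC}\in \underline{\presentable}_{\baseCat,L,\kappa}$, \cref{compactClosure} gives that $\underline{\sC}\tkappa$ is $\kappa$--$\baseCat$--cocomplete; by characterisation (4) of \cref{simpsonTheorem} applied with the canonical choice $\underline{\D} = \underline{\sC}\tkappa$, it is small and $\baseCat$--idempotent-complete. For a morphism $F$ in $\underline{\presentable}_{\baseCat,L,\kappa}$, by definition $F$ preserves $\kappa$-parametrised-compacts, so $F|_{\underline{\sC}\tkappa}$ is a well-defined $\baseCat$--functor, and \cref{characterisationTLeftAdjointsPresentables} ((a)$\Rightarrow$(b)) ensures that it strongly preserves $\kappa$-small $\baseCat$--colimits, i.e.\ defines a morphism in $\underline{\cat}_{\baseCat}^{\underline{\idem}(\kappa)}$.

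For $\baseCat$--fully faithfulness, it suffices to work over a single $V \in \baseCat$; by basechanging we may assume $V$ is terminal and show that the restriction on ordinary morphism $\infty$-categories is an equivalence. We use the commutative diagram
\begin{center}
\begin{tikzcd}
\func_{\baseCat}^{L,\underline{\kappa}}(\underline{\sC},\underline{\D}) \rar["(-)\tkappa"] \dar[hook] & \func_{\baseCat}^{\mathrm{rex}(\kappa)}(\underline{\sC}\tkappa, \underline{\D}\tkappa) \dar[hook] \\
\func_{\baseCat}^{\kappa\operatorname{-filt},\underline{\kappa}}(\underline{\sC},\underline{\D}) \rar["\simeq"] & \func_{\baseCat}(\underline{\sC}\tkappa, \underline{\D}\tkappa)
\end{tikzcd}
\end{center}
where the bottom equivalence comes from \cref{TAccessibleScaffolding} (using $\underline{\sC}\simeq \underline{\ind}_{\kappa}(\underline{\sC}\tkappa)$ from characterisation (4) of \cref{simpsonTheorem}). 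The vertical inclusions are full subcategory inclusions, and it remains to show that the bottom equivalence identifies them. This is exactly the content of \cref{characterisationTLeftAdjointsPresentables}: a $\kappa$-parametrised-accessible functor $F\colon \underline{\sC}\to\underline{\D}$ that preserves $\kappa$-compacts strongly preserves all $\baseCat$--colimits if and only if $F|_{\underline{\sC}\tkappa}$ strongly preserves $\kappa$-small $\baseCat$--colimits. Hence the top functor is an equivalence onto its image, giving $\baseCat$--fully faithfulness.

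For the essential image and the inverse, suppose $\underline{\D}\in \underline{\cat}_{\baseCat}^{\underline{\idem}(\kappa)}$. Then $\underline{\ind}_{\kappa}(\underline{\D})$ is $\baseCat$--presentable by characterisation (4) of \cref{simpsonTheorem} (with the chosen $\underline{\D}$), and \cref{consequencesOfFibrewiseDefinitions}(2) together with $\baseCat$--idempotent-completeness of $\underline{\D}$ yields a natural equivalence $\underline{\D} \xrightarrow{\simeq} (\underline{\ind}_{\kappa}\underline{\D})\tkappa$, so the essential image contains $\underline{\cat}_{\baseCat}^{\underline{\idem}(\kappa)}$; combined with the first paragraph this shows equality. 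The same naturality gives $\underline{\ind}_{\kappa}\circ(-)\tkappa \simeq \id$ on $\underline{\presentable}_{\baseCat,L,\kappa}$ (using characterisation (4) again for the other direction). These unit/counit data are already $\baseCat$--natural by \cref{TAccessibleScaffolding}, so they upgrade to inverse $\baseCat$--equivalences between $\underline{\presentable}_{\baseCat,L,\kappa}$ and $\underline{\cat}_{\baseCat}^{\underline{\idem}(\kappa)}$.

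The main obstacle is conceptually minor but notationally delicate: reconciling the morphism conditions on both sides via \cref{characterisationTLeftAdjointsPresentables}, and checking that the functoriality of $\underline{\ind}_{\kappa}$ from \cref{indAdjunctions}/\cref{2FunctorialityOfInd} is compatible with the extra $\kappa$-cocompleteness-preservation data, so that the correspondence is not just a bijection on objects but an actual $\baseCat$--equivalence of $\baseCat$--categories.
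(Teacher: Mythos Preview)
Your proposal is correct and follows essentially the same approach as the paper: the paper's own proof is a terse one-sentence invocation of \cref{TAccessibleScaffolding}, \cref{compactClosure}, \cref{characterisationTLeftAdjointsPresentables}, and \cref{simpsonTheorem}(4), and you have simply unpacked how these ingredients fit together (landing in the right target via \cref{compactClosure}, identifying the morphism conditions via \cref{characterisationTLeftAdjointsPresentables}, and reading off the essential image and inverse from \cref{simpsonTheorem}(4) and \cref{consequencesOfFibrewiseDefinitions}). Your commutative-square argument for fully faithfulness is exactly the elaboration the paper leaves implicit.
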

\begin{proof}
That it is $\baseCat$--fully faithful with the specified essential image is by \cref{TAccessibleScaffolding} together with \cref{compactClosure} and \cref{characterisationTLeftAdjointsPresentables}. That the inverse from \cref{TAccessibleScaffolding} via $\underline{\ind}_{\kappa}$ lands in $\baseCat$--presentables is by \cref{simpsonTheorem} (4).
\end{proof}

\subsection{Indexed products of presentables}\label{indexedProductsPresentables}
The purpose of this subsection is to show that the (non-full) inclusions $\underline{\presentable}_{\baseCat, L,\kappa}, \underline{\presentable}_{\baseCat, R,\kappa\operatorname{-filt}} \subset \underline{\widehat{\cat}}_{\baseCat}$ create indexed products.

\begin{lem}[Indexed products of $\baseCat$--presentables]\label{indexedProductsOfTPresentables}
Let $f : W \rightarrow V$ be in $\baseCat$ and $\underline{\sC}$ be a $\baseCat_{/W}$--presentable category. Then $f_*\underline{\sC}$ is a $\baseCat_{/V}$--presentable category. 
\end{lem}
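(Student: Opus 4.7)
The plan is to verify characterisation (7) of \cref{simpsonTheorem} for the $\baseCat_{/V}$-category $f_*\underline{\sC}$. By hypothesis, the straightening of $\underline{\sC}$ factors through $\presentable_{L,\kappa}$ for some regular cardinal $\kappa$ and satisfies the left Beck-Chevalley condition. I will show that the same $\kappa$ works for $f_*\underline{\sC}$, and that the left Beck-Chevalley condition is inherited.

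First, by \cref{indexedProductsOfCategories}, for each $Y \in \baseCat_{/V}$ we have
\[(f_*\underline{\sC})_Y \simeq \prod_{O \in \operatorname{Orbit}(Y \times_V W)} \sC_O,\]
which is a \emph{finite} product since $\baseCat$ is orbital (so $Y \times_V W \in \finite_{\baseCat}$ has finitely many orbits). Each factor $\sC_O$ is $\kappa$-presentable by hypothesis, and a finite product of $\kappa$-presentables is $\kappa$-presentable (the $\kappa$-compact objects of a finite product being precisely products of $\kappa$-compacts), so each fibre of $f_*\underline{\sC}$ lies in $\presentable_{L,\kappa}$. Next, given $\varphi : Y' \to Y$ in $\baseCat_{/V}$, orbitality and atomicity yield for each orbit $O'$ of $Y' \times_V W$ a unique orbit $\sigma(O')$ of $Y \times_V W$ through which $O' \to Y \times_V W$ factors, together with a unique map $\rho_{O'} : O' \to \sigma(O')$ in $\baseCat_{/W}$. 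Under the identification above, the restriction $\varphi^*$ acts as $(x_O)_O \mapsto (\rho_{O'}^* x_{\sigma(O')})_{O'}$. Since each $\rho_{O'}^*$ is a left adjoint preserving $\kappa$-compacts by our hypothesis on $\underline{\sC}$, and projections and formation of finite products preserve both left adjointness and $\kappa$-compactness in each coordinate, the functor $\varphi^*$ is a morphism in $\presentable_{L,\kappa}$.

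Finally, to verify the left Beck-Chevalley condition for $f_*\underline{\sC}$, I would check it fibrewise. Given a pullback $\coprod_a R_a = Y' \times_{Y} Y''$ in $\finite_{\baseCat_{/V}}$, base-change along $W \to V$ produces a pullback $(Y' \times_V W) \times_{Y \times_V W} (Y'' \times_V W) \simeq \coprod_a (R_a \times_V W)$ in $\finite_{\baseCat}$ by orbitality. Decomposing all four objects into orbits, the relevant Beck-Chevalley transformation of $f_*\underline{\sC}$ splits, in product coordinates, into a collection of transformations of the form $\coprod_{\ast} \rho_{\ast !}\tau_{\ast}^* \Rightarrow \tau^* \rho_!$ between appropriate factors $\sC_{O''} \to \sC_{O'}$; each such piece is precisely an instance of the left Beck-Chevalley transformation in $\underline{\sC}$, hence an equivalence by hypothesis.

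The main obstacle is the final step: the product decomposition of the Beck-Chevalley transformation. Concretely, one must match orbits of $R_a \times_V W$ with orbits arising as fibre products of orbits of $Y' \times_V W$ and $Y'' \times_V W$ over orbits of $Y \times_V W$, and then show that the canonical comparison maps assemble correctly under these identifications. This is a routine but notation-heavy diagram chase, which once executed reduces the Beck-Chevalley condition for $f_*\underline{\sC}$ to componentwise instances of the condition for $\underline{\sC}$.
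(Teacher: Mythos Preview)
Your argument via characterisation (7) of \cref{simpsonTheorem} is correct, and the sketch of the Beck--Chevalley verification, while terse, is sound: the orbit decomposition of the base-changed pullback square does reduce the transformation for $f_*\underline{\sC}$ to a finite collection of Beck--Chevalley transformations in $\underline{\sC}$, combined with the elementary fact that the formula $\varphi_!(y_{O'})_{O'} = \big(\coprod_{O':\sigma(O')=O}(\rho_{O'})_! y_{O'}\big)_O$ for the left adjoint is compatible with these decompositions.

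The paper takes a rather different route, using characterisation (5) instead. It first proves the identity $f_*\underline{\func}_{\underline{W}}(\underline{\D}, \underline{\spc}_{\underline{W}}) \simeq \underline{\func}_{\underline{V}}(f_!\underline{\D}, \underline{\spc}_{\underline{V}})$ by a short Yoneda argument, so that $f_*$ carries presheaf $\baseCat_{/W}$-categories to presheaf $\baseCat_{/V}$-categories. Then, writing $\underline{\sC}$ as an accessible Bousfield localisation of a presheaf category and invoking \cref{indexedConstructionsPreserveAdjunctions} to push the localisation through $f_*$, one obtains $f_*\underline{\sC}$ as an accessible Bousfield localisation of a presheaf category, hence presentable. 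This approach is shorter and entirely avoids the orbit-level bookkeeping you flag as the main obstacle; on the other hand, your fibrewise description of $(f_*\underline{\sC})_Y$, its restriction functors, and their left adjoints is precisely the kind of explicit formula one needs when later analysing units and counits of the $f^* \dashv f_*$ adjunction on presentable categories (as in \cref{creationIndexedProductsPresentables}), so your approach has independent value.
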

\begin{proof}
We first note that if $\underline{\D}$ is a $\baseCat_{/W}$--category, then $f_*\underline{\func}_{\underline{W}}(\underline{\D}, \underline{\spc}_{\underline{W}}) \simeq \underline{\func}_{\underline{V}}(f_!\underline{\D}, \underline{\spc}_{\underline{V}}).$ To see this, let $\underline{\E}$ be a $\baseCat_{/V}$--category. Then 
\begin{equation*}
    \begin{split}
        \map_{\cat_{\baseCat_{/V}}}(\underline{\E}, f_*\underline{\func}_{\underline{W}}(\underline{\D}, \underline{\spc}_{\underline{W}})) & \simeq \map_{\cat_{\baseCat_{/W}}}(f^*\underline{\E}, \underline{\func}_{\underline{W}}(\underline{\D}, \underline{\spc}_{\underline{W}}))\\
        &\simeq \map_{\cat_{\baseCat_{/W}}}(\underline{\D}, \underline{\func}_{\underline{W}}(f^*\underline{\E}, \underline{\spc}_{\underline{W}}))\\
        &\simeq \map_{\cat_{\baseCat_{/W}}}(\underline{\D}, f^*\underline{\func}_{\underline{V}}(\underline{\E}, \underline{\spc}_{\underline{V}}))\\
        &\simeq \map_{\cat_{\baseCat_{/V}}}(f_!\underline{\D}, \underline{\func}_{\underline{V}}(\underline{\E}, \underline{\spc}_{\underline{V}}))\\
        & \simeq \map_{\cat_{\baseCat_{/V}}}(\underline{\E}, \underline{\func}_{\underline{V}}(f_!\underline{\D}, \underline{\spc}_{\underline{V}}))
    \end{split}
\end{equation*}
By \cref{simpsonTheorem} we have a accessible $\baseCat_{/W}$--Bousfield localisation $\underline{\func}_{\underline{W}}(\underline{\D}, \underline{\spc}_{\underline{W}}) \rightleftarrows \underline{\sC}$ for some small $\baseCat_{/W}$--category $\underline{\D}$. Hence by \cref{indexedConstructionsPreserveAdjunctions}, we obtain the accessible adjunction
\begin{center}
    \begin{tikzcd}
        \underline{\func}_{\underline{V}}(f_!\underline{\D}, \underline{\spc}_{\underline{V}}) \simeq f_*\underline{\func}_{\underline{W}}(\underline{\D}, \underline{\spc}_{\underline{W}}) \rar[shift left = 1] & f_*\underline{\sC} \lar[hook, shift left = 1]
    \end{tikzcd}
\end{center}
Therefore, $f_*\underline{\sC}$ must be $\baseCat_{/V}$--presentable, again by \cref{simpsonTheorem}.
\end{proof}

\begin{prop}[Creation of indexed products for presentables]\label{creationIndexedProductsPresentables}
The (non-full) inclusions $\underline{\presentable}_{\baseCat, L,\kappa}, \underline{\presentable}_{\baseCat, R,\kappa\operatorname{-filt}} \subset \underline{\widehat{\cat}}_{\baseCat}$ create indexed products.
\end{prop}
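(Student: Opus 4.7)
The plan is to exploit \cref{indexedProductsOfTPresentables} together with the explicit fibrewise formulae for the unit and counit of $f^* \dashv f_*$ from \cref{unitCounitIndexedProductOfCategories}. To show that either non-full inclusion creates the indexed product along $f \colon W \to V$ amounts to three checks: (a) $f_*\underline{\sC}$ lies in the subcategory; (b) the counit $\varepsilon \colon f^*f_*\underline{\sC} \to \underline{\sC}$ is a morphism in the subcategory; and (c) for any $\underline{\D}$ in the subcategory, the $f^* \dashv f_*$ adjunction equivalence $\map_{\cat_{\baseCat_{/V}}}(\underline{\D}, f_*\underline{\sC}) \simeq \map_{\cat_{\baseCat_{/W}}}(f^*\underline{\D}, \underline{\sC})$ restricts to an equivalence on the corresponding sub-mapping-spaces of morphisms lying in the subcategory.

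Item (a) is precisely \cref{indexedProductsOfTPresentables}. For (b), \cref{unitCounitIndexedProductOfCategories} identifies $\varepsilon$ at the fibre over $X \in \baseCat_{/W}$ with the coordinate projection $\prod_{N \in \operatorname{Orbit}(X \times_V W)} \sC_N \to \sC_X$, where $X$ appears as one of the orbits by atomicity. Such coordinate projections strongly preserve ordinary colimits and limits, admit both left and right adjoints (given by insertion-at-coordinate with initial or terminal objects elsewhere), preserve $\kappa$-compact objects, and are $\kappa$-filtered-accessible, since all these properties are detected componentwise in a product. The Beck--Chevalley conditions needed for strong preservation in the parametrised sense (\cref{characterisationStrongPreservations}) follow from pullback-stability of relative adjunctions (\cref{relativeAdjunctionPullbacks}) together with \cref{indexedConstructionsPreserveAdjunctions}.

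For (c), the key observation is that any $\baseCat_{/V}$-functor $G \colon \underline{\D} \to f_*\underline{\sC}$ decomposes fibrewise, via \cref{unitCounitIndexedProductOfCategories}, as $G_Y \simeq \prod_{M \in \operatorname{Orbit}(Y \times_V W)} F_M$, where $F \coloneqq \varepsilon \circ f^*G \colon f^*\underline{\D} \to \underline{\sC}$ is the adjoint morphism and $F_M$ is its fibre over $M$. Since parametrised-left-adjointness together with preservation of $\kappa$-parametrised-compacts (respectively strong preservation of parametrised-limits together with $\kappa$-filtered-accessibility) is checkable fibrewise, $G$ lies in the subcategory if and only if each $F_M$ does, if and only if $F$ itself lies in the subcategory. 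The right-adjoint $\kappa$-filtered-accessible case is formally identical, using that $f^*$ commutes with the relevant structure by \cref{indexedConstructionsPreserveAdjunctions}.

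The main obstacle I expect is not in any single step but rather in the careful verification in (c) that strong preservation of parametrised colimits (respectively limits), which is strictly stronger than fibrewise preservation by \cref{shahStrongPreservationWarning}, transfers correctly across the $f^* \dashv f_*$ adjunction. This should ultimately reduce to a Beck--Chevalley check leveraging that $f^*$ itself preserves indexed (co)products as a parametrised-left adjoint, but must be done carefully in light of the distinction emphasised in \cref{strongPreservationObservation}.
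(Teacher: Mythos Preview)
Your structural outline (a)--(c) and the use of \cref{indexedProductsOfTPresentables} and \cref{unitCounitIndexedProductOfCategories} match the paper's approach. However, your argument in (c) has a genuine gap, one you correctly flag in the final paragraph but do not resolve. The claim that ``parametrised-left-adjointness \ldots\ is checkable fibrewise'' is false as written: by \cref{criteriaForTAdjunctions} (or equivalently \cref{characterisationStrongPreservations}), one must also verify that the putative right adjoints (or the indexed-coproduct functors $\xi_!$) commute with the restriction functors, and your product decomposition $G_Y \simeq \prod_M F_M$ alone does not supply this compatibility for $G$ from that for $F$.

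The paper closes this gap differently: rather than arguing (c) directly via the fibrewise decomposition, it shows that \emph{both} the counit $\varepsilon$ and the unit $\eta\colon \underline{\sC} \to f_*f^*\underline{\sC}$ lie in the subcategories. Once both are known to be morphisms in $\underline{\presentable}_{\baseCat,L,\kappa}$ (resp.\ $\underline{\presentable}_{\baseCat,R,\kappa\operatorname{-filt}}$), together with the fact from \cref{indexedConstructionsPreserveAdjunctions} that $f^*$ and $f_*$ send morphisms of the subcategory to morphisms of the subcategory, the two assignments $G \mapsto \varepsilon \circ f^*G$ and $F \mapsto f_*F \circ \eta$ restrict to the sub-mapping-spaces and are inverse there because they are so on the ambient mapping spaces. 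You have only treated $\varepsilon$; the missing ingredient is the verification for $\eta$, which the paper carries out by an explicit Beck--Chevalley check using the pullback cube over $f\colon W \to V$. Separately, your justification in (b) that the Beck--Chevalley condition for $\varepsilon$ ``follows from \cref{relativeAdjunctionPullbacks} together with \cref{indexedConstructionsPreserveAdjunctions}'' is too quick: those results do not directly yield commutativity of $\varepsilon$ with the indexed coproducts $\xi_!$ inside $\underline{\sC}$ and $f^*f_*\underline{\sC}$; the paper instead writes down the relevant pullback cube and reads off the required commutative square from the explicit projection description of $\varepsilon$.
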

\begin{proof}
Let $f : W \rightarrow V$ be in $\baseCat$ and $\underline{\sC}, \underline{\D}$ be $\baseCat_{/V}$-- and $\baseCat_{/W}$--presentables, respectively. We know from \cref{unitCounitIndexedProductOfCategories} that $\underline{\widehat{\cat}}_{\baseCat}$ has indexed products. We need to show that 
\[\map^L_{\underline{V}}(\underline{\sC},  f_*\underline{\D}) \simeq \map^L_{\underline{W}}(f^*\underline{\sC},  \underline{\D})\]\[\map^{R,\kappa\operatorname{-filt}}_{\underline{V}}(\underline{\sC},  f_*\underline{\D}) \simeq \map^{R,\kappa\operatorname{-filt}}_{\underline{W}}(f^*\underline{\sC},  \underline{\D})\]
We claim that the unit and counit in $\underline{\widehat{\cat}}_{\baseCat}$ are already in both $\underline{\presentable}_{\baseCat, L,\kappa}$ and $\underline{\presentable}_{\baseCat, R,\kappa\operatorname{-filt}}$. If we can show this then we would be done by the following pair of diagrams
\begin{center}
    \begin{tikzcd}
    \map_{\underline{V}}^L(\underline{\sC},  f_*\underline{\D}) \rar["f^*"] \dar[hook]& \map_{\underline{W}}^L(f^*\underline{\sC},  f^*f_*\underline{\D})\dar[hook] \rar["\varepsilon_*"] & \map_{\underline{W}}^L(f^*\underline{\sC},  \underline{\D})\dar[hook]\\
    \map_{\underline{V}}(\underline{\sC},  f_*\underline{\D}) \rar["f^*"] \ar[rr, bend right = 15, "\simeq"]& \map_{\underline{W}}(f^*\underline{\sC},  f^*f_*\underline{\D}) \rar["\varepsilon_*"] & \map_{\underline{W}}(f^*\underline{\sC},  \underline{\D})
    \end{tikzcd}
\end{center}
\begin{center}
    \begin{tikzcd}
    \map^L_{\underline{V}}(\underline{\sC}, f^*\underline{\D})  \dar[hook]& \map^L_{\underline{V}}(f_*f^*\underline{\sC},  f^*\underline{\D})\lar["\eta^*"']  \dar[hook]& \map^L_{\underline{W}}(f^*\underline{\sC}, \underline{\D}) \lar["f_*"']\dar[hook]\\
    \map_{\underline{V}}(\underline{\sC}, f^*\underline{\D})   & \map_{\underline{V}}(f_*f^*\underline{\sC},  f^*\underline{\D})\lar["\eta^*"']  & \map_{\underline{W}}(f^*\underline{\sC}, \underline{\D}) \lar["f_*"']\ar[ll, bend left = 15, "\simeq"']
    \end{tikzcd}
\end{center}
and similarly when we replace $\map^L$ by $\map^{R,\kappa\operatorname{-filt}}$: that the (co)units are in $\underline{\presentable}_{\baseCat, R,\kappa\operatorname{-filt}}$ and $\underline{\presentable}_{\baseCat, L,\kappa}$ imply that the maps $\varepsilon_*$ and $\eta^*$ above takes $\map^L$ to $\map^L$; that $f^*$ and $f_*$ also do these is by \cref{indexedConstructionsPreserveAdjunctions}; and finally the bottom equivalences are inverse to each other, and so restrict to inverse equivalences to the top row of each diagram. 

We now prove the claims. That they preserve $\kappa$--$\baseCat$--compact objects is clear by \cref{unitCounitIndexedProductOfCategories} and \cref{simpsonTheorem}. To see that the counit $\varepsilon : f^*f_*\underline{\D} \rightarrow \underline{\D}$ strongly preserves $\baseCat$--(co)limits, since it is clear that they preserve fibrewise $\baseCat$--(co)limits, by \cref{characterisationStrongPreservations} we are left to show that they preserve the indexed (co)products. So let $\xi : Y\rightarrow Z$ be in $\baseCat_{/W}$. For this we will need to know that $\underline{\D}$ has indexed coproducts and products (for the latter, see \cref{TPresentablesAreTComplete}).  We need to show that the squares with the dashed arrows in 
\begin{equation}\label{counitSquare}
    \begin{tikzcd}
    (f^*f_*\underline{\D})_Z \rar["\varepsilon"]\dar["\xi^*"] & \underline{\D}_Z\dar["\xi^*"]\\
    (f^*f_*\underline{\D})_Y \rar["\varepsilon"]\uar[bend left = 50, dashed, "\xi_!"]\uar[bend right = 50, dashed, "\xi_*"'] & \underline{\D}_Y\uar[bend left = 50, dashed, "\xi_!"]\uar[bend right = 50, dashed, "\xi_*"'] 
    \end{tikzcd}
\end{equation}
commute. We analyse this in terms of the counit formula from \cref{unitCounitIndexedProductOfCategories}. For this, consider the diagram of orbits
\begin{equation}\label{comparisonOfPullbacks}
    \begin{tikzcd}
        & \coprod_{b}R_b\ar[dd]\ar[ddrr, phantom, "\scalebox{1.5}{$\lrcorner$}", very near start] \ar[rr] \ar[dl,"\coprod_{b}\xi_{a_b}"']\ar[ddrr, phantom, "\scalebox{1.5}{$\lrcorner$}", very near start]&& Y \ar[dl, "\xi"]\ar[dd]\\
        \coprod_aS_a \ar[ddrr, phantom, "\scalebox{1.5}{$\lrcorner$}", very near start]\ar[dd]\ar[rr, crossing over]&& Z\\
        & W \ar[rr, "f"{xshift=-15pt}]\ar[dl,equal] && V\ar[dl,equal]\\
        W \ar[rr, "f"] && V  \ar[uu, leftarrow,crossing over]
    \end{tikzcd}
\end{equation}
where the top square is also a pullback since we can view this diagram as 
\begin{center}
    \begin{tikzcd}
    \coprod_bR_b \dar\rar & \coprod_aS_a \rar \dar& W \dar["f"]\\
    Y \rar & Z \rar & V
    \end{tikzcd}
\end{center}
with the right square and the outer rectangle being pullbacks. From this we obtain that the diagram \cref{counitSquare} is equivalent to 
\begin{center}
    \begin{tikzcd}
    \prod_a\D_{S_a} \rar["\pi_Z"]\dar["\prod_b\xi^*_{a_b}"'] & \D_Z \dar["\xi^*"]\\
    \prod_b\D_{R_b} \rar["\pi_Y"]\uar[bend left = 80, dashed, "\xi_!"]\uar[bend right = 50, dashed, "\xi_*"'] & \D_Y\uar[bend left = 50, dashed, "\xi_!"]\uar[bend right = 50, dashed, "\xi_*"']
    \end{tikzcd}
\end{center}
where the counits have been identified with the projections $\pi_Z$ (resp. $\pi_Y$) onto the $\underline{\D}_Z$ (resp. $\underline{\D}_Y$) components by virtue of \cref{unitCounitIndexedProductOfCategories}. Here $\prod_b\xi_{a_b}^*$ is supposed to mean forgetting about the components of $\coprod_a{S_a}$ that do not receive a map from $\coprod_bR_b$ and the functor $\xi^*_{a_b}$ for the other components: this makes sense because an orbit in a coproduct can only map to a unique orbit. Since $\underline{\sC}$ was $\baseCat_{/W}$--presentable, it in particular admits an $\baseCat_{/W}$--initial object. And so we can easily use these, together with the adjoints $(\xi_{a_b})_!$ and fibrewise coproducts to obtain a left adjoint $\xi_!$ of $\prod_b\xi^*_{a_b}$, and similarly a right adjoint $\xi_*$. It is then immediate that the dashed squares also commute since the counits just project left/right adjoints from the left vertical to those on the right.

To see that the unit strongly preserves $\baseCat$--(co)limits, similarly as above, we are reduced to the case of showing that it preserves indexed (co)products. Let $\zeta : U \rightarrow X$ be in $\baseCat_{/V}$. And so we want the squares with the dashed arrows 
\begin{center}
    \begin{tikzcd}
    \underline{\sC}_X \dar["\zeta^*"']\rar["\eta"] & (f_*f^*\underline{\sC})_X \dar["\zeta^*"]\\
    \underline{\sC}_U \rar["\eta"] \uar[bend left = 50, dashed, "\zeta_!"]\uar[bend right = 50, dashed, "\zeta_*"']& (f_*f^*\underline{\sC})_U \uar[bend left = 50, dashed, "\zeta_!"]\uar[bend right = 50, dashed, "\zeta_*"']
    \end{tikzcd}
\end{center}
to commute. For this consider the pullback comparison
\begin{center}
    \begin{tikzcd}
        & \coprod_{b}M_b\ar[dd]\ar[ddrr, phantom, "\scalebox{1.5}{$\lrcorner$}", very near start] \ar[rr] \ar[dl,"\coprod_{b}\zeta_{a_b}"']\ar[ddrr, phantom, "\scalebox{1.5}{$\lrcorner$}", very near start]&& U \ar[dl, "\zeta"]\ar[dd]\\
        \coprod_aN_a \ar[dd]\ar[rr, crossing over]\ar[ddrr, phantom, "\scalebox{1.5}{$\lrcorner$}", very near start]&& X \\
        & W \ar[rr, "f" {xshift=-15pt}]\ar[dl,equal] && V\ar[dl,equal]\\
        W \ar[rr, "f"] && V\ar[uu, crossing over, leftarrow]
    \end{tikzcd}
\end{center}
where the top square is also a pullback by the argument for the previous case. Since
\[(f_*f^*\underline{\sC})_X = \prod_a\sC_{N_a} \quad\text{ and }\quad (f_*f^*\underline{\sC})_U = \prod_b\sC_{M_b}\] we see that the units $\eta$ arise as restrictions along the maps $\coprod_aN_a \rightarrow X$ and $\coprod_bM_b \rightarrow U$ respectively. Then the required dashed squares commute by the Beck-Chevalley property of indexed (co)products of $\underline{\sC}$ associated to the top pullback square. This completes the proof.
\end{proof}

\subsection{Functor categories and tensors of presentables}\label{subsec4.6:functorCategoriesAndPresentables}
In this final subsection, we record several basic results about the interaction between parametrised-presentability and functor categories, totally analogous to the unparametrised setting.

\begin{lem}[Small cotensors preserve $\baseCat$--presentability]\label{smallCotensorPreservePresentability}
Let $\underline{\sC}$ be a small $\:{\baseCat}$-category and $\underline{\D}$ be $\baseCat$--presentable. Then $\underline{\func}_{\baseCat}(\underline{\sC}, \underline{\D})$ is also $\baseCat$--presentable.
\end{lem}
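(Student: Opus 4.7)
The plan is to apply characterisation (5) of \cref{simpsonTheorem} to $\underline{\D}$ and propagate the resulting Bousfield localisation through the endofunctor $\underline{\func}_{\baseCat}(\underline{\sC}, -)$ of $\widehat{\cat}_{\baseCat}$. First, since $\underline{\D}$ is $\baseCat$--presentable, \cref{simpsonTheorem} (5) provides a small $\baseCat$--idempotent-complete category $\underline{\E}$ together with a $\kappa$--$\baseCat$--accessible $\baseCat$--Bousfield localisation $L \colon \underline{\presheaf}_{\baseCat}(\underline{\E}) \rightleftarrows \underline{\D} : i$.

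Next, I would post-compose with $\underline{\sC}$. By \cref{omnibusTAdjunctions} this gives a $\baseCat$--adjunction $L_* \colon \underline{\func}_{\baseCat}(\underline{\sC}, \underline{\presheaf}_{\baseCat}(\underline{\E})) \rightleftarrows \underline{\func}_{\baseCat}(\underline{\sC}, \underline{\D}) : i_*$, and \cref{functorsPreserveTFullyFaithfulness} ensures $i_*$ is again $\baseCat$--fully faithful, so we have a $\baseCat$--Bousfield localisation. Since $\baseCat$--colimits in functor $\baseCat$--categories are computed pointwise (\cref{TColimitsFunctorCategories}) and $i$ preserves fibrewise $\kappa$--filtered colimits, the same holds for $i_*$, so the localisation is $\kappa$--$\baseCat$--accessible. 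Currying via \cref{TFunctorCategory} together with the involutive character of $(-)\vop$ and its compatibility with products yields
\[
\underline{\func}_{\baseCat}(\underline{\sC}, \underline{\presheaf}_{\baseCat}(\underline{\E})) \simeq \underline{\func}_{\baseCat}(\underline{\sC} \times \underline{\E}\vop, \underline{\spc}_{\baseCat}) \simeq \underline{\presheaf}_{\baseCat}(\underline{\sC}\vop \times \underline{\E}),
\]
which is a $\baseCat$--presheaf category on a small $\baseCat$--category.

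Finally, I would pass to the $\baseCat$--idempotent-completion $\underline{\F}$ of $\underline{\sC}\vop \times \underline{\E}$ (still small, and $\baseCat$--idempotent-complete), noting that the $\baseCat$--presheaf category is unchanged by this replacement: both $\underline{\presheaf}_{\baseCat}(-)$ satisfy the same universal property as the free $\baseCat$--cocompletion, and the target $\underline{\spc}_{\baseCat}$ is fibrewise idempotent-complete, so restriction along $\underline{\sC}\vop \times \underline{\E} \hookrightarrow \underline{\F}$ is an equivalence on $\baseCat$--presheaves. Combined with the previous steps, this exhibits $\underline{\func}_{\baseCat}(\underline{\sC}, \underline{\D})$ as a $\kappa$--$\baseCat$--accessible $\baseCat$--Bousfield localisation of $\underline{\presheaf}_{\baseCat}(\underline{\F})$ for a small $\baseCat$--idempotent-complete $\underline{\F}$, whence $\baseCat$--presentable by characterisation (5) of \cref{simpsonTheorem}.

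The one genuinely technical point is the idempotent-completion invariance of $\underline{\presheaf}_{\baseCat}(-)$; everything else is a straightforward package of the pointwise nature of functor $\baseCat$--categories (\cref{TColimitsFunctorCategories}), preservation of $\baseCat$--fully faithfulness under $\underline{\func}_{\baseCat}(\underline{\sC}, -)$ (\cref{functorsPreserveTFullyFaithfulness}), and the currying and opposite identifications already in the preliminaries.
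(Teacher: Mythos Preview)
Your proof is correct and follows essentially the same route as the paper's: apply characterisation (5) of \cref{simpsonTheorem} to $\underline{\D}$, push the resulting $\baseCat$--accessible Bousfield localisation through $\underline{\func}_{\baseCat}(\underline{\sC},-)$, and curry to identify $\underline{\func}_{\baseCat}(\underline{\sC},\underline{\presheaf}_{\baseCat}(\underline{\E}))$ with a $\baseCat$--presheaf category on a small $\baseCat$--category. The only difference is your final idempotent-completion step, which is unnecessary: the paper simply observes that a $\baseCat$--presheaf category on \emph{any} small $\baseCat$--category is already $\baseCat$--presentable (e.g.\ via \cref{whenCocompletionsAreIndCompletions}), and an accessible $\baseCat$--Bousfield localisation of a $\baseCat$--presentable is again $\baseCat$--presentable by the equivalence of the characterisations in \cref{simpsonTheorem}, so there is no need to literally match the ``idempotent-complete'' clause in (5). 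Your extra step is harmless but introduces the one technical point you flag (invariance of $\underline{\presheaf}_{\baseCat}$ under idempotent-completion), which the paper's argument sidesteps entirely.
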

\begin{proof}
As a special case, suppose first that $\underline{\D} \simeq \underline{\presheaf}_{\baseCat}(\underline{\D}')$ for a small $\baseCat$--category $\underline{\D}'$. Then $\underline{\func}_{\baseCat}(\underline{\sC}, \underline{\D}) \simeq \underline{\func}_{\baseCat}(\underline{\sC} \times {\underline{\D}'}\vop, \underline{\spc}_{\baseCat})$, and so it is also a $\baseCat$--presheaf category, and so is $\baseCat$--presentable. For a general $\baseCat$--presentable $\underline{\D}$, we know that we have a $\kappa$--$\baseCat$--accessible Bousfield localisation
$L : \underline{\presheaf}_{\baseCat}(\underline{\D}') \rightleftarrows \underline{\D} : i$ for some small $\baseCat$--category $\underline{\D}'$. Then we get a $\kappa$--$\baseCat$--accessible Bousfield localisation
$L_* : \underline{\func}_{\baseCat}(\underline{\sC}, \underline{\presheaf}_{\baseCat}(\underline{\D}'))  \rightleftarrows \underline{\func}_{\baseCat}(\underline{\sC}, \underline{\D}) : i_*$ and so since $\underline{\func}_{\baseCat}(\underline{\sC}, \underline{\presheaf}_{\baseCat}(\underline{\D}'))$ was $\baseCat$--presentable by the first part above, by characterisation \cref{simpsonTheorem} (5) we get that $\underline{\func}_{\baseCat}(\underline{\sC}, \underline{\D})$ is too.
\end{proof}

\begin{lem}[``{\cite[Lem. 5.5.4.17]{lurieHTT}}'']\label{htt5.5.4.17}
Let $F : \underline{\sC} \rightleftarrows \underline{\D} : G$ be a $\baseCat$--adjunction between $\baseCat$--presentables. Suppose we have a $\baseCat$--accessible Bousfield localisation $L : \underline{\sC} \rightleftarrows \underline{\sC}^0 : i$. Let $\underline{\D}^0 \coloneqq G^{-1}(\underline{\sC}^0)\subseteq \underline{\D}$. Then we have a $\baseCat$--accessible Bousfield localisation $L' : \underline{\D} \rightleftarrows \underline{\D}^0 : i'$.
\end{lem}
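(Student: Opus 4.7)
The plan is to deduce the result from the unparametrised \cite[Lem.~5.5.4.17]{lurieHTT} applied fibrewise, assembling the fibrewise data via \cref{criteriaForTAdjunctions}. Throughout I will use that, by characterisation~(7) of \cref{simpsonTheorem}, the fibres $\sC_V$ and $\D_V$ of a $\baseCat$--presentable category are ordinary presentable categories, and that the fibre adjunction $F_V \dashv G_V$ is well-defined.

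First I would verify that $\underline{\D}^0$ is genuinely a $\baseCat$--subcategory of $\underline{\D}$, i.e., that it is stable under the cocartesian restriction functors. Given $D\in \D^0_V$ and $f : W \rightarrow V$ in $\baseCat$, we have $G_W f^*D \simeq f^* G_V D$ since $G$ is a $\baseCat$--functor, and $f^* G_V D \in \sC^0_W$ because $\underline{\sC}^0 \subseteq \underline{\sC}$ is a $\baseCat$--subcategory by hypothesis. Hence $f^*D \in \D^0_W$, as required.

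Next, for each $V \in \baseCat$ the adjunction $L_V \dashv i_V$ is an accessible Bousfield localisation (since $\baseCat$--accessibility is by definition a fibrewise condition), so I can apply \cite[Lem.~5.5.4.17]{lurieHTT} to the fibre adjunction $F_V \dashv G_V$ to obtain fibrewise accessible Bousfield localisations $L'_V : \D_V \rightleftarrows \D^0_V : i'_V$. To assemble these into a $\baseCat$--adjunction via \cref{criteriaForTAdjunctions}, I need to check that $i'_W \circ f^* \simeq f^* \circ i'_V$; since the $i'_V$ are inclusions, this is exactly the cocartesian stability established in the previous paragraph. The resulting $\baseCat$--right adjoint $i'$ is fibrewise fully faithful, hence $\baseCat$--fully faithful, so $L' \dashv i'$ is a $\baseCat$--Bousfield localisation.

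Finally, $\baseCat$--accessibility of $L'$ is automatic: as a $\baseCat$--left adjoint, $L'$ strongly preserves all $\baseCat$--colimits by \cref{omnibusTAdjunctions}, so in particular all fibrewise $\kappa$--filtered colimits for every regular cardinal $\kappa$. The only genuine content of the argument is thus the short verification that $\underline{\D}^0$ is stable under the cocartesian restrictions; once this is in hand, the unparametrised lemma combined with the criteria for assembling $\baseCat$--adjunctions does the remaining work.
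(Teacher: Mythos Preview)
Your approach has a genuine gap in the assembly step. \cref{criteriaForTAdjunctions} applied to the $\baseCat$--functor $i'$ says that $i'$ admits a $\baseCat$--\emph{left} adjoint if and only if the fibrewise left adjoints $L'_V$ exist \emph{and} the squares $L'_W\circ f^* \simeq f^*\circ L'_V$ commute. You instead verify $i'_W\circ f^* \simeq f^*\circ i'_V$, but that is merely the statement that $i'$ is a $\baseCat$--functor (your first paragraph), not the Beck--Chevalley condition on the prospective left adjoints. Over cocartesian fibrations the two conditions are genuinely asymmetric: having a $\baseCat$--functor with fibrewise left adjoints does not automatically yield a relative left adjoint (cf.\ \cref{criteriaRelativeAdjunctions}\,(2)).

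The gap can be filled, but it takes the following extra input. Since $\underline{\D}$ is $\baseCat$--presentable, $f^*\colon\D_V\to\D_W$ has a right adjoint $f_*$. Because $G$ is a $\baseCat$--right adjoint it commutes with $f_*$, and because $\underline{\sC}^0\subseteq\underline{\sC}$ is a $\baseCat$--Bousfield local subcategory it is closed under $\baseCat$--limits (cf.\ the proof of \cref{presentablesAreTComplete}); hence $G_V f_* E \simeq f_* G_W E \in \sC^0_V$ for $E\in\D^0_W$, so $f_*$ preserves $\underline{\D}^0$. This gives a right adjoint to $f^*\colon\D^0_V\to\D^0_W$, and uniqueness of left adjoints then forces $L'_W f^*\simeq f^*L'_V$. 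Note that this is exactly the verification carried out inside the proof of \cref{parametrisedPresentableDwyerKanLocalisation}.

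The paper's own proof sidesteps all of this: accessibility of $L$ gives a small $S$ with $\underline{\sC}^0$ the $S$--local objects, the adjunction $F\dashv G$ then identifies $\underline{\D}^0$ with the $F(S)$--local objects, and one invokes \cref{parametrisedPresentableDwyerKanLocalisation} directly. Your route is not wrong in spirit, but once patched it essentially reproduces that theorem's internal argument; the paper's approach is shorter because it reuses the packaged result.
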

\begin{proof}
The $\baseCat$--accessibility of the Bousfield localisation $L : \underline{\sC} \rightleftarrows \underline{\sC}^0 :i$ ensures that there is a small set of morphisms of $\underline{\sC}$ such that $\underline{\sC}^0$ are precisely the $S$-local objects. Then it is easy to see that $\underline{\D}^0 \subseteq \underline{\D}$ is precisely the $F(S)$-local $\baseCat$--subcategory by using the adjunction.
\end{proof}

\begin{lem}[``{\cite[Lem. 5.5.4.18]{lurieHTT}}'']\label{htt5.5.4.18}
Let $\underline{\sC}$ be a $\baseCat$--presentable category and $\{\underline{\sC}_a\}_{a\in A}$ be a family of $\baseCat$--accessible Bousfield local subcategories indexed by a small set $A$. Then $\bigcap_{a\in A}\underline{\sC}_a$ is also a $\baseCat$--accessible Bousfield local subcategory.
\end{lem}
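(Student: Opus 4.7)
The plan is to exhibit $\bigcap_{a\in A}\underline{\sC}_a$ as a parametrised Dwyer-Kan localisation at a small set of morphisms, by first producing for each $a$ a small $\baseCat$-stable generating set $S_a$, then applying \cref{parametrisedPresentableDwyerKanLocalisation} to $S := \bigcup_{a\in A} S_a$ and identifying the result with the intersection.

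To construct $S_a$, I would choose a single regular cardinal $\kappa$ simultaneously large enough that: $\underline{\sC}$ is $\kappa$-$\baseCat$-accessible; each adjunction $L_a \dashv i_a$ is $\kappa$-$\baseCat$-accessible (i.e. $i_a$ strongly preserves fibrewise $\kappa$-filtered colimits); and $\underline{\sC}\tkappa$ is stable under the restrictions $f^*$ for every $f$ in $\baseCat$ (this last point uses characterisation $(7)$ of \cref{simpsonTheorem} after enlarging $\kappa$ if necessary). All three conditions can be met together since $A$ is small. I then set $S_a := \{\eta_x : x \to i_a L_a x \mid V \in \baseCat,\, x \in (\underline{\sC}\tkappa)_V\}$, which is small and $\baseCat$-stable since $L_a, i_a$ are $\baseCat$-functors and $\underline{\sC}\tkappa$ is $\baseCat$-stable.

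The heart of the argument is to verify that $\underline{\sC}_a$ coincides with the $\baseCat$-full subcategory of $S_a$-local objects. The inclusion $\underline{\sC}_a \subseteq S_a^{-1}\underline{\sC}$ is immediate from the $L_a \dashv i_a$ adjunction and the $\baseCat$-fully faithfulness of $i_a$. For the converse, given an $S_a$-local object $y$, I would first upgrade the defining equivalence $\myuline{\map}_{\underline{\sC}}(i_a L_a x, y) \simeq \myuline{\map}_{\underline{\sC}}(x, y)$ from $\kappa$-compact $x$ to arbitrary $w \in \underline{\sC}$: writing $w$ as a fibrewise $\kappa$-filtered colimit of $\kappa$-compacts and using that $i_a L_a$ preserves such colimits (since both $L_a$ and $i_a$ do), one obtains $\myuline{\map}(w, y) \simeq \myuline{\map}(i_a L_a w, y)$ for all $w$. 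Setting $w = y$ and taking the preimage of the identity produces $\xi : i_a L_a y \to y$ with $\xi \circ \eta_y \simeq \mathrm{id}_y$. A short diagram chase, using that $L_a \eta_y$ is an equivalence (from the triangle identity together with the fact that the counit $L_a i_a \Rightarrow \mathrm{id}$ is an equivalence by $\baseCat$-fully faithfulness of $i_a$) and that the induced map $L_a : \myuline{\map}(i_a L_a y, i_a L_a y) \xrightarrow{\simeq} \myuline{\map}_{\underline{\sC}_a}(L_a y, L_a y)$ is an equivalence, then forces $\eta_y \circ \xi \simeq \mathrm{id}_{i_a L_a y}$, so $\eta_y$ is an equivalence and $y \in \underline{\sC}_a$.

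Finally, \cref{parametrisedPresentableDwyerKanLocalisation} applied to $S := \bigcup_{a \in A} S_a$ (still small and $\baseCat$-stable) produces a $\baseCat$-accessible Bousfield localisation $L : \underline{\sC} \rightleftarrows S_{\underline{\amalg}}^{-1}\underline{\sC} : i$. To identify $S_{\underline{\amalg}}^{-1}\underline{\sC}$ with $\bigcap_a \underline{\sC}_a$, I would use that an object $y$ is $S$-local iff it is $S_a$-local for every $a$ iff $y \in \bigcap_a \underline{\sC}_a$ by the previous step, and that $S_{\underline{\amalg}}$-locality coincides with $S$-locality because $\myuline{\map}(-, y)$ sends finite indexed coproducts in the source to finite indexed products, turning coproducts of $S$-morphisms into products of equivalences. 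The main obstacle is the reverse containment in the third paragraph: bootstrapping $\kappa$-compact $S_a$-locality to full $L_a$-locality relies essentially on the $\baseCat$-accessibility hypothesis on the $L_a$, which is what makes the colimit-extension argument and the resulting diagram chase go through.
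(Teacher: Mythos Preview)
Your proposal is correct and follows the same strategy as the paper: exhibit each $\underline{\sC}_a$ as the subcategory of $S_a$-local objects for some small $\baseCat$-stable set $S_a$, then observe that $\bigcap_a \underline{\sC}_a$ is the $(\bigcup_a S_a)$-local subcategory and invoke \cref{parametrisedPresentableDwyerKanLocalisation}. The paper's proof is a single sentence that simply asserts the existence of such $S(a)$ (implicitly leaning on the unparametrised version in \cite{lurieHTT}), whereas you supply an explicit construction via the units on $\kappa$-compacts and the bootstrap argument to arbitrary objects; your extra detail is sound and makes the proof self-contained, but the underlying idea is identical.
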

\begin{proof}
This is because, if we write $S(a)$ for the morphisms of $\underline{\sC}$ such that $\underline{\sC}_a$ is the $S(a)$-local objects, then $\bigcap_{a\in A}\underline{\sC}_a$ are the $\bigcup_{a\in A}S(a)$-local objects.
\end{proof}

For the remaining results, recall from \cref{LFunRFunNotations} that $\underline{\func}_{\baseCat}^R$ and $\underline{\func}_{\baseCat}^L$ denote strongly $\baseCat$--limit- and $\baseCat$--colimit-preserving functors, respectively, and $\underline{\rfunc}_{\baseCat}$ and $\underline{\lfunc}_{\baseCat}$ denote $\baseCat$--right and $\baseCat$--left adjoint functors, respectively.

\begin{lem}[Presentable functor categories, ``{\cite[Lem. 4.8.1.16]{lurieHA}}'']\label{presentableFunctorCategories}
Let $\underline{\sC}, \underline{\D}$ be $\baseCat$--presentables. Then $\underline{\func}_{\baseCat}^R(\underline{\sC}\vop, \underline{\D})$ and $\underline{\func}^L_{\baseCat}(\sC, \D)$ are also $\baseCat$--presentable.
\end{lem}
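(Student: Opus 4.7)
The plan is to realise each of the two categories as a $\baseCat$-accessible Bousfield local subcategory of a known $\baseCat$-presentable category, and then invoke characterisation (5) of \cref{simpsonTheorem}. The two cases will run in parallel, differing only in how one uses opposites. The starting point for both is to fix a presentation $L \colon \underline{\presheaf}_{\baseCat}(\underline{\sC}_0) \rightleftarrows \underline{\sC} \colon i$ of $\underline{\sC}$ as a $\baseCat$-accessible Bousfield localisation (which exists by \cref{simpsonTheorem} (5)) and to let $\overline{S}$ denote the class of morphisms in $\underline{\presheaf}_{\baseCat}(\underline{\sC}_0)$ inverted by $L$; by \cref{strongSaturationYoga} this is the $\baseCat$-strong saturation of a small set $S_0$.

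For $\underline{\func}^L_{\baseCat}(\underline{\sC}, \underline{\D})$, precomposition with $L$ furnishes a $\baseCat$-fully faithful functor
\[L^* \colon \underline{\func}^L_{\baseCat}(\underline{\sC}, \underline{\D}) \hookrightarrow \underline{\func}^L_{\baseCat}(\underline{\presheaf}_{\baseCat}(\underline{\sC}_0), \underline{\D}) \simeq \underline{\func}_{\baseCat}(\underline{\sC}_0, \underline{\D}),\]
where the equivalence is the universal property \cref{YonedaUnivProp} and the target is $\baseCat$-presentable by \cref{smallCotensorPreservePresentability}. The essential image consists of those strongly colimit-preserving functors $\underline{\presheaf}_{\baseCat}(\underline{\sC}_0) \to \underline{\D}$ that invert $\overline{S}$; since the class of morphisms inverted by a strongly colimit-preserving functor is $\baseCat$-strongly saturated, this is the same as the condition of inverting the small generating set $S_0$.

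For $\underline{\func}^R_{\baseCat}(\underline{\sC}\vop, \underline{\D})$, I would proceed by duality. Taking opposites of the Bousfield adjunction yields a $\baseCat$-adjunction $i\vop \dashv L\vop$ with $i\vop$ still $\baseCat$-fully faithful, so $L\vop$ strongly preserves $\baseCat$-limits by \cref{omnibusTAdjunctions}. Precomposition with $L\vop$ then gives a $\baseCat$-fully faithful inclusion of $\underline{\func}^R_{\baseCat}(\underline{\sC}\vop, \underline{\D})$ into $\underline{\func}^R_{\baseCat}(\underline{\presheaf}_{\baseCat}(\underline{\sC}_0)\vop, \underline{\D})$. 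Since $\underline{\D}$ is $\baseCat$-complete by \cref{TPresentablesAreTComplete}, $\underline{\D}\vop$ is $\baseCat$-cocomplete, so combining \cref{YonedaUnivProp} with \cref{opOfFunctorCats} identifies
\[\underline{\func}^R_{\baseCat}(\underline{\presheaf}_{\baseCat}(\underline{\sC}_0)\vop, \underline{\D}) \simeq \underline{\func}^L_{\baseCat}(\underline{\presheaf}_{\baseCat}(\underline{\sC}_0), \underline{\D}\vop)\vop \simeq \underline{\func}_{\baseCat}(\underline{\sC}_0\vop, \underline{\D}),\]
which is $\baseCat$-presentable by \cref{smallCotensorPreservePresentability}, and by an entirely analogous argument (now passing through $G \mapsto G\vop$ to invoke strong saturation on $\underline{\presheaf}_{\baseCat}(\underline{\sC}_0) \to \underline{\D}\vop$) the essential image is cut out by the inversion condition against the image of $S_0$ in the opposite.

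The hard part will be verifying that in each case the essential image is an \emph{accessible} $\baseCat$-Bousfield local subcategory of the ambient presentable. For each individual $s \in S_0$ the subcategory of (co)limit-preserving functors inverting $s$ should be accessible Bousfield local via a pullback description along evaluation into the arrow $\baseCat$-category, imitating the unparametrised \cite[Prop.~5.5.4.15]{lurieHTT}; an application of \cref{htt5.5.4.18} to intersect over the small set $S_0$ then produces a single accessible $\baseCat$-Bousfield local subcategory, and presentability follows from \cref{simpsonTheorem} (5).
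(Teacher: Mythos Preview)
Your proposal is correct and follows essentially the same route as the paper: present $\underline{\sC}$ as an accessible Bousfield localisation of a presheaf category, identify $\underline{\func}^R_{\baseCat}(\underline{\presheaf}_{\baseCat}(\underline{\sC}_0)\vop,\underline{\D})$ and $\underline{\func}^L_{\baseCat}(\underline{\presheaf}_{\baseCat}(\underline{\sC}_0),\underline{\D})$ with $\underline{\func}_{\baseCat}(\underline{\sC}_0\vop,\underline{\D})$ and $\underline{\func}_{\baseCat}(\underline{\sC}_0,\underline{\D})$ via \cref{YonedaUnivProp} and \cref{opOfFunctorCats}, then cut down by the small set of inversion conditions using \cref{htt5.5.4.18}. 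The paper fills in the step you flag as ``the hard part'' exactly as you anticipate: for each $\alpha$ one forms the pullback along $\mathrm{ev}_\alpha$ to the arrow category, applies \cref{htt5.5.4.17} (this is the relevant lemma, not \cite[Prop.~5.5.4.15]{lurieHTT}), and observes that the equivalences form an accessible Bousfield local subcategory of $\underline{\func}_{\baseCat}(\tconstant_{\baseCat}(\Delta^1),\underline{\D})$ via left Kan extension along $\ast\to\Delta^1$.
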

\begin{proof}
By characterisation (5) of \cref{simpsonTheorem} and that Bousfield localisations are Dwyer-Kan \cref{bousfieldLocalisationsAreDwyerKan}, we know that $\underline{\sC} \simeq S^{-1}\underline{\presheaf}_{\baseCat}(\underline{\sC}')$ for some small $\baseCat$--category $\underline{\sC}'$ and $S$ a small collection of morphisms in $\underline{\presheaf}_{\baseCat}(\underline{\sC}')$.  Then we have
\begin{equation*}
    \begin{split}
        \underline{\func}^R_{\baseCat}(\underline{\presheaf}_{\baseCat}(\underline{\sC}')\vop, \underline{\D}) \simeq \underline{\func}^L_{\baseCat}(\underline{\presheaf}_{\baseCat}(\underline{\sC}'), \underline{\D}\vop) \vop &\simeq \underline{\func}_{\baseCat}(\underline{\sC}', \underline{\D}\vop) \vop\\
        &\simeq \underline{\func}_{\baseCat}({\underline{\sC}'}\vop, \underline{\D})
    \end{split}
\end{equation*}
where the first and last equivalence is by \cref{opOfFunctorCats}, and the second by \cref{UnivPropInd} and since $\baseCat$--presentables are also $\baseCat$--complete by \cref{presentablesAreTComplete}. The right hand term is $\baseCat$--presentable by \cref{smallCotensorPreservePresentability}, and so  $\underline{\func}^R_{\baseCat}(\underline{\presheaf}_{\baseCat}(\underline{\sC}')\vop, \underline{\D})$ is too by the equivalence above. Now note that we have $\underline{\func}^R_{\baseCat}(\underline{\sC}\vop, \underline{\D}) \simeq \underline{\func}^{R, S^{-1}}_T(\underline{\presheaf}_{\baseCat}(\underline{\sC}')\vop, \underline{\D})$: this is by virtue of the following diagram
\begin{equation*}
    \begin{split}
        \underline{\func}^R_{\baseCat}((S^{-1}\underline{\presheaf}_{\baseCat}(\underline{\sC}'))\vop, \underline{\D}) & \simeq  \underline{\func}^L_{\baseCat}(S^{-1}\underline{\presheaf}_{\baseCat}(\underline{\sC}'), \underline{\D}\vop)\vop\\
        & \xrightarrow[\simeq]{L^*} \underline{\func}_{\baseCat}^{L, S^{-1}}(\underline{\presheaf}_{\baseCat}(\underline{\sC}'), \underline{\D}\vop)\vop\\
        &\simeq \underline{\func}_{\baseCat}^{R, S^{-1}}(\underline{\presheaf}_{\baseCat}(\underline{\sC}')\vop, \underline{\D})
    \end{split}
\end{equation*}
where we have the equivalence $L^*$ owing to the formula for $\baseCat$--colimits in $\baseCat$--Bousfield local subcategories. Therefore, if for each $\alpha \in S$ we write $\underline{\E}(\alpha)\subseteq \underline{\func}^R_{\baseCat}(\underline{\presheaf}_{\baseCat}(\underline{\sC}')\vop, \underline{\D})$ to be the $\baseCat$--full subcategory of those functors which carry $\alpha$ to an equivalence in $\underline{\D}$, then $\underline{\func}_{\baseCat}^R(\underline{\sC}\vop, \underline{\D}) \simeq \bigcap_{\alpha\in S}\underline{\E}(\alpha) \subseteq \underline{\func}_{\baseCat}^R(\underline{\presheaf}_{\baseCat}(\underline{\sC}')\vop, \underline{\D})$. Hence to show $\underline{\func}^R(\underline{\sC}\vop, \underline{\D})$ is a $\baseCat$--accessible Bousfield localisation of $\underline{\func}_{\baseCat}^R(\underline{\presheaf}_{\baseCat}(\underline{\sC}')\vop, \underline{\D})$, it will be enough to show it, by \cref{htt5.5.4.18}, for each $\underline{\E}(\alpha)$. Now these $\alpha$'s are morphisms in the various fibres over $\baseCat\op$ but since everything interacts well with basechanges, we can just assume without loss of generality that $\baseCat\op$ has an initial object and that $\alpha$ is a morphism in the fibre of this initial object. Given this, it is clear that we have the pullback
\begin{center}
    \begin{tikzcd}
    \underline{\E}(\alpha) \rar\dar \ar[dr, phantom, very near start, "\scalebox{1.5}{$\lrcorner$}"]& \underline{\func}^R_{\baseCat}(\underline{\presheaf}_{\baseCat}(\underline{\sC}')\vop, \underline{\D}) \dar["\eval_{\alpha}"]\\
    \underline{\E} \rar[hook] & \underline{\func}_{\baseCat}(\tconstant_\baseCat(\Delta^1), \underline{\D})
    \end{tikzcd}
\end{center}
where $\underline{\E}$ is the full subcategory spanned by the equivalences. Hence by \cref{htt5.5.4.17} it will suffice to show that $\underline{\E} \subseteq \underline{\func}_{\baseCat}(\tconstant_\baseCat(\Delta^1), \underline{\D})$ is a $\baseCat$--accessible Bousfield localisation. But this is clear since it is just given by the $\baseCat$--left Kan extension along $\ast \rightarrow \Delta^1$.

The statement for $\underline{\func}_{\baseCat}^L(\underline{\sC}, \underline{\D})$ is proved analogously, but without having to take opposites in showing that  $L^* : \underline{\func}^L_{\baseCat}(S^{-1}\underline{\presheaf}_{\baseCat}(\underline{\sC}'), \underline{\D}) \rightarrow \underline{\func}_{\baseCat}^{L, S^{-1}}(\underline{\presheaf}_{\baseCat}(\underline{\sC}'), \D) $ is an equivalence.
\end{proof}

The following result was stated as Example 3.26 in \cite{nardinThesis} without proof, and so we prove it here. Here the tensor product is the one constructed in \cite[$\S3.4$]{nardinThesis}.

\begin{prop}[Formula for presentable $\baseCat$--tensors]\label{TTensorsOfPresentables} Let $\baseCat$ be an atomic orbital category, and let $\underline{\sC}, \underline{\D}$ be $\baseCat$--presentable categories. Then 
$\underline{\sC}\otimes \underline{\D} \simeq \underline{\func}_{\baseCat}^{R}(\underline{\sC}\vop,\underline{\D})$. 
\end{prop}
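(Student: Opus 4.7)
The plan is to verify that $\underline{\func}^R_{\baseCat}(\underline{\sC}\vop, \underline{\D})$ satisfies the universal property characterising the $\baseCat$-tensor product of $\baseCat$-presentables from \cite[$\S3.4$]{nardinThesis}. Namely, for every $\baseCat$-presentable $\underline{\E}$ we wish to exhibit a natural equivalence
$$\underline{\func}^L_{\baseCat}\bigl(\underline{\func}^R_{\baseCat}(\underline{\sC}\vop, \underline{\D}), \underline{\E}\bigr) \simeq \underline{\func}^{\delta}_{\baseCat}(\underline{\sC} \times \underline{\D}, \underline{\E}),$$
where the right-hand side denotes $\baseCat$-distributive bifunctors in the sense of \cref{normsOfCategories}. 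Both sides live in $\baseCat$-presentables: the source because $\underline{\func}^R_{\baseCat}(\underline{\sC}\vop, \underline{\D})$ is $\baseCat$-presentable by \cref{presentableFunctorCategories}, and the target by construction of the $\baseCat$-tensor.

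The core computation is a parametrised upgrade of the classical identification. Combining \cref{HTT5.2.6.2} (which gives $\underline{\lfunc}_{\baseCat}(\underline{A},\underline{B}) \simeq \underline{\rfunc}_{\baseCat}(\underline{B},\underline{A})\vop$) with the $\baseCat$-adjoint functor theorem \cref{parametrisedAdjointFunctorTheorem} (to pass between $\baseCat$-(co)continuity and the existence of $\baseCat$-adjoints between $\baseCat$-presentables, noting that $\baseCat$-accessibility of adjoints is automatic by \cref{accessibilityOfAdjoints}), and currying via the universal property of the internal functor $\baseCat$-category \cref{TFunctorCategory}, one assembles a chain of equivalences identifying the left-hand side with $\underline{\func}^{L,L}_{\baseCat}(\underline{\sC} \times \underline{\D}, \underline{\E})$, the $\baseCat$-category of $\baseCat$-functors that strongly preserve $\baseCat$-colimits separately in each variable. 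Each intermediate step is a formal shuffle of adjoints and opposites whose verification is routine once the AFT is available.

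The main obstacle, and the reason atomic orbitality is indispensable here, is matching these \emph{separately} strongly $\baseCat$-cocontinuous bifunctors with the $\baseCat$-distributive ones defining $\underline{\sC}\otimes\underline{\D}$. The plan is to unpack the distributivity axiom fibrewise over each $V \in \baseCat$ and each pullback square arising from a morphism in $\finite_{\baseCat}$, using the decomposition principle \cref{omnibusTColimits}(3) to reduce all $\baseCat$-colimits in $\underline{\E}$ to fibrewise colimits together with indexed $\baseCat$-coproducts, and then invoking that $\baseCat$-presentables automatically satisfy the left Beck-Chevalley condition by \cref{simpsonTheorem}(7). Concretely, the distributivity coherence, which demands compatibility with the norm functors of \cref{RecollectionOnNormMorphisms}, reduces to the commutation of indexed $\baseCat$-coproducts with $\baseCat$-colimits, and this holds for any bifunctor that is strongly $\baseCat$-cocontinuous in each variable separately and lands in a category satisfying Beck-Chevalley. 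Once this bridge is established, the equivalence $\underline{\sC}\otimes\underline{\D} \simeq \underline{\func}^R_{\baseCat}(\underline{\sC}\vop, \underline{\D})$ follows by Yoneda applied to the category of $\baseCat$-presentables and strongly $\baseCat$-cocontinuous functors.
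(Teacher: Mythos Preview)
Your core computation—the chain of equivalences passing through $\underline{\func}^{L,L}_{\baseCat}(\underline{\sC}\times\underline{\D},\underline{\E})$ via \cref{HTT5.2.6.2}, \cref{opOfFunctorCats}, the adjoint functor theorem, and \cref{presentableFunctorCategories}—is exactly what the paper does, step for step. So that part is correct and matches.

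The difference is your third paragraph. The paper simply takes $\underline{\func}^{L,L}_{\baseCat}(\underline{\sC}\times\underline{\D},\underline{\E})$ as \emph{the} universal property of $\underline{\sC}\otimes\underline{\D}$ and does not argue for any identification with $\underline{\func}^{\delta}_{\baseCat}$. You are being more cautious here, which is reasonable if you read Nardin's construction as defining the binary tensor through the general norm machinery of \cref{normsOfCategories}. However, your sketch of this bridging step is muddled: the invocation of \cref{RecollectionOnNormMorphisms} is a red herring, since that remark concerns cocartesian lifts in a $\baseCat$-symmetric monoidal category (i.e.\ how tensor and norm \emph{functors on objects} arise), not the distributivity condition governing $f_\otimes$ on categories. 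If the identification $\underline{\func}^{\delta}_{\baseCat}=\underline{\func}^{L,L}_{\baseCat}$ genuinely needs to be argued, the right move is to specialise the distributivity condition to the fold map $\nabla\colon V\sqcup V\to V$ directly: there $f_*(\underline{\sC},\underline{\D})\simeq\underline{\sC}\times\underline{\D}$, and a $\baseCat_{/U'}$-colimit diagram into $g'^*(\underline{\sC},\underline{\D})$ is just a pair of diagrams, so distributivity unwinds to preservation of $\baseCat$-colimits separately in each variable. No Beck--Chevalley gymnastics or general norm compatibility is needed for this special case.

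In short: drop the third paragraph or replace it with the one-line observation that for the fold map the distributivity condition is tautologically separate strong cocontinuity, and you recover the paper's proof exactly.
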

\begin{proof}
This is just a consequence of the universal property of the tensor product. To wit, let $\underline{\E}$ be an arbitrary $\baseCat$--presentable category and write $\underline{\func}_{\baseCat}^{R, \mathrm{acc}}$ for $\baseCat$--accessible strongly $\baseCat$--limit preserving functors. Then
\begin{equation*}
     \begin{split}
         \underline{\func}^{L, L}(\underline{\sC} \times  \underline{\D},\underline{\E}) &\simeq \underline{\func}^L(\underline{\sC}, \underline{\func}^L(\underline{\D},\underline{\E}))\\
         &\simeq \underline{\func}^R_{\baseCat}(\underline{\sC}\vop, \underline{\func}^L_{\baseCat}(\underline{\D},\underline{\E})\vop)\vop\\
         &\simeq \underline{\func}^R_{\baseCat}(\underline{\sC}\vop, \underline{\lfunc}_{\baseCat}(\underline{\D},\underline{\E})\vop)\vop\\
         &\simeq \underline{\func}^R_{\baseCat}(\underline{\sC}\vop, \underline{\rfunc}_{\baseCat}(\underline{\E},\underline{\D}))\vop\\
         &\simeq \underline{\func}^R_{\baseCat}(\underline{\sC}\vop, \underline{\func}^{R,\text{acc}}_T(\underline{\E},\underline{\D}))\vop\\
         &\simeq \underline{\func}^{R,\text{acc}}_T(\underline{\E}, \underline{\func}^{R}_T(\underline{\sC}\vop, \underline{\D}))\vop\\
         &\simeq \underline{\rfunc}_{\baseCat}(\E, \underline{\func}^{R}_T(\underline{\sC}\vop, \underline{\D}))\vop\\
         &\simeq \underline{\lfunc}_{\baseCat}(\underline{\func}^{R}_T(\underline{\sC}\vop, \underline{\D}), \underline{\E})\\
         &\simeq \underline{\func}^L_{\baseCat}(\underline{\func}^{R}_T(\underline{\sC}\vop, \underline{\D}), \underline{\E})
     \end{split}
 \end{equation*}
where the second equivalence is by \cref{opOfFunctorCats}; the third, fifth, seventh, and ninth equivalence is by the adjoint functor \cref{parametrisedAdjointFunctorTheorem}; the fourth and eighth are from \cref{HTT5.2.6.2}. In the seventh and ninth equivalence, we have also used that $\underline{\func}^{R}_T(\underline{\sC}\vop, \underline{\D})$ is $\baseCat$--presentable, which is provided by \cref{presentableFunctorCategories}. Therefore, $\underline{\func}^{R}_T(\underline{\sC}\vop, \underline{\D})$ satisfies the universal property of $\underline{\sC}\otimes \underline{\D}$.
\end{proof}

\printbibliography
\end{document}